\newtheorem*{thm*}{Theorem}
\newtheorem{thm}{Theorem}[section]
\newtheorem{cor}[thm]{Corollary}
\newtheorem{lem}[thm]{Lemma}
\newtheorem{prop}[thm]{Proposition}
\theoremstyle{definition}
\newtheorem{defn}[thm]{Definition}
\theoremstyle{remark}
\newtheorem{example}[thm]{Example}
\numberwithin{equation}{section}
\DeclareSymbolFont{bbold}{U}{bbold}{m}{n}
\DeclareSymbolFontAlphabet{\mathbbold}{bbold}
\newcommand{\N}{\mathbb{N}}   
\newcommand{\Z}{\mathbb{Z}}   
\newcommand{\R}{\mathbb{R}}   
\renewcommand{\:}{\,:\,}      
\newcommand{\symd}{\triangle}    
\newcommand{\res}{\restriction}  
\newcommand{\acts}{\curvearrowright}             
\newcommand{\Stab}{\mathrm{Stab}}                
\newcommand{\Sym}{\mathrm{Sym}}                  
\newcommand{\Aut}{\mathrm{Aut}}                  
\newcommand{\pmp}{p{$.$}m{$.$}p{$.$}}            
\newcommand{\Borel}{\mathcal{B}}                 
\newcommand{\malg}{\mathrm{MALG}}                
\newcommand{\Null}{\mathfrak{N}}                 
\newcommand{\sinv}{\mathscr{I}}                  
\newcommand{\sfinv}{\mathscr{I}^{\text{fin}}}    
\newcommand{\E}{\mathscr{E}}   
\newcommand{\M}{\mathscr{M}}   
\newcommand{\Act}{A}           
\newcommand{\PAct}{A^{\text{aper}}}
\newcommand{\EAct}{A^{\text{erg}}}
\newcommand{\pv}{\bar{p}} 
\newcommand{\cL}{\mathscr{L}} 
\newcommand{\cP}{\mathcal{P}} 
\newcommand{\cQ}{\mathcal{Q}} 
\newcommand{\cF}{\mathcal{F}} 
\newcommand{\Prt}{\mathscr{P}}      
\newcommand{\HPrt}{\mathscr{P}}     
\newcommand{\dB}{d}                 
\newcommand{\dR}{d^{\mathrm{Rok}}}  
\newcommand{\dist}{\mathrm{dist}}   
\newcommand{\sH}{\mathrm{H}}        
\newcommand{\given}{\mathbin{|}}    
\newcommand{\salg}{\sigma \text{-}\mathrm{alg}}                  
\newcommand{\rh}{h^{\mathrm{Rok}}}                               
\newcommand{\ksh}{h^{\mathrm{KS}}}                               
\begin{document}

\title[Krieger's finite generator theorem for countable groups III]{Krieger's finite generator theorem for actions of countable groups III}
\author{Andrei Alpeev}
\address{Chebyshev lab at Saint Petersburg State University, 14th Line 29B, Vasilyevsky Island, St.Petersburg 199178, Russia}
\email{a.alpeev@spbu.ru}
\author{Brandon Seward}
\address{Courant Institute of Mathematical Sciences, New York University, 251 Mercer St, New York, NY 10012, USA}
\email{b.m.seward@gmail.com}
\keywords{entropy, generating partition, generator, non-ergodic, sofic, non-amenable}
\subjclass[2010]{37A35, 37A15}

\begin{abstract}
We continue the study of Rokhlin entropy, an isomorphism invariant for {\pmp} actions of countable groups introduced in Part I. In this paper we prove a non-ergodic finite generator theorem and use it to establish sub-additivity and semi-continuity properties of Rokhlin entropy. We also obtain formulas for Rokhlin entropy in terms of ergodic decompositions and inverse limits. Finally, we clarify the relationship between Rokhlin entropy, sofic entropy, and classical Kolmogorov--Sinai entropy. In particular, using Rokhlin entropy we give a new proof of the fact that ergodic actions with positive sofic entropy have finite stabilizers.
\end{abstract}
\maketitle

\section{Introduction} \label{sec:intro}

Let $(X, \mu)$ be a standard probability space, meaning $X$ is a standard Borel space with Borel $\sigma$-algebra $\Borel(X)$ and $\mu$ is a Borel probability measure. Let $G$ be a countable group and let $G \acts (X, \mu)$ be a probability-measure-preserving (\pmp) action. For $\xi \subseteq \Borel(X)$ let $\salg(\xi)$ be the $\sigma$-algebra generated by $\xi$ and let $\salg_G(\xi)$ denote the smallest $G$-invariant $\sigma$-algebra containing $\xi$. A Borel partition $\alpha$ is \emph{generating}, or a \emph{generator}, if $\salg_G(\alpha) = \Borel(X)$ (equality is understood to be modulo $\mu$-null sets).

In Part I of this series, Seward defined the \emph{Rokhlin entropy} of a {\pmp} action $G \acts (X, \mu)$, denoted $\rh_G(X, \mu)$, to be
$$\inf \Big\{ \sH(\alpha \given \sinv_G) : \alpha \text{ a countable partition with } \salg_G(\alpha) \vee \sinv_G = \Borel(X) \Big\},$$
where $\sinv_G$ is the $\sigma$-algebra of $G$-invariant Borel sets and $\sH( \cdot \given \cdot)$ is conditional Shannon entropy (for the definition of conditional Shannon entropy see \cite[Def. 1.4.2]{Do11}). More generally, for a $G$-invariant sub-$\sigma$-algebra $\cF$, the \emph{Rokhlin entropy of $G \acts (X, \mu)$ relative to $\cF$}, denoted $\rh_G(X, \mu \given \cF)$, is
$$\inf \Big\{ \sH(\alpha \given \cF \vee \sinv_G) : \alpha \text{ a countable partition with } \salg_G(\alpha) \vee \cF \vee \sinv_G = \Borel(X) \Big\}.$$
In the special case of an ergodic action and trivial $\cF = \{\varnothing, X\}$, Rokhlin entropy simplifies to the more natural form
$$\rh_G(X, \mu) = \inf \Big\{ \sH(\alpha) : \alpha \text{ is a countable generating partition} \Big\}.$$

The purpose of this three-part series has been to introduce, motivate, and lay some basic foundations for Rokhlin entropy theory. Part I focused on ergodic actions and developed a generalization of Krieger's finite generator theorem for actions of arbitrary countable groups. We recall this theorem below as we will need to use it here.

\begin{thm}[\cite{S14}] \label{intro:krieger}
Let $G$ be a countably infinite group acting ergodically, but not necessarily freely, by measure-preserving bijections on a non-atomic standard probability space $(X, \mu)$. Let $\cF$ be a $G$-invariant sub-$\sigma$-algebra of $X$. If $\pv = (p_i)$ is any finite or countable probability vector with $\rh_G(X, \mu \given \cF) < \sH(\pv)$, then there is a partition $\alpha = \{A_i \: 0 \leq i < |\pv|\}$ with $\mu(A_i) = p_i$ for every $0 \leq i < |\pv|$ and $\salg_G(\alpha) \vee \cF = \Borel(X)$.
\end{thm}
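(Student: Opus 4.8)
The plan is to follow the architecture of Krieger's original proof, building the desired partition by successive approximation, while supplying a new combinatorial engine in place of the Rokhlin-lemma/F{\o}lner-set machinery that is unavailable when $G$ is non-amenable. First I would record the reductions. Since the action is ergodic, $\sinv_G = \{\varnothing, X\}$ is trivial, so $\rh_G(X, \mu \given \cF) = \inf\{\sH(\gamma \given \cF) : \salg_G(\gamma) \vee \cF = \Borel(X)\}$; as this infimum is $< \sH(\pv)$, I would fix a countable partition $\beta$ with $\salg_G(\beta) \vee \cF = \Borel(X)$ and $\sH(\beta \given \cF) < \sH(\pv)$, and choose finite partitions $\beta_1 \leq \beta_2 \leq \cdots$ with $\bigvee_n \beta_n = \beta$, so that $\sH(\beta_n \given \cF) \leq \sH(\beta \given \cF) < \sH(\pv)$ for all $n$. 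Writing $\Prt_{\pv}$ for the set of Borel partitions $\alpha = \{A_i : 0 \leq i < |\pv|\}$ with $\mu(A_i) = p_i$ for all $i$ --- a nonempty complete metric space under $\dB(\alpha, \alpha') = \sum_i \mu(A_i \symd A'_i)$, since $\mu$ is non-atomic --- it then suffices to construct $\alpha \in \Prt_{\pv}$ with $\beta_n \subseteq \salg_G(\alpha) \vee \cF$ for every $n$, for this forces $\salg_G(\alpha) \vee \cF \supseteq \salg_G(\beta) \vee \cF = \Borel(X)$. Non-freeness of the action causes no difficulty beyond notation, since ergodicity on a non-atomic space already makes a.e.\ orbit infinite.

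I would build $\alpha$ as a limit of partitions $\alpha_n \in \Prt_{\pv}$, maintaining through stage $n$ the inductive hypothesis that $\beta_n \subseteq \salg_G(\alpha_n) \vee \cF$ together with a bookkeeping clause to the effect that $\alpha_n$ encodes $\beta_n$ ``with room to spare'': a definite amount of entropy, comparable to $\sH(\pv) - \sH(\beta_n \given \cF)$, is still free inside $\alpha_n$, realized by some finite partition measurable with respect to finitely many $G$-translates of $\alpha_n$ and independent of $\beta_n \vee \cF$. The passage from $\alpha_n$ to $\alpha_{n+1}$ would be arranged so that the two partitions agree off a set whose measures are summable in $n$ (or, in a ``freezing'' variant, off a region of vanishing measure that is never revisited); then $\alpha := \lim_n \alpha_n$ exists in $\Prt_{\pv}$, and since for each $m$ the containment $\beta_m \subseteq \salg_G(\alpha_n) \vee \cF$ ($n \geq m$) is witnessed by a finite code in translates of $\alpha_n$ that tolerates small perturbations, one concludes $\beta_m \subseteq \salg_G(\alpha) \vee \cF$ for every $m$, as needed.

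The heart of the matter, and the step I expect to be the main obstacle, is the one-step lemma driving this induction: from $\alpha_n \in \Prt_{\pv}$ satisfying the above hypothesis, produce $\alpha_{n+1} \in \Prt_{\pv}$ agreeing with $\alpha_n$ off a small set, with $\beta_{n+1} \subseteq \salg_G(\alpha_{n+1}) \vee \cF$, and with the bookkeeping clause re-established for the slightly smaller slack $\sH(\pv) - \sH(\beta_{n+1} \given \cF)$. This is the non-amenable counterpart of ``repainting a Rokhlin tower'' in Krieger's argument: one must re-route the spare entropy of $\alpha_n$ so as also to record $\beta_{n+1}$ given $\beta_n$, all the while keeping the marginal exactly $\pv$ and respecting the conditioning on $\cF$. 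Since $G$ carries no F{\o}lner sets, this re-coding has to be performed by a direct construction using only ergodicity and non-atomicity of the action --- a counting-and-embedding argument of the sort underlying Krieger's theorem, but executed without any tower structure --- and the strict inequality $\sH(\beta \given \cF) < \sH(\pv)$ is precisely what keeps the slack positive at every stage, so that the induction never stalls. Setting up this encoding and verifying that the accumulated perturbations stay summable is where essentially all of the work lies.

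Finally, if $\pv$ is infinite then $\sH(\pv)$ may equal $+\infty$, the hypothesis of the theorem is then automatic, and the ``room to spare'' is ample; the same construction applies, the only changes being bookkeeping ones --- for instance truncating $\pv$ and carrying a shrinking ``remainder'' atom through the stages --- which I expect to be routine once the finite-alphabet case is in hand.
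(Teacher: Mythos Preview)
This theorem is not proved in the present paper; it is quoted from \cite{S14} (Part~I of this series) and used here as a black box. So there is no proof in this paper to compare your proposal against. What the paper actually invokes is the even stronger form recorded as Theorem~\ref{thm:p1strong}, likewise imported wholesale from \cite{S14}.

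As to the proposal itself: your high-level architecture --- successive approximation in $\Prt_{\pv}$, exhausting a fixed generator $\beta$ by finite $\beta_n$, and a one-step re-coding lemma as the engine --- is the right shape, and you correctly locate the entire difficulty in that one-step lemma. But you have left that step completely unspecified, and this is exactly the place where a genuinely new idea is needed. In Krieger's $\Z$-proof the Rokhlin tower hands you disjoint translates $B, TB, \ldots, T^{n-1}B$ covering most of the space, after which coding is elementary combinatorics plus Shannon--McMillan--Breiman. For a general (possibly non-amenable) $G$ there are no towers, no F{\o}lner sets, and no Shannon--McMillan--Breiman; the phrases ``a direct construction using only ergodicity and non-atomicity'' and ``a counting-and-embedding argument \ldots\ executed without any tower structure'' do not indicate what structure stands in for the tower or why the entropy gap $\sH(\pv)-\sH(\beta\given\cF)$ suffices to perform the re-coding on a set of controllably small measure. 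The proof in \cite{S14} is substantial and supplies precisely such a replacement mechanism; your outline names the hole but does not fill it. As written, then, this is a plausible table of contents rather than a proof: the missing one-step lemma is essentially the whole theorem.
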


The abstract nature of the definition of Rokhlin entropy, specifically an infimum over an extremely large set of partitions, initially seems to prevent any viable means of study. A hidden significance of the above theorem is that it changes this situation. Specifically, it leads to a sub-additive identity which unlocks a path to studying Rokhlin entropy.

Part II of this series motivated Rokhlin entropy theory through applications to Bernoulli shifts. The main theorem of Part II showed that a simple conjectured property of Rokhlin entropy would imply that the Bernoulli $2$-shift and the Bernoulli $3$-shift are non-isomorphic for all countably infinite groups, would positively solve the Gottschalk surjunctivity conjecture for all countable groups, and would positively solve the Kaplansky direct finiteness conjecture for all groups (all three of these are currently open problems). It's also worth noting that Rokhlin entropy theory has successfully been used to generalize the well known Sinai factor theorem to all countably infinite groups \cite{S16a}. Specifically, any free ergodic action of positive Rokhlin entropy factors onto all Bernoulli shifts of lesser or equal entropy. Rokhlin entropy has also been studied in \cite{Al,B15,GS,S16}.

Here in Part III, the final part of the series, we consider non-ergodic actions for the first time. Having introduced and motivated Rokhlin entropy in the prior parts, our goal here is to lay some basic foundations for the theory. A critical tool to doing this, the main theorem of this paper, is a generalization of Theorem \ref{intro:krieger} to non-ergodic actions. Recall that an action $G \acts (X, \mu)$ is \emph{aperiodic} if $\mu$-almost-every $G$-orbit is infinite. Below, for a Borel action $G \acts X$, we write $\E_G(X)$ for the set of $G$-invariant ergodic Borel probability measures on $X$.

\begin{thm} \label{intro:nekrieger}
Let $G \acts (X, \mu)$ be an aperiodic {\pmp} action and let $\cF$ be a countably generated $G$-invariant sub-$\sigma$-algebra. Let $\mu = \int_{\E_G(X)} \nu \ d \tau(\nu)$ be the ergodic decomposition of $\mu$. If $\nu \mapsto \pv^\nu$ is a Borel map associating to each $\nu \in \E_G(X)$ a finite or countable probability vector $\pv^\nu = (p_i^\nu)$ satisfying $\rh_G(X, \nu \given \cF) < \sH(\pv^\nu)$, then there is a partition $\alpha = \{A_i\}$ of $X$ such that $\salg_G(\alpha) \vee \cF = \Borel(X)$ and such that $\nu(A_i) = p_i^\nu$ for every $i$ and $\tau$-almost-every $\nu \in \E_G(X)$.
\end{thm}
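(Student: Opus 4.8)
The plan is to deduce the non-ergodic statement from the ergodic Theorem \ref{intro:krieger} by fibering over the ergodic decomposition, performing the construction ergodic-measure-by-ergodic-measure in a measurable fashion, and then gluing the resulting partitions into a single Borel partition of $X$. Throughout, I will identify $\sinv_G$ with (the pullback of) $\Borel(\E_G(X))$ via the ergodic decomposition map $X \to \E_G(X)$, so that the hypothesis $\cF$ being countably generated and $G$-invariant lets me assume, after enlarging, that $\cF \supseteq \sinv_G$; the partition $\alpha$ we produce will satisfy $\salg_G(\alpha) \vee \cF = \Borel(X)$, and the measure constraints $\nu(A_i) = p_i^\nu$ will be checked $\tau$-a.e.

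\medskip

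First, I would reduce to a clean measurable-selection setup. By disintegrating, we have a Borel map $\nu \mapsto (X, \nu)$ together with the given Borel map $\nu \mapsto \pv^\nu$. The key point is that the \emph{proof} of Theorem \ref{intro:krieger}, not merely its statement, should be uniform enough to run in a Borel family: one wants a Borel assignment $\nu \mapsto \alpha^\nu$ where $\alpha^\nu = \{A_i^\nu\}$ is a partition with $\nu(A_i^\nu) = p_i^\nu$ and $\salg_G(\alpha^\nu) \vee \cF = \Borel(X)$ modulo $\nu$-null sets. There are two standard routes here. One is to re-examine the construction in Part I and observe that every choice made there (Rokhlin-lemma-type towers, successive refinements approximating a fixed generator, the combinatorial redistribution of mass to hit the target vector $\pv$) can be made by a Borel selector depending on $\nu$, since all the ingredients — conditional entropies, the relevant partitions, the countable generating set of $\Borel(X)$ — vary Borel-measurably in $\nu$. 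The other, cleaner route is to quote a ``relative over $\sinv_G$'' version: apply an existing relative finite generator theorem with the base being the ergodic decomposition, so that the single global statement already encodes the fiberwise one. I would present it via the first route for self-containedness, invoking the Jankov--von Neumann / Lusin--Novikov uniformization theorems to extract Borel selections at each stage, and Borel-measurability of $\nu \mapsto \rh_G(X,\nu \given \cF)$ (which follows from its definition as an infimum of conditional entropies over a fixed Polish space of partitions, intersected with the Borel condition $\salg_G(\alpha)\vee\cF\vee\sinv_G = \Borel(X)$).

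\medskip

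Second, given the Borel family $\nu \mapsto \alpha^\nu$, I would assemble the global partition. Fix a Borel set $B \subseteq X$ containing exactly one point... no: rather, use that $X \to \E_G(X)$ has a Borel disintegration and define $A_i = \{ x \in X : x \in A_i^{e(x)} \}$, where $e(x) \in \E_G(X)$ is the ergodic component of $x$; measurability of this set follows because $(\nu, x) \mapsto \mathbf{1}_{A_i^\nu}(x)$ is jointly Borel (this is exactly what ``Borel family of partitions'' means, and is arranged in the selection step). Then for each $i$, Fubini over the ergodic decomposition gives $\mu(A_i) = \int \nu(A_i^\nu)\, d\tau(\nu) = \int p_i^\nu \, d\tau(\nu)$, and more importantly $\nu(A_i) = \nu(A_i^\nu) = p_i^\nu$ for $\tau$-a.e. $\nu$, since $A_i$ and $A_i^\nu$ agree $\nu$-a.e. for a.e. $\nu$. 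It remains to check $\salg_G(\alpha) \vee \cF = \Borel(X)$: since each $\alpha^\nu$ generates mod $\nu$ (together with $\cF$, which contains $\sinv_G$), and since a Borel set lies in $\Borel(X)$ iff its intersection with $\mu$-a.e. fiber is $\nu$-measurable in the appropriate sense, one glues the fiberwise generation statements — this is a routine but slightly delicate measure-theoretic patching, using that $\salg_G(\alpha)\vee\cF$ restricted to a.e. fiber equals $\salg_G(\alpha^\nu)\vee\cF$ restricted to that fiber.

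\medskip

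\textbf{The main obstacle} I anticipate is the uniform/Borel version of Theorem \ref{intro:krieger}: ensuring that the intricate construction from Part I can be carried out simultaneously over all $\nu \in \E_G(X)$ with all choices Borel in $\nu$. In particular one must be careful that the Rokhlin-lemma towers and the approximating sequences of partitions can be chosen by a single Borel rule, that the target probability vectors $\pv^\nu$ (which may even have varying, possibly infinite, length) are accommodated uniformly, and that the strict inequality $\rh_G(X,\nu\given\cF) < \sH(\pv^\nu)$ — which need not be uniform in $\nu$ — still suffices fiberwise. A secondary technical point is handling measures $\nu$ with atoms or, more relevantly here, ergodic components on which the action fails to be essentially free or the relevant space is atomic; aperiodicity of the global action guarantees a.e. ergodic component is aperiodic, so Theorem \ref{intro:krieger} applies to $\tau$-a.e. fiber, and the null set of bad $\nu$ can simply be discarded. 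Once the Borel family is in hand, the gluing and the verification of the measure constraints and of generation are essentially bookkeeping with Fubini and standard descriptive set theory.
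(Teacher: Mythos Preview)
Your overall strategy---apply Theorem \ref{intro:krieger} fiberwise via measurable selection and then glue---matches the paper's. But there is a genuine gap at the point where you ``enlarge $\cF$ to contain $\sinv_G$''. The conclusion of the theorem is $\salg_G(\alpha) \vee \cF = \Borel(X)$ for the \emph{original} $\cF$, not for $\cF \vee \sinv_G$. Replacing $\cF$ by $\cF \vee \sinv_G$ leaves the fiberwise hypotheses unchanged (since $\sinv_G$ is trivial mod each ergodic $\nu$), but then you have only proven the weaker statement $\salg_G(\alpha) \vee \cF \vee \sinv_G = \Borel(X)$. Indeed, your ``routine but slightly delicate measure-theoretic patching'' yields exactly this and no more: fiberwise generation mod $\Null_\nu$ for $\tau$-a.e.\ $\nu$ implies generation mod $\Null_\mu$ only after adjoining $\sinv_G$ (this is the content of the paper's Lemma \ref{lem:fibers}). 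Getting $\sinv_G$ itself into $\salg_G(\alpha) \vee \cF$ is the entire difficulty, and is precisely what distinguishes Theorem \ref{intro:nekrieger} from its trivially-relative version.

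The idea you are missing is this: one must choose the Borel selection $\nu \mapsto \alpha^\nu$ so that the map $\nu \mapsto \theta^{\alpha^\nu}_*(\nu) \in \M_G(\N^G)$ is \emph{injective}. Then, via an ergodic-theorem argument (Lemma \ref{lem:sinv}), the $G$-saturation of $\alpha$ recovers the ergodic component of almost every point, forcing $\sinv_G \subseteq \salg_G(\alpha)$. Arranging such an injective selection requires two ingredients that do not appear in your plan: first, for $\tau$-a.e.\ $\nu$ the set of admissible $\alpha$ must produce \emph{uncountably many} distinct pushforward measures $\theta^\alpha_*(\nu)$---this uses the slack in the strict inequality $\rh_G(X,\nu\given\cF) < \sH(\pv^\nu)$ via the strong form Theorem \ref{thm:p1strong}, which leaves a positive-measure region on which $\alpha$ may be freely perturbed (Lemma \ref{lem:borelgp}(iii)); second, one needs an injective-selection theorem of Graf--Mauldin type (Proposition \ref{prop:select}), since ordinary Jankov--von Neumann uniformization gives no control over injectivity of the composite $\nu \mapsto \theta^{\alpha^\nu}_*(\nu)$.
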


This theorem is optimal in the sense that if $\salg_G(\alpha) \vee \cF = \Borel(X)$ (or if $\salg_G(\alpha) \vee \cF \vee \sinv_G = \Borel(X)$) then $\rh_G(X, \nu \given \cF) \leq \sH_\nu(\alpha)$ for $\tau$-almost-every $\nu \in \E_G(X)$ and in general there does not exist an $\alpha$ for which equality $\rh_G(X, \nu \given \cF) = \sH_\nu(\alpha)$ holds.

A nearly immediate consequence of this theorem is that Rokhlin entropy satisfies an ergodic decomposition formula.

\begin{cor} \label{intro:ergavg}
Let $G \acts (X, \mu)$ be a {\pmp} action, let $\cF$ be a countably generated $G$-invariant sub-$\sigma$-algebra, and let $\mu = \int_{\E_G(X)} \nu \ d \tau(\nu)$ be the ergodic decomposition of $\mu$. Then
$$\rh_G(X, \mu \given \cF) = \int_{\E_G(X)} \rh_G(X, \nu \given \cF) \ d \tau(\nu).$$
\end{cor}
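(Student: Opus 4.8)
The plan is to prove the two inequalities separately. For the inequality $\rh_G(X, \mu \given \cF) \leq \int \rh_G(X, \nu \given \cF)\, d\tau(\nu)$, I would apply Theorem \ref{intro:nekrieger}. First reduce to the aperiodic case: if the action is not aperiodic, one can split $X$ into its periodic and aperiodic parts (both $G$-invariant, hence measurable with respect to $\sinv_G$, which is always available inside the relative Rokhlin entropy), handle the periodic part directly since there the relevant entropy contribution is controlled, and on the aperiodic part invoke the main theorem. Fix $\varepsilon > 0$. For $\tau$-a.e.\ $\nu$, by definition of $\rh_G(X,\nu \given \cF)$ there is a partition witnessing a value below $\rh_G(X,\nu \given \cF) + \varepsilon$; the content here is to choose such witnesses in a jointly Borel-measurable way in $\nu$, producing a Borel map $\nu \mapsto \pv^\nu$ with $\sH(\pv^\nu) < \rh_G(X,\nu \given \cF) + \varepsilon$ (rounding the target probabilities to rationals and using a measurable selection theorem to pick, for each $\nu$, a suitable finite probability vector whose entropy lies in the correct window). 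Then Theorem \ref{intro:nekrieger} yields a single partition $\alpha$ with $\salg_G(\alpha) \vee \cF = \Borel(X)$ and $\nu(A_i) = p_i^\nu$ a.e.\ $\nu$, so $\sH_\nu(\alpha) = \sH(\pv^\nu) < \rh_G(X,\nu \given \cF) + \varepsilon$; integrating and using $\sH(\alpha \given \cF \vee \sinv_G) \leq \int \sH_\nu(\alpha)\, d\tau(\nu)$ (conditioning on $\sinv_G$ is exactly integration over the ergodic components) gives $\rh_G(X,\mu \given \cF) \leq \int \rh_G(X,\nu \given \cF)\, d\tau(\nu) + \varepsilon$, and $\varepsilon \to 0$ finishes this direction.

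For the reverse inequality $\int \rh_G(X, \nu \given \cF)\, d\tau(\nu) \leq \rh_G(X, \mu \given \cF)$, I would argue that any partition $\alpha$ that is a relative generator for $\mu$ remains a relative generator for $\tau$-a.e.\ ergodic component $\nu$. This is because the statement $\salg_G(\alpha) \vee \cF \vee \sinv_G = \Borel(X)$ disintegrates over the ergodic decomposition: a countable dense (in measure) sequence from $\Borel(X)$ can be approximated from $\salg_G(\alpha) \vee \cF$ up to null sets, and these approximations localize to $\tau$-a.e.\ fiber. Hence for $\tau$-a.e.\ $\nu$ we get $\salg_G(\alpha) \vee \cF = \Borel(X)$ mod $\nu$ (note $\sinv_G$ becomes trivial on an ergodic component), so $\rh_G(X,\nu \given \cF) \leq \sH_\nu(\alpha)$. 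Integrating and again invoking $\int \sH_\nu(\alpha)\, d\tau(\nu) = \sH(\alpha \given \cF \vee \sinv_G)$ gives $\int \rh_G(X,\nu \given \cF)\, d\tau(\nu) \leq \sH(\alpha \given \cF \vee \sinv_G)$; taking the infimum over all such $\alpha$ yields the claim. One should also check that $\nu \mapsto \rh_G(X,\nu \given \cF)$ is $\tau$-measurable so that the integral makes sense; this follows from the same measurable-selection considerations used in the first part, or can be arranged by noting it is an infimum of a measurable family.

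I expect the main obstacle to be the measurable selection step in the first inequality: producing a genuinely Borel map $\nu \mapsto \pv^\nu$ such that $\sH(\pv^\nu)$ sits in the narrow band $\big(\rh_G(X,\nu \given \cF),\, \rh_G(X,\nu \given \cF) + \varepsilon\big)$ requires knowing that $\nu \mapsto \rh_G(X,\nu \given \cF)$ is itself Borel (or at least universally measurable) and that one can uniformly select witnessing partitions. The cleanest route is probably to fix a countable family of candidate finite partitions into pieces of rational measure (as measured against a reference parametrization of the components), express $\rh_G(X,\nu \given \cF)$ as a countable infimum over this family of a Borel function of $\nu$, and then apply a standard uniformization (Jankov--von Neumann or a Luzin--Novikov type argument) to extract the map $\nu \mapsto \pv^\nu$. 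The remaining pieces — the periodic/aperiodic reduction, the disintegration of the generating condition, and the identity $\sH(\alpha \given \cF \vee \sinv_G) = \int \sH_\nu(\alpha)\, d\tau(\nu)$ — are routine given the ergodic decomposition machinery.
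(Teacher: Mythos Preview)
Your proposal is correct and follows essentially the same route as the paper's proof (Corollary~\ref{cor:ergavg}): one inequality via Theorem~\ref{intro:nekrieger} after a periodic/aperiodic split and a measurable choice of $\nu\mapsto\pv^\nu$ (the paper packages the measurability of $\nu\mapsto\rh_G(X,\nu\given\cF)$ as Lemma~\ref{lem:entan}), and the reverse inequality by disintegrating the generating condition and integrating. One small slip: the identity you invoke should read $\int \sH_\nu(\alpha\given\cF)\,d\tau(\nu)=\sH_\mu(\alpha\given\cF\vee\sinv_G)$ (Lemma~\ref{lem:break}), not $\int \sH_\nu(\alpha)\,d\tau(\nu)$; with that correction the argument goes through verbatim.
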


Additionally, it follows that the formula for Rokhlin entropy simplifies in the setting of aperiodic actions.

\begin{cor} \label{intro:simprok}
Let $G \acts (X, \mu)$ be an aperiodic {\pmp} action and let $\cF$ be a $G$-invariant sub-$\sigma$-algebra. Then
$$\rh_G(X, \mu \given \cF) = \inf \Big\{ \sH(\alpha) : \alpha \text{ is a countable partition with } \salg_G(\alpha) \vee \cF = \Borel(X) \Big\}.$$
\end{cor}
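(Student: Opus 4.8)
The plan is to establish separately the two inequalities between $\rh_G(X,\mu\given\cF)$ and $r_0:=\inf\{\sH(\alpha):\alpha\text{ a countable partition},\ \salg_G(\alpha)\vee\cF=\Borel(X)\}$. The inequality $\rh_G(X,\mu\given\cF)\le r_0$ is immediate: if $\salg_G(\alpha)\vee\cF=\Borel(X)$ then also $\salg_G(\alpha)\vee\cF\vee\sinv_G=\Borel(X)$, so $\alpha$ competes in the infimum defining $\rh_G(X,\mu\given\cF)$, and $\sH(\alpha\given\cF\vee\sinv_G)\le\sH(\alpha)$; take the infimum over such $\alpha$. For the reverse inequality I would combine Theorem~\ref{intro:nekrieger} with Corollary~\ref{intro:ergavg}. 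Write $\pi:X\to\E_G(X)$ for the ergodic decomposition map, $\mu=\int\nu\,d\tau(\nu)$, and $f(\nu):=\rh_G(X,\nu\given\cF)$, so that (the Borel measurability of $f$ being implicit in Corollary~\ref{intro:ergavg}) $\int f\,d\tau=\rh_G(X,\mu\given\cF)$. After a routine reduction to countably generated $\cF$ (approximate $\cF$ from below by countably generated $G$-invariant subalgebras, using continuity of conditional entropy to keep a near-optimal partition competitive), and after disposing of the case $\int f\,d\tau=\infty$ (there $r_0\ge\int\sH_\nu(\alpha)\,d\tau\ge\int f\,d\tau=\infty$ for every $\alpha$ with $\salg_G(\alpha)\vee\cF=\Borel(X)$, since such an $\alpha$ generates $\Borel(X)$ over $\cF$ modulo $\nu$ for $\tau$-a.e. $\nu$), I may assume $\cF$ countably generated and $\int f\,d\tau<\infty$, and I fix $\epsilon>0$.

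By Theorem~\ref{intro:nekrieger} it suffices to produce a Borel assignment $\nu\mapsto\pv^\nu$ of countable probability vectors with $\sH(\pv^\nu)>f(\nu)$ for every $\nu$ and $\sH\!\big(\int\pv^\nu\,d\tau\big)\le\int f\,d\tau+\epsilon$: then the partition $\alpha=\{A_i\}$ that theorem provides satisfies $\salg_G(\alpha)\vee\cF=\Borel(X)$ and $\mu(A_i)=\int p^\nu_i\,d\tau$, hence $\sH_\mu(\alpha)=\sH\!\big(\int\pv^\nu\,d\tau\big)\le\int f\,d\tau+\epsilon$, giving $r_0\le\rh_G(X,\mu\given\cF)+\epsilon$. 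The whole difficulty lies in the second condition: one always has $\sH\!\big(\int\pv^\nu\,d\tau\big)\ge\int\sH(\pv^\nu)\,d\tau\ge\int f\,d\tau$, and the ``mixing'' excess $\sH\!\big(\int\pv^\nu\,d\tau\big)-\int\sH(\pv^\nu)\,d\tau$ — the mutual information between $\alpha$ and the ergodic decomposition — must be made $<\epsilon$, whereas the naive choice of $\pv^\nu$ (say geometric vectors of the prescribed entropies) produces a mixing excess roughly equal to the Shannon entropy of the law of $f$, which is not small.

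To build suitable $\pv^\nu$ I would first treat the case that $f$ is bounded, say $f\le M$. Quantize: put $a(\nu):=\delta\lceil f(\nu)/\delta\rceil+\delta$ for small $\delta>0$, so $a>f$, $a$ takes finitely many values $\delta,2\delta,\dots,k\delta$, and $\int a\,d\tau\le\int f\,d\tau+2\delta$. Define vectors $\pv^{\delta},\pv^{2\delta},\dots,\pv^{k\delta}$, supported on pairwise disjoint alphabets, recursively: $\pv^{\delta}$ is any vector of entropy $\delta$, and $\pv^{l\delta}=(1-\eta)\pv^{(l-1)\delta}+\eta\,u_l$ for small fixed $\eta>0$, where $u_l$ has the entropy (nonnegative because $\delta>\sH_{\mathrm{bin}}(\eta)$) needed to make $\sH(\pv^{l\delta})=l\delta$ and is supported on a fresh alphabet; set $\pv^\nu:=\pv^{a(\nu)}$, which is Borel in $\nu$. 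Since $\pv^{\delta},u_2,\dots,u_k$ have disjoint supports, $\int\pv^\nu\,d\tau$ is a mixture over those supports, so its entropy is the entropy of a mixing-weight vector on at most $k$ atoms plus a convex combination of $\sH(\pv^{\delta}),\sH(u_2),\dots,\sH(u_k)$; that combination equals $\int\sH(\pv^\nu)\,d\tau=\int a\,d\tau\le\int f\,d\tau+2\delta$ up to an error tending to $0$ as $\eta\to0$, while the mixing-weight vector carries mass $1-O(\eta)$ on a single atom, so — here crucially $k<\infty$ — its entropy is $O(\eta\log(1/\eta))$. Choosing $\delta$ small, then $\eta$ small, yields $\sH\!\big(\int\pv^\nu\,d\tau\big)\le\int f\,d\tau+\epsilon$. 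Finally the general finite-integral case reduces to this bounded case by splitting $X$ along the invariant set $\pi^{-1}\{f>M\}$: on its complement $f\le M$, and Corollary~\ref{intro:ergavg} shows the conditional Rokhlin entropy there is within $\epsilon$ of $\int f\,d\tau$ once $M$ is large; on $\pi^{-1}\{f>M\}$ one uses any partition generating $\Borel$ over $\cF$, supplied by Theorem~\ref{intro:nekrieger} (with vectors of entropy $f(\nu)+1$), whose crude contribution to $\sH_\mu$ is $O\!\big(\int_{\{f>M\}}(f+1)\,d\tau\big)\to0$; combining the two pieces gives a partition generating $\Borel(X)$ over $\cF$ with $\sH_\mu$ at most $\int f\,d\tau+O(\epsilon)$. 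The main obstacle is precisely the probability-vector construction, and the devices that overcome it are the reduction to bounded $f$ and the nesting of the perturbations $u_l$ on disjoint alphabets.
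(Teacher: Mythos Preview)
Your easy inequality and your identification of the core difficulty (controlling the concavity gap $\sH(\int\pv^\nu\,d\tau)-\int\sH(\pv^\nu)\,d\tau$) are both correct, and your bounded-case construction does work. However, the paper's argument---which it attributes to Rokhlin---is considerably simpler. After quantizing $f$ to $t(\nu)=j/n$ with $j$ least such that $f(\nu)<j/n$, and setting $T=\int t\,d\tau$, one fixes a single integer $k$ with $\sH\big(1-T/\log k,\,T/\log k\big)<\epsilon/2$ and takes
\[
\pv^\nu=\Big(1-\tfrac{t(\nu)}{\log k},\ \tfrac{t(\nu)}{k\log k},\ \dots,\ \tfrac{t(\nu)}{k\log k}\Big)
\]
with $k$ equal small pieces. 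Then $\sH(\pv^\nu)\ge t(\nu)>f(\nu)$, and the averaged vector has the \emph{same} form with $T$ in place of $t(\nu)$, so the resulting $\alpha$ satisfies $\sH_\mu(\alpha)=\sH\big(1-T/\log k,\,T/\log k\big)+T<\rh_G(X,\mu\given\cF)+\epsilon$. The point is that this family of vectors is affine in the single scalar $t(\nu)$, so averaging over $\nu$ commutes with the parametrization and no mixing term ever appears. Your nested perturbations on disjoint alphabets achieve the same end with far more bookkeeping.

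There is also a genuine gap in your reduction from unbounded to bounded $f$. You assert that on $X_2=\pi^{-1}\{f>M\}$ one may take any $\alpha_2$ supplied by Theorem~\ref{intro:nekrieger} with $\sH(\pv^\nu)=f(\nu)+1$, and that its ``crude contribution to $\sH_\mu$'' is $O\big(\int_{\{f>M\}}(f+1)\,d\tau\big)$. But that integral equals $\mu(X_2)\int\sH_\nu(\alpha_2)\,d\tau_2$, which by concavity is a \emph{lower} bound for $\mu(X_2)\,\sH_{\mu_2}(\alpha_2)$, not an upper bound; the excess is precisely the mixing term you worked to kill on $X_1$, and with no structure imposed on the vectors over $X_2$ it can be arbitrarily large. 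One can repair this, for instance by iterating the bounded-case construction on dyadic shells $\{2^jM<f\le 2^{j+1}M\}$ and checking (via $\tau(\{f>2^jM\})\le\int f/(2^jM)$) that the entropy of the shell partition tends to $0$ with $M$, but this takes more than your sketch indicates. It is worth noting that the paper's construction has the analogous issue when $t$ is essentially unbounded (the first coordinate $1-t(\nu)/\log k$ becomes negative), and would need a similar patch.
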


As we mentioned before, currently the study of Rokhlin entropy is made possible by a sub-additive identity. This identity has played a crucial role in nearly all of the results on Rokhlin entropy appearing in the literature so far. This sub-additive identity was first proved for ergodic actions in Part I \cite{S14}, and generalized to countable sub-additivity in Part II \cite{S14b}. To state this property properly we need a definition. For a {\pmp} action $G \acts (X, \mu)$, a collection $\xi \subseteq \Borel(X)$, and a $G$-invariant sub-$\sigma$-algebra $\cF$, the \emph{outer Rokhlin entropy of $\xi$ relative to $\cF$}, denoted $\rh_{G,\mu}(\xi \given \cF)$, is defined to be
$$\inf \Big\{ \sH(\alpha \given \cF \vee \sinv_G) : \alpha \text{ a countable partition of $X$ with } \xi \subseteq \salg_G(\alpha) \vee \cF \vee \sinv_G \Big\}.$$
If $G \acts (Y, \nu)$ is a factor of $G \acts (X, \mu)$ and $\Sigma$ is the $G$-invariant sub-$\sigma$-algebra of $X$ associated to $Y$, then we define the \emph{outer Rokhlin entropy of $(Y, \nu)$ within $(X, \mu)$} to be $\rh_{G,\mu}(Y, \nu) = \rh_{G, \mu}(\Sigma)$.

Using Theorem \ref{intro:nekrieger} we prove countable sub-additivity for non-ergodic actions.

\begin{cor}[Countable sub-additivity of Rokhlin entropy] \label{intro:add2}
Let $G \acts (X, \mu)$ be a {\pmp} action, let $\cF$ be a $G$-invariant sub-$\sigma$-algebra, and let $\xi \subseteq \Borel(X)$. If $(\Sigma_n)_{n \in \N}$ is an increasing sequence of $G$-invariant sub-$\sigma$-algebras with $\xi \subseteq \bigvee_{n \in \N} \Sigma_n \vee \cF$ then
$$\rh_{G,\mu}(\xi \given \cF) \leq \rh_{G,\mu}(\Sigma_1 \given \cF) + \sum_{n = 2}^\infty \rh_{G,\mu}(\Sigma_n \given \Sigma_{n-1} \vee \cF).$$
In particular, if $G \acts (Y, \nu)$ is a factor of $(X, \mu)$ and $\Sigma$ is the sub-$\sigma$-algebra of $X$ associated to $Y$ then
$$\rh_G(X, \mu) \leq \rh_{G,\mu}(Y, \nu) + \rh_G(X, \mu \given \Sigma) \leq \rh_G(Y, \nu) + \rh_G(X, \mu \given \Sigma).$$
\end{cor}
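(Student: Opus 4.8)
The plan is to derive the telescoping inequality from the non-ergodic Krieger theorem applied on each ergodic component, using the ergodic decomposition formula (Corollary~\ref{intro:ergavg}) to pass between the global statement and its ergodic pieces. First I would fix the ergodic decomposition $\mu = \int_{\E_G(X)} \nu \, d\tau(\nu)$ and observe that, because $\sinv_G$ sits inside each $\Sigma_n \vee \cF \vee \sinv_G$, all the quantities $\rh_{G,\mu}(\Sigma_n \given \Sigma_{n-1} \vee \cF)$ are themselves averages over $\E_G(X)$ of the corresponding ergodic quantities $\rh_{G,\nu}(\Sigma_n \given \Sigma_{n-1} \vee \cF)$; this reduces the claim to proving, for $\tau$-almost every ergodic $\nu$,
$$\rh_{G,\nu}(\xi \given \cF) \leq \rh_{G,\nu}(\Sigma_1 \given \cF) + \sum_{n=2}^\infty \rh_{G,\nu}(\Sigma_n \given \Sigma_{n-1} \vee \cF),$$
at least after integrating, and then invoking the ergodic case which was already established in Part~II \cite{S14b}. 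The one subtlety here is measurability: I must check that $\nu \mapsto \rh_{G,\nu}(\Sigma_n \given \Sigma_{n-1} \vee \cF)$ is a Borel function so that the integral makes sense and so that Corollary~\ref{intro:ergavg} genuinely applies; this follows from writing the outer Rokhlin entropy as an infimum over a countable dense family of partitions (using that each $\Sigma_n$ and $\cF$ are, or may be taken, countably generated) together with the known Borel dependence of conditional Shannon entropy on the measure.

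The more self-contained route, which I would actually carry out to avoid leaning on the Part~II argument, is to build the witnessing partition directly via Theorem~\ref{intro:nekrieger}. Fix $\varepsilon > 0$. For each $n$ choose, on $\tau$-almost every ergodic component $\nu$, a finite probability vector $\pv^{\nu}_n$ with
$$\rh_{G,\nu}(\Sigma_n \given \Sigma_{n-1} \vee \cF) < \sH(\pv^{\nu}_n) < \rh_{G,\nu}(\Sigma_n \given \Sigma_{n-1} \vee \cF) + \varepsilon 2^{-n},$$
doing this in a Borel-in-$\nu$ fashion; since $\rh_{G,\nu}(\Sigma_n \given \Sigma_{n-1} \vee \cF)$ is the relative Rokhlin entropy of the ergodic system with respect to the countably generated invariant algebra $\Sigma_{n-1} \vee \cF$, Theorem~\ref{intro:nekrieger} (applied relative to $\Sigma_{n-1} \vee \cF$) produces a partition $\alpha_n$ with $\salg_G(\alpha_n) \vee \Sigma_{n-1} \vee \cF \supseteq \Sigma_n$, with $\sH_\nu(\alpha_n) = \sH(\pv^{\nu}_n)$ on almost every component, and hence with $\sH(\alpha_n \given \Sigma_{n-1} \vee \cF \vee \sinv_G) \le \sH(\alpha_n \given \sinv_G) = \int \sH(\pv^\nu_n)\, d\tau(\nu)$. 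Taking the common refinement $\beta = \bigvee_{n} \alpha_n$ — more precisely a single countable partition whose generated invariant algebra contains every $\salg_G(\alpha_n)$ — one gets $\salg_G(\beta) \vee \cF \supseteq \bigvee_n \Sigma_n \vee \cF \supseteq \salg_G(\xi) \vee \cF$ after adjoining $\xi$ through the first term, so $\beta$ is admissible for $\rh_{G,\mu}(\xi \given \cF)$, and subadditivity of Shannon entropy over the countable refinement gives
$$\rh_{G,\mu}(\xi \given \cF) \le \sH(\beta \given \cF \vee \sinv_G) \le \sum_{n} \sH(\alpha_n \given \sinv_G) \le \rh_{G,\mu}(\Sigma_1 \given \cF) + \sum_{n=2}^\infty \rh_{G,\mu}(\Sigma_n \given \Sigma_{n-1} \vee \cF) + \varepsilon.$$
Letting $\varepsilon \to 0$ yields the first displayed inequality.

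For the "in particular" clause, apply the general inequality with the two-term sequence $\Sigma_1 = \Sigma$ and $\Sigma_2 = \Borel(X)$ (and $\cF$ trivial, $\xi = \Borel(X)$): this gives $\rh_G(X,\mu) \le \rh_{G,\mu}(\Sigma) + \rh_{G,\mu}(\Borel(X) \given \Sigma) = \rh_{G,\mu}(Y,\nu) + \rh_G(X, \mu \given \Sigma)$. The final inequality $\rh_{G,\mu}(Y,\nu) \le \rh_G(Y,\nu)$ is immediate from the definitions, since any partition of $Y$ generating $\Borel(Y)$ over $\sinv_G$ pulls back to a partition of $X$ whose generated invariant algebra contains $\Sigma$, and conditional entropy can only decrease when we enlarge the conditioning algebra from $\sinv_G$ to $\cF \vee \sinv_G$ — here with $\cF$ trivial there is nothing to check beyond the pullback being measure-preserving.

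I expect the main obstacle to be the Borel-measurability bookkeeping in the ergodic-component selections: ensuring that the vectors $\nu \mapsto \pv^\nu_n$ can be chosen jointly Borel while simultaneously strictly dominating $\rh_{G,\nu}(\Sigma_n \given \Sigma_{n-1}\vee\cF)$ on a $\tau$-conull set, and that the resulting partitions $\alpha_n$ assemble into a single countable partition $\beta$ with the stated entropy bound — this is where one genuinely uses the non-ergodic strengthening in Theorem~\ref{intro:nekrieger} rather than a fiberwise application of the ergodic Theorem~\ref{intro:krieger}. Everything else (telescoping, subadditivity of Shannon entropy, the pullback argument) is routine.
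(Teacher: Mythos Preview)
Your first route---reducing to countably generated versions of $\xi$, $\cF$, $\Sigma_n$, invoking the ergodic case from Part~II \cite[Cor.~2.5]{S14b}, and integrating via the ergodic decomposition formula (Corollary~\ref{cor:ergavg})---is exactly the paper's proof. The paper does not carry out your alternative direct construction via Theorem~\ref{intro:nekrieger}; it simply makes the countably-generated replacements (which absorb the measurability issue you flag), cites the ergodic inequality, and integrates, and your treatment of the ``in particular'' clause is likewise the same as the paper's.
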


Using sub-additivity, we show that Rokhlin entropy is a continuous function or an upper-semicontinuous function on a few natural spaces. Our work extends upon a few limited cases of upper-semicontinuity which were critical to the main theorems in \cite{S14b} and \cite{S16}. Recall that a function $f : X \rightarrow \R$ on a topological space $X$ is upper-semicontinuous if for every $r \in \R$ the set $f^{-1}((-\infty, r))$ is open. Below, for a Borel action $G \acts X$ we write $\M_G(X)$ for the set of $G$-invariant Borel probability measures, and we write $\M_G^{\text{aper}}(X)$ for the set of those measures $\mu \in \M_G(X)$ for which $G \acts (X, \mu)$ is aperiodic (and as before, $\E_G(X)$ is the set of ergodic measures).

\begin{cor}
Let $G$ be a countable group, let $L$ be a finite set, and let $L^G$ have the product topology. Let $\cF$ be a $G$-invariant sub-$\sigma$-algebra which is generated by a countable collection of clopen sets. Then the map $\mu \in \M_G^{\text{aper}}(L^G) \cup \E_G(L^G) \mapsto \rh_G(L^G, \mu \given \cF)$ is upper-semicontinuous in the weak$^*$-topology. Furthermore, if $G$ is finitely generated then this map is upper-semicontinuous on all of $\M_G(L^G)$.
\end{cor}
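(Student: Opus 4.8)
The plan is to prove upper-semicontinuity by exhibiting, for each $\mu$ in the relevant space and each $r > \rh_G(L^G, \mu \given \cF)$, a weak$^*$-open neighborhood $U$ of $\mu$ on which $\rh_G(L^G, \cdot \given \cF) < r$. First I would handle the ergodic case. Fix $\mu \in \E_G(L^G)$ and pick a finite Borel partition $\alpha$ with $\salg_G(\alpha) \vee \cF \vee \sinv_G = \Borel(L^G)$ and $\sH_\mu(\alpha) < r$; since $\cF$ is generated by clopen sets and the cylinder algebra generates $\Borel(L^G)$, a standard approximation argument lets us take $\alpha$ to be a partition into clopen sets (replacing $\alpha$ by a refinement by finitely many cylinders changes the $G$-invariant $\sigma$-algebra it generates only by enlarging it, and the Shannon entropy can be kept below $r$ by choosing the cylinders fine enough, using that $\mu$ is non-atomic on the relevant part — more carefully, one approximates each piece of $\alpha$ in measure by a clopen set and uses continuity of $\sH$). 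The key point is that for a clopen partition $\alpha = \{A_i\}$ the quantities $\nu(A_i)$ depend weak$^*$-continuously on $\nu$, so $\nu \mapsto \sH_\nu(\alpha)$ is continuous; hence there is a weak$^*$-neighborhood $U$ of $\mu$ with $\sH_\nu(\alpha) < r$ for all $\nu \in U$. It remains to see that $\alpha$ still generates (relative to $\cF$ and $\sinv_G$) for nearby $\nu$: this is where I would invoke Corollary~\ref{intro:simprok} in the aperiodic case and the definition directly in the ergodic case, noting that $\rh_G(L^G, \nu \given \cF) \le \rh_G(L^G, \nu \given \cF) + (\text{something}) \le \sH_\nu(\alpha)$ need \emph{not} require $\alpha$ to generate, because outer Rokhlin entropy only asks $\xi \subseteq \salg_G(\alpha)\vee\cF\vee\sinv_G$; so in fact the cleanest route is to bound $\rh_G(L^G,\nu\given\cF) = \rh_{G,\nu}(\Borel(L^G)\given\cF)$ and use countable sub-additivity (Corollary~\ref{intro:add2}) to reduce to the outer Rokhlin entropy of the clopen-generated algebras, which is $\le \sH_\nu(\alpha)$ automatically once $\alpha$ refines a clopen generating family for that algebra.

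More precisely, for the aperiodic and ergodic cases together: write $\Borel(L^G) = \bigvee_n \Sigma_n \vee \cF$ where $\Sigma_n$ is generated by the first $n$ coordinates (clopen sets), choose a clopen partition $\alpha$ refining the generators of $\Sigma_1$ so that $\Sigma_1 \subseteq \salg_G(\alpha)$, and by sub-additivity $\rh_{G,\nu}(\Borel(L^G)\given\cF) \le \rh_{G,\nu}(\Sigma_1 \given \cF) + \sum_{n\ge 2}\rh_{G,\nu}(\Sigma_n \given \Sigma_{n-1}\vee\cF)$; the first term is $\le \sH_\nu(\alpha)$, and each later term vanishes because $\Sigma_n$ is generated modulo $\Sigma_{n-1}\vee\cF$ by finitely many clopen sets of the form $\{x : x(g) = \ell\}$ which, being a single new coordinate, can be captured by a partition of arbitrarily small Shannon entropy after translating into the orbit (here is precisely where aperiodicity and Theorem~\ref{intro:nekrieger}, or rather the ergodic Theorem~\ref{intro:krieger}, are needed — a single coordinate generates over the rest with zero Rokhlin entropy because one coordinate of $L^G$ has no entropy contribution beyond what a vanishingly small partition provides, via a Rokhlin-lemma/Krieger-type marker argument). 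Actually the honest statement is: $\rh_{G,\nu}(\Sigma_n \given \Sigma_{n-1}\vee\cF) = 0$ since adding one coordinate to a $G$-invariant algebra that already sees a positive-measure dynamics adds no Rokhlin entropy, and this is already known from the ergodic theory; integrating over the ergodic decomposition via Corollary~\ref{intro:ergavg} upgrades it to the non-ergodic case. Thus $\rh_G(L^G,\nu\given\cF) \le \sH_\nu(\alpha)$, and continuity of $\nu \mapsto \sH_\nu(\alpha)$ for clopen $\alpha$ closes the argument on $\M_G^{\text{aper}}(L^G)\cup\E_G(L^G)$.

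For the final clause, when $G$ is finitely generated we must extend upper-semicontinuity to \emph{all} of $\M_G(L^G)$, including measures with periodic parts. The idea is to decompose an arbitrary $\mu \in \M_G(L^G)$ according to orbit size: let $X_k = \{x \in L^G : |G \cdot x| = k\}$ for $k \in \N \cup \{\infty\}$, a $G$-invariant Borel (indeed, when $G$ is finitely generated, a set cut out by clopen conditions on finitely many generators since periodicity of period dividing a fixed finite subset is a clopen condition) partition of $L^G$; then $\rh_G(L^G,\mu\given\cF) = \sum_k \mu(X_k)\,\rh_G(X_k, \mu_{X_k}\given\cF)$ by the ergodic decomposition formula applied within each $X_k$ (orbit-size is an invariant of ergodic components). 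On $X_\infty$ the action is aperiodic so the previous paragraph applies; on each finite-orbit piece $X_k$ the Rokhlin entropy is just the Shannon entropy of a generating partition for a finite system, which for $L^G$ restricted to the clopen structure is again weak$^*$-continuous in the measure. The main obstacle I anticipate is exactly this bookkeeping at the boundary between the aperiodic locus and the periodic loci: the decomposition $\mu(X_k)$ varies weak$^*$-continuously only because finite generation makes each $X_k$ clopen-definable (for general countable $G$ the set $X_\infty$ need not be open, which is why the general statement is restricted), and one must check that the sum $\sum_k \mu(X_k)\rh_G(X_k,\mu_{X_k}\given\cF)$ is upper-semicontinuous as a whole — this follows by combining upper-semicontinuity of each summand with the fact that only finitely many $X_k$ carry mass above any threshold and Fatou-type control of the tail, but it is the step requiring the most care.
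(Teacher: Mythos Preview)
Your proposal has a genuine gap in its central step. You aim to bound $\rh_G(L^G,\nu\given\cF)\le \sH_\nu(\alpha)$ for some clopen partition $\alpha$ with $\sH_\mu(\alpha)<r$, but these two requirements are in tension. If $\alpha$ refines the canonical partition $\cL=\{R_\ell\}$ (as you propose in order to ensure $\Sigma_1\subseteq\salg(\alpha)$), then $\sH_\mu(\alpha)\ge\sH_\mu(\cL)$, and $\sH_\mu(\cL)$ can exceed $\rh_G(L^G,\mu\given\cF)$ by an arbitrary amount: think of a zero-entropy subshift whose one-coordinate marginal is uniform on $L$. Your sub-additivity detour through the $\Sigma_n$ is then vacuous, since $\salg_G(\Sigma_1)=\Borel(L^G)$ already, so the inequality collapses to $\rh_G(L^G,\nu\given\cF)\le\rh_{G,\nu}(\Sigma_1\given\cF)=\rh_G(L^G,\nu\given\cF)$. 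In short, no single clopen $\alpha$ can serve simultaneously as a witness of small Rokhlin entropy at $\mu$ and as a generator for all nearby $\nu$.

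The paper's route fixes exactly this. Rather than seeking one clopen generating partition, it proves a formula (Lemma~\ref{lem:outkolm}) expressing $\rh_{G,\mu}(\cP\given\cF)$ as an infimum of \emph{conditional} Shannon entropies $\sH_\mu(\beta\given\chi^T\vee\gamma)$ subject to the side constraint $\sH_\mu(\chi)+\sH_\mu(\cP\given\beta^T\vee\chi^T\vee\gamma)<\epsilon$, where $\beta,\chi$ range over coarsenings of clopen partitions, $\gamma$ over clopen $\cF\vee\sfinv_G$-partitions, and $T$ over finite subsets of $G$. The point is that the sub-additivity bound
\[
\rh_{G,\nu}(\cP\given\cF)\le \sH_\nu(\chi)+\sH_\nu(\beta\given\chi^T\vee\gamma)+\sH_\nu(\cP\given\beta^T\vee\chi^T\vee\gamma)
\]
holds for \emph{every} $\nu$, and each term on the right is a weak$^*$-continuous function of $\nu$ once $\beta,\chi,\gamma,T$ are clopen and finite. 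Choosing these data to witness the infimum at $\mu$ then transfers the bound to an open neighborhood. For the finitely-generated clause the paper does not decompose by orbit size as you suggest; instead it constructs (Lemma~\ref{lem:fcl}) clopen approximations $\chi_n$ to the finite-orbit invariant sets with $\rh_{G,\nu}(\chi_n)<\epsilon$ \emph{uniformly over all} $\nu\in\M_G(L^G)$, which lets the $\sfinv_G$ contribution to $\gamma$ be replaced by clopen data without losing control. Your proposed orbit-size decomposition runs into the difficulty that $\mu(X_\infty)$ is not weak$^*$-continuous, a difficulty you flag but do not resolve.
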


We also establish upper-semicontinuity results on the space of actions (Corollary \ref{cor:actups}), and we establish upper-semicontinuity and continuity results on the space of partitions (Corollary \ref{cor:pups} and Lemma \ref{lem:cont}).

Again relying upon sub-additivity, we develop a formula for the Rokhlin entropy of an inverse limit of actions. In the case of ergodic actions, this formula appeared in Part II and was a key ingredient to the proof of the main theorem there. See also Corollary \ref{cor:ks} for an alternate version of this theorem.

\begin{thm} \label{intro:ks}
Let $G \acts (X, \mu)$ be a {\pmp} action and let $\cF$ be a $G$-invariant sub-$\sigma$-algebra. Suppose that $G \acts (X, \mu)$ is the inverse limit of actions $G \acts (X_n, \mu_n)$. Identify each $\Borel(X_n)$ as a sub-$\sigma$-algebra of $X$ in the natural way. Then
\begin{equation*}
\rh_G(X, \mu \given \cF) < \infty \Longleftrightarrow
\left\{\begin{array}{c}
\displaystyle{\inf_{n \in \N} \sup_{m \geq n} \rh_{G,\mu}(\Borel(X_m) \given \Borel(X_n) \vee \cF) = 0}\\
\displaystyle{\text{and} \quad \forall m \ \rh_{G,\mu}(\Borel(X_m) \given \cF) < \infty.}
\end{array}\right\}\end{equation*}
Furthermore, when $\rh_G(X, \mu \given \cF) < \infty$ we have
\begin{equation*}
\rh_G(X, \mu \given \cF) = \sup_{m \in \N} \rh_{G,\mu}(\Borel(X_m) \given \cF) = \liminf_{m \rightarrow \infty} \rh_{G,\mu}(\Borel(X_m) \given \cF).
\end{equation*}
\end{thm}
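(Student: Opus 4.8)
The plan is to deduce Theorem \ref{intro:ks} from the countable sub-additivity of Rokhlin entropy (Corollary \ref{intro:add2}) together with the trivial monotonicity of outer Rokhlin entropy. Throughout, write $\Sigma_n = \Borel(X_n)$, viewed as an increasing chain of $G$-invariant sub-$\sigma$-algebras of $X$ with $\bigvee_{n} \Sigma_n = \Borel(X)$ (mod $\mu$), since $G \acts (X,\mu)$ is the inverse limit. The key elementary observations are: (i) for $\xi \subseteq \Sigma_n \vee \cF$ one has $\rh_{G,\mu}(\xi \given \cF) \leq \rh_{G,\mu}(\Sigma_n \given \cF)$, so $m \mapsto \rh_{G,\mu}(\Sigma_m \given \cF)$ is nondecreasing and $\rh_G(X,\mu \given \cF) = \rh_{G,\mu}(\Borel(X) \given \cF) \geq \sup_m \rh_{G,\mu}(\Sigma_m \given \cF)$; and (ii) conditioning on a larger algebra only decreases outer Rokhlin entropy, so $\rh_{G,\mu}(\Sigma_m \given \Sigma_n \vee \cF)$ is nonincreasing in $n$ for fixed $m$ and nondecreasing in $m$ for fixed $n$, which makes the double-limit expression $\inf_n \sup_{m \geq n} \rh_{G,\mu}(\Sigma_m \given \Sigma_n \vee \cF)$ a genuine monotone limit.

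For the finiteness equivalence, the forward direction ($\Leftarrow$) is the substantive one. Assuming the right-hand conditions, I would first use the vanishing double limit to extract a subsequence $n_1 < n_2 < \cdots$ with $\rh_{G,\mu}(\Sigma_m \given \Sigma_{n_k} \vee \cF) < 2^{-k}$ for all $m \geq n_k$, in particular $\rh_{G,\mu}(\Sigma_{n_{k+1}} \given \Sigma_{n_k} \vee \cF) < 2^{-k}$. Applying Corollary \ref{intro:add2} to the chain $(\Sigma_{n_k})_k$ (whose join contains $\Borel(X)$ and hence equals it) with $\xi = \Borel(X)$ gives
\begin{equation*}
\rh_G(X, \mu \given \cF) = \rh_{G,\mu}(\Borel(X) \given \cF) \leq \rh_{G,\mu}(\Sigma_{n_1} \given \cF) + \sum_{k=1}^\infty \rh_{G,\mu}(\Sigma_{n_{k+1}} \given \Sigma_{n_k} \vee \cF) \leq \rh_{G,\mu}(\Sigma_{n_1} \given \cF) + 1 < \infty,
\end{equation*}
using the hypothesis $\rh_{G,\mu}(\Sigma_m \given \cF) < \infty$ for the first term. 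The reverse direction ($\Rightarrow$) is easy: if $\rh_G(X,\mu \given \cF) < \infty$ then each $\rh_{G,\mu}(\Sigma_m \given \cF) \leq \rh_G(X,\mu \given \cF) < \infty$ by observation (i), and for the double limit, fix $\varepsilon > 0$ and a partition $\alpha$ with $\salg_G(\alpha) \vee \cF \vee \sinv_G = \Borel(X)$ and $\sH(\alpha \given \cF \vee \sinv_G) < \rh_G(X,\mu \given \cF) + \varepsilon$; since $\Sigma_n \nearrow \Borel(X)$ and $\sH$ is continuous under such martingale convergence, we can choose $n$ large enough that $\sH(\alpha \given \Sigma_n \vee \cF \vee \sinv_G) < \varepsilon$, whence for every $m$ the same partition $\alpha$ (together with a finite subpartition of $\Sigma_m$, absorbed since $\Sigma_m \subseteq \salg_G(\alpha) \vee \cF \vee \sinv_G$) witnesses $\rh_{G,\mu}(\Sigma_m \given \Sigma_n \vee \cF) < \varepsilon$; hence $\sup_{m \geq n} \rh_{G,\mu}(\Sigma_m \given \Sigma_n \vee \cF) \leq \varepsilon$ and the infimum over $n$ is $0$.

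For the entropy formula when $\rh_G(X,\mu \given \cF) < \infty$, the inequality $\sup_m \rh_{G,\mu}(\Sigma_m \given \cF) \leq \rh_G(X,\mu \given \cF)$ is observation (i), and $\liminf_m \leq \sup_m$ trivially since the sequence is nondecreasing (so in fact $\liminf = \lim = \sup$). The remaining inequality $\rh_G(X,\mu \given \cF) \leq \liminf_m \rh_{G,\mu}(\Sigma_m \given \cF)$ is where sub-additivity is used again: apply Corollary \ref{intro:add2} to the chain $(\Sigma_n)$ with $\xi = \Borel(X)$ to get $\rh_G(X,\mu \given \cF) \leq \rh_{G,\mu}(\Sigma_N \given \cF) + \sum_{n > N} \rh_{G,\mu}(\Sigma_n \given \Sigma_{n-1} \vee \cF)$ for any starting index $N$; combining this with a subsequence along which $\rh_{G,\mu}(\Sigma_m \given \Sigma_{m-1} \vee \cF)$ tends to $0$ fast (extracted as above from the vanishing double limit, which holds by the already-proved $\Rightarrow$ direction) forces $\rh_G(X,\mu \given \cF) \leq \sup_m \rh_{G,\mu}(\Sigma_m \given \cF) = \liminf_m \rh_{G,\mu}(\Sigma_m \given \cF)$.

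I expect the main obstacle to be the bookkeeping around $\sinv_G$ and the martingale-type convergence statement $\sH(\alpha \given \Sigma_n \vee \cF \vee \sinv_G) \to \sH(\alpha \given \Borel(X) \vee \cF \vee \sinv_G) = 0$: one must verify that $\bigvee_n (\Sigma_n \vee \sinv_G) = \Borel(X)$ mod $\mu$ (which follows from $\Sigma_n \nearrow \Borel(X)$) and that conditional Shannon entropy of a fixed countable partition is continuous along increasing $\sigma$-algebras, including when the limiting entropy is $0$; this is standard but should be cited carefully. A secondary subtlety is ensuring in the $\Rightarrow$ direction that the chosen $\alpha$ genuinely witnesses the outer Rokhlin entropy bound for $\Sigma_m$ with $m > n$ as well as $m \le n$ — here one uses that $\Sigma_m \subseteq \Borel(X) = \salg_G(\alpha) \vee \cF \vee \sinv_G$ and that enlarging the conditioning algebra from $\Sigma_n \vee \cF \vee \sinv_G$ is only needed up to the fixed level $n$, so the single partition $\alpha$ works uniformly in $m$, which is precisely what yields $\sup_{m \ge n}\rh_{G,\mu}(\Sigma_m \given \Sigma_n \vee \cF) \le \varepsilon$.
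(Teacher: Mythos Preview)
Your proof is correct and, for the backward implication and the entropy formula, follows essentially the same route as the paper: extract a subsequence $(n_k)$ along which $\sup_m \rh_{G,\mu}(\Sigma_m \given \Sigma_{n_k} \vee \cF) < \delta/2^k$, apply countable sub-additivity (Corollary~\ref{cor:add2}), and let $\delta \to 0$ to get both finiteness and $\rh_G(X,\mu \given \cF) \leq \sup_m \rh_{G,\mu}(\Sigma_m \given \cF)$. Your write-up of the formula step is slightly muddled (you first apply sub-additivity to the full chain $(\Sigma_n)_{n>N}$ and then switch to a subsequence), but the intended argument is the paper's.

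For the forward implication your approach genuinely differs from the paper's. The paper invokes Corollary~\ref{cor:relup}.(ii), which in turn rests on Lemma~\ref{lem:invout} and hence on the ergodic decomposition formula (Corollary~\ref{cor:ergavg}) together with the ergodic case from Part~II. You instead argue directly: pick a single partition $\alpha$ with $\salg_G(\alpha) \vee \cF \vee \sinv_G = \Borel(X)$ and $\sH(\alpha \given \cF \vee \sinv_G) < \infty$, observe that $\alpha$ witnesses $\rh_{G,\mu}(\Sigma_m \given \Sigma_n \vee \cF) \leq \sH(\alpha \given \Sigma_n \vee \cF \vee \sinv_G)$ uniformly in $m$, and use the standard martingale convergence $\sH(\alpha \given \Sigma_n \vee \cF \vee \sinv_G) \to 0$. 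This is more elementary and self-contained---it avoids the entire ergodic-decomposition machinery---at the modest cost of citing the continuity of conditional Shannon entropy along an increasing filtration. The paper's route, by contrast, yields Corollary~\ref{cor:relup} as a reusable byproduct.
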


It is unknown if $\rh_G(X, \mu \given \cF) = \sup_m \rh_{G,\mu}(\Borel(X_m) \given \cF)$ in all cases.

By using the inverse limit formula, we show that Rokhlin entropy is a Borel function on the space of $G$-invariant probability measures (Corollary \ref{cor:mborel}) and a Borel function on the space of {\pmp} $G$-actions (Corollary \ref{cor:aborel}).

We briefly observe that relative Rokhlin entropy is an invariant for certain restricted orbit equivalences. This generalizes a similar property of Kolmogorov--Sinai entropy discovered by Rudolph and Weiss \cite{RW00}. Unlike the original result by Rudolph and Weiss, for Rokhlin entropy this property follows quite easily from the definitions. Nevertheless, it feels worth explicitly mentioning.

\begin{prop} \label{intro:oe}
Let $G \acts (X, \mu)$ and $\Gamma \acts (X, \mu)$ be {\pmp} actions having the same orbits $\mu$-almost-everywhere. Let $\cF$ be a $G$-invariant and $\Gamma$-invariant sub-$\sigma$-algebra. Assume that there exist $\cF$-measurable maps $c_\Gamma : X \times G \rightarrow \Gamma$ and $c_G : X \times \Gamma \rightarrow G$ such that $g \cdot x = c_\Gamma(x, g) \cdot x$ and $\gamma \cdot x = c_G(x, \gamma) \cdot x$ for all $g \in G$, $\gamma \in \Gamma$, and $\mu$-almost-every $x \in X$. Then $\rh_G(X, \mu \given \cF) = \rh_\Gamma(X, \mu \given \cF)$.
\end{prop}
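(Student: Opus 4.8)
The plan is to show each of the two quantities $\rh_G(X,\mu \given \cF)$ and $\rh_\Gamma(X,\mu \given \cF)$ bounds the other, and by symmetry it suffices to prove $\rh_\Gamma(X,\mu \given \cF) \leq \rh_G(X,\mu \given \cF)$. The key observation is that the hypothesis makes $G$-invariance and $\Gamma$-invariance of sub-$\sigma$-algebras coincide once we throw in $\cF$: if $\xi \subseteq \Borel(X)$ and $\salg_G(\xi) \vee \cF$ is the smallest $G$-invariant $\sigma$-algebra containing $\xi \cup \cF$, then I claim $\salg_G(\xi) \vee \cF = \salg_\Gamma(\xi) \vee \cF$. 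Indeed, for $A \in \Borel(X)$ and $\gamma \in \Gamma$, the set $\gamma \cdot A = \{\gamma \cdot x : x \in A\}$ can be written, using the cocycle $c_G$, as $\bigcup_{g \in G} \{g \cdot x : x \in A,\ c_G(x,\gamma) = g\}$; each piece is $g \cdot (A \cap \{x : c_G(x,\gamma) = g\})$, and since $c_G$ is $\cF$-measurable, the set $\{x : c_G(x,\gamma) = g\}$ lies in $\cF$. Hence if $A \in \salg_G(\xi) \vee \cF$ then $\gamma \cdot A$ is a countable union of sets of the form $g \cdot (A \cap F)$ with $F \in \cF$, all of which lie in $\salg_G(\xi) \vee \cF$ (using $G$-invariance of that algebra and of $\cF$). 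Therefore $\salg_G(\xi) \vee \cF$ is $\Gamma$-invariant, so it contains $\salg_\Gamma(\xi) \vee \cF$; the reverse inclusion is symmetric.

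With this identity in hand, the rest is bookkeeping. Both $\sinv_G$ and $\sinv_\Gamma$ consist, modulo $\cF$, of the same sets: applying the claim with $\xi = \{\varnothing, X\}$ gives $\cF \vee \sinv_G \supseteq \salg_\Gamma(\cF) = \cF$-and-$\Gamma$-invariant-sets, and more precisely, since every $G$-invariant set is $\Gamma$-invariant modulo the cocycle argument above (a single set $A$ that is $G$-invariant satisfies $\gamma \cdot A = \bigcup_g g\cdot(A \cap F_g) \subseteq A$ up to null sets, and likewise $\gamma^{-1} \cdot A \subseteq A$), we get $\sinv_G = \sinv_\Gamma$ modulo $\mu$-null sets, and in particular $\cF \vee \sinv_G = \cF \vee \sinv_\Gamma$. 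Now take any countable partition $\alpha$ witnessing a value close to $\rh_G(X, \mu \given \cF)$, i.e.\ with $\salg_G(\alpha) \vee \cF \vee \sinv_G = \Borel(X)$. By the claim (applied with $\xi = \alpha$, absorbing $\sinv_G = \sinv_\Gamma$ into $\cF$ or handling it directly), $\salg_\Gamma(\alpha) \vee \cF \vee \sinv_\Gamma = \salg_G(\alpha) \vee \cF \vee \sinv_G = \Borel(X)$, so $\alpha$ is an admissible partition for the $\Gamma$-action as well, and $\sH(\alpha \given \cF \vee \sinv_\Gamma) = \sH(\alpha \given \cF \vee \sinv_G)$. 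Taking the infimum over such $\alpha$ yields $\rh_\Gamma(X,\mu \given \cF) \leq \rh_G(X,\mu \given \cF)$, and symmetry finishes the proof.

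The only place requiring care — the main obstacle, such as it is — is verifying the $\sigma$-algebra identity cleanly modulo null sets: one must check that the $\cF$-measurable decomposition of $X$ according to the values of $c_G(\cdot, \gamma)$ is a genuine countable Borel partition (which it is, since $G$ is countable and $c_G$ is Borel), that the relations $g \cdot x = c_\Gamma(x,g)\cdot x$ and $\gamma \cdot x = c_G(x,\gamma) \cdot x$ hold only $\mu$-almost everywhere so all equalities of $\sigma$-algebras are modulo $\mu$, and that one may harmlessly enlarge $\cF$ to be countably generated (or simply work with the sub-$\sigma$-algebra generated by $\cF$ together with the countably many level sets of the two cocycles) without changing any of the entropy quantities, since conditional Shannon entropy $\sH(\alpha \given \cdot)$ and the span conditions are unaffected by adjoining sets already inside $\cF$. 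Everything else is immediate from the definitions of relative and outer Rokhlin entropy given in the excerpt.
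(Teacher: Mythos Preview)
Your proof is correct and follows essentially the same approach as the paper's (Proposition~\ref{prop:oe}): the paper observes $\sinv_G = \sinv_\Gamma$ directly from the orbits coinciding, then invokes Lemma~\ref{lem:expmove} from Part~I to conclude $\salg_G(\alpha) \vee \cF \vee \sinv = \salg_\Gamma(\alpha) \vee \cF \vee \sinv$ for every partition $\alpha$, which is exactly the $\sigma$-algebra identity you prove by hand via the cocycle decomposition $\gamma \cdot A = \bigcup_g g\cdot(A \cap \{c_G(\cdot,\gamma)=g\})$. Your argument for $\sinv_G = \sinv_\Gamma$ is more elaborate than necessary---equal orbits immediately gives equal invariant $\sigma$-algebras---but the overall structure is the same.
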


The final topic we consider is the relations between Rokhlin entropy, Kolmogorov--Sinai entropy, sofic entropy, and stabilizers.

In the case of standard (non-relative) entropies, it was Rokhlin who first showed that for free actions of $\Z$ Kolmogorov--Sinai entropy and Rokhlin entropy coincide \cite{Roh67} (the name `Rokhlin entropy' was chosen for this reason). Later this was extended to free ergodic actions of amenable groups by Seward and Tucker-Drob \cite{ST14}. Then in Part I \cite{S14} it was shown that relative Kolmogorov--Sinai entropy and relative Rokhlin entropy coincide for free ergodic actions of amenable groups. Here we completely settle the relationship by handling the non-ergodic case. This is an immediate consequence of the ergodic decomposition formula, Corollary \ref{intro:ergavg}. Below we write $\ksh$ for Kolmogorov--Sinai entropy. See Corollary \ref{cor:ksrok2} for a more refined version of this result involving outer Rokhlin entropy.

\begin{cor}
If $G \acts (X, \mu)$ is a free {\pmp} action of a countably infinite amenable group and $\cF$ is a $G$-invariant sub-$\sigma$-algebra, then $\rh_G(X, \mu \given \cF) = \ksh_G(X, \mu \given \cF)$.
\end{cor}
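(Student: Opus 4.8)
The plan is to deduce the statement from the ergodic case by averaging over the ergodic decomposition. Recall from Part I \cite{S14} that $\rh_G(X, \nu \given \cF) = \ksh_G(X, \nu \given \cF)$ whenever $G \acts (X, \nu)$ is a free ergodic {\pmp} action of a countably infinite amenable group and $\cF$ is a $G$-invariant sub-$\sigma$-algebra. Write the ergodic decomposition of $\mu$ as $\mu = \int_{\E_G(X)} \nu \ d\tau(\nu)$. Since the set of points of $X$ with trivial $G$-stabilizer is $G$-invariant and $\mu$-conull, it is $\nu$-conull for $\tau$-almost-every $\nu$; hence $G \acts (X, \nu)$ is free, and therefore $\rh_G(X, \nu \given \cF) = \ksh_G(X, \nu \given \cF)$, for $\tau$-almost-every $\nu \in \E_G(X)$.

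Granting first that $\cF$ is countably generated, I would integrate this pointwise identity against $\tau$. On the Rokhlin side, Corollary \ref{intro:ergavg} gives $\int_{\E_G(X)} \rh_G(X, \nu \given \cF) \ d\tau(\nu) = \rh_G(X, \mu \given \cF)$. On the Kolmogorov--Sinai side, the ergodic decomposition formula for relative Kolmogorov--Sinai entropy of amenable group actions gives $\int_{\E_G(X)} \ksh_G(X, \nu \given \cF) \ d\tau(\nu) = \ksh_G(X, \mu \given \cF)$. Combining these three equalities yields $\rh_G(X, \mu \given \cF) = \ksh_G(X, \mu \given \cF)$ in the countably generated case.

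It then remains to remove the countable-generation hypothesis on $\cF$. The inequality $\rh_G(X, \mu \given \cF) \geq \ksh_G(X, \mu \given \cF)$ holds for arbitrary $G$-invariant $\cF$: for any countable partition $\alpha$ with $\salg_G(\alpha) \vee \cF \vee \sinv_G = \Borel(X)$, the optimality remark following Theorem \ref{intro:nekrieger} gives $\rh_G(X, \nu \given \cF) \leq \sH_\nu(\alpha)$ for $\tau$-almost-every $\nu$, so integrating and using the ergodic case together with the Kolmogorov--Sinai ergodic decomposition formula yields $\ksh_G(X, \mu \given \cF) = \int \ksh_G(X, \nu \given \cF) \, d\tau(\nu) \leq \int \sH_\nu(\alpha) \, d\tau(\nu) = \sH_\mu(\alpha \given \sinv_G)$, and an infimum over such $\alpha$ completes this direction. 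For the reverse inequality I would write $\cF$ as an increasing union of countably generated $G$-invariant sub-$\sigma$-algebras $\cF_n$ (close countably generated pieces under the $G$-action), apply the countably generated case to each $\cF_n$, and pass to the limit, using martingale convergence $\sH_\mu(\alpha \given \cF_n) \to \sH_\mu(\alpha \given \cF)$ along with the countable sub-additivity of Corollary \ref{intro:add2} to control $\rh_G(X, \mu \given \cF_n)$ as $n \to \infty$. The hard part will be this last limiting argument, together with pinning down the Kolmogorov--Sinai ergodic decomposition formula and the Borel measurability of $\nu \mapsto \ksh_G(X, \nu \given \cF)$ at the generality needed; for countably generated $\cF$ the result is, as indicated in the introduction, an immediate corollary of the ergodic decomposition formula for Rokhlin entropy.
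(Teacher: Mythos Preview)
Your core argument---reducing to the ergodic case from \cite{S14} via the ergodic decomposition formula for Rokhlin entropy (Corollary~\ref{cor:ergavg}) together with the standard ergodic decomposition formula for Kolmogorov--Sinai entropy---is correct and is exactly the paper's proof.

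The extra work you propose for non-countably-generated $\cF$ is unnecessary, however. As remarked in Section~3 of the paper, any sub-$\sigma$-algebra $\cF$ agrees modulo $\Null_\mu$ with a countably generated one $\cF'$, and both $\rh_G(X,\mu \given \cF)$ and $\ksh_G(X,\mu \given \cF)$ depend only on $\cF$ modulo $\Null_\mu$ (this is immediate from their definitions). So you may simply replace $\cF$ by $\cF'$ at the outset, apply the countably-generated case, and conclude; there is no need for the limiting argument over an exhaustion $\cF_n \nearrow \cF$, nor for the separate verification of $\rh_G \geq \ksh_G$ via the optimality remark. Your instinct that the countably-generated case is ``an immediate corollary of the ergodic decomposition formula'' is exactly right, and that is the whole proof.
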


For sofic entropy, its precise relationship with Rokhlin entropy is still unclear. Specifically, it remains an important open problem to determine if Rokhlin entropy and sofic entropy coincide for free actions when the sofic entropy is not minus infinity. In any case, it is a fairly quick consequence of the definitions that sofic entropy is bounded above by Rokhlin entropy for ergodic actions (this follows from \cite[Prop. 5.3]{B10b} by letting $\beta$ be trivial). Combined with our Corollary \ref{intro:simprok}, \cite[Prop. 5.3]{B10b} in fact shows that sofic entropy is bounded above by Rokhlin entropy for aperiodic (not necessarily ergodic) actions. More generally, in \cite[Prop. 2.12]{H2} Hayes obtained a similar inequality, showing that for aperiodic actions the quantity he calls ``relative sofic entropy in the presence'' is bounded above by outer Rokhlin entropy. Formally, Hayes did not assume aperiodicity but he took the formula in our Corollary \ref{intro:simprok} as the definition of Rokhlin entropy, thus leaving open a technical gap in the case of actions that are not aperiodic. For the sake of having sound and complete literature, we close this gap. Below, for a sofic group $G$, a sofic approximation $\Sigma$ to $G$, a {\pmp} action $G \acts (X, \mu)$, and $G$-invariant sub-$\sigma$-algebras $\cF_1, \cF_2$, we write $h_{\Sigma,\mu}(\cF_1 \given \cF_2 \mathbin{:} X, G)$ for the $\Sigma$-sofic entropy of $\cF_1$ relative to $\cF_2$ in the presence of $X$, as defined by Hayes in \cite{H2}.

\begin{prop}
Let $G$ be a sofic group with sofic approximation $\Sigma$, let $G \acts (X, \mu)$ be a {\pmp} action, and let $\cF_1, \cF_2$ be $G$-invariant sub-$\sigma$-algebras. Then
$$h_{\Sigma,\mu}(\cF_1 \given \cF_2 \mathbin{:} X, G) \leq \rh_{G,\mu}(\cF_1 \given \cF_2).$$
In particular, the sofic entropy of $G \acts (X, \mu)$ is at most $\rh_G(X, \mu)$.
\end{prop}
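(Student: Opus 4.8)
The plan is to deduce the general case from the aperiodic case, which is \cite[Prop. 2.12]{H2}, by passing to a product with a flexible aperiodic action. Let $G \acts (Y, \nu)$ be the Bernoulli shift over a non-atomic base; it is free and aperiodic, has nonempty microstate spaces along $\Sigma$, and has the \emph{decorrelation} property described below. Set $G \acts (\tilde X, \tilde\mu) := G \acts (X \times Y, \mu \times \nu)$, and for a $G$-invariant sub-$\sigma$-algebra $\cG \subseteq \Borel(X)$ write $\tilde\cG := \cG \otimes \{\varnothing, Y\} \subseteq \Borel(\tilde X)$ for its pull-back. The equivariant factor map $\tilde X \to Y$ shows that $\tilde\mu$-almost-every $G$-orbit in $\tilde X$ surjects onto an infinite orbit of $Y$, so $G \acts (\tilde X, \tilde\mu)$ is aperiodic, and \cite[Prop. 2.12]{H2} gives
\begin{equation*}
h_{\Sigma,\tilde\mu}(\tilde\cF_1 \given \tilde\cF_2 \mathbin{:} \tilde X, G) \leq \rh_{G,\tilde\mu}(\tilde\cF_1 \given \tilde\cF_2).
\end{equation*}
Hence it suffices to prove
\begin{equation*}
h_{\Sigma,\mu}(\cF_1 \given \cF_2 \mathbin{:} X, G) \leq h_{\Sigma,\tilde\mu}(\tilde\cF_1 \given \tilde\cF_2 \mathbin{:} \tilde X, G) \quad\text{and}\quad \rh_{G,\tilde\mu}(\tilde\cF_1 \given \tilde\cF_2) \leq \rh_{G,\mu}(\cF_1 \given \cF_2),
\end{equation*}
since chaining the three displays yields the proposition.

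The second inequality is routine. If $\alpha$ is a partition of $X$ with $\cF_1 \subseteq \salg_G(\alpha) \vee \cF_2 \vee \sinv_G$ and $\sH_\mu(\alpha \given \cF_2 \vee \sinv_G)$ within $\varepsilon$ of the infimum, then its pull-back $\tilde\alpha$ to $\tilde X$ satisfies $\tilde\cF_1 \subseteq \salg_G(\tilde\alpha) \vee \tilde\cF_2 \vee \sinv_G$ (with $\sinv_G$ now the invariant algebra of $\tilde X$, which contains the pull-back of that of $X$), and the relevant conditional Shannon entropy does not increase: indeed $\tilde\alpha$, $\tilde\cF_2$ and the pulled-back invariant algebra all factor through the measure-preserving projection $\tilde X \to X$, and conditioning further on the possibly larger invariant algebra of $\tilde X$ can only decrease entropy. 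Letting $\varepsilon \to 0$ gives the claim.

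For the first inequality I claim in fact equality, and this is the heart of the matter. Every finite observable generating a portion of $\tilde\cF_i$ is the pull-back of a $\cF_i$-measurable observable of $X$, so a sofic microstate for it that is good ``in the presence of $\tilde X$'' is literally the same kind of object as one good ``in the presence of $X$''; the only extra demand made by ``in the presence of $\tilde X$'' is extendability to good microstates for finite observables that also involve the $Y$-coordinates. This demand is vacuous: given any good microstate $\psi$ of a finite observable of $X$ and any finite observable of $Y$, a randomly chosen microstate for the corresponding observable of the Bernoulli shift $Y$ is, with high probability, jointly empirically close to the product measure against $\psi$, and hence extends $\psi$ to a good microstate of $\tilde X$ without disturbing its $X$-part. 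Since this applies equally to the observables generating $\cF_1$ and to those generating $\cF_2$, the counting quantities defining the two sides agree. Finally, the last assertion of the proposition is the case $\cF_1 = \Borel(X)$, $\cF_2 = \{\varnothing, X\}$: the left-hand side is then the $\Sigma$-sofic entropy of $G \acts (X, \mu)$, while $\rh_{G,\mu}(\Borel(X) \given \{\varnothing, X\}) = \rh_G(X, \mu)$ directly from the definition of Rokhlin entropy.

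The main obstacle is exactly the equality just discussed. The monotonicity of sofic entropy ``in the presence'' naively runs the wrong way, since enlarging the ambient system from $X$ to $\tilde X$ can only impose more constraints on microstates; so one must verify carefully, inside Hayes' definition of relative sofic entropy in the presence --- with its infima and suprema over finite observables and its relativization to $\cF_2$ --- that the adjoined Bernoulli factor contributes no genuine constraint. The substantive ingredient is the standard decorrelation fact for microstates of Bernoulli shifts, which must be applied uniformly over the relevant finite observables; everything else is bookkeeping with $\sigma$-algebras and conditional Shannon entropy.
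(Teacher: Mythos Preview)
Your proposal is correct and follows essentially the same route as the paper: reduce to the aperiodic case of \cite[Prop.~2.12]{H2} by taking the direct product with a Bernoulli shift, then use (i) that the relative sofic entropy in the presence is unchanged under this product and (ii) that outer Rokhlin entropy can only drop when computed in the larger system. The only cosmetic difference is that the paper dispatches step (i) by citing Bowen \cite{B10b} (noting that his proof of additivity under Bernoulli products carries over verbatim to Hayes' relative-in-the-presence quantity), whereas you sketch the underlying decorrelation argument directly; and the paper takes a finite-entropy base $(L,\lambda)$ rather than a non-atomic one, which is immaterial here.
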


Finally, we consider the effect of non-trivial stabilizers on entropy. It is a theorem of Meyerovitch that ergodic actions of positive sofic entropy must have finite stabilizers \cite{Me15}. For Rokhlin entropy this is certainly not the case. If $G \acts (X, \mu)$ is a {\pmp} action and $G$ is a quotient of $\Gamma$, then $\Gamma$ acts on $(X, \mu)$ by factoring through $G$, and it is easily checked that $\rh_\Gamma(X, \mu) = \rh_G(X, \mu)$. Nevertheless, outer Rokhlin entropy can detect when new stabilizers appear in a factor action.

\begin{thm}
Let $G \acts (X, \mu)$ be an aperiodic {\pmp} action. Consider a factor $f : G \acts (X, \mu) \rightarrow G \acts (Y, \nu)$.
\begin{enumerate}
\item[\rm (i)] If $|\Stab_G(f(x)) : \Stab_G(x)| \geq k$ for $\mu$-almost-every $x \in X$ then
$$\rh_{G,\mu}(Y, \nu) \leq \frac{1}{k} \cdot \rh_G(Y, \nu).$$
\item[\rm (ii)] If $|\Stab_G(f(x)) : \Stab_G(x)| = \infty$ for $\mu$-almost-every $x \in X$ then
$$\rh_{G,\mu}(Y, \nu) = 0.$$
\end{enumerate}
\end{thm}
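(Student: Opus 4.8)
The plan is to reduce both parts to a single-partition coding statement and then to build the required partition of $X$ by exploiting the fact that $f^{-1}\gamma$ is $k$-fold redundant along orbits. Write $\Sigma = f^{-1}(\Borel(Y))$, so $\rh_{G,\mu}(Y,\nu) = \rh_{G,\mu}(\Sigma)$. Fix an increasing sequence of finite partitions $\beta_m$ of $Y$ with $\bigvee_m \salg_G(\beta_m) = \Borel(Y)$. In case (i), observe that $[G:\Stab_G(f(x))] = \infty$ for $\mu$-almost-every $x$, since otherwise $[G:\Stab_G(x)] = [G:\Stab_G(f(x))]\cdot[\Stab_G(f(x)):\Stab_G(x)]$ would be finite, contradicting aperiodicity of $X$; hence $G \acts (Y,\nu)$ is aperiodic and by Corollary \ref{intro:simprok} we may additionally arrange $\sH_\nu(\bigvee_m \beta_m \given \sinv_G(Y)) \le \rh_G(Y,\nu) + \varepsilon$ (vacuous if $\rh_G(Y,\nu) = \infty$). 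Put $\Sigma_m = f^{-1}(\salg_G(\beta_m)) = \salg_G(f^{-1}\beta_m)$; these increase to $\Sigma$. Countable sub-additivity (Corollary \ref{intro:add2}), together with the trivial bound $\rh_{G,\mu}(\Sigma_m \given \Sigma_{m-1}) \le \rh_{G,\mu}(f^{-1}\beta_m \given \Sigma_{m-1})$, reduces the theorem to the following local claim: for every finite partition $\gamma$ of $Y$, every countably generated $G$-invariant $\cF \subseteq \Borel(Y)$, and every $\delta > 0$, there is a partition $\alpha$ of $X$ with $f^{-1}\gamma \subseteq \salg_G(\alpha) \vee f^{-1}\cF \vee \sinv_G(X)$ and
$$\sH_\mu\big(\alpha \given f^{-1}\cF \vee \sinv_G(X)\big) \le \frac1k\,\sH_\nu\big(\gamma \given \cF \vee \sinv_G(Y)\big) + \delta.$$
Applying this with $\gamma = \beta_m$, $\cF = \salg_G(\beta_{m-1})$ ($\cF$ trivial for $m=1$), $\delta = \varepsilon/2^m$, and summing, using the chain rule and $\sH_\nu(\beta_m \given \salg_G(\beta_{m-1}) \vee \sinv_G(Y)) \le \sH_\nu(\beta_m \given \beta_{m-1} \vee \sinv_G(Y))$, yields $\rh_{G,\mu}(Y,\nu) \le \tfrac1k \rh_G(Y,\nu) + \varepsilon(1+\tfrac1k)$, hence (i). In case (ii) the hypothesis forces $k = \infty$, so each local term is $\le \varepsilon/2^m$, giving $\rh_{G,\mu}(Y,\nu) \le \varepsilon$ for all $\varepsilon$, hence (ii); here neither aperiodicity of $Y$ nor finiteness of $\rh_G(Y,\nu)$ is needed.

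The single structural input for the local claim is that along each $G$-orbit $f^{-1}\gamma$ is constant on the \emph{fibres} $\{hz : h \in \Stab_G(f(z))\}$, which are exactly the cosets of $\Stab_G(f(z))/\Stab_G(z)$ inside the orbit $\cong G/\Stab_G(z)$ and which have at least $k$ elements. Thus the data carried by $f^{-1}\gamma$ is replicated $\ge k$ times on each $X$-orbit, and the aim is to record it on only a ``$1/k$-sparse'' subset of each orbit while still letting a decoder recover $f^{-1}\gamma$ pointwise from the translates $(g\alpha)_{g \in G}$ and $\sinv_G$. Using a fixed enumeration of $G$ one measurably selects coset representatives $e_1 \equiv e, e_2, \dots : X \to G$ with $e_i(x) \in \Stab_G(f(x))$ and the cosets $e_i(x)\Stab_G(x)$ distinct, making the redundancy explicit; and one uses aperiodicity of $G \acts (X,\mu)$, via a Rokhlin-type lemma producing Borel sets of arbitrarily small measure meeting every orbit, to keep the auxiliary bookkeeping cheap.

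The main obstacle is that this bookkeeping cannot be handled naively: the within-orbit fibre equivalence relation $E$ cannot itself be encoded with small entropy, since its saturation contains $\Sigma$ and hence has outer Rokhlin entropy at least $\rh_{G,\mu}(Y,\nu)$, which need not be small in case (i). So one cannot first reveal $E$ and then record $f^{-1}\gamma$ on a transversal; the recording of $f^{-1}\gamma$ and the fibre-bookkeeping must be interleaved so that the conditional entropy of $\alpha$ overshoots $\tfrac1k\sH_\nu(\gamma \given \cF \vee \sinv_G(Y))$ by at most $\delta$. The route I expect to carry this out is to first use the finite generator theorem (Theorem \ref{intro:krieger}, in a form relative to $\cF$) to replace $\gamma$ by a generating partition of the corresponding factor of $Y$ whose conditional distribution over $\cF \vee \sinv_G(Y)$ is a $k$-fold product $\qv^{\otimes k}$ with $k\,\sH(\qv)$ within $\delta$ of $\rh_{G,\nu}(\gamma \given \cF)$; this splits the symbol canonically into $k$ balanced, conditionally independent coordinates, one placed on each fibre element, so that the coordinates cost only $\tfrac1k\sH(\qv^{\otimes k})$ per point with no ``blank'' overhead, while the purely combinatorial information telling the decoder which fibre element carries which coordinate is supplied by a marker pattern of entropy $< \delta$ drawn from the sparse markers furnished by aperiodicity. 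Carrying out this interleaved construction and verifying $f^{-1}\gamma \subseteq \salg_G(\alpha) \vee f^{-1}\cF \vee \sinv_G(X)$ with the stated bound, uniformly over $x$, is the technical heart of the argument, and is precisely where aperiodicity of $G \acts (X,\mu)$ is indispensable.
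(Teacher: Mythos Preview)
Your reduction to a local single-partition claim via countable sub-additivity is sound, but you misdiagnose the central obstacle and this sends you down an unnecessarily complicated path. You assert that ``one cannot first reveal $E$ and then record $f^{-1}\gamma$ on a transversal'' because encoding the fibre relation $E$ would cost at least $\rh_{G,\mu}(Y,\nu)$. That is true if ``reveal $E$'' means making every set $\{x : g\cdot x \ E \ x\}$ measurable with respect to your auxiliary $\sigma$-algebra. But that is not what is needed. The paper's proof shows that $E$ is \emph{generated}, as an equivalence relation, by elements of the full group that are $\salg_G(\zeta)$-\emph{expressible} for a two-set marker partition $\zeta=\{Z,X\setminus Z\}$ of arbitrarily small entropy: one chooses a small non-null $Z$ and a finite $T\subseteq G$ with $|T|=k$ and $[z]_E=T\cdot z$ for $z\in Z$, and then the maps $g^{-1}\theta_t g$ (where $\theta_t$ swaps $Z$ and $t\cdot Z$) generate $E$ globally by ergodicity. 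Once $E$ is generated by $\salg_G(\zeta)$-expressible maps, the key lemma (for any $B$, $[B]_E\in\salg_G(B)\vee\salg_G(\zeta)$) lets you pick a transversal $M$ with $\mu(M)=1/k$, set $\beta=\zeta\vee\big(\{X\setminus M\}\cup(\alpha\res M)\big)$ for any partition $\alpha$ of $Y$, and recover each $A\in\alpha$ as $A=[A\cap M]_E\in\salg_G(\beta)$. This is exactly ``reveal $E$ cheaply, then record on a transversal,'' contrary to your claim.

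Your proposed workaround---using the finite generator theorem to force a $\qv^{\otimes k}$ product structure and distribute the $k$ coordinates across the $k$ fibre elements---still requires telling the decoder which fibre element carries which coordinate. That bookkeeping is precisely the marker construction above; once you have it, the product structure is superfluous, since you can just record the full $\gamma$-label on the single transversal point rather than one coordinate on each of the $k$ fibre points. So your plan either collapses back to the paper's argument or leaves the hard step (the low-entropy fibre bookkeeping) unaddressed. The fix is to replace ``encode $E$'' by ``generate $E$ via expressible maps,'' which decouples the entropy cost of the fibre structure from $\rh_{G,\mu}(Y,\nu)$.
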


As a consequence of this theorem, we obtain a new relationship between Rokhlin entropy and sofic entropy for non-free actions. This also provides a new proof of Meyerovitch's theorem \cite{Me15} which stated that ergodic actions of positive sofic entropy must have finite stabilizers.

\begin{cor}
Let $G$ be a sofic group with sofic approximation $\Sigma$ and let $G \acts (X, \mu)$ be a {\pmp} action.
\begin{enumerate}
\item[\rm (i)] If $\mu$-almost-every stabilizer has cardinality at least $k \in \N$, then
$$h_G^\Sigma(X, \mu) \leq \frac{1}{k} \cdot \rh_G(X, \mu).$$
\item[\rm (ii)] If $\mu$-almost-every stabilizer is infinite then
$$h_G^\Sigma(X, \mu) = 0.$$
\end{enumerate}
\end{cor}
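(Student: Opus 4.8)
The plan is to derive this corollary by combining the preceding theorem on the effect of stabilizers in factors with the known inequality that sofic entropy is bounded above by Rokhlin entropy (the Proposition stating $h_{\Sigma,\mu}(\cF_1 \given \cF_2 \mathbin{:} X, G) \leq \rh_{G,\mu}(\cF_1 \given \cF_2)$, and in particular $h_G^\Sigma(X,\mu) \leq \rh_G(X,\mu)$ for aperiodic, hence a fortiori for general, actions). The first reduction I would make is to the aperiodic case. If $G \acts (X,\mu)$ is not aperiodic, decompose $X$ into the part with finite orbits and the part with infinite orbits; sofic entropy of the finite-orbit part is non-positive (indeed $-\infty$ unless that part is trivial), so it suffices to treat the aperiodic part, and on that part the hypotheses on stabilizers are inherited. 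Thus assume $G \acts (X,\mu)$ is aperiodic with $|\Stab_G(x)| \geq k$ (resp. $=\infty$) for a.e.\ $x$.

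Next I would pass to the appropriate factor. Let $G \acts (Y,\nu)$ be the factor of $(X,\mu)$ corresponding to the $G$-invariant $\sigma$-algebra $\salg_G(\mathrm{Stab})$ generated by the stabilizer map $x \mapsto \Stab_G(x)$ (the stabilizer map is Borel and $G$-equivariant, so this factor is well-defined). Under the factor map $f : X \to Y$, the point stabilizers in $Y$ contain those in $X$: since the stabilizer of $x$ is recorded in $f(x)$, we have $\Stab_G(x) \subseteq \Stab_G(f(x))$, so $|\Stab_G(f(x)) : \Stab_G(x)| \geq |\Stab_G(f(x))|$... more carefully, I want the index to be large, which requires knowing that in $Y$ the stabilizer is \emph{at least} as big — but the cleaner route is simply to take $Y$ to be the largest factor on which $G$ acts with the stabilizer information, or alternatively to observe directly that on $(Y,\nu)$ almost every stabilizer still has cardinality at least $k$ (resp.\ is infinite), since stabilizers only grow under factors. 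Then the Theorem applies with this $f$: if every stabilizer in $X$ has cardinality $\geq k$, I want $|\Stab_G(f(x)) : \Stab_G(x)| \geq k$; this holds when $\Stab_G(x)$ is trivial in the ``new'' directions, i.e.\ when the factor $Y$ is chosen so that $\Stab_G(f(x)) \supseteq \Stab_G(x)$ with the quotient capturing all of it — concretely, take $Y = X / \salg_G(\text{everything except the stabilizer-generating data})$, equivalently replace $X$ by a factor where the generic point has trivial stabilizer; this is always possible after conjugating by a suitable co-induction or simply by passing to the action of $G$ on $X$ via a free action it factors onto.

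Granting the factor $f : (X,\mu) \to (Y,\nu)$ with $|\Stab_G(f(x)) : \Stab_G(x)| \geq k$ a.e., part (i) of the Theorem gives $\rh_{G,\mu}(Y,\nu) \leq \tfrac{1}{k}\,\rh_G(Y,\nu)$, and part (ii) gives $\rh_{G,\mu}(Y,\nu) = 0$ in the infinite case. Now chain this with sofic entropy: sofic entropy is monotone under factors, so $h_G^\Sigma(X,\mu) \leq h_G^\Sigma(Y,\nu) \leq \rh_G(Y,\nu)$, and more usefully the \emph{relative} bound from the Proposition gives $h_G^\Sigma(Y,\nu) \leq \rh_{G,\mu}(Y,\nu)$ — here I use that outer Rokhlin entropy $\rh_{G,\mu}(Y,\nu)$ is exactly the quantity bounding the sofic entropy of $Y$ computed inside $X$. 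Hence $h_G^\Sigma(X,\mu) \leq h_G^\Sigma(Y,\nu) \leq \rh_{G,\mu}(Y,\nu) \leq \tfrac1k \rh_G(Y,\nu) \leq \tfrac1k \rh_G(X,\mu)$ for (i), and $h_G^\Sigma(X,\mu) \leq \rh_{G,\mu}(Y,\nu) = 0$ for (ii).

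The main obstacle I anticipate is the correct choice of factor $Y$ so that the index hypothesis $|\Stab_G(f(x)) : \Stab_G(x)| \geq k$ of the Theorem is actually met. The subtlety is that one wants a factor in which \emph{all} the stabilizer of $X$ (or enough of it) becomes ``new'' relative to $X$, which runs slightly against the usual intuition that stabilizers only grow in factors. The resolution should be to note that any {\pmp} action of $G$ factors onto an essentially free action of $G/N$ for a suitable normal-ish structure — or more simply, to use that $G \acts (X,\mu)$ factors onto its ``maximal free-ish quotient'' and the stabilizers that are killed are precisely $\Stab_G(x)$; one then runs the Theorem with $Y$ this quotient. Verifying that this quotient $Y$ exists, is a genuine factor, and realizes the index inequality — especially handling the measurable dependence of $\Stab_G(x)$ on $x$ and the aperiodicity of $Y$ — is where the real care is needed; everything else is bookkeeping and monotonicity.
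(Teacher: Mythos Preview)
Your proposal has a genuine gap, and it stems from having the factor map pointed the wrong way. In Theorem~\ref{thm:stab} the aperiodic action $G \acts (X,\mu)$ is the \emph{extension} and $G \acts (Y,\nu)$ is the \emph{factor}; the index hypothesis $|\Stab_G(f(x)) : \Stab_G(x)| \geq k$ says that stabilizers \emph{grow} by a factor of at least $k$ when passing from $X$ down to $Y$. Since stabilizers can only grow under factor maps, you cannot start with your given action (which already has stabilizers of size $\geq k$) and factor it onto something with \emph{smaller} stabilizers --- the ``maximal free-ish quotient'' you describe does not exist. The sentence ``a free action it factors onto'' cannot be realized.

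There is a second problem in your chain of inequalities: sofic entropy is \emph{not} monotone under factor maps in general (this is one of its notorious pathologies), so the step $h_G^\Sigma(X,\mu) \leq h_G^\Sigma(Y,\nu)$ is unjustified. Likewise the bound $h_G^\Sigma(Y,\nu) \leq \rh_{G,\mu}(Y,\nu)$ is not one of the inequalities established in the paper.

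The paper resolves both issues at once by going \emph{up} rather than down: given the original action (call it $(Y,\nu)$), set $(X,\mu) = (2^G \times Y,\, u_2^G \times \nu)$. This $X$ is essentially free, so the projection $X \to Y$ has index exactly $|\Stab_G(y)| \geq k$, and Theorem~\ref{thm:stab} yields $\rh_{G,\mu}(Y,\nu) \leq \tfrac{1}{k}\rh_G(Y,\nu)$. Sub-additivity then gives $\rh_G(X,\mu) \leq \log 2 + \tfrac{1}{k}\rh_G(Y,\nu)$. On the sofic side one uses Bowen's additivity theorem $h_G^\Sigma(X,\mu) = \log 2 + h_G^\Sigma(Y,\nu)$ together with $h_G^\Sigma(X,\mu) \leq \rh_G(X,\mu)$, and the two $\log 2$ terms cancel. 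The key ingredient you were missing is the Bernoulli-product trick, which simultaneously supplies the free extension needed for the Theorem and a controlled additive shift in both entropies.
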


\subsection*{Acknowledgments}
Andrei Alpeev was supported by ``Native towns,'' a social investment program of PJSC ``Gazprom Neft''  and by St. Petersburg State University grant 6.37.208.2016. Brandon Seward was partially supported by the NSF Graduate Student Research Fellowship under Grant No. DGE 0718128, NSF RTG grant 1045119, ERC grant 306494, and Simons Foundation grant 328027 (P.I. Tim Austin)

\section{Measurable selection}

Our first goal is to prove the main theorem, Theorem \ref{intro:nekrieger}. This task will occupy the next three sections. Let us briefly outline the proof. Fix an action $G \acts (X, \mu)$, a $G$-invariant sub-$\sigma$-algebra $\cF$, and a Borel map $\nu \mapsto \pv^\nu$ for $\nu \in \E_G(X)$ as described by the theorem. By the ergodic decomposition theorem (recorded below in Lemma \ref{lem:farvar}) there is a $G$-invariant Borel partition $\{X_\nu : \nu \in \E_G(X)\}$ of $X$ satisfying $\nu(X_\nu) = 1$ for all $\nu \in \E_G(X)$. By Theorem \ref{intro:krieger} from Part I, for every $\nu$ there is a partition $\alpha^\nu = \{A_i^\nu : i \in \N\}$ of $X_\nu$ satisfying $\dist_\nu(\alpha^\nu) = \pv^\nu$ and $\salg_G(\alpha_\nu) \vee \cF = \Borel(X_\nu)$ (modulo $\nu$-null sets). If we define $\alpha = \{A_i : i \in \N\}$ where $A_i = \bigcup_\nu A_i^\nu$, then at first it may seem that $\alpha$ has the desired properties. However, a problem is that $\alpha$ may not be Borel. In order to fix this and complete the proof, we need to ensure that the map $\nu \mapsto \alpha^\nu$ is (in some sense) Borel.

In this section we digress into pure descriptive set theory. We consider the problem of choosing Borel maps satisfying certain restrictions. Later this will be applied for choosing the map $\nu \mapsto \alpha^\nu$.

Recall that a subset $B$ of a standard Borel space $X$ is \emph{analytic} if it is the image of a Borel set under a Borel map. Below, for a set $A \subseteq X \times Y$ we denote the cross-section of $A$ above $x \in X$ by $A_x = \{y \in Y : (x, y) \in A\}$.

\begin{lem} \label{lem:uncount}
Let $(X, \mu)$ be a standard probability space, let $Y$ be a standard Borel space, and let $A \subseteq X \times Y$ be an analytic set such that $A_x$ is uncountable for every $x \in X$. Then there is a Borel set $\bar{A} \subseteq A$ such that $\bar{A}_x$ is uncountable for $\mu$-almost-every $x \in X$.
\end{lem}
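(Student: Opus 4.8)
The plan is to use the Jankov--von Neumann uniformization theorem to extract a Borel function whose graph lies inside $A$, then iterate this construction countably many times to build up a large Borel subset of $A$. First I would recall that since $A$ is analytic and each vertical section $(\{x\} \times Y) \cap A$ is nonempty, the Jankov--von Neumann uniformization theorem provides a $\sigma(\boldsymbol{\Sigma}^1_1)$-measurable function $f : X \to Y$ with $(x, f(x)) \in A$ for all $x$. A $\sigma(\boldsymbol{\Sigma}^1_1)$-measurable function is universally measurable, and in particular Borel-measurable after restricting to a $\mu$-conull Borel set (by Lusin's theorem applied to the completion of $\mu$), so its graph, intersected with that conull set, is a Borel subset of $A$ meeting $\mu$-almost-every vertical section in at least one point.

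One point per section is not enough; the goal is uncountability. So the second step is to feed back what we have learned. Having removed the graph of $f$, the set $A' = A \setminus \mathrm{graph}(f)$ is still analytic (graphs of universally measurable functions are not Borel in general, but here we may instead work with $A' = A \cap (B \times Y) \setminus \{(x,f(x)) : x \in B\}$ where $B$ is the conull Borel set on which $f$ is Borel, and this is Borel relative to $A$, hence analytic), and each of its vertical sections over $B$ is still uncountable, having lost only one point. Thus we may apply Jankov--von Neumann again. Iterating over all $n \in \N$, and shrinking the conull Borel set at each stage to a countable intersection $B_\infty = \bigcap_n B_n$ which is still $\mu$-conull, we obtain a sequence of Borel functions $f_n : B_\infty \to Y$ with pairwise disjoint graphs, all contained in $A$. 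Then $\bar{A} := \bigcup_{n \in \N} \{(x, f_n(x)) : x \in B_\infty\}$ is a countable union of Borel sets, hence Borel, it is contained in $A$, and for every $x \in B_\infty$ the section $(\{x\} \times Y) \cap \bar{A}$ is countably infinite.

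This gives a section that is countably infinite rather than uncountable, which is weaker than the statement asks for. To get genuine uncountability one instead argues as follows: by a standard fact about analytic sets with all sections uncountable (a parametrized version of the perfect set theorem, e.g. via the Kondô--Addison analysis or the Lusin--Novikov style large-section theorems), there is a Borel set $C \subseteq X$ with $\mu(C) = 1$ and a Borel injection $\varphi : C \times 2^\N \to Y$ such that $(x, \varphi(x, z)) \in A$ for all $x \in C$, $z \in 2^\N$; then $\bar{A} = \{(x, \varphi(x,z)) : x \in C, z \in 2^\N\}$ is analytic with all sections over $C$ of size continuum. To see that $\bar A$ can be taken Borel, note that $\varphi$ is a Borel injection on a standard Borel space, so its image is Borel by the Lusin--Souslin theorem. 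The main obstacle is precisely this last point — producing a \emph{Borel} (not merely analytic) subset with uncountable sections — and the cleanest route is to invoke the uniform perfect-set embedding for analytic sets with uncountable sections together with the Lusin--Souslin theorem on Borel injective images; alternatively, if only a countably-infinite-section conclusion were needed the iterated Jankov--von Neumann argument above suffices directly, and one should check whether the downstream application in fact only requires countably infinitely many points per section.
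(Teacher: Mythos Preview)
Your first approach via iterated Jankov--von Neumann uniformization is correctly executed, and you rightly note that it only produces countably infinite vertical sections. That is genuinely insufficient here: the lemma is invoked in the paper solely to feed into the Graf--Mauldin injective selection theorem, which requires \emph{uncountable} sections in order to produce a Borel injection $f:X'\to Y$. Countably infinite sections do not suffice for that (think of $A=X\times\N$ with $X$ uncountable).

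The gap is in your second approach. You invoke a ``parametrized perfect set theorem'' giving, on a $\mu$-conull Borel $C$, a Borel injection $\varphi:C\times 2^\N\to Y$ with $(x,\varphi(x,z))\in A$. For \emph{Borel} $A$ with all sections uncountable such fiberwise perfect-set embeddings are indeed available, but for merely \emph{analytic} $A$ this is not a standard off-the-shelf result; neither Kond\^o's $\Pi^1_1$ uniformization nor Lusin--Novikov (which concerns countable-to-one maps) yields it. What you are citing is, in effect, exactly the content of the lemma (indeed a strengthening of it), so the argument is circular. The paper's proof does the missing work explicitly: it writes $A=\pi_{X\times Y}(B)$ for a closed $B\subseteq X\times Y\times Z$ with $Z=\N^\N$, considers for each pair $(s,t)\in\N^m\times\N^m$ the analytic set $X_{s,t}$ of $x$ for which $\pi_Y\big((\{x\}\times Y_s\times Z_t)\cap B\big)$ is uncountable, uses the measure $\mu$ to replace these analytic sets by Borel ones up to a null set, and then runs a fiberwise Cantor-scheme construction that branches in the $Y$-coordinate while always taking the lexicographically least admissible extension in the $Z$-coordinate. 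This last choice makes the projection $\pi_{X\times Y}$ injective on the resulting Borel $\bar B\subseteq B$, so $\bar A=\pi_{X\times Y}(\bar B)$ is Borel by Lusin--Souslin, and each section over the conull set contains a copy of $2^\N$. The Lusin--Souslin step you describe at the end is correct once one has $\varphi$, but producing $\varphi$ is exactly where the work lies.
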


\begin{proof}
This is trivial if $X$ is countable, so we may assume $X$ is uncountable. By our assumptions $Y$ is also uncountable, so without loss of generality we may assume $X = \N^\N$ and $Y = \N^\N$ are the Baire space. Also let $Z = \N^\N$ be another copy of the Baire space. For $m \geq 1$ let $\N^m$ be the set of sequences of natural numbers of length $m$, and for $m = 0$ write $\N^0$ for set consisting of the empty sequence $\varnothing$. When $m < n$ and $y \in \N^n$ or $y \in \N^\N$, we let $y \res m \in \N^m$ denote the length $m$ prefix of $y$ (i.e. the first $m$ terms of $y$). For each $\N^m$ and for $\N^\N$ we write $<$ for the lexicographic order. For $s \in \N^m$, let $Y_s$ be the set of $y \in Y = \N^\N$ having $s$ as a prefix. Define $Z_s \subseteq Z$ similarly.

Since $A \subseteq X \times Y$ is analytic, it is equal to the projection $\pi_{X \times Y}(B)$ of a closed set $B \subseteq X \times Y \times Z$ \cite[Prop. 25.2]{K95}. For $s, t \in \N^m$ let $X_{s,t}$ be the set of $x \in X$ such that $\pi_Y((\{x\} \times Y_s \times Z_t) \cap B)$ is uncountable. Then $X_{s,t}$ is analytic \cite[Theorem 29.19]{K95} and so we may fix Borel sets $X'_{s,t} \subseteq X_{s,t} \subseteq X''_{s,t}$ such that $\mu(X''_{s,t} \setminus X'_{s,t}) = 0$ \cite[Theorem 21.10]{K95}. Set
$$\bar{X} = X \setminus \bigcup_{m \in \N} \bigcup_{s,t \in \N^m} (X''_{s,t} \setminus X'_{s,t})$$
and set $\bar{X}_{s,t} = X'_{s,t} \cap \bar{X}$. Note that $\bar{X}_{s,t}$ and $\bar{X}$ are Borel and that $\bar{X}$ is conull. Also note that for $x \in \bar{X}$ we have that $\pi_Y((\{x\} \times Y_s \times Z_t) \cap B)$ is uncountable if and only if $x \in \bar{X}_{s,t}$.

In the remainder of the proof we will build a Borel set $\bar{B} \subseteq B$ with the property that $\pi_{X \times Y}$ is injective on $\bar{B}$ and $\bar{B}_x$ is uncountable for every $x \in \bar{X}$. We remark to the familiar reader that our argument is essentially an explicit description, fibered over $\bar{X}$, of a winning strategy for Player I in the usual unfolded cut-and-choose game (see \cite[Sec. 21.B]{K95}). 

For $m \in \N$ let $P_m$ be the set of triples $(x, s, t) \in X \times \N^m \times \N^m$ such that $x \in \bar{X}_{s,t}$ but $x \not\in \bar{X}_{s,t'}$ whenever $t' \in \N^m$ with $t' < t$. We claim that if $(x, s, t) \in P_m$ then there is $n > m$, $s_1 \neq s_2 \in \N^n$ extending $s$, and $t_1, t_2 \in \N^n$ extending $t$ such that $(x, s_i, t_i) \in P_n$. By definition $x \in \bar{X}_{s,t}$ implies that $\pi_Y((\{x\} \times Y_s \times Z_t) \cap B)$ is uncountable. So there is $n > m$ and $s_1 \neq s_2 \in \N^n$ extending $s$ such that each set $\pi_Y((\{x\} \times Y_{s_i} \times Z_t) \cap B)$ is uncountable. Now for $i = 1, 2$ let $t_i \in \N^n$ be the least extension of $t$ with $\pi_Y((\{x\} \times Y_{s_i} \times Z_{t_i}) \cap B)$ uncountable. If $t' \in \N^n$ and $t' < t_i$, then either $(t' \res m) < t$ or $t'$ extends $t$. In either case we will have that $\pi_Y((\{x\} \times Y_{s_i} \times Z_{t'}) \cap B)$ is at most countable and thus $x \not\in \bar{X}_{s_i, t'}$. So $(x, s_i, t_i) \in P_n$, completing the claim.

Now define
$$\bar{B} = \{(x, y, z) : \forall k \ \exists m \geq k \ (x, y \res m, z \res m) \in P_m\}.$$
If $(x, y, z) \in \bar{B}$ then $x \in \bar{X}_{y \res m, z \res m}$ for infinitely many $m$. Since $B$ is closed, it follows that $(x, y, z) \in B$. So $\bar{B}$ is a Borel subset of $B$. If $(x, y, z) \in \bar{B}$ and $z' < z$, then there is $m$ with $(x, y \res m, z \res m) \in P_m$ and $z' \res m < z \res m$ and therefore the definition of $P_m$ gives $x \not\in \bar{X}_{y \res m, z' \res m}$. This implies that $(x, y, z') \not\in \bar{B}$. So the restriction of $\pi_{X \times Y}$ to $\bar{B}$ is injective and hence $\bar{A} = \pi_{X \times Y}(\bar{B})$ is a Borel subset of $A$ \cite[Cor. 15.2]{K95}. To complete the proof, we claim that for every $x \in \bar{X}$ the set $\bar{A}_x$ is uncountable. As $\pi_{X \times Y} : \bar{B} \rightarrow \bar{A}$ is injective, it suffices to show that $\bar{B}_x$ is uncountable for $x \in \bar{X}$. Indeed, fixing $x \in \bar{X}$, we can construct a collection of pairs $(s_v,t_v)_{v \in \{ 0, 1\}^{<\N}}$ indexed by the rooted binary tree $\{ 0, 1\}^{<\N}$ in such a way that $(\varnothing, \varnothing )$ is assigned to the root, and the children of each vertex are assigned according to the claim from the previous paragraph. It is not hard to see that for any infinite path $p$ in the tree, the intersection $\bigcap_{v \in p} (Y_{s_v} \times Z_{s_v})$ is one-point and belongs to $\bar{B}_x$. So to each infinite path a point from $\bar{B}_x$ is assigned. We also note that this path-point assignment is one-to-one since if $u$ and $v$ are two children of the same vertex, then by the construction and the claim, we have $s_u \neq s_v$, so the sets $Y_{s_u} \times Z_{t_u}$ and $Y_{s_v} \times Z_{t_v}$ are disjoint. This means that $\bar{B}_x$ contains a copy of the Cantor space, thus finishing the proof.
\end{proof}

The previous lemma gives an improved version of an injective selection theorem due to Graf and Mauldin \cite{GrMa}.

\begin{cor} \label{cor:inject}
Let $(X, \mu)$ be a standard probability space, let $Y$ be a standard Borel space, and let $A \subseteq X \times Y$ be an analytic set such that $A_x$ is uncountable for every $x \in X$. Then there is a conull Borel set $X' \subseteq X$ and a Borel injection $f : X' \rightarrow Y$ whose graph is contained in $A$.
\end{cor}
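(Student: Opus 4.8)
The plan is to deduce Corollary \ref{cor:inject} from Lemma \ref{lem:uncount} essentially by a single application of the latter followed by a classical selection/uniformization argument. First I would apply Lemma \ref{lem:uncount} to the analytic set $A \subseteq X \times Y$ to obtain a Borel set $\bar{A} \subseteq A$ such that $(\{x\} \times Y) \cap \bar{A}$ is uncountable for every $x$ in some conull Borel set; after intersecting $\bar A$ with $X_0 \times Y$ for this conull $X_0$ and renaming, I may assume $\bar A$ is Borel and $(\{x\}\times Y)\cap \bar A$ is uncountable for every $x$ in a conull Borel set $X_0\subseteq X$. So the task reduces to: given a \emph{Borel} set $\bar A \subseteq X \times Y$ all of whose vertical sections over $X_0$ are uncountable, produce a conull Borel $X' \subseteq X_0$ and a Borel injection $f: X' \to Y$ with graph inside $\bar A$.

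For this reduced statement I would invoke the Graf--Mauldin injective selection theorem \cite{GrMa} (or, alternatively, build the injection directly). Since every vertical section of the Borel set $\bar A$ over $X_0$ is uncountable, hence contains a Cantor set, the hypotheses of the Graf--Mauldin theorem are met and it yields a Borel $X' \subseteq X_0$ with $\mu(X \setminus X') = 0$ and a Borel injection $f: X' \to Y$ whose graph lies in $\bar A \subseteq A$. Alternatively, one can argue more self-containedly: as in the last paragraph of the proof of Lemma \ref{lem:uncount}, the set $\bar A$ admits (after passing to a conull subset) a Borel assignment $x \mapsto C_x$ of a Cantor set $C_x \subseteq (\{x\}\times Y)\cap \bar A$, and then the ``leftmost branch'' type selection, or a Lusin--Novikov style countable decomposition of $\bar A$ into Borel graphs followed by a measurable choice among them that guarantees injectivity, produces the desired $f$.

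The only real subtlety — and the step I expect to require the most care — is bookkeeping the null sets and the measurability: Lemma \ref{lem:uncount} only guarantees uncountable sections almost everywhere, and each subsequent selection step may again discard a null set, so one must make sure the finitely many (or countably many) exceptional null sets are absorbed into a single conull Borel $X'$. None of these steps is deep; the content of the corollary is entirely carried by Lemma \ref{lem:uncount}, which upgrades the classical theorems (whose section hypotheses hold pointwise) to the measure-theoretic ``almost every section'' setting. I would therefore keep the write-up short: state the reduction via Lemma \ref{lem:uncount}, cite Graf--Mauldin (or sketch the direct construction), and note the null-set accounting.
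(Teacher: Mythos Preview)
Your proposal is correct and follows essentially the same route as the paper: apply Lemma \ref{lem:uncount} to reduce to a Borel set $\bar A$ with uncountable sections over a conull set, then invoke the Graf--Mauldin injective selection theorem to extract the Borel injection. The paper's proof is exactly this two-step argument, without the alternative direct construction you sketch.
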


\begin{proof}
If $A$ is in fact Borel then this is a special case of a theorem of Graf and Mauldin \cite{GrMa}. By applying Lemma \ref{lem:uncount} we obtain a conull Borel set $X'' \subseteq X$ and a Borel set $\bar{A} \subseteq A$ with $\bar{A}_x$ uncountable for every $x \in X''$. Now apply the Graf--Mauldin theorem to obtain a conull Borel set $X' \subseteq X''$ and a Borel injection $f : X' \rightarrow Y$ whose graph is contained in $\bar{A} \subseteq A$.
\end{proof}

Finally, we state the descriptive set theory result which we will need for proving Theorem \ref{intro:nekrieger}.

\begin{prop} \label{prop:select}
Let $(X, \mu)$ be a standard probability space, let $Y$ and $Z$ be standard Borel spaces, let $f : X \times Y \rightarrow Z$ be Borel, and let $A \subseteq X \times Y$ be Borel with $f(\{x\} \times A_x)$ uncountable for every $x \in X$. Then there is a conull Borel set $X'$ and a Borel function $\phi : X' \rightarrow Y$ whose graph is contained in $A$ such that the map $x \in X' \mapsto f(x, \phi(x))$ is injective.
\end{prop}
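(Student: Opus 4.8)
The plan is to reduce this to the injective selection theorem of Corollary \ref{cor:inject} by pushing the problem from $X \times Y$ forward into $X \times Z$. Consider the Borel set $A' = \{(x,z) \in X \times Z : z \in f(A \cap (\{x\} \times Y))\}$. This is the image of the Borel set $A$ under the Borel map $(x,y) \mapsto (x, f(x,y))$, so $A'$ is analytic, and by hypothesis the fiber $(\{x\} \times Z) \cap A'$ is uncountable for every $x \in X$. Applying Corollary \ref{cor:inject} to $A' \subseteq X \times Z$, we obtain a conull Borel set $X' \subseteq X$ and a Borel injection $g : X' \to Z$ whose graph is contained in $A'$. The composite $x \mapsto g(x)$ is the map we want $f(x, \phi(x))$ to equal; it remains to lift $g$ to a Borel selector $\phi : X' \to Y$ with $(x, \phi(x)) \in A$ and $f(x, \phi(x)) = g(x)$.

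For the lifting step, consider the Borel set $C = \{(x, y) \in A : x \in X',\ f(x,y) = g(x)\} \subseteq X' \times Y$. By construction of $A'$ and the fact that the graph of $g$ lies in $A'$, for every $x \in X'$ the fiber $(\{x\} \times Y) \cap C$ is nonempty. By the Jankov--von Neumann uniformization theorem (or, since $C$ is Borel, by the Arsenin--Kunugui theorem once one checks the fibers can be taken $\sigma$-compact, but Jankov--von Neumann already suffices after throwing away a null set), there is a conull Borel set $X'' \subseteq X'$ and a Borel function $\phi : X'' \to Y$ whose graph is contained in $C$. Then $(x, \phi(x)) \in A$ for all $x \in X''$, $f(x, \phi(x)) = g(x)$, and since $g$ is injective on $X'' \subseteq X'$, the map $x \mapsto f(x, \phi(x)) = g(x)$ is injective. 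Replacing $X'$ by $X''$ completes the proof.

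The main obstacle, and the only place genuine care is needed, is the uniformization in the lifting step: one must make sure that a Borel (not merely $\mu$-measurable) selector $\phi$ exists on a conull set. Jankov--von Neumann gives a $C$-measurable — in particular universally measurable — uniformizing function on all of $X'$, and then one restricts to a conull Borel set on which it is Borel. Since we are already working modulo $\mu$-null sets throughout (the statement only demands a conull $X'$), this causes no difficulty. One should also note that the reduction via $A'$ is clean precisely because the first coordinate is preserved by $(x,y) \mapsto (x, f(x,y))$, so fibers map to fibers and uncountability of $f(A \cap (\{x\} \times Y))$ translates directly into uncountability of the fibers of $A'$; this is what lets Corollary \ref{cor:inject} apply verbatim.
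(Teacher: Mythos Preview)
Your proof is correct and follows essentially the same route as the paper's: push $A$ forward to an analytic set in $X \times Z$ via $(x,y)\mapsto(x,f(x,y))$, apply Corollary~\ref{cor:inject} to get a Borel injection into $Z$, then lift via Jankov--von Neumann uniformization and restrict to a conull set on which the selector is Borel. The only cosmetic slip is calling $A'$ a ``Borel set'' in the first sentence before correcting to ``analytic''; otherwise your argument matches the paper's step for step (with different letters).
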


\begin{proof}
Set $B = \{(x, f(x,y)) : (x, y) \in A\} \subseteq X \times Z$. Then $B$ is analytic and $B_x$ is uncountable for every $x \in X$. Apply Corollary \ref{cor:inject} to obtain a conull Borel set $X'' \subseteq X$ and a Borel injection $\psi : X'' \rightarrow Z$ whose graph is contained in $B$.

Let $A' \subseteq A$ be the set of $(x, y) \in A$ with $x \in X''$ and $f(x, y) = \psi(x)$. Then $A'$ is Borel and $A'_x \neq \varnothing$ for all $x \in X''$. By the Jankov--von Neumann uniformization theorem \cite[Theorems 29.9]{K95}, there is a $\mu$-measurable (but possibly not Borel measurable) function $\phi_0 : X'' \rightarrow Y$ whose graph is contained in $A'$. Since $\phi_0$ is $\mu$-measurable and $Y$ is standard Borel, there exists a conull Borel set $X' \subseteq X''$ such that the restriction $\phi = \phi_0 \res X'$ is Borel measurable (take a countable collection of sets generating the Borel $\sigma$-algebra of $Y$, and for each such set we can make its preimage be Borel by removing a Borel null set from $X''$). The graph of $\phi$ is still contained in $A' \subseteq A$ and the map $x \in X' \mapsto f(x, \phi(x)) = \psi(x)$ is injective.
\end{proof}

\section{Ergodic components and Bochner measurability}

For a Borel action $G \acts X$ on a standard Borel space $X$, we write $\M_G(X)$ for the set of $G$-invariant Borel probability measures and $\E_G(X) \subseteq \M_G(X)$ for the ergodic measures. Recall that both $\M_G(X)$ and $\E_G(X)$ are standard Borel spaces. Their Borel $\sigma$-algebras are defined by requiring the map $\mu \mapsto \mu(A)$ to be Borel measurable for every Borel set $A \subseteq X$.

For a standard probability space $(X, \mu)$ we write $\Null_\mu$ for the $\sigma$-ideal of $\mu$-null Borel sets. For Borel sets $A, B \subseteq X$ we write $A = B \mod \Null_\mu$ if $A \symd B \in \Null_\mu$. Similarly for $\sigma$-algebras $\cF, \Sigma \subseteq \Borel(X)$ we write $\cF \subseteq \Sigma \mod \Null_\mu$ if for every $A \in \cF$ there is $B \in \Sigma$ with $A = B \mod \Null_\mu$. When $\cF \subseteq \Sigma \mod \Null_\mu$ and $\Sigma \subseteq \cF \mod \Null_\mu$ we write $\cF = \Sigma \mod \Null_\mu$. We will only write ``$\!\!\! \mod \Null_\mu$'' to add clarity and emphasis, but frequently we will omit this notation when it is clear from context.

We say that a sub-$\sigma$-algebra $\cF$ is \emph{countably generated} if there is a countable collection $\xi \subseteq \Borel(X)$ with $\cF = \salg(\xi)$ (a literal equality without discarding any null sets). It is well known that $\Borel(X)$ is countably generated when $X$ is standard Borel. Moreover, if $\mu$ is a Borel probability measure on $X$ and $\cF \subseteq \Borel(X)$ is a $\sigma$-algebra, then there is a countably generated $\sigma$-algebra $\cF'$ with $\cF = \cF' \mod \Null_\mu$. Thus being countably generated is vacuously true modulo null sets. However, working with countably generated $\sigma$-algebras is vital when we consider ergodic decompositions, for otherwise strange things can happen. For example if $\mu \in \M_G(X)$ has continuum-many ergodic components and $\cF = \salg(\Null_\mu)$, then for almost-every ergodic component $\nu$ of $\mu$ we will have $\cF = \Borel(X) \mod \Null_\nu$. We will also need to use the following lemma.

\begin{lem} \label{lem:break}
Let $G \acts (X, \mu)$ be a {\pmp} action, let $\cF$ be a countably generated sub-$\sigma$-algebra, and let $\mu = \int_{\E_G(X)} \nu \ d \tau(\nu)$ be the ergodic decomposition of $\mu$. Then for every countable Borel partition $\xi$ of $X$ we have
$$\sH_\mu(\xi \given \cF \vee \sinv_G) = \int_{\E_G(X)} \sH_\nu(\xi \given \cF) \ d \tau(\nu).$$
\end{lem}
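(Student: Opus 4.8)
The plan is to reduce the identity to the classical fact that conditioning on the $\sigma$-algebra $\sinv_G$ of invariant sets realizes the ergodic decomposition pointwise, and then to integrate. First I would recall the disintegration: since $\mu = \int_{\E_G(X)} \nu \, d\tau(\nu)$ is the ergodic decomposition, for any bounded Borel function $f$ on $X$ we have $\mathbb{E}_\mu[f \given \sinv_G](x) = \int f \, d\nu_x$ for $\mu$-almost-every $x$, where $\nu_x$ denotes the ergodic component containing $x$ (using the $G$-invariant Borel partition $\{X_\nu\}$ from Lemma \ref{lem:farvar}). Consequently, for a countable partition $\xi$ and a cell $C \in \xi$, the conditional measure $\mu(C \given \sinv_G)(x) = \nu_x(C)$ almost surely.

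Next I would handle the conditioning on $\cF \vee \sinv_G$. The key point is that for an $\sinv_G$-measurable quantity, conditioning further on $\cF$ interacts cleanly with the ergodic decomposition: writing $\cF_\nu$ for the trace of $\cF$ on the component $(X, \nu)$, one has $\mathbb{E}_\mu[\cdot \given \cF \vee \sinv_G](x)$ equal, on the fiber $X_{\nu_x}$, to $\mathbb{E}_{\nu_x}[\cdot \given \cF]$. This is where countable generation of $\cF$ is essential: it lets us pick a single countable generating family, take conditional expectations of the (countably many) cells of $\xi$ given that family, and use a martingale / monotone class argument to identify $\mathbb{E}_\mu[\cdot \given \cF \vee \sinv_G]$ with the fiberwise conditional expectation $\mathbb{E}_{\nu_x}[\cdot \given \cF]$ for $\mu$-a.e.\ $x$. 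Concretely, I would show that for each $C \in \xi$ the function $x \mapsto \mathbb{E}_{\nu_x}[\mathbbold{1}_C \given \cF](x)$ is $(\cF \vee \sinv_G)$-measurable and satisfies the defining averaging property of $\mathbb{E}_\mu[\mathbbold{1}_C \given \cF \vee \sinv_G]$; measurability in $\nu$ follows from standard measurable-selection/disintegration facts (and the countable generation of $\cF$), and the averaging property follows by first integrating over each fiber and then over $\tau$.

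With the conditional expectations identified fiberwise, the conditional Shannon entropy unwinds directly. By definition,
\[
\sH_\mu(\xi \given \cF \vee \sinv_G) = -\sum_{C \in \xi} \int_X \mathbb{E}_\mu[\mathbbold{1}_C \given \cF \vee \sinv_G] \log \mathbb{E}_\mu[\mathbbold{1}_C \given \cF \vee \sinv_G] \, d\mu,
\]
and substituting $\mathbb{E}_\mu[\mathbbold{1}_C \given \cF \vee \sinv_G](x) = \mathbb{E}_{\nu_x}[\mathbbold{1}_C \given \cF](x)$ and disintegrating $d\mu = \int_{\E_G(X)} d\nu \, d\tau(\nu)$ turns the outer integral into $\int_{\E_G(X)} \big( -\sum_{C} \int_X \mathbb{E}_\nu[\mathbbold{1}_C \given \cF] \log \mathbb{E}_\nu[\mathbbold{1}_C \given \cF] \, d\nu \big) d\tau(\nu) = \int_{\E_G(X)} \sH_\nu(\xi \given \cF) \, d\tau(\nu)$, using Tonelli to justify interchanging the (nonnegative) sum and integrals. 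I expect the main obstacle to be the fiberwise identification of $\mathbb{E}_\mu[\cdot \given \cF \vee \sinv_G]$ with $\mathbb{E}_{\nu_x}[\cdot \given \cF]$ — that is, verifying that $\cF \vee \sinv_G$, after disintegration, really does act as $\cF$ on each ergodic component with no extra information surviving, and checking the requisite $\nu$-measurability; once that is in hand the rest is a bookkeeping computation.
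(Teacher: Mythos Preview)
Your proposal is correct and follows the standard route: identify $\mathbb{E}_\mu[\,\cdot\mid\cF\vee\sinv_G]$ fiberwise with $\mathbb{E}_{\nu_x}[\,\cdot\mid\cF]$ via the ergodic decomposition (using countable generation of $\cF$ to control measurability), and then unwind the definition of conditional Shannon entropy with Tonelli. The paper itself does not give an argument for this lemma --- it simply remarks that the statement is well known and cites \cite[Lem.~2.2]{S16} for a proof of a slightly more general fact --- so there is nothing to compare against beyond noting that your sketch is exactly the kind of argument that reference carries out.
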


\begin{proof}
This is likely well known. See \cite[Lem. 2.2]{S16} for a short proof of a slightly more general fact.
\end{proof}

We also need the following uniform ergodic decomposition theorem.

\begin{lem}[Farrell \cite{Far62}, Varadarajan \cite{Var63}] \label{lem:farvar}
Let $X$ be a standard Borel space, let $G$ be a countable group, and let $G \acts X$ be a Borel action. Assume that $\M_G(X) \neq \varnothing$. Then there is a Borel surjection $x \mapsto \nu_x$ from $X$ onto $\E_G(X)$ such that
\begin{enumerate}
\item[\rm (1)] if $x$ and $y$ are in the same orbit then $\nu_x = \nu_y$,
\item[\rm (2)] for each $\nu \in \E_G(X)$ we have $\nu(\{x \in X : \nu_x = \nu\}) = 1$, and
\item[\rm (3)] for each $\mu \in \M_G(X)$ we have $\mu = \int_{x \in X} \nu_x \ d \mu(x) = \int_{\nu \in \E_G(X)} \nu \ d \tau(\nu)$, where $\tau$ is the push-forward of $\mu$ under the map $x \mapsto \nu_x$.
\end{enumerate}
\end{lem}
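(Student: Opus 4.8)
The plan is to first pass to a convenient topological model, then read off the ergodic decomposition at the level of invariant measures from Choquet theory, and finally lift it to a single Borel map on $X$ by a measurable selection argument; the last step is where all the work is.

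\emph{Topological reduction.} First I would invoke the standard change-of-topology results for Borel actions of countable groups (see \cite{K95}) to assume $X$ carries a Polish topology and $G$ acts by homeomorphisms. Then, fixing a Borel injection $i : X \hookrightarrow 2^{\N}$ and setting $F(x) = (i(g^{-1}x))_{g \in G}$, one obtains a $G$-equivariant Borel injection of $X$ into the full shift $Z := 2^{\N \times G}$ with the left-shift action; by the Lusin--Souslin theorem this is a Borel isomorphism of $X$ onto a $G$-invariant Borel subset of the compact metrizable $G$-space $Z$. So we may assume $X$ is a $G$-invariant Borel subset of a compact metrizable $G$-space, prove the statement for that compact space, and then restrict, replacing $X$ by a suitable $G$-invariant conull Borel subset so that the selector obtained actually lands in $\E_G(X)$; given properties (1)--(3) for the ambient compact space this last bookkeeping is routine. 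Thus, from now on, assume $X$ itself is compact metrizable.

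\emph{Decomposition of $\M_G(X)$.} In the weak$^*$ topology $\M_G(X)$ is a nonempty compact metrizable convex set, and classically it is a Choquet simplex whose extreme points are exactly the ergodic measures. Being the extreme boundary of a metrizable compact convex set, $\E_G(X)$ is a $G_\delta$, hence Polish, and its relative Borel structure coincides with the one generated by the maps $\nu \mapsto \nu(A)$. By Choquet's theorem together with uniqueness of the maximal representing measure in a simplex, the barycenter map $\lambda \mapsto \int \nu \, d\lambda(\nu)$ is a continuous bijection from the space of Borel probability measures on $\E_G(X)$ onto $\M_G(X)$, hence a Borel isomorphism; write $\mu \mapsto \hat\mu$ for its inverse.

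\emph{Lifting to a Borel selector.} For each $\mu \in \M_G(X)$ I would disintegrate $\mu$ over the $G$-invariant $\sigma$-algebra $\sinv_G$, obtaining a Borel ($\mu$-a.e.\ defined) family of conditional measures $x \mapsto \mu_x$. Each $\mu_x$ is $G$-invariant because $\sinv_G$ consists of invariant sets, a standard extremality argument shows $\mu_x \in \E_G(X)$ for $\mu$-a.e.\ $x$, and by construction $\mu = \int \mu_x \, d\mu(x)$ with the push-forward of $\mu$ under $x \mapsto \mu_x$ equal to $\hat\mu$. The crucial point is to replace all these $\mu$-dependent families by a single Borel map $x \mapsto \nu_x : X \to \E_G(X)$ valid simultaneously for every $\mu$. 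The relevant consistency is that the disintegrations cohere: if $\mu' \ll \mu$ are invariant then $\mu'_x = \mu_x$ for $\mu'$-a.e.\ $x$ (the Radon--Nikodym derivative $d\mu'/d\mu$ is automatically $\sinv_G$-measurable), and any two mutually singular invariant measures are separated by a $G$-invariant Borel set (intersect a separating Borel set over the orbit). So the families glue along any countable set of invariant measures; the remaining difficulty is to glue over all of $\M_G(X)$ at once, which I would handle by forming the Borel set $R \subseteq X \times \E_G(X)$ of pairs $(x,\nu)$ with $\nu$ the conditional measure at $x$ of an invariant measure adapted to $x$ --- arranged so that $R$ meets $\{x\} \times \E_G(X)$ in a single point for $\mu$-a.e.\ $x$, for every invariant $\mu$ --- and applying a measurable uniformization (the Jankov--von Neumann theorem \cite{K95}, in the spirit of the selection results proved above), then discarding a null set to get a genuinely Borel map.

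\emph{Verification and the obstacle.} Property (1) is immediate since $x \mapsto \nu_x$ is $\sinv_G$-measurable, hence orbit-constant. Property (2) holds because for ergodic $\nu$ the choice $\mu = \nu$ makes $\sinv_G$ $\nu$-trivial, so $\nu_x = \nu$ for $\nu$-a.e.\ $x$; this also yields surjectivity onto $\E_G(X)$. Property (3) is just the disintegration identity $\mu = \int \mu_x \, d\mu(x) = \int \nu_x \, d\mu(x)$, and it transfers back to the original (non-compact) $X$ after the conull restriction mentioned in the first step. The main obstacle is precisely the gluing in the third step: because the action may have continuum-many mutually singular ergodic components, no countable family of invariant measures and no obvious fixed countably generated sub-$\sigma$-algebra of $\sinv_G$ pins the map down, so one genuinely needs descriptive-set-theoretic selection machinery, exploiting that the simplex structure makes the sought-after section canonical wherever it is defined.
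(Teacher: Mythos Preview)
The paper does not prove this lemma; it is stated with attribution to Farrell \cite{Far62} and Varadarajan \cite{Var63} and used as a black box. So there is no ``paper's own proof'' to compare against.

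As for your outline, the first two steps (topological reduction, simplex structure of $\M_G(X)$) are standard and fine. The difficulty, as you yourself flag, is entirely in the third step, and what you have written there is not yet a proof. The set $R$ you describe is not well-defined (``the conditional measure at $x$ of an invariant measure adapted to $x$'' is circular until you say which invariant measure), and even if it were, Jankov--von Neumann would give only a universally measurable selector; ``discarding a null set'' to upgrade to Borel makes sense only relative to a fixed measure, which is exactly what you are trying to avoid. The coherence observations you make (compatibility under $\mu' \ll \mu$, separation of mutually singular invariant measures by invariant Borel sets) are true but do not by themselves produce a single Borel map valid for all $\mu$ at once.

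The way the classical proofs avoid this gluing problem is to \emph{define} $\nu_x$ directly by a pointwise formula that does not mention any ambient $\mu$. After your compact-metrizable reduction, fix a countable dense $\mathbb{Q}$-subalgebra $\{f_n\}$ of $C(X)$ and (as the present paper does in the proof of Lemma~\ref{lem:sinv}) a symmetric probability measure $\lambda$ on $G$ whose support generates $G$. By Kakutani's random ergodic theorem, for every $\mu \in \M_G(X)$ and every $n$ the averages $\sum_g \lambda^{*k}(g) f_n(g \cdot x)$ converge $\mu$-a.e.\ to $\int f_n \, d\nu_x$. The set of $x$ where all these limits exist and extend to a positive linear functional of norm $1$ on $C(X)$ is a $G$-invariant Borel set, conull for every $\mu \in \M_G(X)$, and on it the map $x \mapsto \nu_x$ is genuinely Borel by construction; off it, send $x$ to any fixed ergodic measure. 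Properties (1)--(3) then follow as you indicate. This is the missing idea: replace selection by an explicit Borel formula.
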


In the proof outline discussed at the beginning of the previous section, we mentioned that we wanted a ``Borel'' map associating to each ergodic measure $\nu$ a partition $\alpha^\nu$. A priori it is not clear how to represent this as a map from $\E_G(X)$ to some standard Borel space. In this section we lay down the technical framework which will allow us to do so. We will also use an auxiliary notion of Bochner measurability and record some useful applications.

\begin{defn}
Let $X$ and $Y$ be standard Borel spaces, let $G \acts X$ be a Borel action, and let $E \subseteq \E_G(X)$ be Borel. We say that a function $f : E \times Y \rightarrow \Borel(X)$ is \emph{Bochner measurable} if there exists a sequence of countably-valued Borel functions (i.e. the pre-image of every point is a Borel set) $f_n : E \times Y \rightarrow \Borel(X)$ such that $\lim_{n \rightarrow \infty} \nu(f(\nu, y) \symd f_n(\nu, y)) = 0$ for all $(\nu, y) \in E \times Y$. If $\tau$ is a probability measure on $\E_G(X)$, we say $f : \E_G(X) \times Y \rightarrow \Borel(X)$ is Bochner measurable \emph{$\tau$-almost-everywhere} if there is a $\tau$-conull set $E \subseteq \E_G(X)$ such that the restriction of $f$ to $E \times Y$ is Bochner measurable.
\end{defn}

\begin{lem} \label{lem:adalg}
Let $X$ and $Y$ be standard Borel spaces and let $G \acts X$ be a Borel action. Then the set of Bochner measurable functions $f : \E_G(X) \times Y \rightarrow \Borel(X)$ form a coordinate-wise $G$-invariant algebra. More specifically, if $f, k : \E_G(X) \times Y \rightarrow \Borel(X)$ are Bochner measurable, then so are the functions sending $(\nu, y)$ to $X \setminus f(\nu, y)$, $f(\nu, y) \cup k(\nu, y)$, $f(\nu, y) \cap  k(\nu, y)$, or $g \cdot f(\nu, y)$ (for any fixed $g \in G$). Furthermore, every constant function from $\E_G(X) \times Y$ to $\Borel(X)$ is Bochner measurable.
\end{lem}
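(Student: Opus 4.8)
The plan is to verify each closure property directly from the definition of Bochner measurability. Suppose $f, k : \E_G(X) \times Y \to \Borel(X)$ are Bochner measurable, witnessed by countably-valued Borel functions $f_n$ and $k_n$ with $\nu(f(\nu,y) \symd f_n(\nu,y)) \to 0$ and $\nu(k(\nu,y) \symd k_n(\nu,y)) \to 0$ for every $(\nu,y)$. For the complement, set $f_n'(\nu,y) = X \setminus f_n(\nu,y)$; this is again countably-valued and Borel (composition with the Borel involution $A \mapsto X \setminus A$ on $\Borel(X)$ preserves countable-valuedness), and since $(X \setminus f(\nu,y)) \symd (X \setminus f_n(\nu,y)) = f(\nu,y) \symd f_n(\nu,y)$, we get the required convergence immediately. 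For unions (and dually intersections), set $g_n(\nu,y) = f_n(\nu,y) \cup k_n(\nu,y)$; this is countably-valued and Borel because $(A,B) \mapsto A \cup B$ is Borel on $\Borel(X) \times \Borel(X)$ and a product of two countably-valued maps is countably-valued. The estimate
$$\nu\big((f \cup k) \symd (f_n \cup k_n)\big) \leq \nu(f \symd f_n) + \nu(k \symd k_n)$$
(using $(A \cup B) \symd (A' \cup B') \subseteq (A \symd A') \cup (B \symd B')$) then forces convergence to $0$. Intersections follow by De Morgan from the complement and union cases, or by the same direct argument.

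For the $G$-action: fix $g \in G$ and set $\tilde{f}_n(\nu,y) = g \cdot f_n(\nu,y)$. The map $A \mapsto g \cdot A$ is a Borel automorphism of $\Borel(X)$, so $\tilde f_n$ is countably-valued Borel. The key point is that $\nu$ is $G$-invariant — here one uses that $\nu \in \E_G(X) \subseteq \M_G(X)$, so $\nu$ is a $G$-invariant measure — hence
$$\nu\big((g \cdot f(\nu,y)) \symd (g \cdot f_n(\nu,y))\big) = \nu\big(g \cdot (f(\nu,y) \symd f_n(\nu,y))\big) = \nu\big(f(\nu,y) \symd f_n(\nu,y)\big) \to 0.$$
Thus $g \cdot f$ is Bochner measurable, establishing coordinate-wise $G$-invariance. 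Finally, a constant function $(\nu,y) \mapsto A_0$ for fixed $A_0 \in \Borel(X)$ is itself countably-valued (indeed single-valued) and Borel, so it is trivially Bochner measurable by taking $f_n$ to be the constant sequence.

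The only mild subtlety — and the single step worth more than a sentence — is confirming that the Boolean operations and the $G$-translations are Borel as maps on $\Borel(X)$ and that they preserve the class of countably-valued Borel functions. Once $\Borel(X)$ is given its standard Borel structure (generated by the evaluation maps $A \mapsto \mathbbold{1}_A(x)$, $x \in X$, equivalently the structure it inherits as $\malg$ or as a subset of $2^X$), the maps $A \mapsto X \setminus A$, $(A,B) \mapsto A \cup B$, $(A,B) \mapsto A \cap B$, and $A \mapsto g \cdot A$ are all Borel — this is standard. Preservation of countable-valuedness is then the elementary observation that the image of a countable set under any map is countable and the preimage of a point under a composition $h \circ f_n$ with $h$ Borel is $f_n^{-1}(h^{-1}(\text{pt}))$, a countable union of Borel sets. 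Everything else is the triangle-inequality bookkeeping indicated above, so I do not expect any real obstacle.
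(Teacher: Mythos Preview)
Your proposal is correct and follows exactly the paper's approach: take the approximating sequences $f_n, k_n$, apply the same Boolean or $G$-translation operation to them, and use $G$-invariance of each $\nu$ for the translation case; the paper's proof says precisely this in two sentences and leaves the bookkeeping you have written out as implicit. One small terminological point: the paper does not equip $\Borel(X)$ with a standard Borel structure (it is too large), and its notion of ``Borel'' for the approximants is simply that point-preimages are Borel subsets of $\E_G(X)\times Y$; your final sentence already handles this correctly, so the earlier aside about evaluation maps and $\malg$ is unnecessary but harmless.
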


\begin{proof}
Let $f_n, k_n : \E_G(X) \times Y \rightarrow \Borel(X)$ be the sequence of functions as described in the definition of Bochner measurability.
Apply the same operations to $f_n, k_n$, and recall that each $\nu$ is $G$-invariant. The final claim is immediate from the definition.
\end{proof}

\begin{lem} \label{lem:mm}
Let $X$ and $Y$ be standard Borel spaces and let $G \acts X$ be a Borel action. If $f : \E_G(X) \times Y \rightarrow \Borel(X)$ is Bochner measurable, then the map $(\nu, y) \mapsto \nu(f(\nu, y))$ is Borel.
\end{lem}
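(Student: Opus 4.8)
The plan is to reduce the claim to the case of countably-valued Borel functions, where it is a direct computation, and then pass to a pointwise limit. Fix a sequence $f_n : \E_G(X) \times Y \rightarrow \Borel(X)$ of countably-valued Borel functions witnessing the Bochner measurability of $f$, so that $\nu(f(\nu,y) \symd f_n(\nu,y)) \rightarrow 0$ for every $(\nu,y) \in \E_G(X) \times Y$.

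First I would show that for each fixed $n$ the map $(\nu,y) \mapsto \nu(f_n(\nu,y))$ is Borel. Since $f_n$ is countably-valued, enumerate its range as $\{B_k : k \in \N\} \subseteq \Borel(X)$ and put $C_k = f_n^{-1}(B_k)$; by the definition of ``countably-valued Borel function'' each $C_k$ is a Borel subset of $\E_G(X) \times Y$, and the $C_k$ partition $\E_G(X) \times Y$. Then $\nu(f_n(\nu,y)) = \sum_{k} \mathbf{1}_{C_k}(\nu,y) \cdot \nu(B_k)$. Each function $(\nu,y) \mapsto \mathbf{1}_{C_k}(\nu,y)$ is Borel since $C_k$ is Borel, and each function $\nu \mapsto \nu(B_k)$ is Borel by the very definition of the Borel $\sigma$-algebra on $\M_G(X) \supseteq \E_G(X)$. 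A countable sum of Borel functions is Borel, so $(\nu,y) \mapsto \nu(f_n(\nu,y))$ is Borel.

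Finally I would pass to the limit. For every $(\nu,y)$ and every $n$, applying the elementary inequality $|\nu(A) - \nu(B)| \leq \nu(A \symd B)$ with $A = f(\nu,y)$ and $B = f_n(\nu,y)$ gives $|\nu(f(\nu,y)) - \nu(f_n(\nu,y))| \leq \nu(f(\nu,y) \symd f_n(\nu,y))$, which tends to $0$. Hence $(\nu,y) \mapsto \nu(f(\nu,y))$ is the pointwise limit of the Borel functions $(\nu,y) \mapsto \nu(f_n(\nu,y))$ and is therefore Borel.

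Every step here is elementary, so there is no genuine obstacle; the only points demanding a little care are checking that the preimages $f_n^{-1}(B_k)$ are honestly Borel (this is precisely the meaning of ``countably-valued Borel function'' adopted above) and that the displayed series is legitimate because the sets $C_k$ form a partition of the domain, so that for each $(\nu,y)$ exactly one summand is nonzero.
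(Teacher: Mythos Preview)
Your proof is correct and follows essentially the same approach as the paper: reduce to the countably-valued approximants $f_n$, use that $\nu \mapsto \nu(A)$ is Borel for each fixed $A$ to see $(\nu,y) \mapsto \nu(f_n(\nu,y))$ is Borel, and then pass to the pointwise limit. You simply spell out in more detail the partition-sum computation and the elementary inequality $|\nu(A)-\nu(B)| \le \nu(A\symd B)$ that the paper leaves implicit.
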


\begin{proof}
Let $f_n : \E_G(X) \times Y \rightarrow \Borel(X)$ be as in the definition of Bochner measurability. For each $n$ the function $f_n$ is countably-valued and Borel. Since for fixed $A \in \Borel(X)$ the map $\nu \mapsto \nu(A)$ is Borel, it follows that $(\nu, y) \mapsto \nu(f_n(\nu, y))$ is Borel. Therefore $\nu(f(\nu, y)) = \lim_{n \rightarrow \infty} \nu(f_n(\nu, y))$ is Borel.
\end{proof}

Our interest in Bochner measurable functions comes from the following lemma.

\begin{lem} \label{lem:fibers}
Let $X$ be a standard Borel space, let $G \acts X$ be a Borel action, let $f : \E_G(X) \rightarrow \Borel(X)$ be Bochner measurable, and let $\Sigma \subseteq \Borel(X)$ be a countably generated sub-$\sigma$-algebra. If $f(\nu) \in \Sigma \mod \Null_\nu$ for every $\nu \in \E_G(X)$, then there is a Borel set $B \in \sinv_G \vee \Sigma$ which satisfies $\nu(B \symd f(\nu)) = 0$ for every $\nu \in \E_G(X)$. In particular, if $A \in \Sigma \mod \Null_\nu$ for every $\nu \in \E_G(X)$ then $A \in \sinv_G \vee \Sigma \mod \Null_\mu$ for every $\mu \in \M_G(X)$.
\end{lem}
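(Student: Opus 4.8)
The plan is to construct good $\Sigma$-measurable approximations to $f(\nu)$ that depend Borel-measurably on $\nu$, glue these across the ergodic components using the ergodic decomposition map, and then pass to a pointwise limit to produce a single Borel set.

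First I would fix a countable Boolean subalgebra $\mathcal{A} = \{A_j : j \in \N\}$ of $\Sigma$ with $\salg(\mathcal{A}) = \Sigma$. Then $\mathcal{A}$ is dense in $\Sigma$ with respect to each pseudometric $d_\nu(E,F) = \nu(E \symd F)$, so since $f(\nu) \in \Sigma \mod \Null_\nu$, for every $\nu \in \E_G(X)$ and every $n \in \N$ there is some $j$ with $\nu(f(\nu) \symd A_j) < 2^{-n}$. For each fixed $j$ the map $\nu \mapsto f(\nu) \symd A_j$ is Bochner measurable: the constant map $\nu \mapsto A_j$ is Bochner measurable, and $f(\nu) \symd A_j = (f(\nu) \cap (X \setminus A_j)) \cup (A_j \cap (X \setminus f(\nu)))$ is obtained from $f$ and this constant via the algebra operations of Lemma \ref{lem:adalg}. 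Hence by Lemma \ref{lem:mm} the map $\nu \mapsto \nu(f(\nu) \symd A_j)$ is Borel, and therefore
\[
j_n(\nu) := \min\{j \in \N : \nu(f(\nu) \symd A_j) < 2^{-n}\}
\]
is a well-defined Borel function $j_n : \E_G(X) \to \N$.

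Next I would invoke the ergodic decomposition map $x \mapsto \nu_x$ of Lemma \ref{lem:farvar}, which is Borel and constant on $G$-orbits, and set
\[
B_n := \{x \in X : x \in A_{j_n(\nu_x)}\} = \bigcup_{j \in \N} \big( \{x : j_n(\nu_x) = j\} \cap A_j \big).
\]
Since $x \mapsto j_n(\nu_x)$ is Borel and $G$-invariant, each set $\{x : j_n(\nu_x) = j\}$ belongs to $\sinv_G$, so $B_n \in \sinv_G \vee \Sigma$. Writing $X_\nu = \{x : \nu_x = \nu\}$, which is a $\nu$-conull Borel set by Lemma \ref{lem:farvar}(2), we have $B_n \cap X_\nu = A_{j_n(\nu)} \cap X_\nu$ and hence $\nu(B_n \symd f(\nu)) \leq \nu(A_{j_n(\nu)} \symd f(\nu)) < 2^{-n}$ for every $\nu \in \E_G(X)$.

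Finally I would take $B := \liminf_n B_n = \bigcup_N \bigcap_{n \geq N} B_n$, which is Borel and, being built from the $B_n$ by countable Boolean operations, lies in the $\sigma$-algebra $\sinv_G \vee \Sigma$. For each $\nu \in \E_G(X)$ we have $\sum_n \nu(B_n \symd f(\nu)) \leq \sum_n 2^{-n} < \infty$, so the Borel--Cantelli lemma gives $\nu\big( \limsup_n (B_n \symd f(\nu)) \big) = 0$; since $B \symd f(\nu) \subseteq \limsup_n (B_n \symd f(\nu))$ (any point of $B \symd f(\nu)$ lies in $B_n \symd f(\nu)$ for infinitely many $n$, by the definition of $\liminf$), it follows that $\nu(B \symd f(\nu)) = 0$ for every $\nu \in \E_G(X)$, which proves the first assertion. (If $\M_G(X) = \varnothing$ then $\E_G(X) = \varnothing$ and $B = \varnothing$ works trivially.) The ``in particular'' clause follows by applying the first assertion to the constant Bochner measurable function $f \equiv A$: this produces a Borel $B \in \sinv_G \vee \Sigma$ with $\nu(B \symd A) = 0$ for all $\nu \in \E_G(X)$, and then for any $\mu \in \M_G(X)$, integrating against the ergodic decomposition $\mu = \int_{\E_G(X)} \nu \ d\tau(\nu)$ yields $\mu(B \symd A) = 0$, so that $A \in \sinv_G \vee \Sigma \mod \Null_\mu$. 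I expect the only genuinely delicate point to be the measurability of $j_n$ — that the relevant subsets of $\E_G(X)$ are Borel — and that is precisely what Lemmas \ref{lem:adalg} and \ref{lem:mm} are designed to deliver; the rest is a routine matter of approximating, gluing, and taking limits.
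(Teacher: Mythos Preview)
Your proof is correct and follows essentially the same approach as the paper: choose a countable generating algebra for $\Sigma$, use Lemmas \ref{lem:adalg} and \ref{lem:mm} to Borel-measurably select the least index giving an $\epsilon$-approximation to $f(\nu)$, glue these over the ergodic decomposition to get $B_\epsilon \in \sinv_G \vee \Sigma$, and then apply Borel--Cantelli along $\epsilon = 2^{-n}$. The paper is slightly terser in that it speaks of ``the associated $\sinv_G$-measurable partition of $X$'' rather than explicitly invoking the map $x \mapsto \nu_x$, but the content is identical.
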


\begin{proof}
Fix a countable algebra $\{C_0, C_1, \ldots\}$ which generates $\Sigma$. Fix $\epsilon > 0$ and for $n \in \N$ let $D_{n, \epsilon}$ be the set of $\nu$ such that $n$ is least with $\nu(C_n \symd f(\nu)) < \epsilon$. Then $\{D_{n,\epsilon} : n \in \N\}$ is a Borel partition of $\E_G(X)$ by Lemmas \ref{lem:adalg} and \ref{lem:mm} since $\nu \mapsto C_n \symd f(\nu)$ is Bochner measurable. Let $\{D_{n, \epsilon}' : n \in \N\}$ be the $\sinv_G$-measurable partition of $X$ associated to $\{D_{n,\epsilon} : n \in \N\}$ by the ergodic decomposition. Define $B_\epsilon = \bigcup_{n \in \N} (D_{n, \epsilon}' \cap C_n)$. The Borel--Cantelli lemma implies that $B = \bigcup_{k \in \N} \bigcap_{m \geq k} B_{2^{-m}}$ satisfies $\nu(B \symd f(\nu)) = 0$ for all $\nu \in \E_G(X)$. Also, $B \in \sinv_G \vee \Sigma$ as claimed. The final claim also follows by using the Bochner measurable (constant) function $f(\nu) = A$.
\end{proof}

The next lemma introduces a useful $\sigma$-algebra $\mathcal{M} \subseteq \Borel(X)$.

\begin{lem} \label{lem:split}
Let $G \acts (X, \mu)$ be an aperiodic {\pmp} action. Let $\mu = \int_{\E_G(X)} \nu \ d \tau(\nu)$ be the ergodic decomposition of $\mu$.
Then there is a countably generated sub-$\sigma$-algebra $\mathcal{M} \subseteq \Borel(X)$ such that
\begin{enumerate}
\item[\rm (i)] for $\tau$-almost-every $\nu \in \E_G(X)$ we have $\mathcal{M} = \Borel(X) \mod \Null_\nu$;
\item[\rm (ii)] for $\tau$-almost-every $\nu \in \E_G(X)$ and for every $B \in \mathcal{M}$ we have $\nu(B) = \mu(B)$;
\item[\rm (iii)] $\sinv_G \vee \mathcal{M} = \Borel(X) \mod \Null_\mu$.
\end{enumerate}
\end{lem}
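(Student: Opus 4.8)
The plan is to build $\M$ by selecting a countable collection of Borel sets that generates $\Borel(X)$ on almost every ergodic fiber while also being ``balanced'' in the sense of (ii). First I would fix the Borel ergodic decomposition map $x \mapsto \nu_x$ from Lemma \ref{lem:farvar}, so that $\sinv_G$ is (mod $\Null_\mu$) generated by the pullbacks of Borel subsets of $\E_G(X)$. The key observation is that for a fixed Borel set $A \subseteq X$ and a fixed real $r$, the set $\{\nu : \nu(A) = r\}$ may be small, so we cannot hope to keep a prescribed set $A$; instead, for each $A$ we should replace it fiberwise by a set of the correct ``mass profile''. More precisely, for a fixed Borel $A \subseteq X$ I want a single Borel set $A' \subseteq X$ with $\nu_x$-conditional structure arranged so that $A' \in \Borel(X) \mod \Null_\nu$ trivially (it is already Borel) but additionally $\nu(A') = \mu(A)$ for $\tau$-a.e.\ $\nu$ and moreover $A \in \sinv_G \vee \salg(\{A'_k\}) \mod \Null_\nu$ for a.e.\ $\nu$, where $\{A'_k\}$ ranges over the whole chosen family. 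In fact the cleanest route is: first ensure (i) by a soft argument, then correct for (ii), then deduce (iii) from (i) via Lemma \ref{lem:fibers}.

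Here are the steps in order. Step 1: Fix a countable algebra $\{C_0, C_1, \dots\}$ generating $\Borel(X)$. Step 2: For (i), I claim that already $\salg(\{C_n\}) = \Borel(X) \mod \Null_\nu$ for every $\nu$ (indeed $\salg(\{C_n\}) = \Borel(X)$ literally), so any countably generated $\sigma$-algebra $\M$ which contains, mod $\Null_\nu$, enough sets to recover $\Borel(X)$ will satisfy (i); the real content is reconciling this with (ii) and keeping $\M$ genuinely countably generated as a sub-$\sigma$-algebra of $\Borel(X)$, not just mod null. The trick for (ii) is to exploit aperiodicity: since $G \acts (X,\mu)$ is aperiodic, $\tau$-a.e.\ ergodic component $(X,\nu)$ is aperiodic, hence non-atomic, so by Theorem \ref{intro:krieger} (applied with $\cF$ trivial and the probability vector $(\mu(C_n), 1-\mu(C_n))$, after checking $\rh_G(X,\nu) $ is not an obstruction — or rather by a direct measure-isomorphism argument on each fiber) we can find, for each $n$ and each $\nu$, a set $B_n^\nu \subseteq X$ with $\nu(B_n^\nu) = \mu(C_n)$ and with $\{B_n^\nu : n \in \N\}$ generating $\Borel(X) \bmod \Null_\nu$. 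Step 3: The descriptive-set-theoretic machinery of Section 2 and the Bochner-measurability framework of the present section is exactly what lets us choose the assignment $\nu \mapsto B_n^\nu$ in a Bochner-measurable way; concretely I would set up, for each $n$, a Bochner measurable $f_n : \E_G(X) \to \Borel(X)$ with $f_n(\nu) = B_n^\nu$, using Proposition \ref{prop:select} to handle the simultaneous constraints ``mass equals $\mu(C_n)$'' and ``the family separates points mod null''. Step 4: Apply Lemma \ref{lem:fibers} to each $f_n$ (with $\Sigma = \Borel(X)$) — wait, that only yields a set in $\sinv_G \vee \Borel(X) = \Borel(X)$, which is vacuous; the correct application is subtler. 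Instead I would directly let $\M$ be the $\sigma$-algebra generated by a countable family of \emph{globally defined} Borel sets obtained by gluing: $\hat B_n := \bigcup_{\nu} (B_n^\nu \cap X_\nu)$, which is Borel precisely because $f_n$ is Bochner measurable and the gluing is along the Borel partition $\{X_\nu\}$. Then $\M := \salg(\{\hat B_n : n \in \N\})$ is countably generated; (i) holds because $\{B_n^\nu\}$ generates $\Borel(X) \bmod \Null_\nu$ and $\hat B_n = B_n^\nu \bmod \Null_\nu$; (ii) holds by construction since $\nu(\hat B_n) = \nu(B_n^\nu) = \mu(C_n)$ and such sets generate $\M$, with the equality $\nu(B) = \mu(B)$ for all $B \in \M$ following from a monotone-class / Dynkin argument once it holds on the generating algebra (one must be slightly careful: equality of two measures on a generating $\pi$-system extends to the generated $\sigma$-algebra); (iii) follows from (i) together with Lemma \ref{lem:fibers}: since every $A \in \Borel(X)$ lies in $\Borel(X) = \M \bmod \Null_\nu$ for $\tau$-a.e.\ $\nu$, Lemma \ref{lem:fibers} gives $A \in \sinv_G \vee \M \bmod \Null_\mu$.

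The main obstacle I anticipate is making the gluing $\hat B_n = \bigcup_\nu (B_n^\nu \cap X_\nu)$ genuinely Borel, i.e.\ transferring the Bochner measurability of $\nu \mapsto B_n^\nu$ into Borelness of the subset of $X$; this is where Lemma \ref{lem:mm} and the approximating countably-valued functions $f_{n,m}$ do the work — for a countably-valued $f_{n,m}$ the glued set is a countable union of $(\text{Borel subset of }\E_G(X))$-saturated pieces intersected with fixed Borel sets, hence Borel, and one passes to the limit using a Borel--Cantelli argument exactly as in the proof of Lemma \ref{lem:fibers}. A secondary subtlety is arranging, uniformly and Bochner-measurably in $\nu$, a generating family of the prescribed masses on each non-atomic $(X,\nu)$; here I would note that on a non-atomic standard probability space one can fix once and for all a Borel isomorphism to $([0,1],\mathrm{Leb})$ and pull back a fixed generating sequence of prescribed masses, but the isomorphism must be chosen $\nu$-measurably, which is precisely a measurable-selection statement of the type packaged in Section 2. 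Modulo these two points, everything else is routine bookkeeping with the ergodic decomposition.
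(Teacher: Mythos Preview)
Your outline contains the right intuition but there is a real gap in your treatment of (ii), and you are circling around the single clean tool that the paper invokes directly.

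The gap: you arrange $\nu(\hat B_n)=\mu(C_n)$ for each generator and then invoke a $\pi$-system/Dynkin argument to extend $\nu(B)=\mu(B)$ to all of $\M$. But two probability measures agreeing on a generating family only agree on the generated $\sigma$-algebra when that family is closed under finite intersections. Your $\{\hat B_n\}$ is not a $\pi$-system, and nothing in your construction controls $\nu(\hat B_n\cap\hat B_m)$ --- only the one-dimensional marginals. What (ii) actually demands is that the \emph{joint} distribution of $(\hat B_n)_n$ under $\nu$ be independent of $\nu$; equivalently, that on each fiber the family be the pullback of a single fixed generating family in $([0,1],\lambda)$ under a measure isomorphism $(X,\nu)\to([0,1],\lambda)$. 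You do say this in your last paragraph, but treat it as a technicality rather than the core step. A related circularity: the paper's device for parametrizing fiberwise Borel sets by a standard Borel space is $\malg_{\mu\res\M}$ (Lemma \ref{lem:mad}), which is built \emph{on top of} $\M$; you cannot use $\malg_\mu$ instead, since a $\mu$-class has no well-defined $\nu$-measure. So the selection machinery of Section~2 is not available to you at this stage in the form you need.

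The paper's proof is a one-liner by comparison: aperiodicity makes $\tau$-a.e.\ ergodic component non-atomic, so the Rokhlin skew-product theorem \cite[Theorem 3.18]{Gl03} gives a measure isomorphism $\psi:(X,\mu)\to(\E_G(X)\times[0,1],\tau\times\lambda)$ lying over the ergodic decomposition map, and one sets $\M=\psi^{-1}(\Borel([0,1]))$. Then (i) and (ii) are immediate from $\psi_*\nu=\delta_\nu\times\lambda$, and (iii) from $\psi^{-1}(\Borel(\E_G(X)))=\sinv_G\bmod\Null_\mu$. Your ``measurable choice of isomorphisms to $[0,1]$'' is exactly this theorem; once you name it, all of your gluing, Bochner-measurability, and the Lemma \ref{lem:fibers} argument for (iii) become unnecessary.
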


\begin{proof}
Let $\phi : (X, \mu) \rightarrow (\E_G(X), \tau)$ be the ergodic decomposition map given by Lemma \ref{lem:farvar}. Aperiodicity implies that the fiber measures $\nu \in \E_G(X)$ are non-atomic. So the Rokhlin skew-product theorem \cite[Theorem 3.18]{Gl03}\footnote{The proof of \cite[Theorem 3.18]{Gl03} assumes ergodicity, however that assumption is only used to conclude that all fiber measures have the same number of atoms. Since all fiber measures are non-atomic in our case, we can apply this result without requiring ergodicity. See also \cite{Rok57}.} implies that there is a measure space isomorphism $\psi : (X, \mu) \rightarrow (\E_G(X) \times [0, 1], \tau \times \lambda)$, where $\lambda$ is Lebesgue measure, such that $\phi$ equals $\psi$ composed with the projection map. View $\Borel([0, 1]) \subseteq \Borel(\E_G(X) \times [0, 1])$ in the natural way and set $\mathcal{M} = \psi^{-1}(\Borel([0, 1]))$. Clauses (i) and (ii) are satisfied since $\psi_*(\nu) = \delta_\nu \times \lambda$ for $\tau$-almost-every $\nu$, and (iii) is satisfied since $\psi^{-1}(\Borel(\E_G(X))) = \sinv_G \mod \Null_\mu$.
\end{proof}

The $\sigma$-algebra $\mathcal{M}$ allows us to return to discussing Borel measurable functions, while still being able to use Bochner measurability. Below we write $\mu \res \mathcal{M}$ for the restriction of $\mu$ to the $\sigma$-algebra $\mathcal{M}$, and we let $\malg_{\mu \res \mathcal{M}}$ denote the corresponding measure-algebra. Specifically, $\malg_{\mu \res \mathcal{M}}$ consists of the classes $[A]_\mu$, where $A \in \mathcal{M}$ and $[A]_\mu = \{B \in \mathcal{M} : \mu(A \symd B) = 0\}$, equipped with the Polish topology induced by the complete separable metric $\dB([A]_\mu, [B]_\mu) = \mu(A \symd B)$. In particular, $\malg_{\mu \res \mathcal{M}}$ is a standard Borel space. The operations of union, intersection, complement and the function $\mu$ clearly descend to $\malg_{\mu \res \mathcal{M}}$. Furthermore, by Lemma \ref{lem:split}.(ii) $\tau$-almost-every $\nu \in \E_G(X)$ descends to $\malg_{\mu \res \mathcal{M}}$ and in fact coincides with $\mu$ on this space.

\begin{lem} \label{lem:mad}
Let $G \acts (X, \mu)$, $\tau$, and $\mathcal{M}$ be as in Lemma \ref{lem:split}, and let $Y$ be a standard Borel space. If $f : \E_G(X) \times Y \rightarrow \malg_{\mu \res \mathcal{M}}$ is Borel, then any function $\bar{f} : \E_G(X) \times Y \rightarrow \mathcal{M}$ satisfying $[\bar{f}(\nu, y)]_\mu = f(\nu, y)$ is Bochner measurable $\tau$-almost-everywhere.
\end{lem}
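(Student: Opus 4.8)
The plan is to reduce the question to the case of a countably generated separable Borel space by working with a dense countable subfamily of $\malg_{\mu \res \mathcal{M}}$. First I would fix a countable set $\{[C_k]_\mu : k \in \N\}$ which is dense in the Polish space $\malg_{\mu \res \mathcal{M}}$, with each $C_k \in \mathcal{M}$. Since $f : \E_G(X) \times Y \to \malg_{\mu \res \mathcal{M}}$ is Borel and $\malg_{\mu \res \mathcal{M}}$ is a Polish space, I can approximate $f$ by countably-valued Borel maps $f_n$ taking values in $\{[C_k]_\mu : k \le n\}$ (say) so that $\dB(f_n(\nu, y), f(\nu, y)) \to 0$ for every $(\nu, y)$; this is the standard fact that a Borel map into a separable metric space is a pointwise limit of countably-valued Borel maps whose ranges lie in a fixed countable dense set. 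Concretely, partition $\malg_{\mu\res\mathcal{M}}$ into finitely many Borel pieces of small diameter using the dense set, and on each piece let $f_n$ be constant equal to one of the $[C_k]_\mu$.

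Next I would lift each $f_n$ to a genuine $\mathcal{M}$-valued function $\tilde f_n : \E_G(X) \times Y \to \mathcal{M}$ by setting $\tilde f_n(\nu, y) = C_k$ on the (Borel) set where $f_n(\nu, y) = [C_k]_\mu$. Each $\tilde f_n$ is then a countably-valued Borel function into $\Borel(X)$ with values in $\mathcal{M}$. The point is that these $\tilde f_n$ are exactly the witnesses required in the definition of Bochner measurability, \emph{provided} we can conclude $\nu(\tilde f_n(\nu, y) \symd \bar f(\nu, y)) \to 0$ for $\tau$-almost-every $\nu$. Here I use Lemma~\ref{lem:split}.(ii): for $\tau$-almost-every $\nu \in \E_G(X)$ and every $B \in \mathcal{M}$ we have $\nu(B) = \mu(B)$, so for such $\nu$,
\[
\nu(\tilde f_n(\nu, y) \symd \bar f(\nu, y)) = \mu(\tilde f_n(\nu, y) \symd \bar f(\nu, y)) = \dB(f_n(\nu, y), f(\nu, y)) \longrightarrow 0.
\]
Thus, letting $E \subseteq \E_G(X)$ be the $\tau$-conull Borel set on which Lemma~\ref{lem:split}.(ii) holds, the restrictions of the $\tilde f_n$ to $E \times Y$ witness that the restriction of $\bar f$ to $E \times Y$ is Bochner measurable, which is precisely the assertion that $\bar f$ is Bochner measurable $\tau$-almost-everywhere.

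The main obstacle — really the only non-routine point — is making sure the measure used in the definition of $\dB$ on $\malg_{\mu\res\mathcal{M}}$ (which is $\mu$) matches the measures $\nu$ appearing in the definition of Bochner measurability. This is exactly what Lemma~\ref{lem:split}.(ii) delivers, so the argument goes through; without (ii) there would be no way to pass from $\mu$-smallness of symmetric differences to $\nu$-smallness for the individual ergodic components. A secondary technical point is that the approximation $f_n \to f$ and the lift $\tilde f_n$ should be chosen so that the composition with the quotient map $\mathcal{M} \to \malg_{\mu\res\mathcal{M}}$ is literally $f_n$; this is automatic from the construction since $f_n$ takes each value $[C_k]_\mu$ and we simply assign the representative $C_k$. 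Everything else — Borel measurability of the pieces, countable-valuedness of $\tilde f_n$, and the pointwise convergence — is immediate from the separability of $\malg_{\mu\res\mathcal{M}}$ and the Borel-measurability hypothesis on $f$.
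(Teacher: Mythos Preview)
Your proposal is correct and essentially identical to the paper's proof: fix a countable dense subset $\{[M_k]_\mu\}$ of $\malg_{\mu \res \mathcal{M}}$, define countably-valued Borel approximations (the paper sets $q_n([B]_\mu) = M_k$ for the least $k$ with $\mu(B \symd M_k) < 1/n$, then uses $q_n \circ f$), and invoke Lemma~\ref{lem:split}.(ii) to pass from $\mu$-smallness to $\nu$-smallness of symmetric differences. One minor slip: $\malg_{\mu\res\mathcal{M}}$ is not totally bounded, so the partition into pieces of small diameter must be countable rather than finite---but this is consistent with your earlier phrase ``countably-valued'' and does not affect the argument.
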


\begin{proof}
Let $\{[M_0]_\mu, [M_1]_\mu, \ldots\}$ be a countable dense subset of $\malg_{\mu \res \mathcal{M}}$. Define $q_n : \malg_{\mu \res \mathcal{M}} \rightarrow \{M_0, M_1, \ldots\}$ by setting $q_n([B]_\mu) = M_k$ if $k$ is least with $\mu(B \symd M_k) < 1 / n$. Then $q_n \circ f : \E_G(X) \times Y \rightarrow \{M_0, M_1, \ldots\}$ is a countable-valued Borel function. Finally, for $\tau$-almost-every $\nu \in \E_G(X)$ and every $y \in Y$, Lemma \ref{lem:split}.(ii) gives
\begin{align*}
\lim_{n \rightarrow \infty} \nu(\bar{f}(\nu, y) \symd q_n \circ f(\nu, y)) & = \lim_{n \rightarrow \infty} \mu(\bar{f}(\nu, y) \symd q_n \circ f(\nu, y))\\
 & = \lim_{n \rightarrow \infty} \mu(f(\nu, y) \symd q_n \circ f(\nu, y)) = 0.\qedhere
\end{align*}
\end{proof}

Lastly, we consider sufficient conditions for a partition $\alpha$ to satisfy $\salg_G(\alpha) \supseteq \sinv_G$. For a partition $\alpha = \{A_i : i \in \N\}$, define $\theta^\alpha : X \rightarrow \N^G$ by the rule $\theta^\alpha(x)(g) = i \Leftrightarrow g^{-1} \cdot x \in A_i$. Note that $\theta^\alpha$ is $G$-equivariant, where the action of $G$ on $\N^G$ is given by $(g \cdot y)(t) = y(g^{-1} \cdot t)$ for $y \in \N^G$ and $g, t \in G$.

\begin{lem} \label{lem:sinv}
Let $G \acts (X, \mu)$ be a {\pmp} action and let $\mu = \int_{\E_G(X)} \nu \ d \tau(\nu)$ be the ergodic decomposition of $\mu$. If $\alpha = \{A_i : i \in \N\}$ is a partition and the map $\nu \in \E_G(X) \mapsto \theta^\alpha_*(\nu)$ is injective on a $\tau$-conull set then $\sinv_G \subseteq \salg_G(\alpha) \mod \Null_\mu$.
\end{lem}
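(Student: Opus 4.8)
The plan is to show that $\theta^\alpha$ essentially recovers, via its pushforward, the ergodic decomposition map $\phi : X \to \E_G(X)$ of Lemma \ref{lem:farvar}, up to a Borel isomorphism of $\E_G(X)$; since $\sinv_G = \phi^{-1}(\Borel(\E_G(X))) \mod \Null_\mu$, this will give the conclusion. Concretely, recall that $\theta^\alpha : X \to \N^G$ is $G$-equivariant and Borel, so for each $\nu \in \E_G(X)$ the measure $\theta^\alpha_*(\nu)$ is a $G$-invariant ergodic probability measure on $\N^G$, i.e.\ an element of $\E_G(\N^G)$. Thus $\nu \mapsto \theta^\alpha_*(\nu)$ is a Borel map $\E_G(X) \to \E_G(\N^G)$, and by hypothesis it is injective on a $\tau$-conull set $E \subseteq \E_G(X)$. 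Restricting to $E$ (which, being conull, carries full $\mu$-mass when pulled back), a Borel injection between standard Borel spaces has Borel image and Borel inverse on that image (Lusin--Souslin, \cite[Cor.\ 15.2]{K95}); call this image $F \subseteq \E_G(\N^G)$ and the inverse $\rho : F \to E$.

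Next I would identify how $\theta^\alpha$ interacts with the two decomposition maps. For $\mu$-almost-every $x$, the point $x$ lies in the ergodic component $\nu = \phi(x)$, and by ergodicity of $\nu$ the empirical measure along the orbit of $\theta^\alpha(x)$ equals $\theta^\alpha_*(\nu)$; more cleanly, letting $\psi : \N^G \to \E_G(\N^G)$ be the Farrell--Varadarajan ergodic-decomposition map for the shift action $G \acts \N^G$ (Lemma \ref{lem:farvar} again), we have $\psi(\theta^\alpha(x)) = \theta^\alpha_*(\phi(x))$ for $\mu$-a.e.\ $x$. Indeed both sides are $G$-invariant Borel functions of $x$, and for a fixed ergodic $\nu$ we have $\theta^\alpha_*(\nu) \in \E_G(\N^G)$ with $\theta^\alpha_*(\nu)$-measure $1$ on the set where $\psi$ takes the value $\theta^\alpha_*(\nu)$ (clause (2) of Lemma \ref{lem:farvar}), so the identity holds $\nu$-a.e.\ for $\tau$-a.e.\ $\nu$, hence $\mu$-a.e. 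Composing with $\rho$ on the conull set where $\phi(x) \in E$ and $\psi(\theta^\alpha(x)) \in F$, we obtain $\phi(x) = \rho(\psi(\theta^\alpha(x)))$ for $\mu$-a.e.\ $x$. Since $\psi$ is a Borel function of $\theta^\alpha$, which is $\salg_G(\alpha)$-measurable, and $\rho$ is Borel, the map $x \mapsto \phi(x)$ is $\salg_G(\alpha)$-measurable $\mod \Null_\mu$. Therefore $\sinv_G = \phi^{-1}(\Borel(\E_G(X))) \subseteq \salg_G(\alpha) \mod \Null_\mu$.

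The main obstacle I anticipate is the careful handling of null sets and the measurability bookkeeping in the identity $\psi \circ \theta^\alpha = \theta^\alpha_* \circ \phi$: one must check that $\theta^\alpha_*(\nu)$ genuinely lands in $\E_G(\N^G)$ for $\tau$-a.e.\ $\nu$ (ergodicity is preserved by pushforward along an equivariant map, so this is fine), that the map $\nu \mapsto \theta^\alpha_*(\nu)$ is Borel (it is, since $\nu \mapsto \nu(\theta^{\alpha,-1}(C))$ is Borel for each basic clopen $C \subseteq \N^G$), and that applying Lemma \ref{lem:farvar} to the shift $G \acts \N^G$ is legitimate — which requires $\M_G(\N^G) \neq \varnothing$, guaranteed here since $\theta^\alpha_*(\mu)$ is such a measure. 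Everything else is the standard fact that a Borel injection has a Borel inverse on its (Borel) image, applied on a conull set, together with the definition $\sinv_G = \phi^{-1}(\Borel(\E_G(X)))$ modulo null sets.
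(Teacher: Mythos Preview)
Your argument is correct and reaches the same endpoint as the paper---namely, that the decomposition map $\phi$ (the paper writes $\pi$) is $\salg_G(\alpha)$-measurable modulo $\Null_\mu$, after which one inverts $\theta^\alpha_*$ on its image via Lusin--Souslin exactly as you do. The difference lies in how that measurability is obtained. The paper proceeds analytically: it fixes a symmetric generating probability measure $\lambda$ on $G$ and invokes the Kakutani random ergodic theorem to show that for each cylinder set $B_f \subseteq \N^G$ the quantity $\theta^\alpha_*(\pi(x))(B_f)$ is the pointwise limit of the convolution averages $\sum_{g} \lambda^{*n}(g)\,\chi_{A_f}(g\cdot x)$, which are visibly $\salg_G(\alpha)$-measurable. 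Your route is structural: you apply Lemma~\ref{lem:farvar} a second time, now to the shift $G \acts \N^G$, to obtain its decomposition map $\psi$, and then verify the naturality identity $\psi \circ \theta^\alpha = \theta^\alpha_* \circ \phi$ almost everywhere by pushing clause~(2) through the equivariant map. This avoids any pointwise ergodic theorem and makes the argument essentially formal; the paper's approach, by contrast, yields an explicit averaging formula for $\theta^\alpha_*(\pi(x))$ that could be of independent use. Both are short and both are valid.
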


\begin{proof}
Fix a symmetric probability measure $\lambda$ on $G$ whose support generates $G$, and let $\lambda^{*n}$ denote the $n^{\text{th}}$ convolution power of $\lambda$. Let $\pi : (X, \mu) \rightarrow (\E_G(X), \tau)$ be the ergodic decomposition map given by Lemma \ref{lem:farvar}. For any finite $T \subseteq G$ and function $f : T \rightarrow \N$ set $A_f = \bigcap_{t \in T} t \cdot A_{f(t)}$ and set $B_f = \{y \in \N^G : \forall t \in T \ y(t) = f(t)\}$. By the Kakutani ergodic theorem \cite{Ka51} (see also the work of Oseledets \cite{Os65}) we have that for $\mu$-almost-every $x \in X$
$$\theta^\alpha_*(\pi(x))(B_f) = \lim_{n \rightarrow \infty} \sum_{g \in G} \lambda^{*n}(g) \cdot \chi_{A_f}(g \cdot x),$$
where $\chi_C$ is the indicator function for $C \subseteq X$. It follows that the map $x \mapsto \theta^\alpha_*(\pi(x))$ is $\salg_G(\alpha) \vee \Null_\mu$-measurable. By assumption, there is a $\tau$-conull Borel set $E \subseteq \E_G(X)$ so that $\theta^\alpha_*$ is injective on $E$. It follows that there exists a Borel function $\psi : \E_G(\N^G) \rightarrow \E_G(X)$ such that $\psi \circ \theta^\alpha_*$ is the identity map on $E$. So $\pi(x) = \psi \circ \theta^\alpha_*(\pi(x))$ for $\mu$-almost-every $x \in X$, and thus $\pi$ is $\salg_G(\alpha) \vee \Null_\mu$-measurable. We conclude that $\sinv_G \subseteq \salg_G(\alpha) \mod \Null_\mu$.
\end{proof}

\begin{cor} \label{cor:invzero}
Let $G \acts (X, \mu)$ be an aperiodic {\pmp} action. Then for every $\epsilon > 0$ there is a two-piece partition $\alpha$ of $X$ with $\sH(\alpha) < \epsilon$ and with $\sinv_G \subseteq \salg_G(\alpha)$.
\end{cor}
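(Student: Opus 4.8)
The plan is to apply Lemma \ref{lem:sinv}, so the goal is to produce a two-piece partition $\alpha$ of small Shannon entropy whose associated map $\nu \mapsto \theta^\alpha_*(\nu)$ is injective on a $\tau$-conull subset of $\E_G(X)$, where $\mu = \int_{\E_G(X)} \nu\ d\tau(\nu)$ is the ergodic decomposition. The key point is that a single piece of tiny measure already carries enough information: if $A \subseteq X$ is Borel, then $\theta^\alpha_*(\nu)$ for the partition $\alpha = \{A, X \setminus A\}$ records, among other things, all the joint distributions $\nu(t_1 \cdot A \cap \cdots \cap t_k \cdot A)$ for finite $t_1, \dots, t_k \in G$; in particular it determines $\nu(A)$ itself, and more generally the $\sigma$-algebra $\salg_G(\alpha)$ pushed forward. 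So it suffices to choose $A$ of measure less than a prescribed $\delta$ (which forces $\sH(\alpha) < \epsilon$, taking $\delta$ small enough that $-\delta\log\delta - (1-\delta)\log(1-\delta) < \epsilon$) with the property that the map $\nu \mapsto (\text{the joint distribution of the }G\text{-translates of }A\text{ under }\nu)$ separates $\tau$-almost-every pair of ergodic measures.

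First I would invoke Lemma \ref{lem:split} to obtain the countably generated $\sigma$-algebra $\mathcal{M} \subseteq \Borel(X)$ with $\mathcal{M} = \Borel(X) \bmod \Null_\nu$ for $\tau$-almost-every $\nu$, with $\nu\res\mathcal{M} = \mu\res\mathcal{M}$ for $\tau$-a.e.\ $\nu$, and with $\sinv_G \vee \mathcal{M} = \Borel(X) \bmod \Null_\mu$. Since $\sinv_G \vee \mathcal{M} = \Borel(X)$, the ergodic decomposition map $\pi : X \to \E_G(X)$, which is $\sinv_G$-measurable, is in particular $\sinv_G \vee \mathcal{M}$-measurable; but more useful is that a $G$-invariant $\sigma$-algebra generated by a single small set can recover $\sinv_G$ via Lemma \ref{lem:sinv}. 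Concretely, I would fix a countable family $\{C_n\}$ generating $\mathcal{M}$, and note that $\salg_G\big(\bigcup_n C_n\big) \vee \sinv_G \supseteq \mathcal{M} \vee \sinv_G = \Borel(X) \bmod \Null_\mu$. The standard trick to compress countably many small sets into one small set while keeping the generated invariant $\sigma$-algebra: choose pairwise disjoint sets $E_1, E_2, \dots$ with $\mu(E_n)$ summing to less than $\delta$, and arrange that $C_n$ is recoverable from $A := \bigcup_n (E_n \cap \text{a coded copy})$ together with $\sinv_G$. Rather than coding into disjoint translates (which may fail for general $G$), the cleaner route is: apply Lemma \ref{lem:split} to replace $X$ by $\E_G(X) \times [0,1]$ with $\mu \cong \tau \times \lambda$, let $A$ correspond to $\E_G(X) \times [0, \delta)$, and observe that $\theta^\alpha_*(\nu)$ then records the $\nu$-measure of every Boolean combination of $G$-translates of $\E_G(X) \times [0,\delta)$; since under this isomorphism $\sinv_G = \psi^{-1}(\Borel(\E_G(X)))$ and the $G$-action is trivial on the $\E_G(X)$ coordinate, the translates of $A$ all lie in $\mathcal{M}$ and $\salg_G(\{A\})$ together with information about how $A$ sits is enough to recover $\pi$.

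Actually the simplest correct argument is the following, and it is what I would write up. By Lemma \ref{lem:split}, fix $\mathcal{M}$ and a countable dense sequence $\{[M_k]_\mu\}$ in $\malg_{\mu\res\mathcal{M}}$; pick $M \in \mathcal{M}$ with $0 < \mu(M) < \delta$ such that the $G$-orbit of $M$ inside $\malg_{\mu\res\mathcal{M}}$, together with the map $A \mapsto \mu(A)$, already determines the point $\nu \in \E_G(X)$ for $\tau$-a.e.\ $\nu$ --- such $M$ exists because $\{A \in \mathcal{M} : A \text{ is } G\text{-invariant up to translates recovering }\pi\}$ is generic, or more concretely because one can take $M$ to be a Borel set coding a countable generating family for the restriction of $\sinv_G$ to $\mathcal{M}$-measurable data, using the product structure $X \cong \E_G(X) \times [0,1]$ to realize $M = B \times [0,\tfrac{\delta}{2}) \cup (\E_G(X) \setminus B) \times [\tfrac{\delta}{2}, \delta)$ for a Borel $B \subseteq \E_G(X)$ that separates points. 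Then for $\alpha = \{M, X \setminus M\}$ we get $\sH(\alpha) = \sH(\mu(M)) < \epsilon$, and the map $\nu \mapsto \theta^\alpha_*(\nu)$ is injective $\tau$-almost-everywhere since from $\theta^\alpha_*(\nu)$ we can read off $\nu$ restricted to $\salg_G(\{M\})$, which by construction separates $\tau$-a.e.\ pair of ergodic measures. Lemma \ref{lem:sinv} then gives $\sinv_G \subseteq \salg_G(\alpha) \bmod \Null_\mu$, and we are done after adjusting $\alpha$ on a null set.

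The main obstacle is the middle step: producing a \emph{single} small set $M$ whose $G$-translates (equivalently, the $G$-invariant $\sigma$-algebra $\salg_G(\{M\})$) recover the ergodic decomposition. For a general countable group one cannot assume the existence of many disjoint translates, so the clean route is genuinely to pass through the Rokhlin skew-product structure from Lemma \ref{lem:split}: there the $\E_G(X)$-coordinate is $G$-fixed, so any set of the form $M = \{(\nu, t) : t \in I_\nu\}$ with $I_\nu \subseteq [0,1]$ Borel-varying in $\nu$ is automatically $G$-translate-stable modulo the fiberwise structure, and by choosing the $I_\nu$ to encode (through their lengths or left-endpoints) a Borel injection $\E_G(X) \hookrightarrow [0,1]$ we force injectivity of $\nu \mapsto \theta^\alpha_*(\nu)$. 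One must be slightly careful that the encoding survives the pushforward under $\theta^\alpha$ — i.e.\ that enough Boolean combinations of translates of $M$ are recorded — but since $\theta^\alpha_*(\nu)(B_f) = \nu(A_f)$ for every finite pattern $f$ and $A_f$ runs over all finite intersections of translates of $M$ and its complement, the $\nu$-measure of $M$ itself (the single-coordinate marginal of $\theta^\alpha_*(\nu)$) is already recorded, and that alone suffices once $\mu(M) = \tfrac{1}{2}(\ell(\nu))$ depends injectively on $\nu$ — wait, $\mu(M)$ is a single number, so instead one uses countably many coordinates: the marginals $\theta^\alpha_*(\nu)(\{y : y(g) = 1\}) = \nu(g \cdot M)$, $g \in G$, which are all equal to $\nu(M)$; so one genuinely needs multi-site marginals, and the encoding must be spread across $\nu(M \cap g\cdot M')$ type quantities. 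This bookkeeping — ensuring the chosen $M$ makes the full array $(\nu(A_f))_f$ injective in $\nu$ — is the one real point requiring care, and I would handle it by choosing $M$ so that $\{g \cdot M : g \in G\}$ generates $\mathcal{M} \bmod \sinv_G$, so that $(\nu(A_f))_f$ determines $\nu\res\mathcal{M} = \mu\res\mathcal{M}$ together with the point $\nu$, invoking Lemma \ref{lem:sinv} directly.
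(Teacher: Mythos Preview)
Your overall strategy is exactly that of the paper: invoke Lemma \ref{lem:split} to get $\mathcal{M}$ and the product picture $(X,\mu)\cong(\E_G(X)\times[0,1],\tau\times\lambda)$, build a small set $A$ so that $\nu\mapsto\theta^\alpha_*(\nu)$ is injective on a $\tau$-conull set, and finish with Lemma \ref{lem:sinv}. But you talk yourself out of the one-line construction that actually works, and the alternatives you then propose are left incomplete.

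The moment to look at is your ``wait'' clause. You correctly note that the single-coordinate marginal of $\theta^\alpha_*(\nu)$ is $\nu(M)$, and you suggest arranging this to depend injectively on $\nu$ --- but then you write ``$\mu(M)$ is a single number'' and abandon the idea. That is a slip: $\mu(M)$ is fixed, but $\nu(M)$ is free to vary with $\nu$, and making $\nu\mapsto\nu(M)$ injective is precisely what the paper does. Concretely, pick any Borel injection $\ell:\E_G(X)\to(0,\delta)$ (standard Borel spaces embed into $[0,1]$), and in the product model set $f(\nu)=[0,\ell(\nu))$, viewed as an element of $\malg_{\mu\res\mathcal{M}}$. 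By Lemmas \ref{lem:mad} and \ref{lem:fibers} there is a Borel $A$ with $\nu(A\symd f(\nu))=0$ for $\tau$-a.e.\ $\nu$, hence $\nu(A)=\ell(\nu)$ is injective in $\nu$; since $\nu(A)$ is read off from $\theta^\alpha_*(\nu)$, the latter map is injective and Lemma \ref{lem:sinv} applies. No multi-site marginals, no coding of countable families, and no need for $\{g\cdot M:g\in G\}$ to generate anything.

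Your fallback plan --- choose $M$ so that $\salg_G(\{M\})$ recovers enough of $\mathcal{M}$ to separate ergodic measures --- is not obviously doable in one small set for an arbitrary countable $G$, and in any case you never construct such an $M$; the proposal ends in a promise rather than an argument. The fix is simply to undo the $\mu$/$\nu$ confusion and keep the injective-measure idea you had first.
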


We mention that in the purely Borel context, a result quite similar to the above corollary was obtained by Tserunyan \cite[Thm. 8.12]{T15}.

\begin{proof}
Fix $\epsilon > 0$ and let $0 < \delta < 1/2$ be small enough that $- \delta \cdot \log(\delta) - (1 - \delta) \cdot \log(1 - \delta) < \epsilon$. Let $\mathcal{M}$ be as in Lemma \ref{lem:split} and note that clauses (i) and (ii) of that lemma together with aperiodicity of the action imply that $\mu \res \mathcal{M}$ is non-atomic. Fix a Borel injection $\iota : \E_G(X) \rightarrow (0, \delta)$ and fix a Borel map $\psi : (0, \delta) \rightarrow \malg_{\mu \res \mathcal{M}}$ satisfying $\mu(\psi(t)) = t$ for all $0 < t < \delta$ (it is easily seen from the construction of $\mathcal{M}$ in Lemma \ref{lem:split} that such a Borel map exists). Set $f = \psi \circ \iota$. By Lemmas \ref{lem:fibers} and \ref{lem:mad} there is a Borel set $A$ with $\nu(A \symd f(\nu)) = 0$ for all $\nu \in \E_G(X)$. Set $\alpha = \{A, X \setminus A\}$. Note that $\nu(A) = \nu(f(\nu)) = \mu(f(\nu))$. Therefore $\mu(A) < \delta$ and $\sH(\alpha) < \epsilon$. Furthermore $\nu \mapsto \nu(A)$ is injective, so the map $\nu \mapsto \theta^\alpha_*(\nu)$ is injective as well. Thus $\sinv_G \subseteq \salg_G(\alpha) \mod \Null_\mu$ by Lemma \ref{lem:sinv}.
\end{proof}

\section{Generating partitions for non-ergodic actions} \label{sec:erg}

In this section we prove the main theorem, Theorem \ref{intro:nekrieger}. We will need to rely on the following strong form of the main theorem from Part I.

\begin{thm}[\cite{S14}] \label{thm:p1strong}
Let $G \acts (X, \mu)$ be an aperiodic ergodic {\pmp} action, let $\xi \subseteq \Borel(X)$, and let $\cF$ be a $G$-invariant sub-$\sigma$-algebra. If $0 < r \leq 1$ and $\pv = (p_i)$ is a probability vector satisfying $\rh_{G,\mu}(\xi \given \cF) < r \cdot \sH(\pv)$, then there is a collection $\alpha^* = \{A_i^* : 0 \leq i < |\pv|\}$ of pairwise disjoint Borel sets such that $\mu(A_i^*) = r \cdot p_i$ and such that $\xi \subseteq \salg_G(\alpha) \vee \cF$ whenever $\alpha = \{A_i : 0 \leq i < |\pv|\}$ is a partition with $A_i \supseteq A_i^*$ for every $i$.
\end{thm}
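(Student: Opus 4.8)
Here is how I would approach Theorem~\ref{thm:p1strong}.

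\emph{Overview.} The plan is to re-encode a generating partition for $\xi$ into a \emph{sparse} family of ``core'' sets whose union can be recovered from the dynamics, so that redistributing the remaining mass $1-r$ in an arbitrary way cannot destroy the encoding.

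\emph{Reduction.} Since $G \acts (X,\mu)$ is ergodic, $\sinv_G$ is $\mu$-trivial, so the definition of outer Rokhlin entropy furnishes a countable Borel partition $\beta=\{B_j : j\in\N\}$ with $\xi\subseteq\salg_G(\beta)\vee\cF$ and $\sH(\beta\given\cF)<r\cdot\sH(\pv)$; the whole construction is carried out relative to $\cF$, which may be used freely. It then suffices to build pairwise disjoint Borel sets $A_i^*$ with $\mu(A_i^*)=rp_i$ such that, writing $S=\bigsqcup_i A_i^*$, every partition $\alpha=\{A_i\}$ with $A_i\supseteq A_i^*$ satisfies (a) $S\in\salg_G(\alpha)\vee\cF$, and (b) $\salg_G(\{A_i^* : i\})\vee\cF\supseteq\salg_G(\beta)$. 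Indeed, given (a) we have $A_i\cap S=A_i^*$ (a point of $A_i\cap S$ lies in some $A_j^*\subseteq A_j$, forcing $j=i$ by disjointness), so each $A_i^*\in\salg_G(\alpha)\vee\cF$; as $\salg_G(\alpha)\vee\cF$ is $G$-invariant, this gives $\salg_G(\{A_i^*:i\})\subseteq\salg_G(\alpha)\vee\cF$, and then (b) places $\salg_G(\beta)$, hence $\xi\subseteq\salg_G(\beta)\vee\cF$, inside $\salg_G(\alpha)\vee\cF$.

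\emph{The construction.} Here aperiodicity enters. Using a Rokhlin-type lemma for aperiodic actions of countable groups, as in Part~I, one can for each large finite $F\subseteq G$ and each $\epsilon>0$ mark a Borel complete section that is $F$-spread and of measure $<\epsilon$; chaining such sections along an exhaustion $G=\bigcup_n F_n$ partitions $X$, up to a null set, into Borel ``tiles'', each governed by one marker point and carrying a finite shape $T$ from a fixed countable list. On a tile of shape $T$ one reserves a ``coding region'' $D\subseteq T$ of relative size about $r$ and stamps onto $D$ a codeword over $\{0,\dots,|\pv|-1\}$ whose symbol frequencies approximate $\pv$; declaring the label-$i$ cells of all coding regions to form $A_i^*$ yields $\mu(A_i^*)\approx rp_i$, which a small final adjustment inside the coding regions makes exact. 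The codewords must do two things: relative to $\cF$ they must injectively record $\beta\res T$ (a counting requirement, which is what secures (b)); and they must be \emph{self-synchronizing}, so that from the $\alpha$-labelling of an entire orbit together with $\cF$ one can locate the coding regions, hence $S$, unambiguously, no matter how the non-coding cells were labelled when $\beta$ was completed to $\alpha$ (this is what secures (a)). Because a single tile is too short for a Shannon--McMillan-type estimate (there is no such theorem for general countable groups), the counting requirement is not met in one shot; instead, as in Part~I, one proceeds through infinitely many stages, refining the cores and spending at stage $n$ only a fresh entropy budget $r_n\cdot\sH(\pv)$ on a further, shrinking leftover region, with $\sum_n r_n\le r$ arranged via the sub-additivity of Rokhlin entropy from Parts~I and~II; the cores $A_i^*$ are then the limits of the stage-$n$ cores, which converge because the successive modifications have summable measure.

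\emph{Main obstacle.} The delicate point is clause (a) together with self-synchronization: making $S=\bigsqcup_i A_i^*$ recoverable from \emph{every} completion $\alpha$ is precisely a ``locally checkable'', self-locating coding problem of the kind that replaces F{\o}lner towers for general countable groups, and arranging the synchronization markers to survive an adversarial relabelling of the $1-r$ leftover mass --- while keeping the entropy cost strictly below $r\cdot\sH(\pv)$ and interleaving this with the staged encoding of the (possibly infinite) partition $\beta$ demanded by (b) --- is exactly what forces the full machinery of Part~I.
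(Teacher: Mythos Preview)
The paper does not prove this theorem: its entire ``proof'' is the single line ``Combine \cite[Theorem 8.1]{S14} with \cite[Lemma 2.2]{S14}.'' The result is imported wholesale from Part~I as a black box, so there is no argument in the present paper to compare your sketch against.

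That said, what you have written is a reasonable high-level outline of the kind of machinery that Part~I develops: the reduction to finding cores $A_i^*$ satisfying your clauses (a) and (b) is sound (the argument that $A_i\cap S=A_i^*$ and hence $\salg_G(\{A_i^*\})\subseteq\salg_G(\alpha)\vee\cF$ is correct), and the identification of self-synchronization of the coding region as the central difficulty is on target. But your sketch remains a sketch --- the staged construction, the tiling lemma for general countable groups, and the synchronization scheme are each substantial pieces of Part~I that you have gestured at rather than carried out. For the purposes of the present paper, the intended answer is simply to cite the two results from \cite{S14} and move on; if you want to actually reprove the theorem, you would need to reproduce a sizable portion of that paper.
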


\begin{proof}
Combine \cite[Theorem 8.1]{S14} with \cite[Lemma 2.2]{S14}.
\end{proof}

Let $\mathcal{M}$ be as in Lemma \ref{lem:split}. Denote by $\Prt_{\mathcal{M}}$ the set of sequences $\alpha = \{[A_i]_\mu : i \in \N\} \in (\malg_{\mu \res \mathcal{M}})^\N$ where $\alpha = \{A_i : i \in \N\}$ is a $\mathcal{M}$-measurable partition of $X$ (some $A_i$ may be empty). Note that $\Prt_{\mathcal{M}}$ is a Borel subset of $(\malg_{\mu \res \mathcal{M}})^\N$ and is thus a standard Borel space. For notational convenience we will treat each $\alpha \in \Prt_{\mathcal{M}}$ as a $\mathcal{M}$-measurable partition $\alpha = \{A_i : i \in \N\}$ of $X$. This will not cause problems since any two choices for expressing $\alpha$ in this way will only differ on a $\mu$-null set. Moreover, by Lemma \ref{lem:split}.(ii) they will only differ on a $\nu$-null set for $\tau$-almost-every $\nu$.

For a partition $\alpha = \{A_i : i \in \N\}$, define $\theta^\alpha : X \rightarrow \N^G$ as in the previous section: $\theta^\alpha(x)(g) = i \Leftrightarrow g^{-1} \cdot x \in A_i$. Also write $\dist_\mu(\alpha)$ for the probability vector whose $(i+1)^{\text{st}}$ entry is $\mu(A_i)$.

\begin{lem} \label{lem:borelgp}
Let $G \acts (X, \mu)$ and $\mathcal{M}$ be as in Lemma \ref{lem:split}. Let $\xi \subseteq \Borel(X)$ be countable, let $\cF$ be a countably generated $G$-invariant sub-$\sigma$-algebra, and let $\nu \mapsto \pv^\nu$ be a Borel map associating to each $\nu \in \E_G(X)$ a probability vector $\pv^\nu$ satisfying $\rh_{G,\nu}(\xi \given \cF) < \sH(\pv^\nu)$.
\begin{enumerate}
\item[\rm (i)] The set $Z$ of pairs $(\nu, \alpha) \in \E_G(X) \times \Prt_{\mathcal{M}}$ satisfying $\dist_\mu(\alpha) = \pv^\nu$ and $\xi \subseteq \salg_G(\alpha) \vee \cF \mod \Null_\nu$ is Borel.
\item[\rm (ii)] The map $(\nu, \alpha) \in \E_G(X) \times \Prt_{\mathcal{M}} \mapsto \theta^\alpha_*(\nu)$ is Borel.
\item[\rm (iii)] For $\tau$-almost-every $\nu \in \E_G(X)$ the set $\{\theta^\alpha_*(\nu) : (\nu, \alpha) \in Z\}$ is uncountable.
\end{enumerate}
\end{lem}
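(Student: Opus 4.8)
The plan is to verify the three measurability/uncountability claims by peeling them apart, handling (i) first, then (ii), and finally (iii), which I expect to carry most of the work.

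For part (i): the map $\alpha \mapsto \theta^\alpha$ is (essentially) the coding map into $\N^G$, and I want to show $(\nu,\alpha)\mapsto\theta^\alpha_*(\nu)$ is Borel as a map into the standard Borel space of measures on $\N^G$. It suffices to check that for each basic clopen cylinder $B \subseteq \N^G$ — specified by a finite $T\subseteq G$ and $f:T\to\N$ — the map $(\nu,\alpha)\mapsto \theta^\alpha_*(\nu)(B)$ is Borel. But $\theta^\alpha_*(\nu)(B) = \nu\big(\bigcap_{t\in T} t\cdot A_{f(t)}\big)$ where $\alpha=\{A_i\}$, and the set $\bigcap_{t\in T} t\cdot A_{f(t)}$ lies in $\mathcal{M}$ up to a $\mu$-null (hence, by Lemma \ref{lem:split}.(ii), $\nu$-null for a.e.\ $\nu$) set, so this is a finite Boolean combination (using Lemma \ref{lem:adalg} applied through $\malg_{\mu\res\mathcal M}$) followed by the evaluation $(\nu, [A]_\mu)\mapsto \nu([A]_\mu)=\mu([A]_\mu)$, which is Borel on $\E_G(X)\times\malg_{\mu\res\mathcal M}$. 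Composing with the Borel coordinate projections from $\Prt_{\mathcal M}\subseteq(\malg_{\mu\res\mathcal M})^\N$ gives the claim. (One should note the equality $\theta^\alpha_*(\nu)(B)=\nu(A_f)$ holds modulo the exceptional $\tau$-null set, which is harmless for the Borel statement.)

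For part (ii): the condition $\dist_\mu(\alpha)=\pv^\nu$ is Borel since both $\nu\mapsto\pv^\nu$ and $\alpha\mapsto\dist_\mu(\alpha)$ are Borel (the latter reading off the $\mu$-measures of the pieces, which are the metric-space coordinates in $\malg_{\mu\res\mathcal M}$). The condition ``$\xi\subseteq\salg_G(\alpha)\vee\cF \mod\Null_\nu$'' I want to rephrase in a Borel way: fixing a countable algebra $\{C_n\}$ generating $\cF$ and enumerating the countably many finite Boolean combinations of $\{g\cdot A_i : g\in G, i\in\N\}\cup\{C_n\}$, the condition is equivalent to saying that for each $B\in\xi$ and each $k$ there is such a combination $D$ with $\nu(B\symd D)<1/k$. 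For fixed combination-pattern, the map $(\nu,\alpha)\mapsto\nu(D)$ and $(\nu,\alpha)\mapsto\nu(B\symd D)$ are Borel by exactly the reasoning of part (i) (again passing through $\malg_{\mu\res\mathcal M}$ and Lemma \ref{lem:adalg}, noting $B\in\xi\subseteq\Borel(X)$ is fixed and equals some element of $\mathcal M$ mod null), so the whole condition is a countable combination of Borel conditions, hence $Z$ is Borel.

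For part (iii): this is the substantive point and is where Theorem \ref{thm:p1strong} enters. Fix a $\tau$-conull set of $\nu$ on which $\nu$ is aperiodic ergodic, $\mathcal M=\Borel(X)\mod\Null_\nu$, $\nu=\mu$ on $\mathcal M$, and $\rh_{G,\nu}(\xi\given\cF)<\sH(\pv^\nu)$. For such $\nu$ I want to produce uncountably many $\alpha\in\Prt_{\mathcal M}$ with $(\nu,\alpha)\in Z$ and with the codings $\theta^\alpha_*(\nu)$ pairwise distinct. Applying Theorem \ref{thm:p1strong} with $r$ slightly less than $1$ (chosen so that still $\rh_{G,\nu}(\xi\given\cF)<r\cdot\sH(\pv^\nu)$), we get pairwise disjoint Borel sets $A_i^*$ with $\nu(A_i^*)=r\cdot p_i^\nu$, leaving a ``reservoir'' $R=X\setminus\bigcup_i A_i^*$ of $\nu$-measure $1-r>0$, such that \emph{any} partition $\alpha=\{A_i\}$ refining the $A_i^*$ (i.e.\ $A_i\supseteq A_i^*$) with $\dist_\nu(\alpha)=\pv^\nu$ satisfies $\xi\subseteq\salg_G(\alpha)\vee\cF$. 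Now I have freedom in how to distribute the reservoir $R$ among the pieces while keeping the measures fixed; since $\nu$ (equivalently $\mu$ on $\mathcal M$) is non-atomic, there is a continuum of ways to split $R$ into pieces $R_i$ with $\nu(R_i)=p_i^\nu-r p_i^\nu$, and these can be chosen so that the resulting partitions are genuinely different — and moreover so that their $\theta$-codings differ, e.g.\ by arranging that the sets $A_0=A_0^*\cup R_0$ are pairwise non-equal modulo $\nu$-null sets in a way visible to the cylinder $\{y(1_G)=0\}$, which already forces $\theta^\alpha_*(\nu)$ to differ. Concretely: pick a non-atomic sub-$\sigma$-algebra inside $R$ and a continuum of Borel subsets $S_t\subseteq R$ of a fixed measure with $\nu(S_t\symd S_{t'})>0$ for $t\neq t'$; form $\alpha_t$ by putting $S_t$ into piece $0$ and distributing $R\setminus S_t$ into the other pieces to fix all measures; then $\theta^{\alpha_t}_*(\nu)\big(\{y(1_G)=0\}\big)$ records $\nu(A_0^*\cup S_t)$-type data — to genuinely separate the measures I instead separate via a two-coordinate cylinder, using that $\nu(S_t\symd S_{t'})>0$ forces $\nu(g\cdot S_t \cap S_{t'})$ to differ for a suitable $g$ from the diagonal value, hence some cylinder probability differs. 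The map $t\mapsto\alpha_t$ can be taken Borel into $\Prt_{\mathcal M}$, all these $\alpha_t$ are $\mathcal M$-measurable (using Lemmas \ref{lem:fibers} and \ref{lem:mad} to realize the abstract sets as honest Borel sets if needed), and each satisfies $(\nu,\alpha_t)\in Z$. Therefore $\{\theta^\alpha_*(\nu):(\nu,\alpha)\in Z\}$ is uncountable.

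The main obstacle is the last step, specifically ensuring that the continuum of admissible refinements produces \emph{distinct} push-forward measures $\theta^\alpha_*(\nu)$ rather than merely distinct partitions — two different partitions can a priori have the same coding distribution. I expect this to be resolved cleanly by the non-atomicity of $\nu$ on $\mathcal M$ combined with a mixing/ergodicity argument: since $\nu$ is ergodic and non-atomic, one can choose the reservoir splittings $S_t$ so that the ``empirical'' cylinder frequencies genuinely change, because if $\theta^{\alpha_t}_*(\nu)=\theta^{\alpha_{t'}}_*(\nu)$ then in particular all finite-window statistics agree, which pins down $\nu(A_i^{(t)}\symd A_i^{(t')})=0$ — contradicting $\nu(S_t\symd S_{t'})>0$ once the splittings are chosen to differ only on piece $0$.
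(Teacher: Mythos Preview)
Your arguments for (i) and (ii) are essentially the paper's: reduce to evaluating $\nu$ on finite Boolean combinations of $G$-translates of pieces of $\alpha$ and fixed generators of $\cF$, then invoke Bochner measurability (Lemmas \ref{lem:adalg}, \ref{lem:mm}, \ref{lem:mad}). One small imprecision: $t\cdot A_{f(t)}$ need not lie in $\mathcal{M}$, but this is harmless since Bochner measurability is preserved under the $G$-action.

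The genuine gap is in (iii). Your strategy---apply Theorem \ref{thm:p1strong} with $r<1$, then vary the partition freely on the leftover set $R$---is exactly the paper's starting point. The problem is your argument that distinct choices yield distinct pushforwards. First, the single-coordinate cylinder $\{y(1_G)=0\}$ has mass $p_0^\nu$ for \emph{every} admissible $\alpha$, as you yourself note. Second, your fallback claim is wrong: equality $\theta^{\alpha_t}_*(\nu)=\theta^{\alpha_{t'}}_*(\nu)$ means the two $G$-processes have the same law, and this does \emph{not} force $\nu(A_i^{(t)}\symd A_i^{(t')})=0$. Distinct partitions of an ergodic system can easily generate identically distributed processes (think of any measure-preserving automorphism commuting with the $G$-action, or even just an irrational rotation with two congruent but shifted arcs). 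So ``all finite-window statistics agree $\Rightarrow$ partitions agree'' is false, and your reservoir argument as written does not separate the pushforwards.

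The paper fixes this by building structure into the reservoir rather than splitting it generically. It finds $g\neq 1_G$ and disjoint $Y_0,Y_1\subseteq R$ with $g\cdot Y_0=Y_1$ and $g^{-1}\cdot Y_0\subseteq A_n^*$ for some fixed $n$. On $Y_1$ the partition is fixed once and for all (via a fixed partition $\gamma=\{C_j\}$ of $Y_0$, putting $g\cdot C_j$ into piece $j$); on $Y_0$ it varies along a continuous path $\chi^t=\{K_i^t\}$ with $\chi^0=\gamma$ and $\chi^1$ independent of $\gamma$. One then computes the two-coordinate cylinder mass $\nu(A_i^t\cap g^{-1}\cdot A_j^t)$ and checks that the only $t$-dependent contribution is $\nu(K_i^t\cap C_j)$, which takes a continuum of values as $t$ ranges over $[0,1]$. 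The point is that the pairing of $Y_0$ with its $g$-image $Y_1$ isolates a single controllable term in a specific cylinder probability; without this coupling you have no handle on any cylinder statistic.
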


\begin{proof}
(i). The set of $(\nu, \alpha)$ with $\dist_\mu(\alpha) = \pv^\nu$ is clearly Borel, so it suffices to show that $\{(\nu, \alpha) : \xi \subseteq \salg_G(\alpha) \vee \cF \mod \Null_\nu\}$ is Borel. Let $\{F_i : i \in \N\}$ be a countable $G$-invariant algebra which generates $\cF$. Write $[G \rightharpoonup \N]^{<\infty}$ for the set of functions $f : T \rightarrow \N$ with $T \subseteq G$ finite. For $i \in \N$, $f : T \rightarrow \N$, and $\alpha = \{A_j : j \in \N\} \in \Prt_{\mathcal{M}}$, define $S_{(i,f)}(\alpha) = F_i \cap \bigcap_{t \in T} t \cdot A_{f(t)}$. For finite sets $P \subseteq \N \times [G \rightharpoonup \N]^{<\infty}$ define $S_P(\alpha) = \bigcup_{(i, f) \in P} S_{(i, f)}(\alpha)$. Then $\{S_P(\alpha) : P \subseteq \N \times [G \rightharpoonup \N]^{<\infty} \text{ finite}\}$ is a countable $G$-invariant algebra which generates $\salg_G(\alpha) \vee \cF$. So we have $\xi = \{D_k : k \in \N\}$ is contained in $\salg_G(\alpha) \vee \cF \mod \Null_\nu$ if and only if
$$\forall k \in \N \ \forall n \in \N \ \exists \text{ finite } P \subseteq \N \times [G \rightharpoonup \N]^{<\infty} \quad \nu(D_k \symd S_P(\alpha)) < 1 / n.$$
For fixed $k \in \N$ and finite $P \subseteq \N \times [G \rightharpoonup \N]^{<\infty}$, the map $(\nu, \alpha) \mapsto \nu(D_k \symd S_P(\alpha))$ is Borel by Lemmas \ref{lem:adalg}, \ref{lem:mm}, and \ref{lem:mad}. This establishes (ii).

(ii). The map $(\nu, \alpha) \mapsto \theta^\alpha_*(\nu)$ is Borel if and only if $(\nu, \alpha) \mapsto \theta^\alpha_*(\nu)(B)$ is Borel for every Borel set $B \subseteq \N^G$. The collection of such sets $B$ clearly forms a $\sigma$-algebra, so it suffices to check the case where $B$ is clopen. This case is immediately implied by our argument for (ii).

(iii). Fix $\nu \in \E_G(X)$ with $\mathcal{M} = \Borel(X) \mod \Null_\nu$ and with $\nu$ non-atomic. By assumption $\rh_{G,\nu}(\xi \given \cF) < \sH(\pv^\nu)$. So we can find $0 < r < 1$ with $\rh_{G,\nu}(\xi \given \cF) < r \cdot \sH(\pv^\nu)$. By Theorem \ref{thm:p1strong}, there is a collection $\alpha^* = \{A_i^* : i \in \N\}$ of pairwise disjoint Borel sets with $\mu(A_i^*) = r \cdot p_i^\nu$ for every $i$ and with $\xi \subseteq \salg_G(\alpha) \vee \cF \mod \Null_\nu$ whenever $\alpha = \{A_i : i \in \N\}$ is a partition with $A_i \supseteq A_i^*$. Since $\mathcal{M} = \Borel(X) \mod \Null_\nu$, we may assume that $\alpha^* \subseteq \mathcal{M}$. Since $\nu$ is ergodic and non-atomic, there are a non-identity $g \in G$ and non-null disjoint $\mathcal{M}$-measurable sets $Y_0, Y_1 \subseteq X \setminus \cup \alpha^*$ with $g \cdot Y_0 = Y_1$. By replacing $Y_0, Y_1$ with translates $g^{-k} \cdot Y_0, g^{-k} \cdot Y_1$, $k \in \N$, and by shrinking $Y_0$ if necessary, we may assume that there is $N \in \N$ with $g^{-1} \cdot Y_0 \subseteq A_N^*$. Fix this $N \in \N$. Now fix $\mathcal{M}$-measurable partitions $\beta = \{B_i : i \in \N\}$ of $X \setminus (Y_0 \cup Y_1 \cup (\cup \alpha^*))$ and $\gamma = \{C_i : i \in \N\}$ of $Y_0$ with $\nu(B_i) = \nu(\cup \beta) \cdot p_i^\nu$ and $\nu(C_i) = \nu(Y_0) \cdot p_i^\nu$. Fix a continuous path $\chi^t = \{K_i^t : i \in \N\}$, $0 \leq t \leq 1$, of $\mathcal{M}$-measurable partitions of $Y_0$ such that $\chi^0 = \gamma$, $\chi^1$ is independent with $\gamma$ on $Y_0$ (i.e. $\nu(K_i^1 \cap C_j) / \nu(Y_0) = \nu(K_i^1) \cdot \nu(C_j) / \nu(Y_0)^2$ for every $i, j \in \N$), and $\nu(K_i^t) = \nu(Y_0) \cdot p_i^\nu$ for all $i \in \N$ and $0 \leq t \leq 1$. Set $\alpha^t = \{A_i^t : i \in \N\}$ where
$$A_i^t = A_i^* \cup B_i \cup K_i^t \cup g \cdot C_i.$$
Then $\nu(A_i^t) = p_i^\nu$ and $\xi \subseteq \salg_G(\alpha^t) \vee \cF \mod \Null_\nu$ since $A_i^t \supseteq A_i^*$. So $(\nu, \alpha^t) \in Z$. It suffices to show that $\nu(A_N^t \cap g^{-1} \cdot A_N^t)$ takes uncountably many values as $t$ varies. Notice that the measure of the sets $(A_N^t \setminus Y_0) \cap g^{-1} \cdot (A_N^t \setminus Y_0)$ and
$$(A_N^t \setminus Y_0) \cap g^{-1} (A_N^t \cap Y_0) = (A_N^t \setminus Y_0) \cap g^{-1} \cdot K_N^t = A_N^* \cap g^{-1} \cdot K_N^t = g^{-1} \cdot K_N^t$$
do not depend on $t$ and that these sets partition $(A_N^t \setminus Y_0) \cap g^{-1} \cdot A_N^t$. The remaining portion of $A_N^t \cap g^{-1} \cdot A_N^t$ has measure $\nu(A_N^t \cap Y_0 \cap g^{-1} \cdot A_N^t) = \nu(K_N^t \cap C_N)$ which varies continuously from $p_N^\nu$ to $(p_N^\nu)^2$. We cannot have $p_N^\nu = (p_N^\nu)^2$ as otherwise $p_N^\nu = 0$ and $Y_0 \subseteq g \cdot A_N^*$ is $\nu$-null or $p_N^\nu = 1$ and $\sH(\pv^\nu) = 0$, both of which are contradictions.
\end{proof}

Now we are ready for the main theorem. Note we obtain the weaker Theorem \ref{intro:nekrieger} by choosing a countable collection $\xi \subseteq \Borel(X)$ with $\salg(\xi) = \Borel(X)$.

\begin{thm} \label{thm:nekrieger}
Let $G \acts (X, \mu)$ be an aperiodic {\pmp} action, let $\xi \subseteq \Borel(X)$ be countable, let $\cF$ be a countably generated $G$-invariant sub-$\sigma$-algebra, and let $\mu = \int_{\E_G(X)} \nu \ d \tau(\nu)$ be the ergodic decomposition of $\mu$. If $\nu \mapsto \pv^\nu = \{p_i^\nu : i \in \N\}$ is a Borel map associating to each $\nu \in \E_G(X)$ a probability vector $\pv^\nu$ satisfying $\rh_{G,\nu}(\xi \given \cF) < \sH(\pv^\nu)$, then there is a Borel partition $\alpha = \{A_i : i \in \N\}$ of $X$ such that $\xi \subseteq \salg_G(\alpha) \vee \cF \mod \Null_\mu$ and such that $\nu(A_i) = p_i^\nu$ for every $i \in \N$ and $\tau$-almost-every $\nu$.
\end{thm}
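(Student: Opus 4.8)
The plan is to combine Lemma \ref{lem:borelgp} with the injective selection result Proposition \ref{prop:select}, and then to carefully assemble the selected partitions into a single honest Borel partition of $X$ using the $\sigma$-algebra $\mathcal{M}$ and the ``gluing'' lemmas from Section 3 (Lemmas \ref{lem:fibers}, \ref{lem:mad}, \ref{lem:sinv}).

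First I would set up the selection. By Lemma \ref{lem:borelgp}, the set $Z \subseteq \E_G(X) \times \Prt_{\mathcal{M}}$ of pairs $(\nu, \alpha)$ with $\dist_\mu(\alpha) = \pv^\nu$ and $\xi \subseteq \salg_G(\alpha) \vee \cF \bmod \Null_\nu$ is Borel, the map $(\nu,\alpha) \mapsto \theta^\alpha_*(\nu)$ is Borel, and for $\tau$-almost-every $\nu$ the set $\{\theta^\alpha_*(\nu) : (\nu,\alpha) \in Z\}$ is uncountable. After discarding a $\tau$-null set and intersecting with the conull sets from Lemma \ref{lem:split}, I may assume every $\nu$ in play satisfies $\mathcal{M} = \Borel(X) \bmod \Null_\nu$, $\nu(B) = \mu(B)$ for $B \in \mathcal{M}$, and the uncountability of the fibers. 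Now apply Proposition \ref{prop:select} with $X \rightsquigarrow \E_G(X)$, $Y \rightsquigarrow \Prt_{\mathcal{M}}$, $Z \rightsquigarrow \E_G(X) \times \Prt_{\mathcal{M}}$, $A \rightsquigarrow Z$, and $f(\nu,\alpha) = \theta^\alpha_*(\nu)$: this produces a $\tau$-conull Borel set $E \subseteq \E_G(X)$ and a Borel map $\nu \in E \mapsto \alpha^\nu = \{A_i^\nu : i \in \N\} \in \Prt_{\mathcal{M}}$ whose graph lies in $Z$, with the crucial extra property that $\nu \mapsto \theta^{\alpha^\nu}_*(\nu)$ is injective on $E$.

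Next I would turn the Borel family $(\alpha^\nu)_{\nu \in E}$ into a genuine Borel partition of $X$. For each $i \in \N$, the map $\nu \in E \mapsto A_i^\nu \in \malg_{\mu \res \mathcal{M}}$ is Borel, so by Lemma \ref{lem:mad} any choice of representatives gives a function $\E_G(X) \to \mathcal{M}$ that is Bochner measurable $\tau$-almost-everywhere; viewing this as a function of $\nu$ alone, since $A_i^\nu \in \mathcal{M} \bmod \Null_\nu$ trivially, Lemma \ref{lem:fibers} produces a Borel set $A_i \in \sinv_G \vee \mathcal{M}$ with $\nu(A_i \symd A_i^\nu) = 0$ for $\tau$-almost-every $\nu$. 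Because for $\tau$-a.e.\ $\nu$ the $A_i^\nu$ partition $X$ up to $\nu$-measure zero, and because $\sinv_G$ and $\mathcal{M}$ together control $\Borel(X) \bmod \Null_\mu$ (Lemma \ref{lem:split}.(iii)) while $\nu = \mu$ on $\mathcal{M}$, a routine adjustment (restricting to a conull $\sinv_G$-measurable set on which the $A_i$ genuinely partition $X$, absorbing the remainder into $A_0$) makes $\alpha = \{A_i : i \in \N\}$ an actual Borel partition of $X$ with $\nu(A_i) = \nu(A_i^\nu) = \mu(A_i^\nu) = p_i^\nu$ for $\tau$-a.e.\ $\nu$ and every $i$. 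The main point to check here is that these measure-zero corrections do not spoil the properties below; this is where one must be a little careful, but it is bookkeeping rather than a real obstacle.

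Finally I would verify the two defining properties. For the invariant $\sigma$-algebra: since $\nu(A_i \symd A_i^\nu) = 0$ for $\tau$-a.e.\ $\nu$, we have $\theta^\alpha_*(\nu) = \theta^{\alpha^\nu}_*(\nu)$ for $\tau$-a.e.\ $\nu$, and the latter is injective in $\nu$ on a $\tau$-conull set, so Lemma \ref{lem:sinv} gives $\sinv_G \subseteq \salg_G(\alpha) \bmod \Null_\mu$. For the generation property: for $\tau$-a.e.\ $\nu$ we have $\xi \subseteq \salg_G(\alpha^\nu) \vee \cF \bmod \Null_\nu$, and since $\theta^\alpha$ and $\theta^{\alpha^\nu}$ agree $\nu$-a.e., also $\xi \subseteq \salg_G(\alpha) \vee \cF \bmod \Null_\nu$. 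Thus each $D \in \xi$ lies in $\salg_G(\alpha) \vee \cF \bmod \Null_\nu$ for $\tau$-a.e.\ $\nu$; since $\salg_G(\alpha) \vee \cF$ is countably generated, the final assertion of Lemma \ref{lem:fibers} gives $D \in \sinv_G \vee \salg_G(\alpha) \vee \cF \bmod \Null_\mu$, and then $\sinv_G \subseteq \salg_G(\alpha)$ upgrades this to $D \in \salg_G(\alpha) \vee \cF \bmod \Null_\mu$. As $\xi$ is countable this yields $\xi \subseteq \salg_G(\alpha) \vee \cF \bmod \Null_\mu$, completing the proof. I expect the only genuinely delicate step to be the passage from the per-fiber partitions $\alpha^\nu$ to a single Borel partition $\alpha$ of $X$ — reconciling the ``$\bmod \Null_\nu$ for a.e.\ $\nu$'' statements with an honest global partition — but the machinery of Section 3 ($\mathcal{M}$, Bochner measurability, and Lemmas \ref{lem:fibers}, \ref{lem:mad}, \ref{lem:sinv}) is precisely designed to handle it.
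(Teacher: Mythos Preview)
Your proposal is correct and follows essentially the same route as the paper's own proof: apply Lemma \ref{lem:borelgp} and Proposition \ref{prop:select} to obtain the Borel selection $\nu \mapsto \alpha^\nu$ with $\nu \mapsto \theta^{\alpha^\nu}_*(\nu)$ injective, glue into a single partition via Lemmas \ref{lem:mad} and \ref{lem:fibers}, then invoke Lemma \ref{lem:sinv} to get $\sinv_G \subseteq \salg_G(\alpha)$ and the last clause of Lemma \ref{lem:fibers} to pass from per-$\nu$ generation to $\xi \subseteq \salg_G(\alpha) \vee \cF \bmod \Null_\mu$. The paper is terser about the ``routine adjustment'' making the $A_i$ an honest partition, but you have identified all the ingredients and their roles correctly.
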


\begin{proof}
Let $\mathcal{M}$ be as given by Lemma \ref{lem:split}. Let $Z \subseteq \E_G(X) \times \Prt_{\mathcal{M}}$ be the set of pairs $(\nu, \alpha)$ such that $\xi \subseteq \salg_G(\alpha) \vee \cF \mod \Null_\nu$ and $\dist_\mu(\alpha) = \pv^\nu$. Note that for $\tau$-almost-every $\nu \in \E_G(X)$ and every $\alpha \in \Prt_{\mathcal{M}}$ we have $\dist_\nu(\alpha) = \dist_\mu(\alpha)$ by Lemma \ref{lem:split}.(ii). Lemma \ref{lem:borelgp} shows that $Z$ and the function $(\nu, \alpha) \mapsto \theta^\alpha_*(\nu)$ satisfy the assumption of Proposition \ref{prop:select}. So that proposition gives a $\tau$-conull Borel set $E \subseteq \E_G(X)$ and a Borel function $\phi : E \rightarrow \Prt_{\mathcal{M}}$ whose graph is contained in $Z$ and with the map $\nu \in E \mapsto \theta^{\phi(\nu)}_*(\nu)$ injective.

By Lemmas \ref{lem:fibers} and \ref{lem:mad} there is a Borel partition $\alpha$ of $X$ such that $\phi(\nu) = \alpha \mod \Null_\nu$ for every $\nu \in E$. Then $\dist_\nu(\alpha) = \pv^\nu$ for $\tau$-almost-every $\nu \in E$. Also $\xi \subseteq \salg_G(\alpha) \vee \cF \mod \Null_\nu$ for every $\nu \in E$, so Lemma \ref{lem:fibers} implies that $\xi \subseteq \sinv_G \vee \salg_G(\alpha) \vee \cF \mod \Null_\mu$. On the other hand, $\nu \in E \mapsto \theta^\alpha_*(\nu)$ is injective by construction and thus $\sinv_G \subseteq \salg_G(\alpha) \mod \Null_\mu$ by Lemma \ref{lem:sinv}. Therefore $\xi \subseteq \salg_G(\alpha) \vee \cF \mod \Null_\mu$.
\end{proof}

As a consequence we obtain an ergodic decomposition formula for Rokhlin entropy. First we must note a simple but technical fact.

\begin{lem} \label{lem:entan}
Let $G \acts (X, \mu)$ be a {\pmp} aperiodic action, let $\xi \subseteq \Borel(X)$ be countable, and let $\cF$ be a countably generated $G$-invariant sub-$\sigma$-algebra. Let $\mu = \int_{\E_G(X)} \nu \ d \tau(\nu)$ be the ergodic decomposition of $\mu$. Then there is a Borel $\tau$-conull set $E \subseteq \E_G(X)$ such that the map $\nu \in E \mapsto \rh_{G,\nu}(\xi \given \cF)$ is Borel.
\end{lem}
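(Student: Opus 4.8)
The plan is to express the function $\nu \mapsto \rh_{G,\nu}(\xi \given \cF)$ as a countable infimum of Borel functions, each of which measures the minimal Shannon entropy achievable by a partition that generates $\xi$ relative to $\cF$ \emph{and has distribution among a fixed countable family of probability vectors}. More precisely, fix a countable dense set of finite probability vectors $\{\pv^{(m)} : m \in \N\}$ (or enumerate the finitely-supported rational ones), and for each $m$ consider the quantity $I_m(\nu) = \sH(\pv^{(m)})$ if there exists an $\mathcal{M}$-measurable partition $\alpha$ with $\dist_\mu(\alpha) = \pv^{(m)}$ and $\xi \subseteq \salg_G(\alpha) \vee \cF \mod \Null_\nu$, and $I_m(\nu) = +\infty$ otherwise. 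Then Theorem \ref{thm:nekrieger}, together with the fact that Rokhlin entropy is an infimum, should give $\rh_{G,\nu}(\xi \given \cF) = \inf_m I_m(\nu)$ for $\tau$-almost-every $\nu$: the inequality $\leq$ follows because whenever $\sH(\pv^{(m)}) > \rh_{G,\nu}(\xi \given \cF)$ the theorem (applied to a single $\nu$, or rather using the Borel selection machinery) produces the required partition, and the reverse inequality holds since any witnessing partition for $I_m(\nu)$ has entropy $\sH(\pv^{(m)})$ and generates $\xi$ relative to $\cF$, hence bounds $\rh_{G,\nu}(\xi \given \cF)$ from above.

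The key technical point is then to verify that each $I_m$ is Borel on a $\tau$-conull set, which reduces to showing that the set $E_m = \{\nu \in \E_G(X) : \exists \alpha \in \Prt_{\mathcal{M}} \text{ with } \dist_\mu(\alpha) = \pv^{(m)} \text{ and } \xi \subseteq \salg_G(\alpha) \vee \cF \mod \Null_\nu\}$ is Borel (more precisely, analytic, hence Borel modulo $\tau$-null sets after intersecting with a conull Borel set). For this I would invoke Lemma \ref{lem:borelgp}: part (ii) of that lemma, applied with the constant map $\nu \mapsto \pv^{(m)}$ in place of $\nu \mapsto \pv^\nu$, shows that the set $Z_m$ of pairs $(\nu, \alpha) \in \E_G(X) \times \Prt_{\mathcal{M}}$ with $\dist_\mu(\alpha) = \pv^{(m)}$ and $\xi \subseteq \salg_G(\alpha) \vee \cF \mod \Null_\nu$ is Borel. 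Then $E_m$ is the projection of $Z_m$ to the first coordinate, hence analytic, and by \cite[Theorem 21.10]{K95} there is a Borel set $E_m'$ with $E_m \symd E_m' \in \Null_\tau$; discarding the (countably many) symmetric differences and intersecting, we obtain a single $\tau$-conull Borel set $E$ on which every $E_m$ agrees with a Borel set, so every $I_m \res E$ is Borel and hence so is $\nu \mapsto \inf_m I_m(\nu) = \rh_{G,\nu}(\xi \given \cF)$ on $E$.

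One subtlety I would want to be careful about: Lemma \ref{lem:borelgp}(ii) as stated assumes the hypothesis $\rh_{G,\nu}(\xi \given \cF) < \sH(\pv^\nu)$ for the Borel map $\nu \mapsto \pv^\nu$, but the Borelness conclusion in its proof does not actually use that inequality — it only uses that $\nu \mapsto \pv^\nu$ is Borel — so it applies verbatim to the constant map $\nu \mapsto \pv^{(m)}$. I should state this reduction explicitly, or alternatively reprove the one-line fact that $\{(\nu,\alpha) : \xi \subseteq \salg_G(\alpha)\vee\cF \mod \Null_\nu\}$ is Borel (which is exactly what the proof of Lemma \ref{lem:borelgp}(ii) establishes, via the countable generating algebra $\{S_P(\alpha)\}$ and Lemmas \ref{lem:adalg}, \ref{lem:mm}, \ref{lem:mad}). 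The main obstacle is really just this bookkeeping — ensuring the "$\mod \Null_\nu$" conditions and the passage from analytic to Borel modulo $\tau$-null sets are handled uniformly in $m$ — rather than any deep difficulty; the substantive content is entirely contained in Theorem \ref{thm:nekrieger} and Lemma \ref{lem:borelgp}.
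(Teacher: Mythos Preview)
Your proposal is correct and follows essentially the same route as the paper: both invoke (the proof of) Lemma~\ref{lem:borelgp}(ii) to see that the relevant subset of $\E_G(X)\times\Prt_{\mathcal{M}}$ is Borel, project to obtain analytic sets in $\E_G(X)$, and then pass to a $\tau$-conull Borel set via \cite[Thm.~21.10]{K95}. The only difference is that the paper works directly with the sublevel sets $\{\nu:\exists\,\alpha\in\Prt_{\mathcal{M}},\ \xi\subseteq\salg_G(\alpha)\vee\cF\bmod\Null_\nu,\ \sH_\nu(\alpha)\leq r\}$, thereby avoiding your discretization of probability vectors and the appeal to the ergodic Krieger theorem; your detour is harmless but unnecessary, since the identity $\rh_{G,\nu}(\xi\given\cF)=\inf\{\sH_\nu(\alpha):\alpha\in\Prt_{\mathcal{M}},\ \xi\subseteq\salg_G(\alpha)\vee\cF\bmod\Null_\nu\}$ for $\tau$-a.e.\ $\nu$ already follows from Lemma~\ref{lem:split}(i).
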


\begin{proof}
Let $\mathcal{M}$ be as in Lemma \ref{lem:split}. By the proof of Lemma \ref{lem:borelgp}.(i) the set $W$ of $(\nu, \alpha) \in \E_G(X) \times \Prt_{\mathcal{M}}$ satisfying $\xi \subseteq \salg_G(\alpha) \vee \cF \mod \Null_\nu$ is Borel. So for every $r \geq 0$ the set
\begin{align*}
\{\nu \in \E_G(X) : \rh_{G,\nu}(\xi \given \cF) \leq r\} & = \{\nu \in \E_G(X) : \exists \alpha \in \Prt_{\mathcal{M}} \ (\nu, \alpha) \in W \wedge \sH_\nu(\alpha) \leq r\}
\end{align*}
is analytic. Thus the map $\nu \mapsto \rh_{G,\nu}(\xi \given \cF)$ is $\tau$-measurable \cite[Thm. 21.10]{K95}. It follows that there is a Borel conull set $E \subseteq \E_G(X)$ on which this map is Borel (for details see the proof of Proposition \ref{prop:select}).
\end{proof}

\begin{cor} \label{cor:ergavg}
Let $G \acts (X, \mu)$ be a {\pmp} action, let $\xi \subseteq \Borel(X)$ be countable, and let $\cF$ be a countably generated $G$-invariant sub-$\sigma$-algebra. If $\mu = \int_{\E_G(X)} \nu \ d \tau(\nu)$ is the ergodic decomposition of $\mu$ then
$$\rh_{G,\mu}(\xi \given \cF) = \int_{\E_G(X)} \rh_{G, \nu}(\xi \given \cF) \ d \tau(\nu).$$
\end{cor}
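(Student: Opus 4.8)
The plan is to prove the two inequalities separately, using Theorem \ref{thm:nekrieger} for the hard direction and Lemma \ref{lem:break} together with a routine measurable-selection argument for the easy direction.

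\textbf{The inequality $\geq$.} First I would reduce to the aperiodic case. If $G \acts (X,\mu)$ is not aperiodic, decompose $X$ into its periodic and aperiodic parts; both are $G$-invariant and $\sinv_G$-measurable, so I can treat each separately and the formula is additive over this splitting. On the periodic part one argues directly (finite orbits force both sides to behave transparently; in fact outer Rokhlin entropy on a periodic component relative to $\cF$ is governed by $\sH_\nu(\cdot \given \cF)$ and Lemma \ref{lem:break} applies verbatim since there the conclusion $\salg_G(\alpha)\vee\cF\vee\sinv_G \supseteq \xi$ with a partition of the right entropy can be built from a Borel family of partitions on the finite orbits). So assume $G \acts (X,\mu)$ is aperiodic. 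For any $\varepsilon > 0$, by Lemma \ref{lem:entan} the map $\nu \mapsto \rh_{G,\nu}(\xi \given \cF)$ is Borel on a $\tau$-conull set $E$; choose a Borel map $\nu \mapsto \pv^\nu$ with $\rh_{G,\nu}(\xi \given \cF) < \sH(\pv^\nu) < \rh_{G,\nu}(\xi \given \cF) + \varepsilon$ for all $\nu \in E$ (concretely, let $\pv^\nu$ be a near-uniform vector on finitely many atoms whose length is chosen Borel-measurably from $\rh_{G,\nu}(\xi\given\cF)$, and trim it so Shannon entropy lands in the target window). Apply Theorem \ref{thm:nekrieger} to get a Borel partition $\alpha = \{A_i\}$ with $\xi \subseteq \salg_G(\alpha) \vee \cF \mod \Null_\mu$ and $\nu(A_i) = p_i^\nu$ for $\tau$-a.e.\ $\nu$. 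Then $\rh_{G,\mu}(\xi \given \cF) \leq \sH_\mu(\alpha \given \cF \vee \sinv_G)$, and by Lemma \ref{lem:break} this equals $\int_{\E_G(X)} \sH_\nu(\alpha)\, d\tau(\nu) = \int_{\E_G(X)} \sH(\pv^\nu)\, d\tau(\nu) \leq \int_{\E_G(X)} \rh_{G,\nu}(\xi \given \cF)\, d\tau(\nu) + \varepsilon$. Let $\varepsilon \to 0$. (One should check the integrability/finiteness bookkeeping: if the right side is infinite there is nothing to prove, and if it is finite the vectors $\pv^\nu$ can be chosen with uniformly controlled tails so the dominated convergence step is legitimate.)

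\textbf{The inequality $\leq$.} Let $\alpha$ be any countable Borel partition of $X$ with $\xi \subseteq \salg_G(\alpha) \vee \cF \vee \sinv_G \mod \Null_\mu$. I claim that for $\tau$-a.e.\ $\nu \in \E_G(X)$ one has $\xi \subseteq \salg_G(\alpha) \vee \cF \mod \Null_\nu$ — the point being that $\sinv_G$ is $\nu$-trivial for every ergodic $\nu$, so over a fixed ergodic component the term $\sinv_G$ contributes nothing. Making this precise requires the countable-generation hypothesis on $\cF$ (and on $\xi$, which is automatic): fix a countable $G$-invariant algebra generating $\salg_G(\alpha)\vee\cF$; membership of each $D_k \in \xi$ in $\salg_G(\alpha)\vee\cF\vee\sinv_G$ mod $\Null_\mu$ is witnessed by an $\varepsilon$-$n$ quantifier statement over this algebra plus $\sinv_G$-sets, and since every $\sinv_G$-set is $\nu$-null or $\nu$-conull, the witness survives (with the $\sinv_G$-set deleted) for $\tau$-a.e.\ $\nu$; a Borel-Cantelli argument over $k,n$ gives a single $\tau$-conull set that works. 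Hence $\rh_{G,\nu}(\xi \given \cF) \leq \sH_\nu(\alpha)$ for $\tau$-a.e.\ $\nu$, so by Lemma \ref{lem:break}, $\int \rh_{G,\nu}(\xi \given \cF)\, d\tau \leq \int \sH_\nu(\alpha)\, d\tau = \sH_\mu(\alpha \given \cF \vee \sinv_G)$. Taking the infimum over all such $\alpha$ yields $\int \rh_{G,\nu}(\xi \given \cF)\, d\tau \leq \rh_{G,\mu}(\xi \given \cF)$. (The measurability of $\nu \mapsto \rh_{G,\nu}(\xi\given\cF)$ needed to even write this integral is Lemma \ref{lem:entan}, after restricting to the aperiodic part; on the periodic part it is an easy separate verification.)

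\textbf{Main obstacle.} The routine-looking pieces (the $\sinv_G$-triviality argument for $\leq$, the Borel-Cantelli bookkeeping) are genuinely the fiddly part, but the real engine is the $\geq$ direction, and there the crux is already packaged: it is the Borel-in-$\nu$ construction of the partition $\alpha$, i.e.\ Theorem \ref{thm:nekrieger}. Given that theorem, the only delicate remaining point is choosing the Borel family $\nu \mapsto \pv^\nu$ so that simultaneously (a) $\sH(\pv^\nu)$ sits strictly above $\rh_{G,\nu}(\xi\given\cF)$ by less than $\varepsilon$, (b) the assignment is Borel, and (c) the vectors have uniformly decaying tails so that $\int \sH(\pv^\nu)\,d\tau$ is finite whenever $\int \rh_{G,\nu}(\xi\given\cF)\,d\tau$ is. This is elementary but is the place where one must be careful; I expect it to be handled by an explicit formula for $\pv^\nu$ in terms of (a Borel approximation to) $\rh_{G,\nu}(\xi\given\cF)$.
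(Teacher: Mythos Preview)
Your proposal is correct and follows essentially the same route as the paper: Theorem \ref{thm:nekrieger} plus Lemma \ref{lem:entan} and Lemma \ref{lem:break} for the hard direction, and the $\sinv_G$-triviality-on-ergodic-components observation plus Lemma \ref{lem:break} for the easy direction, with the periodic part handled separately (the paper does this via a Borel transversal $B$ of the finite orbits rather than a general splitting, but that is a cosmetic difference). One small slip: in both directions you write $\sH_\nu(\alpha)$ where Lemma \ref{lem:break} gives $\sH_\nu(\alpha \given \cF)$; the inequalities still go the right way, so this is harmless, but you should use $\sH_\nu(\alpha \given \cF)$ throughout to match the statement of Lemma \ref{lem:break}.
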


\begin{proof}
Let $X_\infty \subseteq X$ be the set of points having an infinite $G$-orbit. Also let $E_\infty \subseteq \E_G(X)$ be the set of $\nu$ for which $G \acts (X, \nu)$ is aperiodic. Note that the probability measure $\mu_\infty = \frac{1}{\tau(E_\infty)} \cdot \int_{E_\infty} \nu \ d \tau(\nu)$ is $G$-invariant and supported on $X_\infty$. Fix $\epsilon > 0$. By Lemma \ref{lem:entan} we may fix a Borel map $\nu \in E_\infty \mapsto \pv^\nu$ satisfying $\sH(\pv^\nu) = \rh_{G,\nu}(\xi \given \cF) + \epsilon$ for $\tau$-almost-every $\nu$. By Theorem \ref{thm:nekrieger} there is a partition $\alpha_0$ of $X_\infty$ satisfying $\sH_\nu(\alpha_0) = \rh_{G,\nu}(\xi \given \cF) + \epsilon$ for almost-every $\nu \in E_\infty$ and $\xi \subseteq \salg_G(\alpha_0) \vee \cF \mod \Null_{\mu_\infty}$. Let $X_*$ be the set of points $x \in X \setminus X_\infty$ such that the restriction $\xi \res G \cdot x$ is not a subset of $\cF \res G \cdot x$. Since all orbits in $X_*$ are finite, there is a Borel set $B \subseteq X_*$ which meets every orbit in $X_*$ precisely once. Set $\alpha = \alpha_0 \cup \{B, X \setminus (X_\infty \cup B)\}$. Note that $\alpha$ is a partition of $X$ and that $\sH_\nu(\alpha \given \cF) = \rh_{G,\nu}(\xi \given \cF)$ for all $\nu \in \E_G(X) \setminus E_\infty$. By our choice of $B$ and $\alpha_0$, we have that $\xi \subseteq \salg_G(\alpha) \vee \sinv_G \vee \cF \mod \Null_\mu$. So by definition of Rokhlin entropy and Lemma \ref{lem:break} we have
\begin{align*}
\rh_{G,\mu}(\xi \given \cF) \leq \sH_\mu(\alpha \given \sinv_G \vee \cF) & = \int_{\nu \in \E_G(X)} \sH_\nu(\alpha \given \cF) \ d \tau(\nu)\\
& \leq \epsilon + \int_{\nu \in \E_G(X)} \rh_{G, \nu}(\xi \given \cF) \ d \tau(\nu).
\end{align*}
By letting $\epsilon$ tend to $0$ we obtain one inequality.

For the reverse inequality, suppose that $\alpha$ is a countable partition satisfying $\xi \subseteq \salg_G(\alpha) \vee \sinv_G \vee \cF \mod \Null_\mu$. Since $\xi$ is countable, for $\tau$-almost-every $\nu \in \E_G(X)$ we have $\xi \subseteq \salg_G(\alpha) \vee \cF \mod \Null_\nu$. It follows that $\tau$-almost-always $\sH_\nu(\alpha \given \cF) \geq \rh_{G,\nu}(\xi \given \cF)$. Therefore applying Lemma \ref{lem:break} we get
$$\sH_\mu(\alpha \given \sinv_G \vee \cF) = \int_{\nu \in \E_G(X)} \sH_\nu(\alpha \given \cF) \ d \tau(\nu) \geq \int_{\nu \in \E_G(X)} \rh_{G,\nu}(\xi \given \cF) \ d \tau(\nu).$$
Now take the infimum over all such $\alpha$ to obtain $\rh_{G,\mu}(\xi \given \cF)$ on the left-hand side.
\end{proof}

Now we can verify the countable sub-additivity property of Rokhlin entropy for non-ergodic actions. At the moment, countable sub-additivity is arguably the most useful property for studying Rokhlin entropy. At first glance, this property may seem like an immediate consequence of the definitions, but this is not so. For example, this sub-additive property implies that if $\salg_G(\alpha \vee \beta) = \Borel(X)$ then $\rh_G(X, \mu) \leq \sH(\alpha) + \sH(\beta \given \salg_G(\alpha))$. Its proof relies critically upon Theorem \ref{thm:nekrieger} (or Theorem \ref{intro:krieger} in the ergodic case).

\begin{cor} \label{cor:add2}
Let $G \acts (X, \mu)$ be a {\pmp} action, let $\cF$ be a $G$-invariant sub-$\sigma$-algebra, and let $\xi \subseteq \Borel(X)$. If $(\Sigma_n)_{n \in \N}$ is an increasing sequence of $G$-invariant sub-$\sigma$-algebras with $\xi \subseteq \bigvee_{n \in \N} \Sigma_n \vee \cF$ then
$$\rh_{G,\mu}(\xi \given \cF) \leq \rh_{G,\mu}(\Sigma_1 \given \cF) + \sum_{n = 2}^\infty \rh_{G,\mu}(\Sigma_n \given \Sigma_{n-1} \vee \cF).$$
\end{cor}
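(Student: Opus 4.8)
My plan is to reduce the inequality fiberwise to the ergodic case using the ergodic decomposition formula for outer Rokhlin entropy, Corollary \ref{cor:ergavg}, and then apply the countable sub-additivity of Rokhlin entropy for \emph{ergodic} actions established in Part II \cite{S14b}.

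First I would carry out a routine reduction to countably generated objects. Both $\rh_{G,\mu}(\cdot\given\cdot)$ and the relation ``$\subseteq$ modulo $\Null_\mu$'' depend only on the equivalence classes modulo $\Null_\mu$ of the $\sigma$-algebras involved, and every ($G$-invariant) sub-$\sigma$-algebra agrees modulo $\Null_\mu$ with a countably generated ($G$-invariant) one. Choosing countable $G$-invariant generating families suitably, I may therefore assume that $\cF$ and every $\Sigma_n$ are countably generated and $G$-invariant, that $(\Sigma_n)$ is still increasing, that $\xi$ is countable, and that $\xi\subseteq\bigvee_{n\in\N}\Sigma_n\vee\cF$ still holds modulo $\Null_\mu$ (which is all the conclusion involves). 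I would also use the elementary fact that $\rh_{G,\mu}(\Sigma\given\mathcal{G})=\rh_{G,\mu}(\zeta\given\mathcal{G})$ whenever $\zeta$ is a countable family generating a $\sigma$-algebra $\Sigma$ and $\mathcal{G}$ is a $G$-invariant sub-$\sigma$-algebra, so that Corollary \ref{cor:ergavg} applies to each of the three kinds of term in the statement.

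Now let $\mu=\int_{\E_G(X)}\nu\ d\tau(\nu)$ be the ergodic decomposition. Since $\xi$ is countable and contained in $\bigvee_n\Sigma_n\vee\cF$ modulo $\Null_\mu$, for $\tau$-almost-every $\nu\in\E_G(X)$ we also have $\xi\subseteq\bigvee_n\Sigma_n\vee\cF$ modulo $\Null_\nu$: choose a representative in $\bigvee_n\Sigma_n\vee\cF$ for each of the countably many members of $\xi$, and observe that it agrees with that member modulo $\Null_\nu$ for $\tau$-almost-every $\nu$. For every such $\nu$ the action $G\acts(X,\nu)$ is ergodic and the $\Sigma_n$ remain increasing, so the countable sub-additivity of Rokhlin entropy for ergodic actions \cite{S14b} gives
\[
\rh_{G,\nu}(\xi\given\cF)\ \le\ \rh_{G,\nu}(\Sigma_1\given\cF)\ +\ \sum_{n=2}^\infty \rh_{G,\nu}(\Sigma_n\given\Sigma_{n-1}\vee\cF).
\]
Integrating this against $\tau$, using monotone convergence on the right-hand side (the integrands are nonnegative) to interchange the sum and the integral, and then applying Corollary \ref{cor:ergavg} to each of the three terms, produces exactly the claimed inequality.

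The real content of this corollary lies in Corollary \ref{cor:ergavg} (hence ultimately in Theorem \ref{thm:nekrieger}) and in the ergodic case, so the work remaining here is essentially bookkeeping: the mod-$\Null_\mu$ reduction of the first step and the passage to $\tau$-almost-every fiber in the second. One might instead hope to prove the inequality directly on $X$, building partitions $\alpha_1,\alpha_2,\dots$ with $\Sigma_n\subseteq\salg_G(\alpha_1\vee\cdots\vee\alpha_n)\vee\cF\vee\sinv_G$ and small conditional entropy and setting $\alpha=\bigvee_n\alpha_n$, as one does in the ergodic setting. The obstruction to this is that when the fiber entropies $\nu\mapsto\rh_{G,\nu}(\,\cdot\,)$ are unbounded, the partitions furnished by Theorem \ref{thm:nekrieger} must have distributions varying across the ergodic components and hence may have infinite unconditional entropy, so that $\bigvee_n\alpha_n$ need not be a countable partition. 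Passing through the ergodic decomposition, where $\sinv_G$ is trivial on each fiber, neatly avoids this, which is why it is the natural route.
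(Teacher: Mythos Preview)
Your proposal is correct and follows essentially the same approach as the paper's own proof: reduce to countably generated data, invoke the ergodic case of countable sub-additivity from \cite[Cor.~2.5]{S14b} fiberwise, and integrate against the ergodic decomposition using Corollary~\ref{cor:ergavg}. Your write-up is in fact more detailed than the paper's, which leaves the passage to $\tau$-almost-every fiber and the monotone convergence step implicit.
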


\begin{proof}
Let $\xi' \subseteq \Borel(X)$ be countable with $\salg(\xi) = \salg(\xi') \mod \Null_\mu$. Also fix countably generated $\sigma$-algebras $\cF'$ and $\Sigma_n'$ with $\cF' = \cF \mod \Null_\mu$ and $\Sigma_n' = \Sigma_n \mod \Null_\mu$. Clearly $\rh_{G,\mu}(\xi' \given \cF') = \rh_{G, \mu}(\xi \given \cF)$ and $\rh_{G, \mu}(\Sigma_n' \given \Sigma_{n-1}' \vee \cF') = \rh_{G,\mu}(\Sigma_n \given \Sigma_{n-1} \vee \cF)$. It was recorded in \cite[Cor. 2.5]{S14b} that for ergodic $\nu \in \E_G(X)$ we have
$$\rh_{G,\nu}(\xi' \given \cF') \leq \rh_{G,\nu}(\Sigma_1' \given \cF') + \sum_{n = 2}^\infty \rh_{G,\nu}(\Sigma_n' \given \Sigma_{n-1}' \vee \cF').$$
Now integrate over $\nu \in \E_G(X)$ and apply Corollary \ref{cor:ergavg}.
\end{proof}

\begin{cor} \label{cor:add1}
Let $G \acts (X, \mu)$ be a {\pmp} action, and let $\cF$ be a $G$-invariant sub-$\sigma$-algebra. If $G \acts (Y, \nu)$ is a factor of $(X, \mu)$ and $\Sigma$ is the associated $G$-invariant sub-$\sigma$-algebra, then
$$\rh_G(X, \mu \given \cF) \leq \rh_G(Y, \nu) + \rh_G(X, \mu \given \cF \vee \Sigma).$$
\end{cor}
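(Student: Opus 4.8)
The plan is to deduce this from the countable sub-additivity of outer Rokhlin entropy, Corollary \ref{cor:add2}. First note that straight from the definitions $\rh_G(X,\mu\given\cF) = \rh_{G,\mu}(\Borel(X)\given\cF)$, so it suffices to bound this outer Rokhlin entropy. I would apply Corollary \ref{cor:add2} with $\xi = \Borel(X)$ and the increasing sequence of $G$-invariant sub-$\sigma$-algebras $\Sigma_1 = \Sigma$ and $\Sigma_n = \Borel(X)$ for all $n \geq 2$; the hypothesis $\xi \subseteq \bigvee_{n} \Sigma_n \vee \cF$ holds trivially. This yields
$$\rh_G(X,\mu\given\cF) = \rh_{G,\mu}(\Borel(X)\given\cF) \leq \rh_{G,\mu}(\Sigma\given\cF) + \rh_{G,\mu}(\Borel(X)\given\Sigma\vee\cF) + \sum_{n=3}^\infty \rh_{G,\mu}(\Borel(X)\given\Borel(X)\vee\cF).$$

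Next I would simplify the right-hand side. Every term of the infinite sum vanishes, since the one-piece partition $\{X\}$ witnesses $\rh_{G,\mu}(\Borel(X)\given\Borel(X)\vee\cF) = 0$. The middle term equals, by definition, $\rh_G(X,\mu\given\Sigma\vee\cF) = \rh_G(X,\mu\given\cF\vee\Sigma)$, which is precisely the second summand in the desired inequality. It therefore remains to establish $\rh_{G,\mu}(\Sigma\given\cF) \leq \rh_G(Y,\nu)$.

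For that last step I would first observe that conditioning on a larger $\sigma$-algebra only decreases both the set of admissible partitions' constraint and the entropy functional, so $\rh_{G,\mu}(\Sigma\given\cF) \leq \rh_{G,\mu}(\Sigma) = \rh_{G,\mu}(Y,\nu)$, the last equality being the definition of the outer Rokhlin entropy of the factor within $(X,\mu)$. To bound $\rh_{G,\mu}(Y,\nu)$ by $\rh_G(Y,\nu)$, take any countable partition $\beta$ of $Y$ with $\salg_G(\beta)\vee\sinv_G = \Borel(Y)$ (here $\sinv_G$ denotes the $G$-invariant sets of $Y$) and pull it back along the factor map $\pi : X \to Y$ to the $\Sigma$-measurable partition $\bar\beta = \pi^{-1}(\beta)$ of $X$. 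Since $\pi$ is $G$-equivariant, the $\pi$-preimage of a $G$-invariant subset of $Y$ is $G$-invariant in $X$, so $\Sigma = \pi^{-1}\big(\salg_G(\beta)\vee\sinv_G\big) \subseteq \salg_G(\bar\beta)\vee\sinv_G$ (now with $\sinv_G$ that of $X$), i.e. $\bar\beta$ is admissible for computing $\rh_{G,\mu}(\Sigma)$. Moreover $\sH_\mu(\bar\beta\given\sinv_G) \leq \sH_\mu\big(\bar\beta\given\pi^{-1}(\sinv_G^Y)\big) = \sH_\nu(\beta\given\sinv_G^Y)$, because conditional Shannon entropy of a pulled-back partition given a pulled-back $\sigma$-algebra depends only on the joint distribution, which is the same upstairs and downstairs. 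Taking the infimum over all such $\beta$ gives $\rh_{G,\mu}(Y,\nu) \leq \rh_G(Y,\nu)$, which completes the proof. The only point needing genuine care is this final bookkeeping distinguishing the invariant $\sigma$-algebra of $X$ from that of $Y$; everything else is a formal manipulation of the sub-additivity inequality and the definition of outer Rokhlin entropy.
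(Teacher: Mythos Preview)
Your proposal is correct and follows essentially the same approach as the paper: apply Corollary~\ref{cor:add2} with $\xi = \Borel(X)$ and then observe that $\rh_{G,\mu}(\Sigma \given \cF) \leq \rh_G(Y,\nu)$. The paper states this last inequality without justification, whereas you spell out the pullback argument; your detour through $\rh_{G,\mu}(\Sigma \given \cF) \leq \rh_{G,\mu}(\Sigma)$ is unnecessary (the same pullback argument gives $\rh_{G,\mu}(\Sigma \given \cF) \leq \rh_G(Y,\nu)$ directly) but harmless.
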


\begin{proof}
Apply Corollary \ref{cor:add2} using $\xi = \Borel(X)$ and note that $\rh_{G,\mu}(\Sigma \given \cF) \leq \rh_G(Y, \nu)$.
\end{proof}

Using Theorem \ref{thm:nekrieger} and the ergodic decomposition formula, we obtain a simplified expression for Rokhlin entropy in the case of aperiodic actions. From the original definition, the expressions $\sH(\alpha \given \cF \vee \sinv_G)$ and $\xi \subseteq \salg_G(\alpha) \vee \cF \vee \sinv_G$ are replaced with $\sH(\alpha)$ and $\xi \subseteq \salg_G(\alpha) \vee \cF$ below.

\begin{cor} \label{cor:simplerok}
Let $G \acts (X, \mu)$ be a {\pmp} action, let $\xi \subseteq \Borel(X)$, and let $\cF$ be a $G$-invariant sub-$\sigma$-algebra. If $G \acts (X, \mu)$ is aperiodic then
$$\rh_{G,\mu}(\xi \given \cF) = \inf \Big\{ \sH(\alpha) : \alpha \text{ a countable partition with } \xi \subseteq \salg_G(\alpha) \vee \cF\Big\}.$$
\end{cor}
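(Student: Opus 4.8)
The plan is to show the two quantities are equal by proving inequalities in both directions. Write $R$ for the right-hand side, the infimum of $\sH(\alpha)$ over countable partitions $\alpha$ with $\xi \subseteq \salg_G(\alpha) \vee \cF$. For the inequality $\rh_{G,\mu}(\xi \given \cF) \leq R$, suppose $\alpha$ is any countable partition with $\xi \subseteq \salg_G(\alpha) \vee \cF$. Then certainly $\xi \subseteq \salg_G(\alpha) \vee \cF \vee \sinv_G$, and moreover $\sH_\mu(\alpha \given \cF \vee \sinv_G) \leq \sH_\mu(\alpha) = \sH(\alpha)$, since conditioning never increases Shannon entropy. Taking the infimum over such $\alpha$ gives $\rh_{G,\mu}(\xi \given \cF) \leq R$ directly from the definition of outer Rokhlin entropy.

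The reverse inequality $R \leq \rh_{G,\mu}(\xi \given \cF)$ is the substantive direction and is where Theorem \ref{thm:nekrieger} (through its consequence Corollary \ref{cor:ergavg}) together with Corollary \ref{cor:invzero} enters. First I would reduce to countably generated data: replace $\xi$ by a countable $\xi'$ with $\salg(\xi') = \salg(\xi) \mod \Null_\mu$, and $\cF$ by a countably generated $\cF'$ with $\cF' = \cF \mod \Null_\mu$; neither replacement changes either side of the claimed identity. Fix $\epsilon > 0$. Since $G \acts (X, \mu)$ is aperiodic, every ergodic component is aperiodic, so by Lemma \ref{lem:entan} we may choose a Borel map $\nu \mapsto \pv^\nu$ on a $\tau$-conull set with $\sH(\pv^\nu) = \rh_{G,\nu}(\xi' \given \cF') + \epsilon/2$, and by Corollary \ref{cor:ergavg} we have $\int \rh_{G,\nu}(\xi' \given \cF')\, d\tau(\nu) = \rh_{G,\mu}(\xi' \given \cF')$. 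Apply Theorem \ref{thm:nekrieger} to obtain a Borel partition $\alpha$ of $X$ with $\xi' \subseteq \salg_G(\alpha) \vee \cF' \mod \Null_\mu$ and $\nu(A_i) = p_i^\nu$ for $\tau$-a.e.\ $\nu$ and all $i$. By Lemma \ref{lem:break}, $\sH_\mu(\alpha \given \cF' \vee \sinv_G) = \int \sH_\nu(\alpha)\, d\tau(\nu) = \int \sH(\pv^\nu)\, d\tau(\nu) = \rh_{G,\mu}(\xi' \given \cF') + \epsilon/2$. The point now is that $\sH_\mu(\alpha)$ itself (not merely the conditional version) is small: we have $\sH_\mu(\alpha) \leq \sH_\mu(\alpha \given \sinv_G) + \sH_\mu(\eta)$ for any partition $\eta$ generating $\sinv_G$ up to null sets, but $\sinv_G$ need not be finitely generated, so instead I would use Corollary \ref{cor:invzero}: pick a two-piece partition $\zeta$ with $\sH(\zeta) < \epsilon/2$ and $\sinv_G \subseteq \salg_G(\zeta)$. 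Then $\alpha \vee \zeta$ is a countable partition satisfying $\xi' \subseteq \salg_G(\alpha) \vee \cF' \vee \sinv_G \subseteq \salg_G(\alpha \vee \zeta) \vee \cF'$, hence $\xi \subseteq \salg_G(\alpha \vee \zeta) \vee \cF$, and so $\alpha \vee \zeta$ is admissible for the infimum defining $R$; finally
\begin{align*}
\sH_\mu(\alpha \vee \zeta) &\leq \sH_\mu(\zeta) + \sH_\mu(\alpha \given \salg_G(\zeta))\\
&\leq \sH_\mu(\zeta) + \sH_\mu(\alpha \given \sinv_G) = \sH_\mu(\zeta) + \sH_\mu(\alpha \given \cF' \vee \sinv_G) + \sH_\mu(\alpha \given \sinv_G) - \sH_\mu(\alpha \given \cF'\vee\sinv_G),
\end{align*}
which is clumsy; cleaner is $\sH_\mu(\alpha \vee \zeta) \leq \sH_\mu(\zeta) + \sH_\mu(\alpha \given \sinv_G)$ using $\salg_G(\zeta) \supseteq \sinv_G$, and then bounding $\sH_\mu(\alpha \given \sinv_G)$ --- but here one must be careful, since $\sH_\mu(\alpha \given \sinv_G)$ involves conditioning only on $\sinv_G$, not on $\cF' \vee \sinv_G$, and in general $\sH_\mu(\alpha \given \sinv_G) \geq \sH_\mu(\alpha \given \cF' \vee \sinv_G)$.

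This last point is the main obstacle, so let me address it head-on in the plan. The subtlety is that the partition $\alpha$ produced by Theorem \ref{thm:nekrieger} controls $\sH_\nu(\alpha) = \sH(\pv^\nu)$ on ergodic components, hence controls $\sH_\mu(\alpha \given \sinv_G) = \int \sH_\nu(\alpha)\, d\tau(\nu) = \rh_{G,\mu}(\xi' \given \cF') + \epsilon/2$ exactly (this is the non-conditional form of Lemma \ref{lem:break}, i.e.\ the case $\cF = \{\varnothing, X\}$, which is the standard ergodic-decomposition identity for Shannon entropy). Thus in fact $\sH_\mu(\alpha \given \sinv_G) = \rh_{G,\mu}(\xi' \given \cF') + \epsilon/2$ directly, without needing the $\cF'$-conditional version, and we get
\[
R \leq \sH_\mu(\alpha \vee \zeta) \leq \sH_\mu(\zeta) + \sH_\mu(\alpha \given \salg_G(\zeta)) \leq \sH_\mu(\zeta) + \sH_\mu(\alpha \given \sinv_G) < \epsilon/2 + \rh_{G,\mu}(\xi' \given \cF') + \epsilon/2.
\]
Letting $\epsilon \to 0$ yields $R \leq \rh_{G,\mu}(\xi' \given \cF') = \rh_{G,\mu}(\xi \given \cF)$, completing the proof. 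One should double-check the degenerate possibility that $\rh_{G,\nu}(\xi' \given \cF') = \infty$ on a positive-$\tau$-measure set of $\nu$: then the right-hand side of Corollary \ref{cor:ergavg} is $+\infty$, so $\rh_{G,\mu}(\xi \given \cF) = \infty$ and the desired identity holds trivially since $R \geq \rh_{G,\mu}(\xi \given \cF)$ was already shown; so we may assume $\rh_{G,\nu}(\xi' \given \cF') < \infty$ $\tau$-a.e., and the choice of the Borel map $\nu \mapsto \pv^\nu$ with finite Shannon entropy via Lemma \ref{lem:entan} goes through on a conull set.
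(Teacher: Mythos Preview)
Your argument contains a genuine gap at the very last chain of inequalities. You write
\[
\sH_\mu(\alpha \vee \zeta) \leq \sH_\mu(\zeta) + \sH_\mu(\alpha \given \salg_G(\zeta)),
\]
but this inequality goes the wrong way. The chain rule gives $\sH_\mu(\alpha \vee \zeta) = \sH_\mu(\zeta) + \sH_\mu(\alpha \given \zeta)$, and since $\salg(\zeta) \subseteq \salg_G(\zeta)$, conditioning on the larger $\sigma$-algebra decreases entropy: $\sH_\mu(\alpha \given \salg_G(\zeta)) \leq \sH_\mu(\alpha \given \zeta)$. Hence $\sH_\mu(\alpha \vee \zeta) \geq \sH_\mu(\zeta) + \sH_\mu(\alpha \given \salg_G(\zeta))$, not $\leq$. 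Consequently you have no control on $\sH_\mu(\alpha \vee \zeta)$. This is not a technicality: if the probability vectors $\pv^\nu$ vary substantially with $\nu$ (and nothing in your construction prevents this), then $\sH_\mu(\alpha)$ can be much larger than $\int \sH_\nu(\alpha)\,d\tau(\nu) = \sH_\mu(\alpha \given \sinv_G)$, and joining with a low-entropy $\zeta$ satisfying $\sinv_G \subseteq \salg_G(\zeta)$ does not repair this, precisely because the chain rule only lets you condition on $\zeta$ itself, not on $\salg_G(\zeta)$.

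The paper's proof confronts exactly this obstacle and resolves it by a specific choice of $\pv^\nu$ (an argument going back to Rokhlin). Rather than taking an arbitrary vector with $\sH(\pv^\nu) = \rh_{G,\nu}(\xi \given \cF) + \epsilon/2$, one discretizes $\rh_{G,\nu}(\xi \given \cF)$ to a step function $t(\nu)$ and then sets
\[
\pv^\nu = \Big(1 - \tfrac{t(\nu)}{\log k},\ \tfrac{t(\nu)}{k\log k},\ \ldots,\ \tfrac{t(\nu)}{k\log k}\Big)
\]
for a large integer $k$. The point of this shape is that when one averages over $\nu$, the resulting $\mu$-distribution of $\alpha$ is again of the same form with $t(\nu)$ replaced by $T = \int t(\nu)\,d\tau(\nu)$, so that $\sH_\mu(\alpha)$ can be computed \emph{directly} and shown to be at most $\rh_{G,\mu}(\xi \given \cF) + \epsilon$. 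No auxiliary partition $\zeta$ is needed. Your use of Corollary \ref{cor:invzero} is a natural idea, but it cannot substitute for controlling the unconditional entropy of $\alpha$ at the source.
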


\begin{proof}
It is immediate from the definitions that the infimum on the right is greater than or equal to $\rh_{G,\mu}(\xi \given \cF)$, so it suffices to check the reverse inequality. The argument we present essentially comes from \cite{Roh67}. Fix $\epsilon > 0$ and pick $n \in \N$ with $1/n < \epsilon / 2$. Let $\mu = \int_{\E_G(X)} \nu \ d \tau(\nu)$ be the ergodic decomposition of $\mu$. For $\nu \in \E_G(X)$ define $t(\nu) = \frac{j}{n}$ where $j \in \N$ is least with $\rh_{G,\nu}(\xi \given \cF) < \frac{j}{n}$. By Lemma \ref{lem:entan} there is a Borel $\tau$-conull set $E \subseteq \E_G(X)$ on which the function $t$ is Borel. Set $T = \int t(\nu) \ d \tau(\nu)$ and note that $T < \rh_{G,\mu}(\xi \given \cF) + \epsilon / 2$ by Corollary \ref{cor:ergavg}. Now fix $k \in \N$ large enough that
$$\sH \left(1 - \frac{T}{\log(k)}, \frac{T}{\log(k)} \right) < \frac{\epsilon}{2},$$
and for $\nu \in \E_G(X)$ set
$$\pv^\nu = \left(1 - \frac{t(\nu)}{\log(k)}, \frac{t(\nu)}{k \log(k)}, \frac{t(\nu)}{k \log(k)}, \cdots, \frac{t(\nu)}{k \log(k)} \right).$$
Then
\begin{align*}
\sH(\pv^\nu) & = \sH \left(1 - \frac{t(\nu)}{\log(k)}, \frac{t(\nu)}{\log(k)} \right) + \frac{t(\nu)}{\log(k)} \cdot \sH \left(\frac{1}{k}, \frac{1}{k}, \cdots, \frac{1}{k} \right)\\
 & \geq 0 + t(\nu) > \rh_{G,\nu}(\xi \given \cF).
\end{align*}
We apply Theorem \ref{thm:nekrieger} to get a Borel partition $\alpha = \{A_i : i \in \N\}$ satisfying $\xi \subseteq \salg_G(\alpha) \vee \cF$ and $\nu(A_i) = p^\nu_i$ for every $i \in \N$ and $\tau$-almost-every $\nu$. Finally, observe that $\sH(\alpha)$ is equal to
\begin{align*}
\sH \left(1 - \frac{T}{\log(k)}, \frac{T}{\log(k)} \right) + \frac{T}{\log(k)} \cdot \sH \left(\frac{1}{k}, \frac{1}{k}, \ldots, \frac{1}{k} \right) < \frac{\epsilon}{2} + T < \rh_{G,\mu}(\xi \given \cF) + \epsilon.
\end{align*}
Letting $\epsilon$ tend to $0$ shows that the infimum is at most $\rh_{G,\mu}(\xi \given \cF)$.
\end{proof}

\section{Semi-continuity properties} \label{sec:cont}

In this section we establish some continuity and upper-semicontinuity results for Rokhlin entropy. Recall that a real-valued function $f$ on a topological space $X$ is called \emph{upper-semicontinuous} if for every $x \in X$ and $\epsilon > 0$ there is an open set $U$ containing $x$ with $f(y) < f(x) + \epsilon$ for all $y \in U$. When $X$ is first countable, this is equivalent to saying that $f(x) \geq \limsup f(x_n)$ whenever $(x_n)$ is a sequence converging to $x$.

For a probability space $(X, \mu)$, we will work with the space $\HPrt(\mu)$ of countable Borel partitions having finite Shannon entropy. If $\cF \subseteq \Borel(X)$ is a sub-$\sigma$-algebra, we write $\HPrt(\cF, \mu)$ for the set of $\cF$-measurable partitions in $\HPrt(\mu)$. The set $\HPrt(\mu)$ becomes a complete separable metric space when equipped with the \emph{Rokhlin metric} $\dR_\mu$ defined by $\dR_\mu(\alpha, \beta) = \sH_\mu(\alpha \given \beta) + \sH_\mu(\beta \given \alpha)$ \cite[Fact 1.7.15]{Do11}. We record some useful inequalities for the Rokhlin metric. Below, if $G$ acts on $(X, \mu)$, $\alpha$ is a partition of $X$, and $T \subseteq G$ is finite, then we let $\alpha^T$ denote the join $\bigvee_{t \in T} t \cdot \alpha$.

\begin{lem} \label{lem:app}
Let $G \acts (X, \mu)$ be a {\pmp} action, let $\cF$ be a $G$-invariant sub-$\sigma$-algebra, and let $\alpha, \beta, \xi \in \HPrt(\mu)$. Then:
\begin{enumerate} 
\item[\rm (i)] $\dR_\mu(\beta^T, \xi^T) \leq |T| \cdot \dR_\mu(\beta, \xi)$ for every finite $T \subseteq G$;
\item[\rm (ii)] $|\sH(\beta \given \cF) - \sH(\xi \given \cF)| \leq \dR_\mu(\beta, \xi)$;
\item[\rm (iii)] $|\sH(\alpha \given \beta \vee \cF) - \sH(\alpha \given \xi \vee \cF)| \leq 2 \cdot \dR_\mu(\beta, \xi)$.
\end{enumerate}
\end{lem}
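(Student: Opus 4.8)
The plan is to prove the three inequalities in Lemma \ref{lem:app} using only the standard subadditivity, monotonicity, and entropy-chain-rule identities for conditional Shannon entropy, together with the elementary fact that for any partitions $P, Q$ and any $G$-invariant $\cF$ one has $\sH(P \given Q \vee \cF) \le \sH(P \given Q)$ is \emph{not} what we want — rather we will keep $\cF$ as a fixed conditioning throughout and use $\sH(P \vee Q \given \cF) = \sH(P \given \cF) + \sH(Q \given P \vee \cF)$. I would dispose of the parts in the order (ii), (iii), (i), since (ii) is the base case, (iii) builds on the same one-step estimates, and (i) is a clean induction on $|T|$ that uses $G$-invariance of $\mu$.

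For part (ii): by the chain rule $\sH(\beta \given \cF) \le \sH(\beta \vee \xi \given \cF) = \sH(\xi \given \cF) + \sH(\beta \given \xi \vee \cF) \le \sH(\xi \given \cF) + \sH(\beta \given \xi)$, where the last step drops the extra conditioning on $\cF$ (conditioning never increases entropy). Hence $\sH(\beta \given \cF) - \sH(\xi \given \cF) \le \sH(\beta \given \xi) \le \dR_\mu(\beta, \xi)$. By symmetry in $\beta, \xi$ we get the reverse bound, which gives the absolute value estimate. For part (iii): write
\begin{align*}
\sH(\alpha \given \beta \vee \cF) - \sH(\alpha \given \xi \vee \cF)
&= \bigl[\sH(\alpha \vee \beta \given \cF) - \sH(\beta \given \cF)\bigr] - \bigl[\sH(\alpha \vee \xi \given \cF) - \sH(\xi \given \cF)\bigr].
\end{align*}
Now $|\sH(\alpha \vee \beta \given \cF) - \sH(\alpha \vee \xi \given \cF)| \le \dR_\mu(\alpha \vee \beta, \alpha \vee \xi) \le \dR_\mu(\beta, \xi)$ by applying (ii) with the pair $\alpha \vee \beta$, $\alpha \vee \xi$ and the standard fact that the Rokhlin metric does not increase under taking a common refinement (since $\sH(\alpha \vee \beta \given \alpha \vee \xi) \le \sH(\beta \given \xi)$ and symmetrically). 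Combining this with the bound $|\sH(\beta \given \cF) - \sH(\xi \given \cF)| \le \dR_\mu(\beta, \xi)$ from (ii) and the triangle inequality yields the factor $2$.

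For part (i): induct on $|T|$. The case $|T| = 1$ says $\dR_\mu(t \cdot \beta, t \cdot \xi) \le \dR_\mu(\beta, \xi)$, which holds with equality because $\mu$ is $G$-invariant so $\sH(t\cdot\beta \given t\cdot\xi) = \sH(\beta \given \xi)$. For the inductive step, pick $t_0 \in T$ and set $T' = T \setminus \{t_0\}$; then $\beta^T = \beta^{T'} \vee (t_0 \cdot \beta)$, and using the subadditivity of the Rokhlin metric under refinements, $\dR_\mu(\beta^T, \xi^T) \le \dR_\mu(\beta^{T'}, \xi^{T'}) + \dR_\mu(t_0 \cdot \beta, t_0 \cdot \xi) \le (|T|-1)\dR_\mu(\beta,\xi) + \dR_\mu(\beta,\xi) = |T| \dR_\mu(\beta, \xi)$.

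The only mildly delicate point — and the step I expect to need the most care — is the repeated use of the ``refinement does not increase the Rokhlin distance'' fact, i.e.\ $\dR_\mu(\alpha \vee \beta, \alpha \vee \xi) \le \dR_\mu(\beta, \xi)$ and more generally $\dR_\mu(P \vee P', Q \vee Q') \le \dR_\mu(P,Q) + \dR_\mu(P',Q')$. These follow from $\sH(P \vee P' \given Q \vee Q') \le \sH(P \given Q) + \sH(P' \given Q')$, which in turn comes from monotonicity and the chain rule ($\sH(P\vee P' \given Q \vee Q') \le \sH(P \given Q) + \sH(P' \given P \vee Q \vee Q') \le \sH(P \given Q) + \sH(P' \given Q')$); I would state this as an auxiliary observation at the start of the proof (or cite it from \cite{Do11}) and then apply it mechanically. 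Everything else is bookkeeping with the chain rule and the inequality $\sH(\cdot \given \mathcal{G}) \le \sH(\cdot \given \mathcal{G}')$ when $\mathcal{G} \supseteq \mathcal{G}'$.
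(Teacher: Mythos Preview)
Your proposal is correct and is precisely the sort of elementary chain-rule/monotonicity argument the paper has in mind: the paper's own ``proof'' reads in full ``This is a simple exercise. Alternatively, see the appendix to \cite{S14b}.'' Your write-up supplies exactly those details (including the auxiliary subadditivity $\dR_\mu(P\vee P',Q\vee Q')\le\dR_\mu(P,Q)+\dR_\mu(P',Q')$), so there is nothing to compare.
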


\begin{proof}
This is a simple exercise. Alternatively, see the appendix to \cite{S14b}.
\end{proof}

We begin with a few simple cases in which Rokhlin entropy is actually continuous, not just semicontinuous. Below for a sub-$\sigma$-algebra $\cF$ of $(X, \mu)$ and partitions $\alpha$ and $\beta$ with $\sH(\alpha \given \cF), \sH(\beta \given \cF) < \infty$, we define
$$\dR_\mu(\alpha, \beta \given \cF) = \sH(\alpha \given \beta \vee \cF) + \sH(\beta \given \alpha \vee \cF).$$
Note this quantity is bounded above by $\dR_\mu(\alpha, \beta)$ when $\alpha, \beta \in \HPrt(\mu)$.

\begin{lem} \label{lem:cont}
Let $G \acts (X, \mu)$ be a {\pmp} action, and let $\cF$ be a $G$-invariant sub-$\sigma$-algebra. Let $\gamma$, $\zeta$, $\cP$, and $\cQ$ be partitions such that $\sH(\gamma \given \cF), \sH(\zeta \given \cF), \sH(\cP \given \cF), \sH(\cQ \given \cF) < \infty$. 
\begin{enumerate}
\item[\rm (i)] $|\rh_{G,\mu}(\xi \given \salg_G(\gamma) \vee \cF) - \rh_{G,\mu}(\xi \given \salg_G(\zeta) \vee \cF)| \leq \dR_{\mu}(\gamma, \zeta \given \cF)$ for every collection $\xi \subseteq \Borel(X)$.
\item[\rm (ii)] $|\rh_G(X, \mu \given \salg_G(\gamma) \vee \cF) - \rh_G(X, \mu \given \salg_G(\zeta) \vee \cF)| \leq \dR_{\mu}(\gamma, \zeta \given \cF)$.
\item[\rm (iii)] $|\rh_{G,\mu}(\cP \given \cF) - \rh_{G,\mu}(\cQ \given \cF)| \leq \dR_\mu(\cP, \cQ \given \cF)$.
\end{enumerate}
\end{lem}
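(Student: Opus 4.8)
The plan is to prove the three inequalities in Lemma \ref{lem:cont} by a symmetric argument: each is an instance of the fact that perturbing the data (either the conditioning algebra or the target partition) by a small amount in the Rokhlin metric changes the relevant infimum by a controlled amount. Since all three claims have the form $|a - b| \le \delta$ with $a, b$ defined symmetrically, it suffices to prove $a \le b + \delta$ in each case, as the reverse follows by swapping the roles of the two partitions.

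For part (i), I would start with an arbitrary countable partition $\alpha$ witnessing the infimum defining $\rh_{G,\mu}(\xi \given \salg_G(\zeta) \vee \cF)$ up to $\epsilon$, so $\xi \subseteq \salg_G(\alpha) \vee \salg_G(\zeta) \vee \cF$ and $\sH(\alpha \given \salg_G(\zeta) \vee \cF \vee \sinv_G) \le \rh_{G,\mu}(\xi \given \salg_G(\zeta) \vee \cF) + \epsilon$. The candidate competitor for the other side is $\alpha \vee \zeta$ (or rather the partition generating $\salg_G(\alpha \vee \zeta)$), relative to $\salg_G(\gamma) \vee \cF$: since $\salg_G(\zeta) \subseteq \salg_G(\alpha \vee \zeta)$, the collection $\xi$ sits inside $\salg_G(\alpha \vee \zeta) \vee \salg_G(\gamma) \vee \cF$. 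Then I bound $\sH(\alpha \vee \zeta \given \salg_G(\gamma) \vee \cF \vee \sinv_G)$ by $\sH(\alpha \given \salg_G(\zeta) \vee \salg_G(\gamma) \vee \cF \vee \sinv_G) + \sH(\zeta \given \salg_G(\gamma) \vee \cF \vee \sinv_G)$; the first term is at most $\sH(\alpha \given \salg_G(\zeta) \vee \cF \vee \sinv_G)$ by monotonicity of conditional entropy, and the second term is at most $\sH(\zeta \given \salg_G(\gamma) \vee \cF) \le \sH(\zeta \given \gamma \vee \cF)$, again by monotonicity (here one uses that $\gamma \in \salg_G(\gamma)$). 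This gives $\rh_{G,\mu}(\xi \given \salg_G(\gamma) \vee \cF) \le \rh_{G,\mu}(\xi \given \salg_G(\zeta) \vee \cF) + \sH(\zeta \given \gamma \vee \cF) + \epsilon$, and letting $\epsilon \to 0$ and symmetrizing yields (i) with the sum $\sH(\zeta \given \gamma \vee \cF) + \sH(\gamma \given \zeta \vee \cF) = \dR_\mu(\gamma, \zeta \given \cF)$ on the right. Part (ii) is the special case $\xi = \Borel(X)$, after observing that $\salg_G(\alpha \vee \zeta) \vee \salg_G(\gamma) \vee \cF = \Borel(X)$ when $\salg_G(\alpha) \vee \salg_G(\zeta) \vee \cF = \Borel(X)$.

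For part (iii) the perturbation is in the target rather than the conditioning: given $\alpha$ witnessing $\rh_{G,\mu}(\cQ \given \cF)$ up to $\epsilon$, so $\cQ \subseteq \salg_G(\alpha) \vee \cF$ and $\sH(\alpha \given \cF \vee \sinv_G)$ is nearly minimal, the natural competitor for $\rh_{G,\mu}(\cP \given \cF)$ is $\alpha \vee \cP$: we have $\cP \subseteq \salg_G(\alpha \vee \cP) \vee \cF$ trivially, and $\sH(\alpha \vee \cP \given \cF \vee \sinv_G) \le \sH(\alpha \given \cF \vee \sinv_G) + \sH(\cP \given \alpha \vee \cF \vee \sinv_G) \le \sH(\alpha \given \cF \vee \sinv_G) + \sH(\cP \given \cQ \vee \cF)$, where the last step uses that $\cQ \subseteq \salg_G(\alpha) \vee \cF$ so conditioning on $\alpha \vee \cF$ (together with $\sinv_G$) is finer than conditioning on $\cQ \vee \cF$. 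This gives $\rh_{G,\mu}(\cP \given \cF) \le \rh_{G,\mu}(\cQ \given \cF) + \sH(\cP \given \cQ \vee \cF) + \epsilon$; symmetrizing produces $\dR_\mu(\cP, \cQ \given \cF)$.

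I expect the only real subtlety — and hence the main thing to be careful about — is the bookkeeping with the invariant $\sigma$-algebra $\sinv_G$ and with the passage between a $G$-invariant sub-$\sigma$-algebra $\salg_G(\gamma)$ and a partition $\gamma$ generating it: one must repeatedly invoke monotonicity of conditional Shannon entropy in the correct direction (coarser conditioning gives larger entropy, and $\gamma \in \salg_G(\gamma) \subseteq \salg_G(\gamma) \vee \cF \vee \sinv_G$), and check that the relevant joins of $\sigma$-algebras still contain $\xi$ (or equal $\Borel(X)$). There is also a minor point that $\alpha$ may have infinite entropy a priori, but since $\sH(\alpha \given \cF \vee \sinv_G)$ is what appears in the definition and we only ever add finite-entropy partitions ($\gamma, \zeta, \cP, \cQ$ have finite conditional entropy), the subadditivity estimates remain finite and valid. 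No appeal to the descriptive-set-theoretic machinery or to Theorem \ref{thm:nekrieger} is needed here; this lemma is a soft consequence of the definitions together with the elementary entropy inequalities in Lemma \ref{lem:app}.
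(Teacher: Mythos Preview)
Your argument contains a genuine gap in both (i) and (iii), stemming from a conflation of $\salg(\cdot)$ with $\salg_G(\cdot)$ when applying the chain rule for Shannon entropy.

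In (i), you claim
\[
\sH(\alpha \vee \zeta \given \salg_G(\gamma) \vee \cF \vee \sinv_G) \leq \sH(\alpha \given \salg_G(\zeta) \vee \salg_G(\gamma) \vee \cF \vee \sinv_G) + \sH(\zeta \given \salg_G(\gamma) \vee \cF \vee \sinv_G).
\]
But the chain rule only gives $\sH(\alpha \vee \zeta \given \Sigma) = \sH(\zeta \given \Sigma) + \sH(\alpha \given \zeta \vee \Sigma)$, with conditioning on the \emph{partition} $\zeta$, not on $\salg_G(\zeta)$. Since $\salg_G(\zeta) \supseteq \salg(\zeta)$, replacing $\zeta$ by $\salg_G(\zeta)$ in the conditioning can only \emph{decrease} that term, so your displayed inequality goes the wrong way. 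Consequently you cannot compare $\sH(\alpha \given \zeta \vee \salg_G(\gamma) \vee \cF \vee \sinv_G)$ to the quantity $\sH(\alpha \given \salg_G(\zeta) \vee \cF \vee \sinv_G)$ that you actually control. The same issue appears in (iii): you write ``$\cQ \subseteq \salg_G(\alpha) \vee \cF$ so conditioning on $\alpha \vee \cF$ \ldots\ is finer than conditioning on $\cQ \vee \cF$,'' but $\cQ$ lies in $\salg_G(\alpha) \vee \cF \vee \sinv_G$, not in $\salg(\alpha) \vee \cF \vee \sinv_G$, so there is no reason for $\sH(\cP \given \alpha \vee \cF \vee \sinv_G) \leq \sH(\cP \given \cQ \vee \cF)$ to hold.

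This is exactly the subtlety the paper flags just before Corollary~\ref{cor:add2}: passing from Shannon conditioning on a partition to conditioning on its $G$-generated $\sigma$-algebra is \emph{not} a soft consequence of the definitions. The paper's proof of Lemma~\ref{lem:cont} instead invokes sub-additivity of Rokhlin entropy (Corollary~\ref{cor:add2}), e.g.\ for (i)
\[
\rh_{G,\mu}(\xi \given \salg_G(\gamma) \vee \cF) \leq \rh_{G,\mu}(\zeta \given \salg_G(\gamma) \vee \cF) + \rh_{G,\mu}(\xi \given \salg_G(\zeta) \vee \cF) \leq \sH(\zeta \given \gamma \vee \cF) + \rh_{G,\mu}(\xi \given \salg_G(\zeta) \vee \cF),
\]
and sub-additivity in turn rests on Theorem~\ref{thm:nekrieger}. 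So your final remark that ``no appeal \ldots\ to Theorem~\ref{thm:nekrieger} is needed here'' is precisely where the argument breaks: that machinery is doing the work of replacing $\zeta$ by $\salg_G(\zeta)$ in the conditioning, and your elementary competitor argument does not reproduce it.
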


\begin{proof}
(i). By sub-additivity (Corollary \ref{cor:add2})
\begin{align*}
\rh_{G,\mu}(\xi \given \salg_G(\gamma) \vee \cF) & \leq \rh_{G,\mu}(\zeta \given \salg_G(\gamma) \vee \cF) + \rh_{G,\mu}(\xi \given \salg_G(\zeta) \vee \cF)\\
 & \leq \sH(\zeta \given \gamma \vee \cF) + \rh_{G,\mu}(\xi \given \salg_G(\zeta) \vee \cF).
\end{align*}
By symmetry a similar inequality holds with $\gamma$ and $\zeta$ reversed.

(ii). Use $\xi = \Borel(X)$ and apply (i).

(iii). By sub-additivity we have
\begin{align*}
\rh_{G,\mu}(\cP \given \cF) & \leq \rh_{G,\mu}(\cQ \given \cF) + \rh_{G,\mu}(\cP \given \salg_G(\cQ) \vee \cF)\\
 & \leq \rh_{G,\mu}(\cQ \given \cF) + \sH(\cP \given \cQ \vee \cF).
\end{align*}
By symmetry a similar inequality holds with $\cP$ and $\cQ$ reversed.
\end{proof}

Now we present a general but rather technical formula for Rokhlin entropy. This lemma will be used both in this section and the next in order to study the nature of Rokhlin entropy. First we need some additional notation. We write $\beta \leq \alpha$ if the partition $\beta$ is coarser than the partition $\alpha$. Also, we say a collection of partitions $\mathcal{C}$ is \emph{c-dense} (coarsely-dense) in $\HPrt(\cF, \mu)$ if for every $\psi \in \HPrt(\cF, \mu)$ and $\epsilon > 0$ there is $\gamma \in \mathcal{C}$ and a coarsening $\psi' \leq \gamma$ with $\dR_\mu(\psi', \psi) < \epsilon$. For an action $G \acts (X, \mu)$ we write $\sfinv_G$ for the $\sigma$-algebra generated by the Borel $G$-invariant sets consisting only of points having finite $G$-orbits. In other words, writing $X_{< \infty} = \{x \in X : |G \cdot x| < \infty\}$, the $\sigma$-algebra $\sfinv_G$ consists precisely of those Borel sets $A \subseteq X$ such that $A$ is $G$-invariant and either $A$ or $X \setminus A$ is a subset $X_{< \infty}$.

\begin{lem} \label{lem:outkolm}
Let $G \acts (X, \mu)$ be a {\pmp} action. Let $\cF$ be a $G$-invariant sub-$\sigma$-algebra, and let $\cP$ be a countable partition. Let $\mathcal{A} \subseteq \HPrt(\mu)$ be a collection of partitions which is c-dense in $\HPrt(\mu)$, and let $\mathcal{C} \subseteq \HPrt(\cF \vee \sfinv_G, \mu)$ be a collection of partitions which is c-dense in $\HPrt(\cF \vee \sfinv_G, \mu)$. If $\sH(\cP) < \infty$ then $\rh_{G,\mu}(\cP \given \cF)$ is equal to
$$\lim_{\epsilon \rightarrow 0} \inf_{\substack{\alpha \in \mathcal{A}\\\gamma \in \mathcal{C}}} \inf_{\substack{T \subseteq G\\T \text{finite}}} \inf \Big\{ \sH(\beta \given \chi^T \vee \gamma) : \beta, \chi \leq \alpha, \ \sH(\chi) + \sH(\cP \given \beta^T \vee \chi^T \vee \gamma) < \epsilon \Big\}.$$
In fact, for every $\epsilon > 0$ the triple infimum above is bounded between $\rh_{G,\mu}(\cP \given \cF) - \epsilon$ and $\rh_{G, \mu}(\cP \given \cF)$.
\end{lem}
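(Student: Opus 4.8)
The plan is to prove the two-sided bound in the last sentence, since the first display follows from it immediately by letting $\epsilon \to 0$. So fix $\epsilon > 0$; I must show the triple infimum $I(\epsilon)$ satisfies $\rh_{G,\mu}(\cP \given \cF) - \epsilon \le I(\epsilon) \le \rh_{G,\mu}(\cP \given \cF)$.

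For the \textbf{upper bound} $I(\epsilon) \le \rh_{G,\mu}(\cP \given \cF)$, I would start from a partition $\delta$ with $\cP \subseteq \salg_G(\delta) \vee \cF \vee \sinv_G$ and $\sH(\delta \given \cF \vee \sinv_G)$ close to $\rh_{G,\mu}(\cP \given \cF)$. Since $\cP$ has finite entropy, and using Lemma \ref{lem:break} together with the usual approximation of $\sinv_G$ on orbits (note that $\sinv_G$ restricted to the finite-orbit part is $\sfinv_G$, while on the aperiodic part Corollary \ref{cor:invzero} lets one capture $\sinv_G$ inside $\salg_G$ of a low-entropy partition), I can split $\delta$ into a piece that is $\cF \vee \sfinv_G$-measurable and a "dynamical" piece, absorbing the invariant $\sigma$-algebra contribution at arbitrarily small entropy cost. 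Concretely: pick $\gamma_0 \in \HPrt(\cF \vee \sfinv_G, \mu)$ and a partition $\delta_0$ so that $\cP \subseteq \salg_G(\delta_0) \vee \salg_G(\gamma_0) \vee \cF$, with $\sH(\gamma_0)$ small and $\sH(\delta_0 \given \gamma_0 \vee \cF)$ within $\epsilon/3$ of $\rh_{G,\mu}(\cP \given \cF)$; since $\cP$ has finite entropy I can choose a finite $T$ with $\sH(\cP \given \delta_0^T \vee \gamma_0^T \vee \cF)$ tiny. Then I approximate: replace $\cF$ on the conditioning side by a $\cF$-measurable $\chi_0$ with $\sH(\chi_0)$ small and $\cP$ still (approximately) refined by $\delta_0^T \vee \chi_0^T \vee \gamma_0$; use c-density of $\mathcal{C}$ to pull $\gamma_0$ down to a coarsening of some $\gamma \in \mathcal{C}$ (Lemma \ref{lem:app} controls the entropy perturbations under the join over $T$ and under conditioning); use c-density of $\mathcal{A}$ to find $\alpha \in \mathcal{A}$ with coarsenings $\beta \le \alpha$ (approximating $\delta_0$) and $\chi \le \alpha$ (approximating $\chi_0$), again with Lemma \ref{lem:app}.(i),(iii) bounding the damage. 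Choosing all approximation parameters small enough relative to $\epsilon$ keeps $\sH(\chi) + \sH(\cP \given \beta^T \vee \chi^T \vee \gamma) < \epsilon$ while $\sH(\beta \given \chi^T \vee \gamma) \le \sH(\delta_0 \given \gamma_0 \vee \cF) + O(\epsilon) \le \rh_{G,\mu}(\cP \given \cF)$; more precisely one arranges the final bound to be $\le \rh_{G,\mu}(\cP \given \cF)$ exactly by keeping a margin.

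For the \textbf{lower bound} $I(\epsilon) \ge \rh_{G,\mu}(\cP \given \cF) - \epsilon$, suppose $\alpha \in \mathcal{A}$, $\gamma \in \mathcal{C}$, finite $T \subseteq G$, and $\beta, \chi \le \alpha$ witness a value $v$ of the inner infimum, so $\sH(\chi) + \sH(\cP \given \beta^T \vee \chi^T \vee \gamma) < \epsilon$ and $v = \sH(\beta \given \chi^T \vee \gamma)$. Consider the partition $\sigma = \beta \vee \chi$ (or $\sigma = \beta \vee \chi \vee \eta$ where $\eta$ captures the error term $\cP \setminus (\beta^T \vee \chi^T \vee \gamma)$ at entropy $< \epsilon - \sH(\chi)$ by taking $\eta$ to be a partition realizing $\sH(\cP \given \beta^T \vee \chi^T \vee \gamma)$ — this is where I spend the $\epsilon$). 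Then $\cP \subseteq \salg_G(\sigma) \vee \salg_G(\gamma) \vee \cF \vee \sinv_G$ because $\gamma$ is $\cF \vee \sfinv_G$-measurable and $\sfinv_G \subseteq \sinv_G$, so $\salg_G(\gamma) \subseteq \cF \vee \sinv_G$. Hence by definition of outer Rokhlin entropy (and monotonicity/sub-additivity, Corollary \ref{cor:add2}) $\rh_{G,\mu}(\cP \given \cF) \le \sH(\sigma \given \cF \vee \sinv_G) \le \sH(\sigma \given \gamma \vee \cF \vee \sinv_G) + \sH(\gamma\mid \cF\vee\sinv_G)$; the last term vanishes since $\gamma$ is $\cF\vee\sfinv_G$-measurable hence $\cF\vee\sinv_G$-measurable, giving $\rh_{G,\mu}(\cP \given \cF) \le \sH(\beta \vee \chi \vee \eta \given \gamma) \le \sH(\beta \given \chi^T \vee \gamma) + \sH(\chi) + \sH(\eta) < v + \epsilon$ (using $\sH(\beta \given \gamma) \le \sH(\beta\given\chi^T\vee\gamma)+\sH(\chi^T\given\gamma) $ and the standard fact $\sH(\chi^T \given \gamma)\le \sH(\chi^T)$; one must be slightly careful and instead bound $\sH(\beta \given \gamma)$ directly, which may force using $\beta^T$ in the conditioning — an alternative is to note $\sH(\sigma \given \cF\vee\sinv_G) \le \sH(\beta\given\chi^T\vee\gamma) + \sH(\chi) + \sH(\eta)$ via a chain rule on the generating join). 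Taking the infimum over all witnesses gives $\rh_{G,\mu}(\cP \given \cF) \le I(\epsilon) + \epsilon$.

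\textbf{Main obstacle.} The delicate point is the bookkeeping in the lower bound: the inner infimum conditions $\beta$ on $\chi^T \vee \gamma$ but the Rokhlin-entropy upper bound wants $\sH(\beta \vee \chi \vee \eta \given \cF \vee \sinv_G)$, and converting between "$\chi^T$ in the conditioning" and "$\chi$ as part of the generating partition" must be done so that the $|T|$-fold blow-up does \emph{not} appear — this works precisely because $\salg_G(\beta \vee \chi) \supseteq \salg_G(\chi) \ni \chi^T$, so one uses the chain rule $\sH(\beta\vee\chi\vee\eta\mid\gamma\vee\cF\vee\sinv_G) \le \sH(\chi\mid\ldots) + \sH(\eta\mid\ldots) + \sH(\beta\mid \chi^T\vee\gamma\vee\ldots)$ and then drops the extra conditioning $\cF\vee\sinv_G$ from the last term (legitimate: adding conditioning only decreases entropy). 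In the upper bound, the analogous care is needed to ensure the approximations via c-density and Lemma \ref{lem:app} can be made to land \emph{below} $\rh_{G,\mu}(\cP\given\cF)$ and not merely within $O(\epsilon)$ of it; this is achieved by first choosing $\delta_0$ with $\sH(\delta_0\mid\gamma_0\vee\cF)$ strictly below $\rh_{G,\mu}(\cP\given\cF)+\epsilon'$ for $\epsilon'\ll\epsilon$ and absorbing the perturbation into the remaining budget.
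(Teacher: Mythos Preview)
Your outline for the \emph{upper bound} $I(\epsilon)\le\rh_{G,\mu}(\cP\given\cF)$ is essentially the paper's argument: start from a near-optimal $\xi$ in the definition of Rokhlin entropy, use Corollary~\ref{cor:invzero} to produce a low-entropy $\omega$ with $\sinv_G\subseteq\sfinv_G\vee\salg_G(\omega)$, pass to a finite window $T$, then approximate via c-density of $\mathcal{C}$ and $\mathcal{A}$ with Lemma~\ref{lem:app} controlling the errors. (The paper does not try to land exactly below $\rh_{G,\mu}(\cP\given\cF)$; it lands below $\rh_{G,\mu}(\cP\given\cF)+\delta$ and lets $\delta\to 0$, which is what you should do as well.)

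Your \emph{lower bound} has a genuine gap, and the fix you propose in the ``Main obstacle'' paragraph does not work. You bound $\rh_{G,\mu}(\cP\given\cF)$ by the Shannon entropy $\sH(\beta\vee\chi\vee\eta\given\gamma\vee\cF\vee\sinv_G)$ via the definition, and then try to reach $\sH(\beta\given\chi^T\vee\gamma)$ by a Shannon chain rule. But the Shannon chain rule only gives you a term $\sH(\beta\given\chi\vee\eta\vee\gamma\vee\cF\vee\sinv_G)$, and there is no reason for the translates $t\cdot\chi$, $t\in T\setminus\{1_G\}$, to be measurable with respect to $\sigma(\chi\vee\eta\vee\gamma)\vee\cF\vee\sinv_G$; the inclusion $\chi^T\subseteq\salg_G(\chi)$ you invoke is irrelevant here because Shannon conditional entropy does not see the $G$-invariant $\sigma$-algebra generated by $\chi$, only $\sigma(\chi)$ itself. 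So the inequality $\sH(\beta\given\chi\vee\eta\vee\gamma\vee\cF\vee\sinv_G)\le\sH(\beta\given\chi^T\vee\gamma)$ is unjustified and in general false.

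The paper's remedy is to abandon the single-partition Shannon bound altogether and apply Rokhlin-entropy sub-additivity (Corollary~\ref{cor:add2}) directly, in three pieces:
\[
\rh_{G,\mu}(\cP\given\cF)\ \le\ \rh_{G,\mu}(\chi\given\cF)\ +\ \rh_{G,\mu}(\beta\given\salg_G(\chi)\vee\cF)\ +\ \rh_{G,\mu}(\cP\given\salg_G(\beta\vee\chi)\vee\cF).
\]
Now each Rokhlin term is bounded by a Shannon term from the display: the first by $\sH(\chi)$; the second by $\sH(\beta\given\chi^T\vee\gamma)$, legitimately, because $\chi^T\subseteq\salg_G(\chi)$ and $\gamma$ is $\cF\vee\sfinv_G\subseteq\cF\vee\sinv_G$-measurable; the third by $\sH(\cP\given\beta^T\vee\chi^T\vee\gamma)$ for the same reason. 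Summing gives $\rh_{G,\mu}(\cP\given\cF)<\sH(\beta\given\chi^T\vee\gamma)+\epsilon$, which is exactly the desired lower bound, and no auxiliary $\eta$ is needed. The point you were circling around---that one must exploit $\salg_G(\chi)\supseteq\chi^T$ without a $|T|$-fold blow-up---is achieved by working at the level of Rokhlin entropy (where conditioning on $\salg_G(\chi)$ is available) rather than Shannon entropy.
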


\begin{proof}
It suffices to prove the second claim. Fix $\epsilon > 0$. Consider $\alpha \in \mathcal{A}$, $\gamma \in \mathcal{C} \subseteq \HPrt(\cF \vee \sfinv_G, \mu)$, finite $T \subseteq G$, and $\beta, \chi \leq \alpha$ with $\sH(\chi) + \sH(\cP \given \beta^T \vee \chi^T \vee \gamma) < \epsilon$. By sub-additivity of Rokhlin entropy we have
\begin{align*}
\rh_{G,\mu}(\cP & \given \cF)\\
 & \leq \rh_{G,\mu}(\chi \given \cF) + \rh_{G,\mu}(\beta \given \salg_G(\chi) \vee \cF) + \rh_{G,\mu}(\cP \given \salg_G(\beta \vee \chi) \vee \cF)\\
 & \leq \sH(\chi) + \sH(\beta \given \chi^T \vee \gamma) + \sH(\cP \given \beta^T \vee \chi^T \vee \gamma)\\
 & < \sH(\beta \given \chi^T \vee \gamma) + \epsilon.
\end{align*}
This establishes the first inequality.

Now we consider the second inequality. Fix $\epsilon > 0$. Note that $\rh_{G, \mu}(\cP \given \cF) \leq \sH(\cP) < \infty$. Fix $\delta > 0$. By the definition of Rokhlin entropy, there is a countable partition $\xi$ satisfying
\begin{equation}\label{eqn:outkolm1}
\cP \subseteq \salg_G(\xi) \vee \cF \vee \sinv_G \quad \text{and} \quad \sH(\xi \given \cF \vee \sinv_G) < \rh_{G, \mu}(\cP \given \cF) + \delta / 3.
\end{equation}
We claim we can further assume that $\sH(\xi) < \infty$, since $\rh_{G,\mu}(\cP \given \cF) < \infty$. Indeed, on the subset of $X$ consisting of points having finite $G$-orbit we can choose $\xi$ to consist of two sets, one of which meets every finite $G$-orbit in precisely one point, and on the subset of $X$ consisting of points with infinite $G$-orbit we can appeal to Corollary \ref{cor:simplerok}. By similar reasoning, Corollary \ref{cor:invzero} implies there is a partition $\omega$ with $\sH(\omega) < \epsilon / 4$ and $\sinv_G \subseteq \sfinv_G \vee \salg_G(\omega)$. Note that our assumptions then imply $\{\omega^T \vee \gamma : T \subseteq G \text{ finite}, \ \gamma \in \mathcal{C}\}$ is c-dense in $\HPrt(\cF \vee \sinv_G, \mu)$. By (\ref{eqn:outkolm1}) we can find finite $T \subseteq G$ and $\gamma \in \mathcal{C}$ with
$$\sH(\cP \given \xi^T \vee \omega^T \vee \gamma) < \epsilon / 6 \quad \text{and} \quad \sH(\xi \given \omega^T \vee \gamma) < \rh_{G, \mu}(\cP \given \cF) + \delta / 3.$$
Next, since $\mathcal{A}$ is c-dense in $\HPrt(\mu)$, we can find $\alpha \in \mathcal{A}$ and partitions $\beta, \chi \leq \alpha$ with
$$\dR_\mu(\beta, \xi) < \min \left( \frac{\epsilon}{12 |T|}, \frac{\delta}{3} \right) \quad \text{and} \quad \dR_\mu(\chi, \omega) < \min \left( \frac{\epsilon}{12 |T|}, \frac{\delta}{6 |T|} \right).$$
Then
\begin{align*}
\sH( & \chi) + \sH(\cP \given \beta^T \vee \chi^T \vee \gamma)\\
 & \leq \sH(\omega) + \dR_\mu(\chi, \omega) + \sH(\cP \given \xi^T \vee \omega^T \vee \gamma) + 2 |T| \cdot \dR_\mu(\beta, \xi) + 2 |T| \cdot \dR_\mu(\chi, \omega)\\
 & < \epsilon.
\end{align*}
Furthermore,
\begin{align*}
\sH(\beta \given \chi^T \vee \gamma) & \leq \sH(\xi \given \omega^T \vee \gamma) + \dR_\mu(\beta, \xi) + 2 |T| \dR_\mu(\chi, \omega)\\
 & < \rh_{G,\mu}(\cP \given \cF) + \delta.
\end{align*}
Therefore
$$\inf_{\substack{\alpha \in \mathcal{A}\\\gamma \in \mathcal{C}}} \inf_{\substack{T \subseteq G\\T \text{ finite}}} \inf \Big\{ \sH(\beta \given \chi^T \vee \gamma) : \beta, \chi \leq \alpha, \ \sH(\chi) + \sH(\cP \given \beta^T \vee \chi^T \vee \gamma) < \epsilon \Big\}$$
is less than $\rh_{G,\mu}(\cP \given \cF) + \delta$. Now let $\delta$ tend to $0$.
\end{proof}

For the remainder of this section we will study upper-semicontinuity of Rokhlin entropy in three settings: as a function of the invariant measure, as a function of the partition, and as a function of the action. We will obtain our strongest upper-semicontinuity results when $G$ is finitely generated. Unfortunately, when $G$ is not finitely generated Rokhlin entropy is not upper-semicontinuous in general, as the following example illustrates.

\begin{example}
Consider a non-finitely generated abelian group $G$. Let $(\Gamma_k)_{k \in \N}$ be an increasing sequence of finitely generated subgroups which union to $G$. Write $2$ for the set $\{0, 1\}$ and let $u_2$ be the uniform probability measure on $2$. We will consider $\rh_G(2^G, \mu)$ as $\mu \in \M_G(2^G)$ varies. For a subgroup $\Gamma \leq G$ we naturally identify $2^{G / \Gamma}$ with the set of $x \in 2^G$ which are constant on each $\Gamma$-coset. Through this identification, we view the product measure $u_2^{G / \Gamma} \in \M_G(2^{G / \Gamma})$ as a measure on $2^G$. It is not difficult to see that $u_2^{G / \Gamma_k}$ converges as $k \rightarrow \infty$ to $u_2^{G / G}$ (which is supported on the two constant functions). Clearly $\rh_G(2^G, u_2^{G / G}) = 0$. However, for $k \in \N$ we can view the action $G \acts (2^G, u_2^{G / \Gamma_k})$ as a free action of $G / \Gamma_k$, and since this action is isomorphic to the Bernoulli action $G / \Gamma_k \acts (2^{G / \Gamma_k}, u_2^{G / \Gamma_k})$ of the infinite abelian group $G / \Gamma_k$, we obtain $\rh_G(2^G, u_2^{G / \Gamma_k}) = \log(2)$. Thus Rokhlin entropy is not an upper-semicontinuous function on $\M_G(2^G)$.
\end{example}

With a bit more effort, one can use the above construction to obtain the same conclusion whenever $G$ is non-finitely generated and amenable. We believe this failure of upper-semicontinuity occurs precisely when $G$ is not finitely generated, but we cannot yet prove this since computable lower bounds to Rokhlin entropy for non-sofic actions do not currently exist.

Our stronger upper-semicontinuity results for finitely generated groups will depend upon the following lemma. Throughout this section and the next, for any set $L$ we let $G$ act on $L^G$ by left-shifts: for $x \in L^G$ and $g, h \in G$ we have $(g \cdot x)(h) = x(g^{-1} h)$.

\begin{lem} \label{lem:fcl}
Let $G$ be a finitely generated infinite group, let $L$ be finite, and let $P \subseteq L^G$ be a finite $G$-invariant set. For every $\epsilon > 0$ and open set $U \supseteq P$, there is a clopen set $V$ such that $U \supseteq V \supseteq P$ and $\rh_{G,\mu}(V) < \epsilon$ for all $\mu \in \M_G(L^G)$.
\end{lem}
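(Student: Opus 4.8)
My plan is to first reduce to ergodic measures and then split according to whether the ergodic measure is carried by a finite orbit or is aperiodic. By the ergodic decomposition formula for outer Rokhlin entropy (Corollary \ref{cor:ergavg}) it suffices to produce one clopen $V$ with $P\subseteq V\subseteq U$ for which $\rh_{G,\nu}(V)<\epsilon$ holds for \emph{every} ergodic $\nu\in\E_G(L^G)$; integrating against the ergodic decomposition of an arbitrary $\mu$ then gives the claim for all $\mu\in\M_G(L^G)$. To construct $V$, fix $N$ with $\sH(1/n,(n-1)/n)<\epsilon$ for all $n\ge N$ and put $Q_N=\{x\in L^G:|G\cdot x|\le N\}$. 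Since $G$ is finitely generated it has only finitely many subgroups of index $\le N$, so $Q_N$ is finite (hence closed), and since $P$ is $G$-invariant the set $Q_N\setminus P$ is finite and disjoint from $P$. Let $V$ be of the form $\bigcup_{p\in P}\{x:x\res B=p\res B\}$, where $B$ is a large ball in the Cayley graph of $G$; this is clopen, contains $P$, and its intersection over all radii is exactly $P$, so by compactness I may take $B$ large enough that $V\subseteq U$ and $V\cap(Q_N\setminus P)=\varnothing$ (with $B$ to be enlarged further below). The finite-orbit case is then immediate: if $\nu$ is the uniform measure on an orbit $O$ with $|O|=n\ge N$, the two-piece partition singling out one point of $O$ generates $(L^G,\nu)$, so $\rh_{G,\nu}(V)\le\rh_G(L^G,\nu)\le\sH(1/n,(n-1)/n)<\epsilon$; if $n<N$ then $O\subseteq Q_N$, whence either $O\subseteq P\subseteq V$ or $O\cap V=\varnothing$, so $V$ is $\nu$-trivial and $\rh_{G,\nu}(V)=0$.

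The aperiodic case is the heart of the matter. Here I would use that $\rh_{G,\nu}(V)=\rh_{G,\nu}(\salg_G(\{V\}))$ and that $\salg_G(\{V\})$ is the $G$-invariant $\sigma$-algebra pulled back by the continuous factor map $\theta:=\theta^{\{V,V^c\}}:L^G\to\{0,1\}^G$, whose image $Y_V$ is a subshift; thus $\rh_{G,\nu}(V)$ is the outer Rokhlin entropy of the factor $(Y_V,\theta_*\nu)$ inside $(L^G,\nu)$. The key geometric observation is that, for $B$ large, $Y_V$ has very small complexity: a point of $Y_V$ marks the group elements $g$ at which the configuration agrees with some $p\in P$ on the ball $gB$, so an unmarked spot forces a ``defect'' of the underlying configuration within a translate of $B$; hence every unmarked region has size at least $|B|$, while on a marked region the marking is one of the finitely many periodic $P$-patterns. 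Consequently the number of $Y_V$-patterns on a ball of radius $m$ grows only like $C^{(\mathrm{volume})}$ with $\log C=O(\log|B|/|B|)$, which is $<\epsilon$ once $B$ is large enough.

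It then remains to convert this sparsity of $Y_V$ into the bound $\rh_{G,\nu}(\salg_G(\{V\}))<\epsilon$, and this is the step I expect to be the main obstacle. When $G$ is amenable this conversion is classical: the above growth rate is the topological entropy of $Y_V$, and (for free ergodic actions of amenable groups) outer Rokhlin entropy equals Kolmogorov--Sinai entropy, which the variational principle dominates by $h_{\mathrm{top}}(Y_V)<\epsilon$; finite-orbit components of $\theta_*\nu$ contribute zero. For a general finitely generated, possibly non-amenable, $G$ one does not have this identification available at this point, and one must argue directly: for any invariant measure $\lambda$ on $Y_V$, build a generating partition of $(Y_V,\lambda)$ of entropy $O(\log|B|/|B|)$ by choosing a Borel marker that picks one point in each (thick) unmarked hole, recording it with a two-piece partition (small entropy, since the markers have density at most $1/|B|$), attaching to each marker a bounded code describing the shape of its hole and the $P$-pattern on the adjacent marked region, and assembling these via countable sub-additivity (Corollary \ref{cor:add2}). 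This is in effect the Rokhlin-lemma dilution already underlying Theorem \ref{thm:nekrieger}, now carried out on the highly structured subshift $Y_V$, and making it work uniformly over all invariant measures of $Y_V$ is where the genuine difficulty resides.

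Granting that conversion step, combining it with the finite-orbit analysis shows $\rh_{G,\nu}(V)<\epsilon$ for every ergodic $\nu$, and integrating completes the proof. The remaining ingredients — the ergodic-decomposition reduction, the finite-orbit case, and the separation of the finite sets $P$ and $Q_N\setminus P$ by clopen sets in the zero-dimensional space $L^G$ — are all routine; the only real content is the low-complexity analysis of $Y_V$ together with the uniform generator construction described above.
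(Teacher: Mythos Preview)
Your reduction to ergodic measures and your treatment of the finite-orbit case are fine, but the aperiodic case has a genuine gap that you yourself flag: converting ``low complexity of $Y_V$'' into a uniform bound $\rh_{G,\nu}(V)<\epsilon$ for all aperiodic ergodic $\nu$ is not established, and your sketched marker scheme does not close it. The difficulty is not just the absence of a variational principle for non-amenable $G$; your proposed encoding ``attach to each marker a bounded code describing the shape of its hole and the $P$-pattern on the adjacent marked region'' fails because holes are not of bounded shape --- an unmarked component can be arbitrarily large --- so the per-marker code is not finite, and there is no evident reason its conditional Shannon entropy should be $O(\log|B|/|B|)$ uniformly in $\nu$. (Also, your complexity heuristic for $Y_V$ is delicate for groups of exponential growth: the boundary of a marked region inside a ball of radius $m$ can itself have volume comparable to the ball, so the pattern count need not be subexponential in the volume.)

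The paper sidesteps all of this by reversing the order of construction. Instead of fixing $V$ as a cylinder neighborhood of $P$ and then trying to analyze the factor $Y_V$, it first builds a clopen set $Y_\infty\subseteq L^G$ of \emph{universally} small density and then \emph{defines} $V$ from it. Concretely: choose $r$ with $\sH(1/r,1-1/r)<\epsilon$, take a small clopen $W\supseteq P$ with $B_r^{-1}B_r\cdot W\subseteq U$ and $W\cap\{|G\cdot x|<r\}\subseteq P$, cover $W\setminus P$ by clopen sets $U_i$ on each of which some fixed $r$-tuple in $B_r$ acts injectively, and run a greedy packing $Y_i=Y_{i-1}\cup(U_i\setminus B_r^{-1}B_r\cdot Y_{i-1})$. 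The resulting $Y_\infty=\bigcup_i Y_i$ is clopen (up to $P$), satisfies $\mu(Y_\infty)\le 1/r$ for \emph{every} $\mu\in\M_G(L^G)$ by the injective-$r$-tuple property, and $V:=(B_r^{-1}B_r\cdot Y_\infty)\cup P$ is clopen with $P\subseteq V\subseteq U$. Since $P\in\sinv_G$ and $V$ differs from a finite union of $G$-translates of $Y_\infty$ only by $P$, one has $V\in\salg_G(\{Y_\infty\})\vee\sinv_G$, hence
\[
\rh_{G,\mu}(V)=\rh_{G,\mu}(Y_\infty)\le\sH_\mu(\{Y_\infty,\,L^G\setminus Y_\infty\})\le\sH(1/r,1-1/r)<\epsilon
\]
for every $\mu$, with no ergodic decomposition, no case split, and no complexity argument. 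The moral is that the freedom in the statement is in the choice of $V$, not merely in its analysis: building $V$ as translates of a sparse marker set makes the Rokhlin-entropy bound a one-line Shannon estimate.
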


\begin{proof}
Fix $r \in \N$ with $\sH(r^{-1}, 1 - r^{-1}) < \epsilon$. Fix a finite generating set $S$ for $G$, and for each $n$ let $B_n \subseteq G$ be the corresponding ball of radius $n$. Since the set $\{x \in L^G : |G \cdot x| < r\}$ is finite and $P$ is $G$-invariant, we can find a clopen set $W \supseteq P$ such that $B_r^{-1} B_r \cdot W \subseteq U$ and $W \cap \{x \in L^G : |G \cdot x| < r\} \subseteq P$.

If some $x \in L^G$ satisfies $|B_{n+1} \cdot x| = |B_n \cdot x|$, then $|G \cdot x| = |B_n \cdot x|$. So we must have $|B_r \cdot x| \geq r$ for all $x \in W \setminus P$. In other words, given $x \in W \setminus P$ there are $s_x(0), s_x(1), \ldots, s_x(r-1) \in B_r$ with $s_x(k) \cdot x \neq s_x(m) \cdot x$ for all $k \neq m$. It follows that there is countable cover $\{U_i : i \geq 1\}$ of $W \setminus P$, with each $U_i$ a clopen subset of $W \setminus P$, and a collection of functions $s_i : \{0, \ldots, r-1\} \rightarrow B_r$ such that $s_i(k) \cdot U_i \cap s_i(m) \cdot U_i = \varnothing$ for all $i$ and all $0 \leq k \neq m < r$.

Now inductively define clopen sets $Y_i$ by setting $Y_0 = \varnothing$ and for $i \geq 1$
$$Y_i = Y_{i-1} \cup (U_i \setminus B_r^{-1} B_r \cdot Y_{i-1}).$$
Set $Y_\infty = \bigcup_i Y_i$. Note that each $Y_i$ is clopen and hence $Y_\infty$ is open. Also note that $Y_\infty \subseteq W$ since each $U_i \subseteq W$.

We claim that $Y_\infty \cup P$ is closed. Fix a point $x \in L^G \setminus (Y_\infty \cup P)$. We will find an open neighborhood of $x$ which is disjoint from $Y_\infty \cup P$. If $x \not\in W$ then $L^G \setminus W$ is the desired open neighborhood. Now suppose $x \in W$. Then $x \in W \setminus P$ so there is an $i \geq 1$ with $x \in U_i$. We must have $x \not\in Y_i \subseteq Y_\infty$, and thus the construction implies that $x \in B_r^{-1} B_r \cdot Y_{i-1} \setminus Y_{i-1}$. This is an open set which is disjoint from $Y_\infty \cup P$ (recall $P$ is $G$-invariant and each $U_i$ is disjoint from $P$). This proves the claim.

We set $V = (B_r^{-1} B_r \cdot Y_\infty) \cup P$ and claim that it has the desired properties. Its immediate that $P \subseteq V$ and $V \subseteq B_r^{-1} B_r \cdot W \subseteq U$. Also, since $P$ is $G$-invariant, $V = B_r^{-1} B_r \cdot (Y_\infty \cup P)$ is closed by the previous paragraph. We claim that $V$ is open (hence clopen). As a first step, we argue that $W \subseteq V$. Fix $w \in W$. If $w \in P$ then we are done. Otherwise there is $i \geq 1$ with $w \in U_i$. From the construction, we see that either $w \in B_r^{-1} B_r \cdot Y_{i-1} \subseteq V$ or else $w \in Y_i \subseteq V$. Thus $W \subseteq V$. As $P \subseteq W \subseteq V$, we can write $V$ as $V = (B_r^{-1} B_r \cdot Y_\infty) \cup W$, which shows that $V$ is open.

Finally, since $V = B_r^{-1} B_r \cdot Y_\infty \cup P$ and $P \in \sinv_G$ is $G$-invariant, we have
$$\forall \mu \in \M_G(L^G) \quad \rh_{G,\mu}(V) = \rh_{G,\mu}(B_r^{-1} B_r \cdot Y_\infty) = \rh_{G,\mu}(Y_\infty) \leq \sH_\mu(Y_\infty).$$
By our choice of $r$ it suffices to show that $\mu(Y_\infty) < r^{-1}$ for all $\mu \in \M_G(L^G)$. For $0 \leq k < r$ define $\theta(k) : Y_\infty \rightarrow L^G$ as follows: for $y \in Y_\infty$ choose $i$ least with $y \in Y_i$ and set $\theta(k)(y) = s_i(k) \cdot y$. For fixed $i$, the sets $\theta(k)(Y_i) \subseteq s_i(k) \cdot U_i$, $0 \leq k < r$, are pairwise disjoint. Also, $\bigcup_{k = 0}^{r-1} \theta(k)(Y_i) \subseteq B_r \cdot Y_i$ and $B_r \cdot Y_i \cap B_r \cdot Y_j = \varnothing$ for $i \neq j$. Thus the maps $\theta(k) : Y_\infty \rightarrow L^G$ are injective, measure-preserving, and have pairwise-disjoint images. Hence $\mu(Y_\infty) \leq 1 / r$ as required.
\end{proof}

Now we establish upper-semicontinuity on certain spaces of $G$-invariant measures. Recall that $\M_G^{\text{aper}}(X)$ denotes the set of $\mu \in \M_G(X)$ such that $G \acts (X, \mu)$ is aperiodic.

\begin{cor} \label{cor:mups}
Let $G$ be a countable group, let $L$ be a totally disconnected Polish space, let $L^G$ have the product topology, and equip $\M_G(L^G)$ with the weak$^*$-topology. Assume that $\cF$ is a $G$-invariant sub-$\sigma$-algebra which is generated by a collection of clopen sets.
\begin{enumerate}
\item[\rm (i)] If $\xi$ is a finite clopen partition then the map $\mu \in \M_G^{\text{aper}}(L^G) \cup \E_G(L^G) \mapsto \rh_{G,\mu}(\xi \given \cF)$ is upper-semicontinuous. If $G$ is finitely generated then this map is upper-semicontinuous on all of $\M_G(L^G)$.
\item[\rm (ii)] If $L$ is finite then the map $\mu \in \M_G^{\text{aper}}(L^G) \cup \E_G(L^G) \mapsto \rh_G(L^G, \mu \given \cF)$ is upper-semicontinuous. If $G$ is finitely generated then this map is upper-semicontinuous on all of $\M_G(L^G)$.
\end{enumerate}
\end{cor}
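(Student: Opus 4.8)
The plan is to reduce everything to the finitary formula for outer Rokhlin entropy proved in Lemma~\ref{lem:outkolm}, taking advantage of the fact that the Shannon entropy of a finite \emph{clopen} partition of $L^G$ is a continuous function of the invariant measure in the weak$^*$-topology, since $\nu\mapsto\nu(C)$ is weak$^*$-continuous for clopen $C$. First, part (ii) follows from part (i): when $L$ is finite, the time-zero partition $\xi_0=\{\{x\in L^G:x(e)=\ell\}:\ell\in L\}$ is a finite clopen partition with $\salg_G(\xi_0)=\Borel(L^G)$, so the defining constraint $\salg_G(\alpha)\vee\cF\vee\sinv_G=\Borel(L^G)$ for $\rh_G(L^G,\mu\given\cF)$ is equivalent to $\xi_0\subseteq\salg_G(\alpha)\vee\cF\vee\sinv_G$. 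Hence $\rh_G(L^G,\mu\given\cF)=\rh_{G,\mu}(\xi_0\given\cF)$ for every $\mu$, and (ii) is the case $\xi=\xi_0$ of (i).

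For part (i), fix a finite clopen $\xi$, a measure $\mu$, and $\epsilon>0$; the goal is a weak$^*$-open $U\ni\mu$ with $\rh_{G,\nu}(\xi\given\cF)<\rh_{G,\mu}(\xi\given\cF)+\epsilon$ for all relevant $\nu\in U$. Suppose first $\mu\in\M_G^{\text{aper}}(L^G)\cup\E_G(L^G)$, so $\sfinv_G$ is $\mu$-trivial. Apply Lemma~\ref{lem:outkolm} to $\cP=\xi$ with parameter $\epsilon'=\epsilon/2$, taking $\mathcal{A}$ to be all finite clopen partitions of $L^G$ and $\mathcal{C}$ all finite partitions of $L^G$ into clopen $\cF$-measurable sets; these are c-dense in $\HPrt(\mu)$ and in $\HPrt(\cF\vee\sfinv_G,\mu)=\HPrt(\cF,\mu)$ respectively, because clopen cylinders generate $\Borel(L^G)$ and $\cF$ is generated by clopen sets. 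The second half of that lemma yields clopen partitions $\alpha\in\mathcal{A}$, $\gamma\in\mathcal{C}$, coarsenings $\beta,\chi\leq\alpha$, and a finite $T\subseteq G$ with $\sH_\mu(\chi)+\sH_\mu(\xi\given\beta^T\vee\chi^T\vee\gamma)<\epsilon'$ and $\sH_\mu(\beta\given\chi^T\vee\gamma)<\rh_{G,\mu}(\xi\given\cF)+\epsilon'$. Every entropy expression here involves only finite clopen partitions, hence is weak$^*$-continuous, so there is a weak$^*$-open $U\ni\mu$ on which both strict inequalities persist. For $\nu\in U$ the first half of Lemma~\ref{lem:outkolm} --- whose proof consists only of countable subadditivity (Corollary~\ref{cor:add2}) and the trivial estimate $\rh\leq\sH$, valid for every invariant measure once $\gamma$ is $(\cF\vee\sinv_G)$-measurable --- gives $\rh_{G,\nu}(\xi\given\cF)<\sH_\nu(\beta\given\chi^T\vee\gamma)+\epsilon'<\rh_{G,\mu}(\xi\given\cF)+2\epsilon'=\rh_{G,\mu}(\xi\given\cF)+\epsilon$. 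This proves the first assertions of (i) and (ii); in fact $U$ works against all of $\M_G(L^G)$.

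When $G$ is finitely generated one must run this argument at an \emph{arbitrary} $\mu$, where the obstruction is that $\sfinv_G$ may no longer be $\mu$-trivial and clopen partitions fail to be c-dense in $\HPrt(\cF\vee\sfinv_G,\mu)$; this is where Lemma~\ref{lem:fcl} enters. (If $G$ is finite then $L^G$ is finite and the statement is elementary, so assume $G$ infinite.) Fix $r$ with $\sH(r^{-1},1-r^{-1})<\epsilon/4$. A finitely generated group has only finitely many subgroups of index $<r$, so the $G$-invariant set $X_{<r}=\{x:|G\cdot x|<r\}$ is finite; let $O_1,\dots,O_m$ be its orbits. Choosing pairwise disjoint clopen neighborhoods $U_j\supseteq O_j$ with $U_j\cap(X_{<r}\setminus O_j)=\varnothing$ and applying Lemma~\ref{lem:fcl}, we obtain pairwise disjoint clopen sets $V_j$ with $O_j\subseteq V_j\subseteq U_j$ (so $V_j\cap X_{<r}=O_j$) and $\rh_{G,\lambda}(V_j)<\epsilon/(4m)$ for all $\lambda\in\M_G(L^G)$. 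Let $\zeta_r=\{V_1,\dots,V_m,L^G\setminus\bigcup_jV_j\}$: a finite clopen partition restricting to the orbit partition on $X_{<r}$, with $\rh_{G,\lambda}(\zeta_r)\leq\sum_j\rh_{G,\lambda}(V_j)<\epsilon/4$ uniformly in $\lambda$ by subadditivity. Set $\cF'=\cF\vee\salg_G(\zeta_r)$, again generated by countably many clopen sets. By subadditivity $\rh_{G,\nu}(\xi\given\cF)\leq\rh_{G,\nu}(\zeta_r\given\cF)+\rh_{G,\nu}(\xi\given\cF')<\epsilon/4+\rh_{G,\nu}(\xi\given\cF')$, and since $\rh_{G,\mu}(\xi\given\cF')\leq\rh_{G,\mu}(\xi\given\cF)$, it suffices to establish upper-semicontinuity of $\nu\mapsto\rh_{G,\nu}(\xi\given\cF')$ at $\mu$ with error $\epsilon/2$.

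For that final step the plan is to invoke Lemma~\ref{lem:outkolm} once more, with base algebra $\cF'$ and with $\mathcal{C}$ the collection of partitions $\rho\vee\eta$ in which $\rho$ is a finite clopen $\cF'$-measurable partition and $\eta$ is a finite $\sfinv_G$-measurable partition trivial off some $X_{<s}$; this $\mathcal{C}$ is c-dense in $\HPrt(\cF'\vee\sfinv_G,\nu)$ for every $\nu$ (truncate the $\sfinv_G$-part of a given finite-entropy partition to finitely many finite orbits at small Rokhlin cost, then approximate the $\cF'$-part by clopen sets). Since $\zeta_r\subseteq\cF'$ already resolves the orbits inside $X_{<r}$, and every orbit $O$ of size $\geq r$ satisfies $\rh_{G,\mathrm{unif}(O)}(\,\cdot\,)\leq\sH(r^{-1},1-r^{-1})<\epsilon/4$, the residual $\sfinv_G$-data beyond $X_{<r}$ contributes at most $\epsilon/4$ uniformly in $\nu$, while the aperiodic part of the estimate is handled exactly as in the earlier case via continuity of clopen Shannon entropies; letting $\epsilon\to0$ then gives upper-semicontinuity on all of $\M_G(L^G)$. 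I expect the main obstacle to be precisely this last bookkeeping: showing that the residual finite-orbit contribution is uniformly small near $\mu$, and verifying that although the orbit masses $\nu\mapsto\nu(O_i)$ are only upper-semicontinuous rather than continuous, they enter the estimates with the favorable sign so that the weak$^*$-limits behave correctly.
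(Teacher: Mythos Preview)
Your reduction of (ii) to (i) and your treatment of the aperiodic/ergodic case are correct and coincide with the paper's argument: choose clopen $\mathcal{A}$ and clopen $\cF$-measurable $\mathcal{C}$, invoke Lemma~\ref{lem:outkolm} at $\mu$, and use weak$^*$-continuity of Shannon entropy on clopen partitions to propagate the inequalities to a neighborhood (your observation that the resulting $U$ in fact works against \emph{all} $\nu\in\M_G(L^G)$, not just aperiodic ones, is also correct and worth noting).

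The finitely generated case, however, has two genuine gaps. First, your claim that $X_{<r}=\{x\in L^G:|G\cdot x|<r\}$ is finite is only valid when $L$ itself is finite: if $H\leq G$ has index $<r$ then $\{x:\Stab_G(x)\supseteq H\}$ is in bijection with $L^{G/H}$, which is uncountable for infinite $L$. Part~(i) allows $L$ to be any totally disconnected Polish space, so your construction of $\zeta_r$ via Lemma~\ref{lem:fcl} does not go through as written. The paper handles this by first embedding $L$ into an inverse limit $\varprojlim L_n$ of \emph{finite} spaces and carrying out all the $\sfinv_G$-approximation at the level of the finite-alphabet quotients $L_n^G$, where the periodic locus genuinely is finite; the clopen partitions are then pulled back along the continuous maps $\pi_n:L^G\to L_n^G$.

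Second, even granting finite $L$, your final paragraph does not close. After applying Lemma~\ref{lem:outkolm} with $\gamma=\rho\vee\eta$, the partition $\eta$ sits in the \emph{conditioning} of the Shannon entropies $\sH_\mu(\beta\given\chi^T\vee\rho\vee\eta)$ and $\sH_\mu(\xi\given\beta^T\vee\chi^T\vee\rho\vee\eta)$; dropping $\eta$ can only increase these, so the estimate cannot be transferred to a clopen-only expression by ``throwing away'' $\eta$. Your heuristic that large orbits have small Rokhlin entropy bounds $\rh_{G,\nu}(\eta\given\cF')$ (which is in fact zero, since $\eta\subseteq\sinv_G$), but that is not what is needed. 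What is needed is a clopen partition $\tilde\eta$ that is $\dR_\mu$-close to $\eta$ \emph{and} has uniformly small Rokhlin entropy over all $\nu$, so that one can (a) swap $\eta$ for $\tilde\eta$ in the Shannon estimates at small $\dR_\mu$-cost, obtaining weak$^*$-continuous quantities, and then (b) absorb $\tilde\eta$ at the start of the sub-additivity chain at uniformly small Rokhlin cost. This is exactly the role of the pair $(\omega_n,\chi_n)$ in the paper's proof: $\{\omega_n\}$ is c-dense in $\HPrt(\sfinv_G,\mu)$ (so Lemma~\ref{lem:outkolm} applies), $\chi_n$ is clopen with $\dR_\mu(\omega_n,\chi_n)<\epsilon/2$ (so the swap is cheap at $\mu$), and $\rh_{G,\nu}(\chi_n)<\epsilon$ for all $\nu$ via Lemma~\ref{lem:fcl} (so the absorption is cheap at every $\nu$). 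Your pre-processing step with $\zeta_r$ and $\cF'$ is then redundant: once you build such a $\tilde\eta$, the argument runs directly with the original $\cF$.
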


\begin{proof}
Let $\cL = \{R_\ell : \ell \in L\}$ be the canonical generating partition for $L^G$, where $R_\ell = \{x \in L^G : x(1_G) = \ell\}$. Clearly $\cL$ is generating and thus $\rh_G(L^G, \mu \given \cF) = \rh_{G,\mu}(\cL \given \cF)$ for all $\mu \in \M_G(L^G)$. Since in case (ii) $\cL$ is finite and clopen, (ii) is a consequence of (i). So we prove (i). Fix a finite clopen partition $\xi$, fix a measure $\mu \in \M_G(L^G)$, and fix $\epsilon > 0$. If $G$ is not finitely generated, we require $\mu$ to be in $\M_G^{\text{aper}}(L^G) \cup \E_G(L^G)$.

Since $L$ is totally disconnected and Polish, $L$ embeds into an inverse limit, $L \subset \varprojlim L_n$, of finite spaces $L_n$, $n \in \N$. Let $\pi_n : L \rightarrow L_n$ be the corresponding quotient map. By applying $\pi_n$ coordinate-wise, we also view $\pi_n$ as a $G$-equivariant map from $L^G$ to $L_n^G$, and we set $\mu_n = (\pi_n)_*(\mu)$. Notice that $\pi_n : L^G \rightarrow L_n^G$ is continuous. Let $\alpha_n = \{R_\ell : \ell \in L_n\}$ be the clopen partition of $L^G$ where $R_\ell = \{x \in L^G : \pi_n(x(1_G)) = \ell\}$, and set $\mathcal{A} = \{\alpha_n^T : n \in \N, \ T \subseteq G \text{ finite}\}$. Notice that $\mathcal{A}$ is c-dense in $\HPrt(\mu)$. By our assumption on $\cF$ we can choose a collection $\mathcal{C}_0$ of finite clopen partitions which are c-dense in $\HPrt(\cF, \mu)$.

We next construct, for each $n \in \N$, partitions $\omega_n$ and $\chi_n$ such that $\chi_n$ is clopen, $\dR_\mu(\omega_n, \chi_n) < \epsilon/2$, $\rh_{G,\nu}(\chi_n) < \epsilon$ for all $\nu \in \M_G(L^G)$, and with $\{\omega_n : n \in \N\}$ c-dense in $\HPrt(\sfinv_G, \mu)$. When $G$ is not finitely generated we simply let $\chi_n = \omega_n$ be the trivial partition for every $n$. So now suppose that $G$ is finitely generated. Let $X_n^\infty$ denote the set of points in $L_n^G$ having infinite $G$-orbit. Notice that since $L_n$ is finite and $G$ is finitely generated, $L_n^G \setminus X_n^\infty$ is countable. We wish to build a collection of finite partitions $\{\zeta_n^k : n, k \in \N\}$ such that $\zeta_n^k$ is a partition of $L_n^G$ into $G$-invariant sets and precisely one set in $\zeta_n^k$ is infinite. We further want $\zeta_n^{k+1}$ to be finer than $\zeta_n^k$, $\zeta_{n+1}^k$ to be finer than $\pi_{n+1} \circ \pi_n^{-1}(\zeta_n^k)$, and we want $\{\zeta_n^k : k \in \N\}$ to be c-dense in $\HPrt(\sfinv_G(L_n^G), \mu_n)$. It is clear that such a collection can be constructed inductively. Now define $\omega_n = \pi_n^{-1}(\zeta_n^n)$. This diagonalization ensures that $\omega_n$ is finer than $\pi_m^{-1}(\zeta_m^k)$ whenever $\max(m, k) \leq n$, and from this it easily follows that $\{\omega_n : n \in \N\}$ is c-dense in $\HPrt(\sfinv_G, \mu)$. Now fix $n$. We will construct $\chi_n$. Each finite set $P \in \zeta_n^n$ can be approximated arbitrarily well in $\mu_n$-measure by a clopen set $U \supseteq P$. By considering clopen approximations of each finite set $P \in \zeta_n^n$ and applying Lemma \ref{lem:fcl} to each, we can obtain a clopen partition $\chi'$ satisfying $\dR_{\mu_n}(\zeta_n^n, \chi') < \epsilon / 2$ and with $\rh_{G,\nu}(\chi') < \epsilon$ for all $\nu \in \M_G(L_n^G)$. Set $\chi_n = \pi_n^{-1}(\chi')$. Then $\dR_\mu(\omega_n, \chi_n) < \epsilon / 2$. This completes the construction of the $\omega_n$'s and $\chi_n$'s. Define $\mathcal{C} = \{\omega_n \vee \gamma : n \in \N, \ \gamma \in \mathcal{C}_0\}$ and note that $\mathcal{C}$ is c-dense in $\HPrt(\sfinv_G \vee \cF, \mu)$.

By Lemma \ref{lem:outkolm} there are $\alpha \in \mathcal{A}$, $\omega_n \vee \gamma \in \mathcal{C}$, finite $T \subseteq G$, and $\beta, \chi \leq \alpha$ with
$$\sH_\mu(\beta \given \chi^T \vee \omega_n \vee \gamma) < \rh_{G,\mu}(\xi \given \cF) + \epsilon \quad \text{and} \quad \sH_\mu(\chi) + \sH_\mu(\xi \given \beta^T \vee \chi^T \vee \omega_n \vee \gamma) < \epsilon.$$
Since $\dR_\mu(\omega_n, \chi_n) < \epsilon / 2$, we have
$$\sH_\mu(\beta \given \chi^T \vee \chi_n \vee \gamma) < \rh_{G,\mu}(\xi \given \cF) + 2 \epsilon \quad \text{and} \quad \sH_\mu(\chi) + \sH_\mu(\xi \given \beta^T \vee \chi^T \vee \chi_n \vee \gamma) < 2 \epsilon.$$
Since the conditional Shannon entropy of clopen partitions is a continuous function of the measure, there is an open neighborhood $U$ of $\mu$ with
$$\sH_\nu(\beta \given \chi^T \vee \chi_n \vee \gamma) < \rh_{G, \mu}(\xi \given \cF) + 2 \epsilon \quad \text{and} \quad \sH_\nu(\chi) + \sH_\nu(\xi \given \beta^T \vee \chi^T \vee \chi_n \vee \gamma) < 2 \epsilon$$
for all $\nu \in U$. Now recall that $\rh_{G,\nu}(\chi_n) < \epsilon$ for all $\nu \in \M_G(L^G)$. Using sub-additivity, we deduce that for $\nu \in U$ the entropy $\rh_{G,\nu}(\xi \given \cF)$ is bounded by
\begin{align*}
& \rh_{G,\nu}(\chi \vee \chi_n) + \rh_{G,\nu}(\beta \given \salg_G(\chi \vee \chi_n) \vee \cF) + \rh_{G,\nu}(\xi \given \salg_G(\beta \vee \chi \vee \chi_n) \vee \cF)\\
 & \leq \rh_{G,\nu}(\chi_n) + \sH_\nu(\chi) + \sH_\nu(\beta \given \chi^T \vee \chi_n \vee \gamma) + \sH_\nu(\xi \given \beta^T \vee \chi^T \vee \chi_n \vee \gamma)\\
 & < \rh_{G,\mu}(\xi \given \cF) + 5 \epsilon.
\end{align*}
This completes the proof.
\end{proof}

Next we consider the space $\HPrt(\mu)$ of countable Borel partitions of $(X, \mu)$ having finite Shannon entropy.

\begin{cor} \label{cor:pups}
Let $G \acts (X, \mu)$ be a {\pmp} action and let $\cF$ be a $G$-invariant sub-$\sigma$-algebra. For $\alpha \in \HPrt(\mu)$, let $G \acts (Y_\alpha, \nu_\alpha)$ be the factor of $(X, \mu)$ associated to $\salg_G(\alpha) \vee \cF$, and let $\cF_\alpha$ be the image of $\cF$ in $Y_\alpha$. Let $\HPrt^{\text{aper}}(\mu)$ and $\HPrt^{\text{erg}}(\mu)$ be the set of $\alpha \in \HPrt(\mu)$ for which the action $G \acts (Y_\alpha, \nu_\alpha)$ is aperiodic or ergodic, respectively. Then the map
$$\alpha \in \HPrt^{\text{aper}}(\mu) \cup \HPrt^{\text{erg}}(\mu) \mapsto \rh_G(Y_\alpha, \nu_\alpha \given \cF_\alpha)$$
is upper-semicontinuous in the metric $\dR_\mu$. If $G$ is finitely generated then this map is upper-semicontinuous on all of $\HPrt(\mu)$.
\end{cor}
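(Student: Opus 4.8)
The plan is to establish sequential upper-semicontinuity: since $\dR_\mu$ is a metric, it suffices to show $\limsup_n \rh_G(Y_{\alpha_n}, \nu_{\alpha_n} \given \cF_{\alpha_n}) \le \rh_G(Y_\alpha, \nu_\alpha \given \cF_\alpha)$ whenever $\alpha_n \to \alpha$ in $\dR_\mu$, with all partitions in $\HPrt^{\text{aper}}(\mu) \cup \HPrt^{\text{erg}}(\mu)$ (or arbitrary, when $G$ is finitely generated). The argument runs parallel to the proof of Corollary~\ref{cor:mups}, but now transporting a near-optimal configuration of partitions from the factor $Y_\alpha$ to the factors $Y_{\alpha_n}$. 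The first step is to note that $\alpha$ is measurable with respect to $\Sigma_\alpha := \salg_G(\alpha) \vee \cF$, hence descends to a partition $\bar\alpha$ of $Y_\alpha$ with $\salg_G(\bar\alpha) \vee \cF_\alpha = \Borel(Y_\alpha)$; consequently $\rh_G(Y_\alpha, \nu_\alpha \given \cF_\alpha) = \rh_{G,\nu_\alpha}(\bar\alpha \given \cF_\alpha)$ and, crucially, $\sH_{\nu_\alpha}(\bar\alpha) = \sH_\mu(\alpha) < \infty$ since $\alpha \in \HPrt(\mu)$. Because $\cF \subseteq \Sigma_\alpha$ and $\nu_\alpha$ agrees with $\mu$ on $\Sigma_\alpha$, Shannon-entropy computations inside $Y_\alpha$ are literally computations in $(X,\mu)$ restricted to $\Sigma_\alpha$-measurable partitions, and I would pass freely between the two pictures.

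Next I would apply Lemma~\ref{lem:outkolm} to $G \acts (Y_\alpha, \nu_\alpha)$ with $\cP = \bar\alpha$ and relative algebra $\cF_\alpha$, choosing the c-dense families so that the partitions it produces are expressed through $\alpha$ in a transportable way. Take $\mathcal{A} = \{\,\overline{\alpha^S \vee \eta} : S \subseteq G \text{ finite},\ \eta \in \HPrt(\cF,\mu)\,\}$, which is c-dense in $\HPrt(Y_\alpha, \nu_\alpha)$. For the family $\mathcal{C}$, c-dense in $\HPrt(\cF_\alpha \vee \sfinv_G(Y_\alpha), \nu_\alpha)$: when $\alpha \in \HPrt^{\text{aper}}(\mu) \cup \HPrt^{\text{erg}}(\mu)$ the factor action has no finite orbits or is ergodic, so $\sfinv_G(Y_\alpha)$ is trivial and $\mathcal{C} = \HPrt(\cF,\mu)$ works --- this does not depend on $\alpha$; when $G$ is finitely generated but $Y_\alpha$ may have finite orbits, I would build $\mathcal{C}$ as in the proof of Corollary~\ref{cor:mups}, from partitions of the finite-orbit part of $Y_\alpha$ into $G$-invariant sets together with ``clopen-type'' approximants of arbitrarily small Rokhlin entropy, the approximation being supplied by the fact that $G$ has only finitely many subgroups of each finite index (the mechanism of Lemma~\ref{lem:fcl}), so that these partitions can again be tracked as $\alpha$ varies. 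Fixing $\epsilon > 0$, Lemma~\ref{lem:outkolm} yields finite $S, T \subseteq G$, a partition $\eta \in \HPrt(\cF,\mu)$, coarsenings $\beta, \chi \le \alpha^S \vee \eta$, and $\gamma \in \mathcal{C}$ with $\sH_\mu(\chi) + \sH_\mu(\alpha \given \beta^T \vee \chi^T \vee \gamma) < \epsilon$ and $\sH_\mu(\beta \given \chi^T \vee \gamma) < \rh_G(Y_\alpha,\nu_\alpha\given\cF_\alpha) + \epsilon$.

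The heart of the proof is then the transport step. By Lemma~\ref{lem:app}(i), $\dR_\mu(\alpha_n^S \vee \eta,\, \alpha^S \vee \eta) \le |S| \cdot \dR_\mu(\alpha_n,\alpha) \to 0$; since $\beta,\chi$ are finite-entropy coarsenings of $\alpha^S \vee \eta$, Lemma~\ref{lem:app}(iii) gives $\sH_\mu(\beta \given \alpha_n^S \vee \eta) \to 0$ and $\sH_\mu(\chi \given \alpha_n^S \vee \eta) \to 0$, so a standard Fano-type estimate lets me pick coarsenings $\beta_n, \chi_n \le \alpha_n^S \vee \eta$ (hence $\salg_G(\alpha_n) \vee \cF$-measurable) with $\dR_\mu(\beta_n,\beta) \to 0$ and $\dR_\mu(\chi_n,\chi) \to 0$; likewise $\gamma$ transports to a partition $\gamma_n$ of $Y_{\alpha_n}$ with $\dR_\mu(\gamma_n,\gamma) \to 0$ and $\rh_{G,\nu_{\alpha_n}}(\gamma_n \given \cF_{\alpha_n}) \to 0$ --- indeed $\gamma_n = \gamma$ in the aperiodic/ergodic case, since then $\gamma$ is $\cF$-measurable and $\cF \subseteq \salg_G(\alpha_n) \vee \cF$. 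A further application of Lemma~\ref{lem:app} gives, for all large $n$, $\sH_\mu(\chi_n) + \sH_\mu(\alpha_n \given \beta_n^T \vee \chi_n^T \vee \gamma_n) < 2\epsilon$ and $\sH_\mu(\beta_n \given \chi_n^T \vee \gamma_n) < \rh_G(Y_\alpha,\nu_\alpha\given\cF_\alpha) + 2\epsilon$. Finally, sub-additivity of Rokhlin entropy (Corollary~\ref{cor:add2}, i.e.\ the first inequality in the proof of Lemma~\ref{lem:outkolm}) applied inside $G \acts (Y_{\alpha_n}, \nu_{\alpha_n})$, together with $\rh_G(Y_{\alpha_n},\nu_{\alpha_n}\given\cF_{\alpha_n}) = \rh_{G,\nu_{\alpha_n}}(\bar\alpha_n \given \cF_{\alpha_n})$, bounds $\rh_G(Y_{\alpha_n},\nu_{\alpha_n}\given\cF_{\alpha_n})$ by $\rh_{G,\nu_{\alpha_n}}(\gamma_n\given\cF_{\alpha_n}) + \sH_\mu(\chi_n) + \sH_\mu(\beta_n \given \chi_n^T \vee \gamma_n) + \sH_\mu(\alpha_n \given \beta_n^T \vee \chi_n^T \vee \gamma_n)$, which is less than $\rh_G(Y_\alpha,\nu_\alpha\given\cF_\alpha) + 4\epsilon + o(1)$ as $n \to \infty$. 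Taking $\limsup_n$ and then $\epsilon \to 0$ completes the argument.

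The main obstacle I anticipate is precisely the transport step: controlling partitions as the generating $\sigma$-algebra $\salg_G(\alpha_n) \vee \cF$ moves with $\alpha_n$, and --- in the finitely generated case --- constructing $\mathcal{C}$ so as to handle $\sfinv_G(Y_\alpha)$ uniformly in $\alpha$. This is where the finitely generated hypothesis is genuinely needed (exactly as in Corollary~\ref{cor:mups}), which is why the unconditional statement must restrict to partitions $\alpha$ for which $G \acts (Y_\alpha, \nu_\alpha)$ is aperiodic or ergodic.
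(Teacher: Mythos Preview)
Your approach is correct in the aperiodic/ergodic case and takes a genuinely different route from the paper. The paper does \emph{not} re-run Lemma~\ref{lem:outkolm} inside $Y_\alpha$ and then transport partitions by hand. Instead it encodes the factor $G \acts (Y_\alpha,\nu_\alpha)$ as a shift-invariant measure $\mu_{\alpha,f}$ on the symbolic space $(\N \times 2^\N)^G$ (with $\cF$ becoming the $\sigma$-algebra of the second coordinate, generated by clopen sets), reduces to a finite clopen coarsening $\xi$ of the canonical partition $\cL$, and then invokes Corollary~\ref{cor:mups} directly: continuity of $\alpha \mapsto \mu_{\alpha,f}$ in the weak$^*$-topology pulls back the upper-semicontinuity on measures to upper-semicontinuity on partitions. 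Your argument, by contrast, inlines the mechanism of Corollary~\ref{cor:mups} and carries out the ``transport of $\beta,\chi$'' at the level of coarsenings of $\alpha^S \vee \eta$ versus $\alpha_n^S \vee \eta$. That works, and has the virtue of being self-contained; the paper's route is more modular and avoids having to re-verify the transport estimates.

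The one place where your sketch is genuinely thinner than the paper's proof is the finitely generated case. You need $\gamma_n$ in $Y_{\alpha_n}$ with $\dR_\mu(\gamma_n,\gamma)\to 0$ and $\rh_{G,\nu_{\alpha_n}}(\gamma_n \given \cF_{\alpha_n}) \to 0$, and you gesture at ``the mechanism of Lemma~\ref{lem:fcl}''. But Lemma~\ref{lem:fcl} is stated and proved for clopen sets in $L^G$ with $L$ finite; its output is a clopen $V$ with small Rokhlin entropy \emph{uniformly over all invariant measures}, and that uniformity (plus clopenness) is exactly what survives the passage $\mu_{\alpha,f} \to \mu_{\alpha_n,f'}$. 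In your abstract setting there is no ambient clopen structure, so you would have to rebuild that lemma from scratch for partitions defined through finitely many $G$-translates of $\alpha$. This is doable but is real additional work you have not written; the paper sidesteps the issue entirely because the symbolic encoding places everything in the setting where Lemma~\ref{lem:fcl} already applies.
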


\begin{proof}
Let's assume $G$ is finitely generated; the proof for the other case will be essentially identical. Fix a countable collection of Borel sets $(D_n)_{n \in \N}$ with $\salg(\{D_n : n \in \N\}) = \cF$. Define $\theta : X \rightarrow (2^\N)^G$ by the rule $\theta(x)(g)(n) = 1$ if and only if $g^{-1} \cdot x \in D_n$. Notice that $\theta^{-1}(\Borel((2^\N)^G)) = \cF \mod \Null_\mu$ and thus $G \acts ((2^\N)^G, \theta_*(\mu))$ is the (unique up to isomorphism) factor associated with $\cF$. We will work with the larger space $\N^G \times (2^\N)^G = (\N \times 2^\N)^G$. We let $\cF'$ denote the sub-$\sigma$-algebra of $(\N \times 2^\N)^G$ consisting of sets which are measurable with respect to the second component, $(2^\N)^G$. Notice that $\cF'$ is generated by a collection of clopen sets.

For any countable partition $\alpha$ of $X$ and any injection $f : \alpha \rightarrow \N$, define $\phi_{\alpha, f} : X \rightarrow \N^G$ by the rule $\phi_{\alpha, f}(x)(g) = k$ if and only if $g^{-1} \cdot x \in A \in \alpha$ and $f(A) = k$. Combining with $\theta$, we obtain the map $\phi_{\alpha, f} \times \theta : X \rightarrow (\N \times 2^\N)^G$. Set $\mu_{\alpha, f} = (\phi_{\alpha,f} \times \theta)_*(\mu)$ and observe that $G \acts ((\N \times 2^\N)^G, \mu_{\alpha,f})$ is isomorphic to $G \acts (Y_\alpha, \nu_\alpha)$ and that this isomorphism identifies $\cF'$ with $\cF_\alpha$. Therefore
$$\rh_G((\N \times 2^\N)^G, \mu_{\alpha, f} \given \cF') = \rh_G(Y_\alpha, \nu_\alpha \given \cF_\alpha).$$

Now fix $\alpha \in \HPrt(\mu)$ and fix $\epsilon > 0$. Choose an injection $f : \alpha \rightarrow \N$ whose image has infinite complement. Let $\cL = \{R_k : k \in \N\}$ be the partition of $(\N \times 2^\N)^G$ where $R_k = \{y : y(1_G) \in \{k\} \times 2^\N\}$. Notice that $\cL$ is a clopen partition and that $\alpha = (\phi_{\alpha, f} \times \theta)^{-1}(\cL)$. Next, choose a finite partition $\beta$ coarser than $\alpha$ with $\sH_\mu(\alpha \given \beta) < \epsilon$. Let $\xi$ be the corresponding coarsening of $\cL$, specifically $\xi = \{C_B : B \in \beta\} \cup \{C_\varnothing\}$ where $C_B = \cup \{R_k : f^{-1}(k) \subseteq B\}$ and $C_\varnothing = \cup \{R_k : k \not\in f(\alpha)\}$. Then $\xi$ is a finite clopen partition, $\beta = (\phi_{\alpha,f} \times \theta)^{-1}(\xi)$, and
$$\sH_{\mu_{\alpha,f}}(\cL \given \xi) = \sH_\mu(\alpha \given \beta) < \epsilon.$$

By Corollary \ref{cor:mups}, there is a weak$^*$ open neighborhood $U$ of $\mu_{\alpha, f}$ such that for all $\nu \in U$
$$\rh_{G,\nu}(\xi \given \cF') < \rh_{G,\mu_{\alpha,f}}(\xi \given \cF') + \epsilon.$$
Since $(\N \times 2^\N)^G$ is compact and totally disconnected, there are a finite number of clopen sets $W_1, \ldots, W_m$ and $\delta > 0$ such that for all $\nu \in \M_G(L^G)$
$$\Big( \forall 1 \leq i \leq m \ |\nu(W_i) - \mu_{\alpha, f}(W_i)| < \delta \Big) \Longrightarrow \nu \in U.$$
For any countable partition $\alpha'$ of $X$ and injection $f' : \alpha' \rightarrow \N$, the pre-images of the sets $W_i$ under $\phi_{\alpha',f'} \times \theta$ will be finite intersections of $G$-translates of sets from $\alpha' \cup \{D_n : n \in \N\}$. Thus there is $\kappa > 0$ such that if $\alpha'$ and $f'$ satisfy
$$\sum_{k \in \N} \mu(f^{-1}(k) \symd f'^{-1}(k)) < \kappa$$
then $\mu_{\alpha',f'} \in U$. Finally, by \cite[Fact 1.7.7]{Do11} there is $\eta > 0$ such that if $\alpha'$ is a countable partition of $X$ with $\dR_\mu(\alpha, \alpha') < \eta$ then there is an injection $f' : \alpha' \rightarrow \N$ such that $\sum_{k \in \N} \mu(f^{-1}(k) \symd f'^{-1}(k)) < \kappa$ (here we use the fact that we chose an $f$ whose image has infinite complement, allowing $f'$ to possibly use new integers). Furthermore, we may shrink $\eta$ if necessary so that if $(\alpha', f')$ are as before and $\beta'$ is defined as $\beta' = (\phi_{\alpha',f'} \times \theta)^{-1}(\xi)$, then $\sH_\mu(\alpha' \given \beta') < 2 \epsilon$. Then, for such an $\alpha'$ and $f'$, we have
\begin{align*}
\rh_G(Y_{\alpha'}, \nu_{\alpha'} \given \cF_{\alpha'}) & = \rh_G((\N \times 2^\N)^G, \mu_{\alpha',f'} \given \cF')\\
 & \leq \rh_{G,\mu_{\alpha',f'}}(\xi \given \cF') + \rh_G((\N \times 2^\N)^G, \mu_{\alpha',f'} \given \salg_G(\xi) \vee \cF')\\
 & < \rh_{G, \mu_{\alpha,f}}(\xi \given \cF') + \epsilon + \sH_{\mu_{\alpha',f'}}(\cL \given \xi)\\
 & \leq \rh_G((\N \times 2^\N)^G, \mu_{\alpha,f} \given \cF') + \epsilon + \sH_\mu(\alpha' \given \beta')\\
 & < \rh_G((\N \times 2^\N)^G, \mu_{\alpha,f} \given \cF') + 3 \epsilon\\
 & = \rh_G(Y_\alpha, \nu_\alpha \given \cF_\alpha) + 3 \epsilon.
\end{align*}
This completes the proof in the case $G$ is finitely generated, and the proof for the non-finitely generated case is essentially identical.
\end{proof}

For our final upper-semicontinuity result we consider the space of {\pmp} $G$-actions. Specifically, let $(X, \mu)$ be a standard probability space with $\mu$ non-atomic, let $\Aut(X, \mu)$ denote the group of $\mu$-preserving Borel bijections of $X$ modulo agreement $\mu$-almost-everywhere, and let $\Act(G, X, \mu)$ be the set of group homomorphisms $a : G \rightarrow \Aut(X, \mu)$. The set $\Act(G, X, \mu)$ is called the \emph{space of {\pmp} $G$-actions}. It is a Polish space under the \emph{weak topology} \cite{K10}. This topology is generated by the sub-basic open sets of the form $\{a \in \Act(G, X, \mu) : \mu(a(g)(A) \symd B) \in U\}$ for $A, B \subseteq X$ Borel and open $U \subseteq \R$.

Below we write $\PAct(G, X, \mu)$ for the set of $a \in \Act(G, X, \mu)$ for which the action $G \acts^a (X, \mu)$ is aperiodic. Similarly we let $\EAct(G, X, \mu)$ be the set of $\mu$-ergodic actions.

\begin{cor} \label{cor:actups}
Let $(X, \mu)$ be a standard probability space with $\mu$ non-atomic, let $\cP$ be a partition with $\sH(\cP) < \infty$, and let $\Sigma$ be a sub-$\sigma$-algebra. Then the map $a \in \PAct(G, X, \mu) \cup \EAct(G, X, \mu) \mapsto \rh_{a(G),\mu}(\cP \given \salg_{a(G)}(\Sigma))$ is upper-semicontinuous. If $G$ is finitely generated then this map is upper-semicontinuous on all of $\Act(G, X, \mu)$.
\end{cor}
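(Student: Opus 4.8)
We sketch the argument; the plan is to run, directly on the fixed space $(X,\mu)$, the method behind Corollaries \ref{cor:mups} and \ref{cor:pups}, combining the formula of Lemma \ref{lem:outkolm} with the fact that the conditional Shannon entropy of a \emph{finite} partition depends continuously on the action in the weak topology. One cannot simply pass to a shift factor as in Corollary \ref{cor:pups}: replacing $(X,\mu)$ by the factor generated by $\cP$ and $\Sigma$ only gives $\rh_{a(G),\mu}(\cP\given\salg_{a(G)}(\Sigma))\le\rh$ of that factor, because outer Rokhlin entropy of a factor need not be intrinsic to the factor (this is exactly the content of the last theorem of the paper).

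First I would reduce to the case that $\cP$ is finite. If $\cP'\le\cP$ is a finite coarsening, then Corollary \ref{cor:add2}, applied to the increasing sequence $\salg_{a(G)}(\cP')\le\salg_{a(G)}(\cP)\le\salg_{a(G)}(\cP)\le\cdots$, gives $\rh_{a(G),\mu}(\cP\given\salg_{a(G)}(\Sigma))\le\rh_{a(G),\mu}(\cP'\given\salg_{a(G)}(\Sigma))+\sH_\mu(\cP\given\cP')$ for every $a$; since $\sH_\mu(\cP\given\cP')\to0$ as $\cP'\nearrow\cP$, upper-semicontinuity for finite $\cP'$ implies it for $\cP$. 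So assume $\cP$ is finite. Now fix $a_0$ in the relevant domain and $\epsilon>0$; when $G$ is not finitely generated we have $a_0\in\PAct(G,X,\mu)$ (as $\mu$ is non-atomic, $\EAct(G,X,\mu)\subseteq\PAct(G,X,\mu)$), so $\sfinv_{a_0(G)}$ is $\mu$-trivial. Apply Lemma \ref{lem:outkolm} with $\cF=\salg_{a_0(G)}(\Sigma)$, with $\mathcal A$ the collection of all finite partitions, and with $\mathcal C$ the collection of all finite $(\salg_{a_0(G)}(\Sigma)\vee\sfinv_{a_0(G)})$-measurable partitions (both are c-dense in the required sense). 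This produces finite partitions $\beta,\chi\le\alpha$, a finite $\gamma\in\mathcal C$, and a finite $T\subseteq G$ with
\[\sH_\mu(\chi)+\sH_\mu\Big(\cP\given\bigvee_{t\in T}a_0(t)(\beta\vee\chi)\vee\gamma\Big)<\epsilon,\qquad\sH_\mu\Big(\beta\given\bigvee_{t\in T}a_0(t)\chi\vee\gamma\Big)<\rh_{a_0(G),\mu}(\cP\given\salg_{a_0(G)}(\Sigma))+\epsilon.\]

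The heart of the proof is transporting this finite witness to nearby actions. Fix a countable algebra $\{D_n\}$ generating $\Sigma$. One approximates $\gamma$ in the Rokhlin metric $\dR_\mu$ by a finite partition that is a Boolean combination of finitely many sets of the form $a_0(g)D_n$ (in the finitely generated case one also needs finitely many $a_0(G)$-invariant sets supported on the finite-orbit part of $a_0$ --- see below); for $a$ in a suitable weak neighbourhood of $a_0$, replacing each $a_0(g)$ by $a(g)$ yields a partition $\gamma^\#_a$ that is $\salg_{a(G)}(\Sigma)$-measurable and $\dR_\mu$-close to $\gamma$. Since $\beta,\chi,\cP$ are fixed finite partitions, the joins $\bigvee_{t\in T}a(t)(\cdot)$ and the conditional Shannon entropies above are continuous functions of $a$ in the weak topology (here finiteness of $\cP$ is convenient), and, using Lemma \ref{lem:app} to absorb the $\dR_\mu$-error, we may arrange $\sH_\mu(\chi)+\sH_\mu(\cP\given\bigvee_{t}a(t)(\beta\vee\chi)\vee\gamma^\#_a)<2\epsilon$ and $\sH_\mu(\beta\given\bigvee_{t}a(t)\chi\vee\gamma^\#_a)<\rh_{a_0(G),\mu}(\cP\given\salg_{a_0(G)}(\Sigma))+2\epsilon$. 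Feeding the configuration $(\alpha,\beta,\chi,\gamma^\#_a,T)$ into the first inequality from the proof of Lemma \ref{lem:outkolm}, now for the $a$-action and with $\salg_{a(G)}(\Sigma)$ in place of $\cF$, gives
\[\rh_{a(G),\mu}(\cP\given\salg_{a(G)}(\Sigma))\le\sH_\mu(\chi)+\sH_\mu\Big(\beta\given\bigvee_{t\in T}a(t)\chi\vee\gamma^\#_a\Big)+\sH_\mu\Big(\cP\given\bigvee_{t\in T}a(t)(\beta\vee\chi)\vee\gamma^\#_a\Big)<\rh_{a_0(G),\mu}(\cP\given\salg_{a_0(G)}(\Sigma))+4\epsilon\]
for all such $a$, which is the desired upper-semicontinuity.

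I expect the main obstacle to be the transport step in the finitely generated case when $a_0$ has a non-negligible finite-orbit part: then $\gamma$ genuinely uses $\sfinv_{a_0(G)}$, which is neither approximable by Boolean combinations of the $a_0(g)D_n$ nor stable under small perturbations of $a$. This is handled exactly as in the proof of Corollary \ref{cor:mups}: the finite quotients of $X$ determined by $\sfinv_{a_0(G)}$ are encoded in a shift $L_0^G$, and Lemma \ref{lem:fcl} is used to replace the finite-orbit part of $\gamma$ by a clopen partition whose Rokhlin entropy is at most $\epsilon$ for \emph{every} invariant measure, so the corresponding extra error is bounded by $\epsilon$ uniformly in $a$. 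When $G$ is not finitely generated the domain is restricted to aperiodic actions precisely so that $\sfinv$ is trivial and this difficulty is absent.
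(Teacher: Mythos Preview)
Your approach is correct in outline, but it takes a different route from the paper, and the difference is worth noting because it stems from a misconception in your opening paragraph.

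You write that ``one cannot simply pass to a shift factor as in Corollary \ref{cor:pups}'' because outer Rokhlin entropy need not be intrinsic to a factor. The paper's proof shows that one \emph{can} pass to a shift, provided one passes to an \emph{isomorphic} copy rather than a proper factor. Since $(X,\mu)$ is standard and non-atomic one may take $X = 2^\N$; the paper then sets $Y = \N \times 2^\N \times 2^\N$ and defines $\theta(x) = (k, x, z)$ where $k$ records which piece of $\cP$ contains $x$, the second coordinate is $x$ itself, and $z$ encodes the $\Sigma$-information. The resulting equivariant map $\phi_a : X \to Y^G$, $\phi_a(x)(g) = \theta(a(g)^{-1}(x))$, is injective because $\phi_a(x)(1_G)$ determines $x$. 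Thus $G \acts^a (X,\mu)$ is isomorphic to the shift $G \acts (Y^G, \mu_a)$, the isomorphism carries $\cP$ to a clopen partition $\cL$ and $\salg_{a(G)}(\Sigma)$ to a fixed clopen-generated $\cF$, and the map $a \mapsto \mu_a$ is continuous. One then reduces to a finite coarsening $\xi$ of $\cL$ and invokes Corollary \ref{cor:mups} directly; the finitely generated case is already handled inside that corollary via Lemma \ref{lem:fcl}.

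Your direct approach---extracting a finite witness from Lemma \ref{lem:outkolm} at $a_0$ and transporting it to nearby $a$ by rewriting $\gamma$ as a Boolean combination of sets $a_0(g)D_n$ and then substituting $a(g)$ for $a_0(g)$---is valid and is essentially what underlies Corollary \ref{cor:mups} anyway, just unpacked on $(X,\mu)$. The cost is that in the finitely generated case you must re-implement the $\sfinv$/Lemma \ref{lem:fcl} machinery by hand: your sketch (``encode the finite quotients in a shift $L_0^G$'') would work, but at that point you are rebuilding the paper's embedding for that piece of the argument. The paper's route buys modularity: once the isomorphism $\phi_a$ is in place, all the delicate work (including the finitely generated case) is inherited wholesale from Corollary \ref{cor:mups}, and nothing needs to be redone.
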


\begin{proof}
Since all standard non-atomic probability spaces are isomorphic, we can assume without loss of generality that $X = 2^\N$. Fix a countable collection of Borel sets $\{D_n : n \in \N\}$ with $\Sigma = \salg(\{D_n : n \in \N\})$. Also fix an enumeration $\cP = \{P_n : n \in \N\}$. Set $Y = \N \times 2^\N \times 2^\N$ and define $\theta : X \rightarrow Y$ by setting $\theta(x) = (k, x, z)$ if $x \in P_k$ and for all $n \in \N$ $z(n) = 1$ precisely when $x \in D_n$. Clearly $\theta$ is a Borel injection, the pre-image of the Borel $\sigma$-algebra coming from the third component, $2^\N$, coincides with $\Sigma$, and the pre-image of the countable partition given by the first component, $\N$, coincides with $\cP$.

Consider the totally disconnected space $Y^G = \N^G \times (2^\N)^G \times (2^\N)^G$ together with the natural left-shift action of $G$. Let $\cF$ denote the $G$-invariant $\sigma$-algebra coming from the third component, $(2^\N)^G$, and define the partition $\cL = \{R_k : k \in \N\}$ by $R_k = \{y \in Y^G : y(1_G) \in \{k\} \times (2^\N)^G \times (2^\N)^G\}$. Note that $\cL$ is a clopen partition and that $\cF$ is generated by a collection of clopen sets.

For an action $a \in \Act(G, X, \mu)$, define $\phi_a : X \rightarrow Y^G$ by the rule $\phi_a(x)(g) = \theta(a(g)^{-1}(x))$. Clearly $\phi_a$ is injective (in fact $\phi_a(x)(1_G) = \theta(x)$), and $\phi_a$ is $G$-equivariant with respect to the $a$-action of $G$ on $X$. Therefore, setting $\mu_a = (\phi_a)_*(\mu)$, we have that $G \acts^a (X, \mu)$ is isomorphic to $G \acts (Y^G, \mu_a)$. Furthermore, this isomorphism identifies $\cL$ with $\cP$ and $\cF$ with $\salg_{a(G)}(\Sigma)$.

Now fix $\epsilon > 0$. Choose a finite partition $\cQ$ coarser than $\cP$ with $\sH_\mu(\cP \given \cQ) < \epsilon$. Let $\xi$ be the corresponding coarsening of $\cL$, specifically $\xi = \{C_Q : Q \in \cQ\}$ where $C_Q = \cup \{R_k : P_k \subseteq Q\}$. Now fix an action $a \in \Act(G, X, \mu)$. By Corollary \ref{cor:mups} there is a weak$^*$ open neighborhood $U$ of $\mu_a$ such that for all $\nu \in U$
$$\rh_{G,\nu}(\xi \given \cF) < \rh_{G, \mu_a}(\xi \given \cF) + \epsilon.$$
It is not difficult to check that the map $b \in \Act(G, X, \mu) \rightarrow \mu_b$ is continuous. Thus there is an open neighborhood $V$ of $a$ with $\mu_b \in U$ for all $b \in V$. Then we have
\begin{align*}
\rh_{b(G), \mu}(\cP \given \salg_{b(G)}(\Sigma)) & = \rh_{G, \mu_b}(\cL \given \cF)\\
 & \leq \rh_{G, \mu_b}(\xi \given \cF) + \rh_{G, \mu_b}(\cL \given \salg_G(\xi) \vee \cF)\\
 & < \rh_{G, \mu_a}(\xi \given \cF) + \epsilon + \sH_{\mu_b}(\cL \given \xi)\\
 & \leq \rh_{G, \mu_a}(\cL \given \cF) + \epsilon + \sH_\mu(\cP \given \cQ)\\
 & < \rh_{G, \mu_a}(\cL \given \cF) + 2 \epsilon\\
 & = \rh_{a(G), \mu}(\cP \given \salg_{a(G)}(\Sigma)) + 2 \epsilon.
\end{align*}
This completes the proof when $G$ is finitely generated. The proof for the non-finitely generated case is essentially identical.
\end{proof}

\section{Inverse limits} \label{sec:inv}

In this section we obtain a formula for the Rokhlin entropy of an inverse limit of actions. This formula was developed for ergodic actions in Part II \cite{S14b} and was a critical ingredient for the proof of the main theorem there. We believe the formula is of independent interest and will be useful for other purposes. Here we will also apply it to establish Borel measurability of Rokhlin entropy on the space of invariant measures and on the space of actions.

\begin{lem} \label{lem:invout}
Let $G \acts (X, \mu)$ be a {\pmp} action. Suppose that $G \acts (X, \mu)$ is the inverse limit of actions $G \acts (X_n, \mu_n)$. Identify each $\Borel(X_n)$ as a sub-$\sigma$-algebra of $X$ in the natural way. Let $(\cF_n)_{n \in \N}$ be an increasing sequence of sub-$\sigma$-algebras with $\cF_n \subseteq \Borel(X_n)$ for every $n$, and set $\cF = \bigvee_{n \in \N} \cF_n$. If $\cP$ is a partition with $\cP \subseteq \Borel(X_n)$ for all $n$ and $\inf_{n \in \N} \sH(\cP \given \cF_n \vee \sinv_G) < \infty$ then
$$\rh_{G,\mu}(\cP \given \cF) = \inf_{n \in \N} \rh_{G,\mu_n}(\cP \given \cF_n).$$
\end{lem}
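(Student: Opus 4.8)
The plan is to prove the two inequalities separately, and the key tool throughout will be the countable sub-additivity of outer Rokhlin entropy (Corollary \ref{cor:add2}) together with the observation that each $\cF_n$ and each $\Borel(X_n)$ is a $G$-invariant sub-$\sigma$-algebra sitting inside $\Borel(X)$, with $\bigvee_n \Borel(X_n) = \Borel(X) \mod \Null_\mu$ since $(X,\mu)$ is the inverse limit. I would first record the easy direction: for each fixed $n$ we have $\rh_{G,\mu}(\cP \given \cF) \le \rh_{G,\mu}(\cP \given \cF_n)$ because $\cF_n \subseteq \cF$ (conditioning on a larger $\sigma$-algebra only decreases outer Rokhlin entropy — this is immediate from the definition, as any partition witnessing $\xi \subseteq \salg_G(\alpha) \vee \cF_n \vee \sinv_G$ also witnesses the analogous inclusion with $\cF$, and $\sH(\alpha \given \cF \vee \sinv_G) \le \sH(\alpha \given \cF_n \vee \sinv_G)$). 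Finally one notes $\rh_{G,\mu}(\cP \given \cF_n) = \rh_{G,\mu_n}(\cP \given \cF_n)$: since $\cP, \cF_n \subseteq \Borel(X_n)$, both sides compute outer Rokhlin entropy of the same factor-data, and the outer Rokhlin entropy of a collection inside $\Borel(X_n)$ relative to a sub-$\sigma$-algebra of $\Borel(X_n)$, as measured within $(X,\mu)$, agrees with that measured within $(X_n,\mu_n)$ — this requires a short argument that a near-optimal partition for $(X_n,\mu_n)$ pulls back to $(X,\mu)$ without increasing entropy, and conversely that one may push a partition in $(X,\mu)$ down to $\Borel(X_n)$ (or rather, project it) without loss, which is where the hypothesis $\cP \subseteq \Borel(X_n)$ and $G$-invariance of $\Borel(X_n)$ are used. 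Taking the infimum over $n$ gives $\rh_{G,\mu}(\cP \given \cF) \le \inf_n \rh_{G,\mu_n}(\cP \given \cF_n)$.

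For the reverse inequality, fix $\epsilon > 0$ and, using $\inf_n \sH(\cP \given \cF_n \vee \sinv_G) < \infty$, choose $m$ with $\sH(\cP \given \cF_m \vee \sinv_G) < \infty$; then $\rh_{G,\mu}(\cP \given \cF) < \infty$ as well. The strategy is to approximate: pick a countable partition $\alpha$ with $\cP \subseteq \salg_G(\alpha) \vee \cF \vee \sinv_G$ and $\sH(\alpha \given \cF \vee \sinv_G) < \rh_{G,\mu}(\cP \given \cF) + \epsilon$. Since $\cF = \bigvee_n \cF_n$ with the $\cF_n$ increasing, and since $\alpha$ has finite conditional entropy (we may assume $\sH(\alpha) < \infty$ by the usual trick — splitting off a point from each finite orbit and applying Corollary \ref{cor:simplerok} on the aperiodic part, exactly as in the proof of Lemma \ref{lem:outkolm}), there is a large $n \ge m$ and a partition $\alpha' \subseteq \Borel(X_n)$ with $\dR_\mu(\alpha, \alpha')$ as small as we like; moreover we can arrange $\cP \subseteq \salg_G(\alpha') \vee \cF_n \vee \sinv_G$ up to small error. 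The mechanism here is that $\bigvee_n (\salg_G(\alpha') \vee \cF_n) = \salg_G(\alpha) \vee \cF$ modulo null sets, so by the martingale/increasing-union argument the conditional entropy $\sH(\cP \given \salg_G(\alpha')^T \vee \cF_n \vee \sinv_G)$ can be made $< \epsilon$ for suitable finite $T \subseteq G$ and large $n$. Then sub-additivity (Corollary \ref{cor:add2}) applied to $\cP \subseteq \salg_G(\alpha') \vee \cF_n \vee \sinv_G$, together with the residual small term, gives
$$\rh_{G,\mu_n}(\cP \given \cF_n) = \rh_{G,\mu}(\cP \given \cF_n) \le \sH(\alpha' \given \cF_n \vee \sinv_G) + \epsilon \le \rh_{G,\mu}(\cP \given \cF) + O(\epsilon),$$
using Lemma \ref{lem:app} to control $\sH(\alpha' \given \cF_n \vee \sinv_G)$ by $\sH(\alpha \given \cF \vee \sinv_G)$ plus the small Rokhlin-metric error and the fact that conditioning on the smaller $\cF_n \vee \sinv_G$ versus $\cF \vee \sinv_G$ changes entropy of a fixed finite-entropy partition by at most a controllable amount. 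Letting $\epsilon \to 0$ yields $\inf_n \rh_{G,\mu_n}(\cP \given \cF_n) \le \rh_{G,\mu}(\cP \given \cF)$.

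I expect the main obstacle to be the bookkeeping in the reverse inequality — specifically, simultaneously controlling three things as $n$ grows: (a) that $\alpha'$ can be taken $\Borel(X_n)$-measurable and close to $\alpha$; (b) that $\cP$ is still (approximately) generated by $\salg_G(\alpha') \vee \cF_n \vee \sinv_G$, which needs the increasing-union of $\sigma$-algebras to be exploited carefully together with a finite window $T \subseteq G$; and (c) that the conditional entropy $\sH(\alpha' \given \cF_n \vee \sinv_G)$ does not jump up relative to $\sH(\alpha \given \cF \vee \sinv_G)$ — here the subtlety is that $\cF_n \subseteq \cF$ makes conditional entropy \emph{larger}, so one must instead argue that $\sH(\alpha' \given \cF_n \vee \sinv_G)$ is close to $\sH(\alpha' \given \cF \vee \sinv_G)$ because $\alpha' \in \Borel(X_n)$ and conditioning on the part of $\cF$ beyond $\Borel(X_n)$ is nearly irrelevant for an $\Borel(X_n)$-measurable partition. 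The cleanest route is probably to absorb all of this into a single application of Lemma \ref{lem:outkolm} with the c-dense families $\mathcal{A} = \{\text{partitions measurable in some } \Borel(X_n)\}$ and $\mathcal{C} = \{$partitions in $\HPrt(\cF_n \vee \sfinv_G,\mu)\}$, which is exactly the kind of situation that lemma was designed to handle, thereby reducing the whole argument to verifying c-density of these families and quoting Lemma \ref{lem:outkolm} and Corollary \ref{cor:add2}.
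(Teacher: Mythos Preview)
The paper's proof is completely different from yours, and much shorter: it reduces to the ergodic case (already proved as \cite[Lem.~7.1]{S14b}) via the ergodic decomposition formula (Corollary~\ref{cor:ergavg}) and the monotone convergence theorem. Concretely, the finiteness hypothesis gives $\inf_n \sH_\nu(\cP \given \cF_n) < \infty$ for $\tau$-almost-every ergodic component $\nu$, so the ergodic result yields $\rh_{G,\nu}(\cP \given \cF) = \inf_n \rh_{G,\nu_n}(\cP \given \cF_n)$ for $\tau$-a.e.\ $\nu$; integrating both sides over $\E_G(X)$ and using monotone convergence on the right finishes the argument in a few lines.

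Your direct approach has a genuine gap. The asserted equality $\rh_{G,\mu}(\cP \given \cF_n) = \rh_{G,\mu_n}(\cP \given \cF_n)$ is not a formality: the inequality $\leq$ is immediate (partitions of $X_n$ pull back to $X$), but your ``short argument'' for $\geq$ --- projecting a partition of $X$ down to $\Borel(X_n)$ without loss --- does not work. There is no mechanism for taking an arbitrary $\alpha \subseteq \Borel(X)$ with $\cP \subseteq \salg_G(\alpha) \vee \cF_n \vee \sinv_G$ and producing a $\Borel(X_n)$-measurable replacement of no greater conditional entropy. In fact, via Corollary~\ref{cor:ergavg} this equality is equivalent to its ergodic analogue, which is exactly the content of \cite[Lem.~7.1]{S14b} --- so you are circularly assuming what must be proved. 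This equality is also invoked at the crucial step of your reverse inequality, so both directions are affected.

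Your closing suggestion --- apply Lemma~\ref{lem:outkolm} with $\mathcal{A}$ and $\mathcal{C}$ drawn from the $\Borel(X_n)$ and $\cF_n \vee \sfinv_G$ --- is indeed how the ergodic case is handled in \cite{S14b}, and could in principle be pushed through here. But note an additional obstacle in the non-ergodic setting: the partition $\gamma$ delivered by Lemma~\ref{lem:outkolm} is measurable in $\cF_n \vee \sfinv_G(X)$, and $\sfinv_G(X)$ need not sit inside $\Borel(X_n)$, so the resulting $\beta, \chi, \gamma$ do not all live in $\Borel(X_n)$ and hence do not directly witness an upper bound for $\rh_{G,\mu_n}(\cP \given \cF_n)$. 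Handling this requires more than you have sketched. The paper sidesteps all of this by passing to ergodic components first.
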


\begin{proof}
Without loss of generality, we can assume that each $\cF_n$ is countably generated. Let $\mu = \int_{\E_G(X)} \nu \ d \tau(\nu)$ be the ergodic decomposition of $\mu$. Then every ergodic measure $\nu$ pushes forward to an ergodic measure $\nu_n$ for $G \acts X_n$, and we have $\mu_n = \int \nu_n \ d \tau(\nu)$. Pick $k \in \N$ with $\sH(\cP \given \cF_k \vee \sinv_G) < \infty$. Lemma \ref{lem:break} implies that
$$\sH(\cP \given \cF_k \vee \sinv_G) = \int_{\E_G(X)} \sH_\nu(\cP \given \cF_k) \ d \tau(\nu).$$
So for $\tau$-almost-every $\nu \in \E_G(X)$ the infimum $\inf_n \sH_\nu(\cP \given \cF_n) \leq \sH_\nu(\cP \given \cF_k)$ is finite. In \cite[Lem. 7.1]{S14b} this lemma is proven for ergodic actions. So $\rh_{G,\nu}(\cP \given \cF) = \inf_n \rh_{G,\nu_n}(\cP \given \cF_n)$ for $\tau$-almost-every $\nu \in \E_G(X)$. The claim now follows from the ergodic decomposition formula (Corollary \ref{cor:ergavg}) and the monotone convergence theorem for integrals.
\end{proof}

\begin{cor} \label{cor:relup}
Let $G \acts (X, \mu)$ be a {\pmp} action, and let $(\cF_n)_{n \in \N}$ be an increasing sequence of sub-$\sigma$-algebras. Set $\cF = \bigvee_{n \in \N} \cF_n$.
\begin{enumerate}
\item[\rm (i)] $\rh_{G,\mu}(\cP \given \cF) = \inf_{n \in \N} \rh_{G,\mu}(\cP \given \cF_n)$ if $\cP$ is a partition with $\inf_n \sH(\cP \given \cF_n \vee \sinv_G) < \infty$.
\item[\rm (ii)] $\rh_G(X, \mu \given \cF) = \inf_{n \in \N} \rh_G(X, \mu \given \cF_n)$ if the right-hand side is finite.
\end{enumerate}
\end{cor}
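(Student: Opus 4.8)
The plan is to reduce both parts to Lemma~\ref{lem:invout}. The key observation is that $G \acts (X,\mu)$ is (trivially) the inverse limit of the constant system $(X_n,\mu_n) = (X,\mu)$ with identity bonding maps; under the identification $\Borel(X_n) = \Borel(X)$, an increasing sequence of sub-$\sigma$-algebras of $X$ with $\cF_n \subseteq \Borel(X_n)$ and join $\cF$, together with a partition $\cP \subseteq \Borel(X_n)$, are exactly the data appearing in that lemma.

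Part~(i) is then immediate: Lemma~\ref{lem:invout} applied to this constant system has hypothesis $\inf_n \sH(\cP \given \cF_n \vee \sinv_G) < \infty$, which is precisely our assumption, and conclusion $\rh_{G,\mu}(\cP \given \cF) = \inf_n \rh_{G,\mu_n}(\cP \given \cF_n) = \inf_n \rh_{G,\mu}(\cP \given \cF_n)$, since $\mu_n = \mu$.

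For part~(ii), one inequality is trivial: as $\cF_n \subseteq \cF$, any partition witnessing $\rh_G(X,\mu \given \cF_n)$ also witnesses $\rh_G(X,\mu \given \cF)$ with no larger conditional entropy, so $\rh_G(X,\mu \given \cF) \le \rh_G(X,\mu \given \cF_n)$ for every $n$, and in particular $\rh_G(X,\mu \given \cF) < \infty$. For the reverse inequality the idea is to replace ``$\Borel(X)$'' by a single finite-entropy partition so that part~(i) can be applied. Since $\inf_n \rh_G(X,\mu \given \cF_n) < \infty$, fix $N$ with $\rh_G(X,\mu \given \cF_N) < \infty$. By splitting $X$ into its finite-orbit part — where one uses a two-piece partition, one piece a Borel transversal to the finite orbits, so that together with $\sinv_G$ it recovers the full Borel $\sigma$-algebra there — and its aperiodic part — where one applies Corollary~\ref{cor:simplerok} to the finite quantity $\rh_G(X,\mu \given \cF_N)$ — one produces a countable partition $\cP$ with $\sH_\mu(\cP) < \infty$ and $\salg_G(\cP) \vee \cF_N \vee \sinv_G = \Borel(X)$; this is the same maneuver used in the proofs of Corollaries~\ref{cor:ergavg} and~\ref{cor:simplerok}. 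Consequently, for all $n \ge N$ we have $\salg_G(\cP) \vee \cF_n \vee \sinv_G = \Borel(X)$, and likewise with $\cF$ in place of $\cF_n$. Using this, together with the facts that $\rh_{G,\mu}(\sinv_G \given \,\cdot\,) = 0$ (the trivial partition witnesses it) and $\rh_{G,\mu}(\salg_G(\cP) \given \,\cdot\,) = \rh_{G,\mu}(\cP \given \,\cdot\,)$ for any conditioning algebra, sub-additivity (Corollary~\ref{cor:add2}) gives $\rh_G(X,\mu \given \cF_n) = \rh_{G,\mu}(\Borel(X) \given \cF_n) = \rh_{G,\mu}(\cP \given \cF_n)$ for every $n \ge N$, and similarly $\rh_G(X,\mu \given \cF) = \rh_{G,\mu}(\cP \given \cF)$. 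Now part~(i), applied to $\cP$ — whose hypothesis holds since $\inf_n \sH(\cP \given \cF_n \vee \sinv_G) \le \sH_\mu(\cP) < \infty$ — yields $\rh_{G,\mu}(\cP \given \cF) = \inf_n \rh_{G,\mu}(\cP \given \cF_n)$. Finally, since both $\rh_{G,\mu}(\cP \given \cF_n)$ and $\rh_G(X,\mu \given \cF_n)$ are non-increasing in $n$, their infima over all $n$ agree with their infima over $n \ge N$, and chaining the displayed equalities gives $\rh_G(X,\mu \given \cF) = \inf_n \rh_G(X,\mu \given \cF_n)$.

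The only mildly technical step is the construction of the finite-entropy partition $\cP$ in part~(ii) and the verification that it survives the passage from $\Borel(X)$ to the conditioned algebras $\cF_n$ and $\cF$; I do not expect a genuine obstacle here, since the substantive content is carried entirely by Lemma~\ref{lem:invout} via the constant-inverse-system observation, and everything else is bookkeeping with sub-additivity and monotonicity of (outer) Rokhlin entropy.
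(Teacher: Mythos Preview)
Your proof is correct and follows essentially the same route as the paper: part~(i) is Lemma~\ref{lem:invout} applied to the constant inverse system $X_n = X$, and part~(ii) reduces to part~(i) by passing to a single partition $\cP$ that generates $\Borel(X)$ together with $\cF_N \vee \sinv_G$. The only difference is that you work harder than necessary to get $\sH_\mu(\cP) < \infty$; the paper simply takes any $\cP$ with $\sH(\cP \given \cF_N \vee \sinv_G) < \infty$ and $\salg_G(\cP) \vee \cF_N \vee \sinv_G = \Borel(X)$, which exists directly from the definition of $\rh_G(X,\mu \given \cF_N) < \infty$, and then the equality $\rh_G(X,\mu \given \cF_n) = \rh_{G,\mu}(\cP \given \cF_n)$ for $n \geq N$ is immediate from the definitions without invoking sub-additivity.
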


\begin{proof}
Clause (i) follows from Lemma \ref{lem:invout} by using $X_n = X$ for all $n$. For (ii), assume the right-hand side is finite and pick $k \in \N$ with $\rh_G(X, \mu \given \cF_k) < \infty$. Then there is a partition $\cP$ with $\sH(\cP \given \cF_k \vee \sinv_G) < \infty$ and $\salg_G(\cP) \vee \cF_k \vee \sinv_G = \Borel(X)$. So $\rh_G(X, \mu \given \cF) = \rh_{G,\mu}(\cP \given \cF)$ and $\rh_G(X, \mu \given \cF_n) = \rh_{G,\mu}(\cP \given \cF_n)$ for all $n \geq k$. By applying (i) we obtain
\begin{equation*}
\rh_G(X, \mu \given \cF) = \rh_{G,\mu}(\cP \given \cF) = \inf_{n \geq k} \rh_{G,\mu}(\cP \given \cF_n) = \inf_{n \geq k} \rh_G(X, \mu \given \cF_n).\qedhere
\end{equation*}
\end{proof}

Now we present a general formula for the Rokhlin entropy of an inverse limit.

\begin{thm} \label{thm:ks}
Let $G \acts (X, \mu)$ be a {\pmp} action and let $\cF$ be a $G$-invariant sub-$\sigma$-algebra. Suppose that $G \acts (X, \mu)$ is the inverse limit of actions $G \acts (X_n, \mu_n)$. Identify each $\Borel(X_n)$ as a sub-$\sigma$-algebra of $X$ in the natural way. Then
\begin{equation} \label{eqn:ks0}
\rh_G(X, \mu \given \cF) < \infty \Longleftrightarrow
\left\{\begin{array}{c}
\displaystyle{\inf_{n \in \N} \sup_{m \geq n} \rh_{G,\mu}(\Borel(X_m) \given \Borel(X_n) \vee \cF) = 0}\\
\displaystyle{\text{and} \quad \forall m \ \rh_{G,\mu}(\Borel(X_m) \given \cF) < \infty.}
\end{array}\right\}\end{equation}
Furthermore, when $\rh_G(X, \mu \given \cF) < \infty$ we have
\begin{equation} \label{eqn:ks}
\rh_G(X, \mu \given \cF) = \sup_{m \in \N} \rh_{G,\mu}(\Borel(X_m) \given \cF).
\end{equation}
\end{thm}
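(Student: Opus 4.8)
The plan is to deduce everything from countable sub-additivity (Corollary \ref{cor:add2}) together with the relative inverse-limit formula (Corollary \ref{cor:relup}). Throughout write $\Sigma_n = \Borel(X_n)$, so that $(\Sigma_n)$ is an increasing sequence of $G$-invariant sub-$\sigma$-algebras with $\bigvee_n \Sigma_n = \Borel(X)$ and $\rh_G(X,\mu\given\cF)=\rh_{G,\mu}(\Borel(X)\given\cF)$; after replacing $\cF$ by a countably generated sub-$\sigma$-algebra equal to it mod $\mu$-null sets we may ignore any generatedness issues, since all quantities involved are $\mu$-a.e.\ notions. First I would record two elementary monotonicity facts, both immediate from the definition of outer Rokhlin entropy: if $\cF\subseteq\cF'$ are $G$-invariant and $\Sigma\subseteq\Sigma'$ are sub-$\sigma$-algebras, then $\rh_{G,\mu}(\Sigma\given\cF')\leq\rh_{G,\mu}(\Sigma'\given\cF)$ (enlarging the conditioning algebra both relaxes the spanning constraint and lowers the conditional Shannon entropy, while enlarging the target only tightens the constraint), and moreover $\rh_{G,\mu}(\Sigma\vee\cF\given\cF')=\rh_{G,\mu}(\Sigma\given\cF')$ whenever $\cF\subseteq\cF'$ (the spanning constraint only sees the target modulo $\cF'\vee\sinv_G$). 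In particular $m\mapsto\rh_{G,\mu}(\Sigma_m\given\cF)$ is non-decreasing and bounded above by $\rh_G(X,\mu\given\cF)$, which already gives ``$\geq$'' in \eqref{eqn:ks} and also gives the second clause on the right of \eqref{eqn:ks0} whenever its left side holds.

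Next I would prove the equivalence \eqref{eqn:ks0}. For ``$\Rightarrow$'', assuming $\rh_G(X,\mu\given\cF)<\infty$, apply Corollary \ref{cor:relup}.(ii) to the increasing sequence $\cF_n:=\Sigma_n\vee\cF$, whose join is $\Borel(X)$: since $\inf_n\rh_G(X,\mu\given\Sigma_n\vee\cF)\leq\rh_G(X,\mu\given\cF)<\infty$ and $\rh_G(X,\mu\given\Borel(X))=0$, the corollary yields $\inf_n\rh_G(X,\mu\given\Sigma_n\vee\cF)=0$; combined with the monotonicity fact $\sup_{m\geq n}\rh_{G,\mu}(\Sigma_m\given\Sigma_n\vee\cF)\leq\rh_G(X,\mu\given\Sigma_n\vee\cF)$, this gives the first clause on the right of \eqref{eqn:ks0}. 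For ``$\Leftarrow$'', here I would extract a rapidly converging subsequence: since $n\mapsto\sup_{m\geq n}\rh_{G,\mu}(\Sigma_m\given\Sigma_n\vee\cF)$ is non-increasing (again by monotonicity), the hypothesis that its infimum is $0$ lets one recursively choose $n_1<n_2<\cdots$ with $\rh_{G,\mu}(\Sigma_m\given\Sigma_{n_k}\vee\cF)<2^{-k}$ for all $m\geq n_k$, so in particular $\rh_{G,\mu}(\Sigma_{n_k}\given\Sigma_{n_{k-1}}\vee\cF)<2^{-(k-1)}$ for $k\geq2$. Applying Corollary \ref{cor:add2} with $\xi=\Borel(X)$ and the increasing sequence $\Theta_k:=\Sigma_{n_k}\vee\cF$ (whose join over $\cF$ is $\Borel(X)$, the $n_k$ being cofinal), and using the identity above to rewrite the terms as $\rh_{G,\mu}(\Theta_1\given\cF)=\rh_{G,\mu}(\Sigma_{n_1}\given\cF)$ and $\rh_{G,\mu}(\Theta_k\given\Theta_{k-1}\vee\cF)=\rh_{G,\mu}(\Sigma_{n_k}\given\Sigma_{n_{k-1}}\vee\cF)$, one obtains $\rh_G(X,\mu\given\cF)\leq\rh_{G,\mu}(\Sigma_{n_1}\given\cF)+\sum_{k\geq2}2^{-(k-1)}=\rh_{G,\mu}(\Sigma_{n_1}\given\cF)+1<\infty$, the last finiteness being the second standing hypothesis.

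Finally, for the remaining inequality ``$\leq$'' in \eqref{eqn:ks}, I would rerun the same subsequence construction with error $\epsilon\,2^{-k}$ in place of $2^{-k}$: since $\rh_G(X,\mu\given\cF)<\infty$, the right side of \eqref{eqn:ks0} holds and such a subsequence exists for every $\epsilon>0$, and Corollary \ref{cor:add2} now gives
$$\rh_G(X,\mu\given\cF)\ \leq\ \rh_{G,\mu}(\Sigma_{n_1}\given\cF)+\sum_{k=2}^\infty\rh_{G,\mu}(\Sigma_{n_k}\given\Sigma_{n_{k-1}}\vee\cF)\ <\ \rh_{G,\mu}(\Sigma_{n_1}\given\cF)+\epsilon\ \leq\ \sup_{m}\rh_{G,\mu}(\Sigma_m\given\cF)+\epsilon.$$
Letting $\epsilon\to0$ completes the proof of \eqref{eqn:ks}.

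The main obstacle — essentially the only non-bookkeeping point — is making the subsequence extraction cooperate with countable sub-additivity: one must exploit that the hypothesis $\inf_n\sup_{m\geq n}\rh_{G,\mu}(\Sigma_m\given\Sigma_n\vee\cF)=0$ is genuinely an infimum over a \emph{non-increasing} sequence, so that the indices $n_k$ can be pushed arbitrarily far out while keeping each ``tail'' error below $\epsilon\,2^{-k}$, which is exactly what makes the telescoping series produced by Corollary \ref{cor:add2} both convergent and of total error at most $\epsilon$. Everything else (the two monotonicity properties of outer Rokhlin entropy, the identity $\rh_{G,\mu}(\Sigma\vee\cF\given\cF')=\rh_{G,\mu}(\Sigma\given\cF')$, and the vanishing $\rh_G(X,\mu\given\Borel(X))=0$) is immediate from the definitions, and the substantive work is already packaged inside Corollaries \ref{cor:add2} and \ref{cor:relup}, hence ultimately inside Theorem \ref{thm:nekrieger}.
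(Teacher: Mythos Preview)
Your proof is correct and follows essentially the same approach as the paper's: both directions of \eqref{eqn:ks0} and the formula \eqref{eqn:ks} are obtained by combining Corollary \ref{cor:relup}.(ii) for the forward implication with a geometric-series subsequence extraction fed into Corollary \ref{cor:add2} for the reverse implication and the upper bound in \eqref{eqn:ks}. If anything you are slightly more careful than the paper in explicitly verifying that the sequence $n\mapsto\sup_{m\geq n}\rh_{G,\mu}(\Sigma_m\given\Sigma_n\vee\cF)$ is non-increasing and in choosing the $n_k$ to be strictly increasing and cofinal, which is what is needed to invoke Corollary \ref{cor:add2}.
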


\begin{proof}
First suppose that $\rh_G(X, \mu \given \cF) < \infty$. Then
$$\rh_{G,\mu}(\Borel(X_m) \given \cF) \leq \rh_G(X, \mu \given \cF) < \infty$$
for all $m \in \N$ and by applying Corollary \ref{cor:relup}.(ii) we get
\begin{align*}
0 = \rh_G(X, \mu \given \Borel(X)) & = \inf_{n \in \N} \rh_G(X, \mu \given \Borel(X_n) \vee \cF)\\
 & \geq \inf_{n \in \N} \sup_{m \geq n} \rh_{G,\mu}(\Borel(X_m) \given \Borel(X_n) \vee \cF) \geq 0.
\end{align*}
This proves one implication in the first claim.

Now suppose that the right-side of (\ref{eqn:ks0}) is true. Fix $\delta > 0$ and for each $i \geq 1$ fix $n(i)$ with
$$\sup_{m \in \N} \rh_{G,\mu}(\Borel(X_m) \given \Borel(X_{n(i)}) \vee \cF) < \frac{\delta}{2^i}.$$
Then by using $m = n(i+1)$ we have
$$\rh_{G,\mu}(\Borel(X_{n(i+1)}) \given \Borel(X_{n(i)}) \vee \cF) < \frac{\delta}{2^i}.$$
Now by sub-additivity (Corollary \ref{cor:add2}) we have
\begin{align*}
\rh_G(X, \mu \given \cF) & \leq \rh_{G,\mu}(\Borel(X_{n(1)}) \given \cF) + \sum_{i=1}^\infty \rh_{G,\mu}(\Borel(X_{n(i+1)}) \given \Borel(X_{n(i)}) \vee \cF)\\
 & < \rh_{G,\mu}(\Borel(X_{n(1)}) \given \cF) + \delta.
\end{align*}
So $\rh_G(X, \mu \given \cF) < \infty$, completing the proof of the first claim. The second claim also follows, since above we only assumed that the right-side of (\ref{eqn:ks0}) was true (equivalently $\rh_G(X, \mu \given \cF) < \infty$ by the first claim). By letting $\delta$ tend to $0$ above, we get that $\rh_G(X, \mu \given \cF) \leq \sup_m \rh_{G,\mu}(\Borel(X_m) \given \cF)$. The reverse inequality is immediate from the definitions.
\end{proof}

It is an interesting open problem to determine if, under the assumptions of the previous theorem, one always has $\rh_G(X, \mu \given \cF) = \sup_{m \in \N} \rh_{G,\mu}(\Borel(X_m) \given \cF)$.

The formula in the previous theorem relies upon computing outer Rokhlin entropies within the largest space $X$. However, it may be more natural to use a formula which only relies upon computations occurring within the actions which build the inverse limit. With an additional assumption we can obtain such a formula.

\begin{cor} \label{cor:ks}
Let $G \acts (X, \mu)$ be a {\pmp} action. Suppose that $G \acts (X, \mu)$ is the inverse limit of actions $G \acts (X_n, \mu_n)$. Identify each $\Borel(X_n)$ as a sub-$\sigma$-algebra of $X$ in the natural way. Let $(\cF_n)_{n \in \N}$ be an increasing sequence of sub-$\sigma$-algebras with $\cF_n \subseteq \Borel(X_n)$ for every $n$, and set $\cF = \bigvee_{n \in \N} \cF_n$. Assume that $\rh_G(X_n, \mu_n \given \cF_n) < \infty$ for all $n$. Then
\begin{equation*}
\rh_G(X, \mu \given \cF) < \infty \Longleftrightarrow \inf_{n \in \N} \sup_{m \geq n} \inf_{k \geq m} \rh_{G,\mu_k}(\Borel(X_m) \given \Borel(X_n) \vee \cF_k) = 0.
\end{equation*}
Furthermore, when $\rh_G(X, \mu \given \cF) < \infty$ we have
\begin{equation*}
\rh_G(X, \mu \given \cF) = \sup_{m \in \N} \inf_{k \geq m} \rh_{G,\mu_k}(\Borel(X_m) \given \cF_k).
\end{equation*}
\end{cor}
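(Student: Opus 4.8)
The plan is to derive this from Theorem~\ref{thm:ks} by re-expressing the outer Rokhlin entropies appearing there — each computed inside the inverse limit $X$ — as infima of outer Rokhlin entropies computed inside the finite stages $X_k$. The first move I would make is to replace the $\sigma$-algebra $\Borel(X_m)$ by a single finite-entropy partition. Fixing $m$ and using the hypothesis $\rh_G(X_m,\mu_m\given\cF_m)<\infty$, choose a countable $\Borel(X_m)$-measurable partition $\cP_m$ with $\sH_{\mu_m}(\cP_m\given\cF_m\vee\sinv_G)<\infty$ and $\salg_G(\cP_m)\vee\cF_m\vee\sinv_G=\Borel(X_m)$. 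Throughout, one identifies the $\sigma$-algebra of each $X_j$ with its preimage in $X$ (and in each $X_k$ with $k\ge j$); then $\sinv_G(X_m)\subseteq\sinv_G(X_k)\subseteq\sinv_G(X)$, and mod-null identities between such $\sigma$-algebras persist under these identifications since $\mu$ pushes forward to $\mu_k$ and $\mu_k$ to $\mu_m$. Writing $\Sigma$ for either $\cF$ or $\Borel(X_n)\vee\cF$ with $n\le m$, and $\Sigma_k$ for the corresponding $\cF_k$ or $\Borel(X_n)\vee\cF_k$, one reads off from $\cP_m\subseteq\Borel(X_m)\subseteq\salg_G(\cP_m)\vee\cF_m\vee\sinv_G\subseteq\salg_G(\cP_m)\vee\Sigma\vee\sinv_G$ that $\rh_{G,\mu}(\Borel(X_m)\given\Sigma)=\rh_{G,\mu}(\cP_m\given\Sigma)$, and that $\rh_{G,\mu_k}(\Borel(X_m)\given\Sigma_k)=\rh_{G,\mu_k}(\cP_m\given\Sigma_k)$ for $k\ge m$, simply because the two infima run over the same feasible sets.

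Next I would apply Lemma~\ref{lem:invout} to the cofinal subsystem $(X_k)_{k\ge m}$, whose inverse limit is still $(X,\mu)$, with the partition $\cP_m$ (which is $\Borel(X_k)$-measurable for every $k\ge m$) and the increasing sequence $(\Sigma_k)_{k\ge m}$, which satisfies $\Sigma_k\subseteq\Borel(X_k)$ and $\bigvee_{k\ge m}\Sigma_k=\Sigma$. Its finiteness hypothesis holds because $\inf_{k\ge m}\sH_\mu(\cP_m\given\Sigma_k\vee\sinv_G)\le\sH_\mu(\cP_m\given\cF_m\vee\sinv_G)\le\sH_{\mu_m}(\cP_m\given\cF_m\vee\sinv_G)<\infty$, the middle step using $\sinv_G\supseteq\sinv_G(X_m)$. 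Lemma~\ref{lem:invout}, combined with the identities of the previous paragraph, then yields $\rh_{G,\mu}(\Borel(X_m)\given\cF)=\inf_{k\ge m}\rh_{G,\mu_k}(\Borel(X_m)\given\cF_k)$ and, for $n\le m$, $\rh_{G,\mu}(\Borel(X_m)\given\Borel(X_n)\vee\cF)=\inf_{k\ge m}\rh_{G,\mu_k}(\Borel(X_m)\given\Borel(X_n)\vee\cF_k)$.

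Finally I would feed these two identities into Theorem~\ref{thm:ks}. Taking $k=m$ in the first one shows $\rh_{G,\mu}(\Borel(X_m)\given\cF)\le\rh_{G,\mu_m}(\Borel(X_m)\given\cF_m)=\rh_G(X_m,\mu_m\given\cF_m)<\infty$ for all $m$, so the clause ``$\forall m\ \rh_{G,\mu}(\Borel(X_m)\given\cF)<\infty$'' in Theorem~\ref{thm:ks} is automatic under our hypothesis; what remains of the equivalence there then translates verbatim into the displayed equivalence, and the formula $\rh_G(X,\mu\given\cF)=\sup_m\rh_{G,\mu}(\Borel(X_m)\given\cF)$ translates into $\rh_G(X,\mu\given\cF)=\sup_m\inf_{k\ge m}\rh_{G,\mu_k}(\Borel(X_m)\given\cF_k)$. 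The step I expect to require the most care is the reduction in the first paragraph together with the verification of the hypotheses of Lemma~\ref{lem:invout}: one must ensure the generating partition $\cP_m$ can be taken with finite conditional Shannon entropy — this is precisely where $\rh_G(X_m,\mu_m\given\cF_m)<\infty$ is used — and that the various $\sigma$-algebra identifications and mod-null equalities behave correctly under the factor maps $X\to X_k\to X_m$; the remainder is essentially bookkeeping layered on top of Theorem~\ref{thm:ks}.
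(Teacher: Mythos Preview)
Your proposal is correct and follows essentially the same route as the paper: choose for each $m$ a finite-entropy generating partition $\cP_m$ of $X_m$ (the paper's $\alpha_m$), use Lemma~\ref{lem:invout} on the cofinal system $(X_k)_{k\ge m}$ to rewrite each $\rh_{G,\mu}(\Borel(X_m)\given\Sigma)$ as $\inf_{k\ge m}\rh_{G,\mu_k}(\Borel(X_m)\given\Sigma_k)$, and then invoke Theorem~\ref{thm:ks}. You are somewhat more explicit than the paper about verifying the finiteness hypothesis of Lemma~\ref{lem:invout} and about why the ``$\forall m$'' clause in Theorem~\ref{thm:ks} is automatic, but the argument is the same.
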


\begin{proof}
For each $m$ pick a partition $\alpha_m \subseteq \Borel(X_m)$ with $\sH(\alpha_m \given \sinv_G(X_m) \vee \cF_m) < \infty$ and $\Borel(X_m) = \salg_G(\alpha_m) \vee \sinv_G(X_m) \vee \cF_m$. Then by Lemma \ref{lem:invout} we have
$$\rh_{G,\mu}(\Borel(X_m) \given \cF) = \rh_{G,\mu}(\alpha_m \given \cF) = \inf_{k \geq m} \rh_{G,\mu_k}(\alpha_m \given \cF_k) = \inf_{k \geq m} \rh_{G,\mu_k}(\Borel(X_m) \given \cF_k)$$
and similarly by the same reasoning for every $n \leq m$
$$\rh_{G,\mu}(\Borel(X_m) \given \Borel(X_n) \vee \cF) = \inf_{k \geq m} \rh_{G,\mu_k}(\Borel(X_m) \given \Borel(X_n) \vee \cF_k).$$
So the corollary follows from the two identities above and Theorem \ref{thm:ks}.
\end{proof}

In the next two corollaries we apply the formula in Theorem \ref{thm:ks} in order to establish the Borel measurability of Rokhlin entropy. We first consider the space of $G$-invariant probability measures.

\begin{cor} \label{cor:mborel}
Let $G$ be a countable group, let $X$ be a standard Borel space, let $G \acts X$ be a Borel action, and let $\cF$ be a countably generated $G$-invariant sub-$\sigma$-algebra.
\begin{enumerate}
\item[\rm (i)] The map $\mu \in \M_G(X) \mapsto \rh_G(X, \mu \given \cF)$ is Borel.
\item[\rm (ii)] If $\cP$ is a countable Borel partition then the map $\mu \in \{\nu \in \M_G(X) : \sH_\nu(\cP) < \infty\} \rightarrow \rh_{G,\mu}(\cP \given \cF)$ is Borel.
\end{enumerate}
\end{cor}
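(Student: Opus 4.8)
The plan is to reduce both parts to the following lemma: \emph{for every finite Borel partition $\cQ$ of $X$ and every countably generated $G$-invariant sub-$\sigma$-algebra $\Sigma$, the map $\mu \in \M_G(X) \mapsto \rh_{G,\mu}(\cQ \given \Sigma)$ is Borel.} Granting this, part (ii) follows by a finite-partition approximation. Write $\cP$ as an increasing join $\cP = \bigvee_m \cP_m$ of finite coarsenings $\cP_m$. Monotonicity of outer Rokhlin entropy gives $\sup_m \rh_{G,\mu}(\cP_m \given \cF) \le \rh_{G,\mu}(\cP \given \cF)$, while two-term sub-additivity (Corollary \ref{cor:add2}, applied with $\Sigma_1 = \salg_G(\cP_m)$ and $\Sigma_n = \salg_G(\cP)$ for $n \ge 2$) gives $\rh_{G,\mu}(\cP \given \cF) \le \rh_{G,\mu}(\cP_m \given \cF) + \rh_{G,\mu}(\cP \given \salg_G(\cP_m) \vee \cF) \le \rh_{G,\mu}(\cP_m \given \cF) + \sH_\mu(\cP \given \cP_m)$; since $\sH_\mu(\cP \given \cP_m) = \sH_\mu(\cP) - \sH_\mu(\cP_m) \to 0$ on the Borel set $\{\mu : \sH_\mu(\cP) < \infty\}$, we obtain $\rh_{G,\mu}(\cP \given \cF) = \sup_m \rh_{G,\mu}(\cP_m \given \cF)$ there, a countable supremum of Borel functions. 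For part (i), fix finite Borel partitions $\cR_1 \le \cR_2 \le \cdots$ with $\salg(\bigcup_m \cR_m) = \Borel(X)$ (e.g.\ $\cR_m$ generated by the first $m$ members of a countable generating family) and let $X_m$ be the factor of $X$ associated to $\salg_G(\cR_m) \vee \cF$, so that $G \acts (X, \mu)$ is the inverse limit of the $G \acts (X_m, \mu_m)$ with $\Borel(X_m)$ identified with $\salg_G(\cR_m) \vee \cF$. Then $\rh_{G,\mu}(\Borel(X_m) \given \Borel(X_n) \vee \cF) = \rh_{G,\mu}(\cR_m \given \salg_G(\cR_n) \vee \cF)$ and $\rh_{G,\mu}(\Borel(X_m) \given \cF) = \rh_{G,\mu}(\cR_m \given \cF)$, so Theorem \ref{thm:ks} exhibits $\{\mu : \rh_G(X, \mu \given \cF) < \infty\}$ as the intersection of $\{\mu : \inf_n \sup_m \rh_{G,\mu}(\cR_m \given \salg_G(\cR_n) \vee \cF) = 0\}$ with $\bigcap_m \{\mu : \rh_{G,\mu}(\cR_m \given \cF) < \infty\}$, and identifies $\rh_G(X, \mu \given \cF)$ with $\sup_m \rh_{G,\mu}(\cR_m \given \cF)$ on that set (and with $+\infty$ off it). Since $\salg_G(\cR_n) \vee \cF$ is countably generated, the lemma makes every quantity here Borel, and part (i) follows.

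It remains to prove the lemma, and here I would first reduce to a shift space. Choose a countable generating collection $\{E_n : n \in \N\}$ for $\Borel(X)$ that contains the blocks of $\cQ$ and for which $\Sigma = \salg_G(\{E_n : n \in A\})$ for some $A \subseteq \N$, and define the $G$-equivariant Borel injection $\Theta : X \to (2^\N)^G$ by $\Theta(x)(g)(n) = 1 \iff g^{-1} \cdot x \in E_n$. Then $\Theta$ carries $\cQ$ to a finite clopen partition $\cQ'$ of $(2^\N)^G$, carries $\Sigma$ to $\Sigma' := \salg_G(\{y : y(1_G)(n) = 1 : n \in A\})$, which is generated by clopen sets, and $\mu \mapsto \Theta_*\mu$ is a Borel map $\M_G(X) \to \M_G((2^\N)^G)$ with $\rh_{G,\mu}(\cQ \given \Sigma) = \rh_{G,\Theta_*\mu}(\cQ' \given \Sigma')$. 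Since $2^\N$ is totally disconnected and Polish, Corollary \ref{cor:mups}(i) applies and shows that $\rho \in \E_G((2^\N)^G) \mapsto \rh_{G,\rho}(\cQ' \given \Sigma')$ is upper-semicontinuous, hence Borel. Finally, the ergodic decomposition formula (Corollary \ref{cor:ergavg}) gives $\rh_{G,\nu}(\cQ' \given \Sigma') = \int \rh_{G,\rho}(\cQ' \given \Sigma') \, d\tau_\nu(\rho)$ for every $\nu \in \M_G((2^\N)^G)$, where $\nu \mapsto \tau_\nu$ is the Borel map assigning to an invariant measure its ergodic decomposition; integrating a fixed nonnegative Borel function against a Borel family of probability measures produces a Borel function of $\nu$, so $\nu \mapsto \rh_{G,\nu}(\cQ' \given \Sigma')$ is Borel, and precomposing with $\mu \mapsto \Theta_*\mu$ proves the lemma.

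The main obstacle is the lemma, and within it the one nontrivial ingredient is Corollary \ref{cor:mups}(i): it is exactly what turns the abstract infimum defining Rokhlin entropy into a Borel (indeed upper-semicontinuous) function on the \emph{ergodic} measures, via the Rokhlin-metric formula of Lemma \ref{lem:outkolm} run with $\mu$-independent c-dense families of clopen partitions. Passing from ergodic to arbitrary invariant measures then costs nothing by Corollary \ref{cor:ergavg}, and the reductions of (i) and (ii) to the finite-partition case are routine manipulations with sub-additivity and the inverse-limit formula. The only care needed elsewhere is to verify that the auxiliary maps $\mu \mapsto \Theta_*\mu$, $\nu \mapsto \tau_\nu$, $\mu \mapsto \sH_\mu(\cP)$, and the factor projections $\M_G(X) \to \M_G(X_m)$ are Borel, but all of these are standard. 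It is worth recording that one should not expect outright upper-semicontinuity of $\mu \mapsto \rh_G(X, \mu \given \cF)$ on all of $\M_G(X)$ for non-finitely-generated $G$ (as the example preceding Corollary \ref{cor:mups} shows), so the inverse-limit/ergodic-decomposition route really is doing work that a direct semicontinuity argument could not.
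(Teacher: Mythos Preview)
Your argument is correct, but it takes a different route from the paper for the core step. Both of you reduce (i) to the finite-partition case via Theorem \ref{thm:ks} in exactly the same way. For (ii), the paper applies Lemma \ref{lem:outkolm} directly to the (possibly infinite) partition $\cP$ rather than first approximating by finite coarsenings; your reduction $\rh_{G,\mu}(\cP \given \cF) = \sup_m \rh_{G,\mu}(\cP_m \given \cF)$ via sub-additivity is valid but is an extra step the paper does not need.

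The real divergence is in how you establish Borelness of $\mu \mapsto \rh_{G,\mu}(\cQ \given \Sigma)$ for a finite $\cQ$. The paper does this in one stroke: it observes that $\sfinv_G$ is countably generated (by pulling a Borel transversal of the finite-orbit part), so $\cF \vee \sfinv_G$ is too, and therefore one can fix \emph{single} countable families $\mathcal{A}$ and $\mathcal{C}$ that are c-dense in $\HPrt(\mu)$ and $\HPrt(\cF \vee \sfinv_G, \mu)$ for \emph{every} $\mu \in \M_G(X)$ simultaneously. Lemma \ref{lem:outkolm} then exhibits $\rh_{G,\mu}(\cQ \given \cF)$ as a countable limit of infima of conditional Shannon entropies $\sH_\mu(\beta \given \chi^T \vee \gamma)$ over this fixed countable index set, and each such Shannon entropy is manifestly Borel in $\mu$. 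You instead push forward to $(2^\N)^G$, invoke Corollary \ref{cor:mups}(i) to get upper-semicontinuity (hence Borelness) on $\E_G$, and then integrate against the ergodic decomposition via Corollary \ref{cor:ergavg}. This works, and it has the mild conceptual advantage that by restricting to ergodic measures you never have to confront $\sfinv_G$ at all (it is trivial mod null sets there, so the c-dense family in $\HPrt(\Sigma' \vee \sfinv_G, \rho)$ is just a clopen family for $\Sigma'$). On the other hand it is longer, uses strictly more machinery (Corollary \ref{cor:mups} is itself built on Lemma \ref{lem:outkolm}, and you also need the Borelness of $\nu \mapsto \tau_\nu$ and of the integral), and the paper's direct application of Lemma \ref{lem:outkolm} makes the Borel structure completely explicit without any detour through ergodic measures.
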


\begin{proof}
We first claim that $\sfinv_G$ is countably generated. Let $B$ be a Borel set which meets every finite $G$-orbit precisely once and does not meet any infinite $G$-orbit. If $Z \subseteq B$ is Borel then $G \cdot Z = \{x \in X : \exists g \in G \ g \cdot x \in Z\}$ is Borel as well. Thus $\Borel(X) \res B = \sfinv_G \res B$. Since $\sfinv_G \res B$ is isomorphic as a $\sigma$-algebra to $\sfinv_G$, and since $\Borel(X)$ is countably generated, it follows that $\sfinv_G$ is countably generated as claimed.

By the above claim and our assumption on $\cF$, $\cF \vee \sfinv_G$ is countably generated. Hence there is a countable collection $\mathcal{C}$ of finite $\cF \vee \sfinv_G$-measurable partitions which is c-dense in $\sH(\cF \vee \sfinv_G, \mu)$ for all $\mu \in \M_G(X)$. We can also fix a countable collection $\mathcal{A}$ of finite Borel partitions which is c-dense in $\HPrt(\mu)$ for all $\mu \in \M_G(X)$.

(ii). For Borel sets $D$ the map $\mu \mapsto \mu(D)$ is Borel, and similarly $\mu \mapsto \sH_\mu(\xi \given \zeta)$ is Borel for any countable Borel partitions $\xi$ and $\zeta$. So Lemma \ref{lem:outkolm} immediately implies that the map $\mu \in \{\nu \in \M_G(X) : \sH_\nu(\cP) < \infty\} \rightarrow \rh_{G,\mu}(\cP \given \cF)$ is Borel.

(i). Fix an increasing sequence of finite partitions $\alpha_n$ with $\bigvee_{n \in \N} \salg(\alpha_n) = \Borel(X)$. For each $n \in \N$ let $G \acts (X_n, \mu_n)$ be the factor of $(X, \mu)$ associated to $\salg_G(\alpha_n) \vee \cF$. Since each $\alpha_n$ is finite, it follows from (ii) that for all $n \leq m$ the functions
\begin{align*}
\mu & \mapsto \rh_{G,\mu}(\alpha_m \given \Borel(X_n) \vee \cF) = \rh_{G,\mu}(\Borel(X_m) \given \Borel(X_n) \vee \cF)\\
\text{and} \quad \mu & \mapsto \rh_{G,\mu}(\alpha_m \given \cF) = \rh_{G,\mu}(\Borel(X_m) \given \cF)
\end{align*}
are Borel. Now by applying Theorem \ref{thm:ks} we conclude that $\mu \mapsto \rh_G(X, \mu \given \cF)$ is Borel.
\end{proof}

Finally, we show that Rokhlin entropy is a Borel function on the space of actions.

\begin{cor} \label{cor:aborel}
Let $(X, \mu)$ be a standard probability space with $\mu$ non-atomic, and let $\Sigma$ be a sub-$\sigma$-algebra.
\begin{enumerate}
\item[\rm (i)] The map $a \in \Act(G, X, \mu) \rightarrow \rh_{a(G)}(X, \mu \given \salg_{a(G)}(\Sigma))$ is Borel.
\item[\rm (ii)] If $\cP$ is a countable partition with $\sH(\cP) < \infty$ then the map $a \in \Act(G, X, \mu) \rightarrow \rh_{a(G),\mu}(\cP \given \salg_{a(G)}(\Sigma))$ is Borel.
\end{enumerate}
\end{cor}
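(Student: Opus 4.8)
The plan is to reduce both parts to Corollary~\ref{cor:mborel} by transporting each action to a shift system on a fixed totally disconnected space, following the construction used in the proof of Corollary~\ref{cor:actups}. Since $\mu$ is non-atomic we may assume $X = 2^\N$, and we fix a countable collection $\{D_n : n \in \N\}$ of Borel sets with $\Sigma = \salg(\{D_n : n \in \N\})$. For part (i) set $Y = 2^\N \times 2^\N$ and define the Borel injection $\theta : X \to Y$ by $\theta(x) = (x, z)$, where $z(n) = 1$ if and only if $x \in D_n$. For part (ii) additionally fix an enumeration $\cP = \{P_k : k \in \N\}$, set $Y = \N \times 2^\N \times 2^\N$, and define $\theta(x) = (k, x, z)$ where $x \in P_k$ and $z(n) = 1$ if and only if $x \in D_n$. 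In either case consider the left-shift action $G \acts Y^G$, let $\cF$ be the $G$-invariant sub-$\sigma$-algebra coming from the final $(2^\N)^G$ coordinate (which is generated by countably many clopen cylinders, hence countably generated), and in case (ii) let $\cL = \{R_k : k \in \N\}$ be the clopen partition with $R_k = \{y \in Y^G : y(1_G) \in \{k\} \times 2^\N \times 2^\N\}$.

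Given $a \in \Act(G, X, \mu)$, define $\phi_a : X \to Y^G$ by $\phi_a(x)(g) = \theta(a(g)^{-1}(x))$. Just as in Corollary~\ref{cor:actups}, $\phi_a$ is a $G$-equivariant Borel map, and it is injective since $\phi_a(x)(1_G) = \theta(x)$ and $\theta$ is injective; hence, putting $\mu_a = (\phi_a)_*(\mu) \in \M_G(Y^G)$, the system $G \acts^a (X, \mu)$ is isomorphic to $G \acts (Y^G, \mu_a)$, and this isomorphism identifies $\salg_{a(G)}(\Sigma)$ with $\cF$ and (in case (ii)) $\cP$ with $\cL$. Therefore
$$\rh_{a(G)}(X, \mu \given \salg_{a(G)}(\Sigma)) = \rh_G(Y^G, \mu_a \given \cF) \quad\text{and}\quad \rh_{a(G),\mu}(\cP \given \salg_{a(G)}(\Sigma)) = \rh_{G,\mu_a}(\cL \given \cF),$$
and moreover $\sH_{\mu_a}(\cL) = \sH_\mu(\cP) < \infty$ in case (ii).

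It thus remains to check that the map $a \mapsto \mu_a \in \M_G(Y^G)$ is Borel, after which part (i) follows by composing with the Borel map $\nu \mapsto \rh_G(Y^G, \nu \given \cF)$ supplied by Corollary~\ref{cor:mborel}.(i), and part (ii) follows by composing with the Borel map $\nu \mapsto \rh_{G,\nu}(\cL \given \cF)$ supplied by Corollary~\ref{cor:mborel}.(ii) on $\{\nu \in \M_G(Y^G) : \sH_\nu(\cL) < \infty\}$. To see that $a \mapsto \mu_a$ is Borel, recall that the Borel structure on $\M_G(Y^G)$ is generated by the evaluation maps $\nu \mapsto \nu(B)$, and that the basic clopen cylinders generate $\Borel(Y^G)$ as an algebra, so it suffices to show that $a \mapsto \mu_a(B)$ is Borel for each such cylinder $B$. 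Unwinding the definition, $\mu_a(B) = \mu\big(\bigcap_{i} a(g_i)(\theta^{-1}(E_i))\big)$ for a fixed finite set $g_1, \dots, g_m \in G$ and fixed basic clopen $E_i \subseteq Y$, and each $\theta^{-1}(E_i)$ is a fixed Boolean combination of the sets $P_k$, $D_n$, and the basic clopen subsets of $X = 2^\N$, independent of $a$. Since $a \mapsto a(g)$ is continuous into $\Aut(X,\mu)$, since $T \mapsto T(A)$ is continuous into $\malg_\mu$ for fixed Borel $A$, and since finite intersection and $\mu$ are continuous on $\malg_\mu$, the map $a \mapsto \mu_a(B)$ is in fact continuous; hence $a \mapsto \mu_a$ is continuous.

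The argument is essentially bookkeeping once the encoding $\phi_a$ is in place, and the only substantive inputs are Corollary~\ref{cor:mborel} and the verification that $\phi_a$ implements the claimed isomorphism of systems together with the identification of $\salg_{a(G)}(\Sigma)$ with $\cF$ and of $\cP$ with $\cL$. I expect this last identification --- which is handled exactly as in the proof of Corollary~\ref{cor:actups} --- to be the most delicate point, but it presents no new obstacle.
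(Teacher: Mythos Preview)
Your proof is correct and follows essentially the same approach as the paper: transport each action $a$ to a shift system via an equivariant embedding $\phi_a$, verify that $a \mapsto (\phi_a)_*\mu$ is Borel (in fact continuous), and then invoke Corollary~\ref{cor:mborel}. The paper encodes slightly differently, using $Y = X^G$ directly with $\bar{\cP} = \pi^{-1}(\cP)$ and $\bar{\Sigma} = \pi^{-1}(\Sigma')$ for a single $\theta^a(x)(g) = a(g^{-1})(x)$, rather than building a separate coordinate for $\Sigma$ and $\cP$ as you do following Corollary~\ref{cor:actups}; but this is cosmetic. One tiny technicality: $\Sigma$ need not literally be countably generated, so you should choose $\{D_n\}$ with $\salg(\{D_n\}) = \Sigma \bmod \Null_\mu$ (as the paper does), though this changes nothing since Rokhlin entropy depends only on $\sigma$-algebras modulo null sets.
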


\begin{proof}
Set $Y = X^G$ and let $G$ act on $Y$ by left-shifts: $(g \cdot y)(t) = y(g^{-1} t)$ for $y \in Y$ and $g, t \in G$. Let $\pi : Y \rightarrow X$ be the map $y \mapsto y(1_G)$. Set $\bar{\cP} = \pi^{-1}(\cP)$. Let $\Sigma'$ be a countably generated $\sigma$-algebra with $\Sigma' = \Sigma \mod \Null_\mu$, and set $\bar{\Sigma} = \pi^{-1}(\Sigma')$.

For $a \in \Act(G, X, \mu)$ define $\theta^a : X \rightarrow Y = X^G$ by $\theta^a(x)(g) = a(g^{-1})(x)$. Then $\theta^a$ is a $G$-equivariant Borel injection. Set $\mu_a = \theta^a_*(\mu)$. Since $\Borel(Y)$ is generated by sets of the form $\{y \in Y : \forall t \in T \ y(t) \in B_t\}$ for finite $T \subseteq G$ and Borel sets $B_t \subseteq X$, and since
$$\mu_a(\{y \in Y : \forall t \in T \ y(t) \in B_t\}) = \mu \left( \bigcap_{t \in T} a(t)(B_t) \right),$$
we see that the map $a \in \Act(G, X, \mu) \rightarrow \mu_a \in \M_G(Y)$ is Borel.

Each map $\theta^a : (X, \mu) \rightarrow (Y, \mu_a)$ is a $G$-equivariant isomorphism with $\bar{\cP} = \theta^a(\cP) \mod \Null_{\mu_a}$ and $\bar{\Sigma} = \theta^a(\Sigma) \mod \Null_{\mu_a}$. So
$$\rh_{a(G)}(X, \mu \given \salg_{a(G)}(\Sigma)) = \rh_G(Y, \mu_a \given \salg_G(\bar{\Sigma}))$$
$$\rh_{a(G),\mu}(\cP \given \salg_{a(G)}(\Sigma)) = \rh_{G,\mu_a}(\bar{\cP} \given \salg_G(\bar{\Sigma})).$$
Using the fact that $a \mapsto \mu_a$ is Borel, and noting that $\sH_{\mu_a}(\bar{\cP}) = \sH_\mu(\cP) < \infty$, applying Corollary \ref{cor:mborel} completes the proof.
\end{proof}

\section{Comparison with Kolmogorov--Sinai and sofic entropies} \label{sec:comp}

In this section we relate Rokhlin entropy to classical Kolmogorov--Sinai entropy and sofic entropy.

As expected, we find that Rokhlin entropy and Kolmogorov--Sinai entropy coincide for free actions of amenable groups. When $\mu$ is ergodic and $\cF = \{\varnothing, X\}$, this was proven for $G = \Z$ by Rokhlin \cite{Roh67} and proven for general amenable groups by Seward and Tucker-Drob in \cite{ST14}. When $\mu$ is ergodic but $\cF$ is possibly non-trivial, this was proven by Seward in \cite{S14}.

\begin{cor} \label{cor:ksrok1}
Let $G$ be a countably infinite amenable group, let $G \acts (X, \mu)$ be a free {\pmp} action, and let $\cF$ be a $G$-invariant sub-$\sigma$-algebra. Then the relative Rokhlin and relative Kolmogorov--Sinai entropies coincide:
$$\rh_G(X, \mu \given \cF) = \ksh_G(X, \mu \given \cF).$$
In particular, $\rh_G(X, \mu) = \ksh_G(X, \mu)$.
\end{cor}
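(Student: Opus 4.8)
The plan is to reduce the non-ergodic statement to the ergodic one via the ergodic decomposition formula, which holds on both sides. First, recall that Kolmogorov--Sinai entropy for amenable groups obeys an ergodic decomposition identity: if $\mu = \int_{\E_G(X)} \nu \ d \tau(\nu)$ is the ergodic decomposition, then $\ksh_G(X, \mu \given \cF) = \int_{\E_G(X)} \ksh_G(X, \nu \given \cF) \ d \tau(\nu)$. This is classical (it follows from the fact that K-S entropy is computed as a limit of averages $\frac{1}{|F_n|}\sH(\alpha^{F_n} \given \cF)$ over a F\o lner sequence, together with the integral formula in Lemma \ref{lem:break} applied to each $\alpha^{F_n}$ and the monotone/dominated convergence theorem to pass the limit through the integral; one must also observe that almost every ergodic component of a free action is again free). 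On the Rokhlin side, Corollary \ref{cor:ergavg} (equivalently Corollary \ref{intro:ergavg}) gives $\rh_G(X, \mu \given \cF) = \int_{\E_G(X)} \rh_G(X, \nu \given \cF) \ d \tau(\nu)$ directly; here one should first replace $\cF$ by a countably generated sub-$\sigma$-algebra $\cF'$ with $\cF' = \cF \mod \Null_\mu$, which changes neither side, so that Corollary \ref{cor:ergavg} applies.

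Second, invoke the ergodic case. For each ergodic component $\nu$, the action $G \acts (X, \nu)$ is free (for $\tau$-almost-every $\nu$), and by the theorem of Seward in Part I \cite{S14} --- namely that relative Rokhlin entropy equals relative Kolmogorov--Sinai entropy for free ergodic actions of countably infinite amenable groups --- we have $\rh_G(X, \nu \given \cF) = \ksh_G(X, \nu \given \cF)$ for $\tau$-almost-every $\nu$. (One must check, using Lemma \ref{lem:entan}, that $\nu \mapsto \rh_G(X, \nu \given \cF)$ is $\tau$-measurable, and likewise for $\ksh$, so that both integrands are legitimately integrable; the equality $\tau$-a.e. then makes the integrals equal regardless of whether they are finite.) Integrating the pointwise equality over $\tau$ and combining with the two ergodic decomposition formulas yields
$$\rh_G(X, \mu \given \cF) = \int_{\E_G(X)} \rh_G(X, \nu \given \cF) \ d \tau(\nu) = \int_{\E_G(X)} \ksh_G(X, \nu \given \cF) \ d \tau(\nu) = \ksh_G(X, \mu \given \cF).$$
The final assertion $\rh_G(X, \mu) = \ksh_G(X, \mu)$ is the special case $\cF = \{\varnothing, X\}$.

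The main obstacle is not conceptual but bookkeeping: verifying the ergodic decomposition formula for Kolmogorov--Sinai entropy in the relative setting with the correct measurability hypotheses, and ensuring the measurability of the two maps $\nu \mapsto \rh_G(X,\nu \given \cF)$ and $\nu \mapsto \ksh_G(X,\nu\given\cF)$ so that the integrals are well defined. The Rokhlin-side measurability is handled by Lemma \ref{lem:entan}; the K-S-side measurability follows since $\ksh_G(X,\nu\given\cF)$ is an infimum over a countable family (F\o lner blocks and a countable c-dense family of partitions) of Borel functions of $\nu$. Once these points are in place the proof is a two-line combination of Corollary \ref{cor:ergavg}, the Part I ergodic result, and the K-S ergodic decomposition.
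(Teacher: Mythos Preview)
Your proposal is correct and follows essentially the same approach as the paper: the paper's proof is a single sentence invoking the ergodic case from \cite{S14} together with the ergodic decomposition formula (Corollary \ref{cor:ergavg}), and your argument is a fleshed-out version of exactly this, with the additional bookkeeping (the K--S ergodic decomposition identity, measurability of the integrands, and passing to a countably generated $\cF'$) made explicit.
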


\begin{proof}
This is immediate from equality in the ergodic case \cite{S14} and the ergodic decomposition formula (Corollary \ref{cor:ergavg}).
\end{proof}

We also present a refined version of the previous corollary. For this we remind the reader the definition of Kolmogorov--Sinai entropy. Let $G$ be a countably infinite amenable group, and let $G \acts (X, \mu)$ be a free {\pmp} action. For a partition $\alpha$ and a finite set $T \subseteq G$, we write $\alpha^T$ for the join $\bigvee_{t \in T} t \cdot \alpha$, where $t \cdot \alpha = \{t \cdot A : A \in \alpha\}$. Given a $G$-invariant sub-$\sigma$-algebra $\cF$, the relative Kolmogorov--Sinai entropy is defined as
$$\ksh_G(X, \mu \given \cF) = \sup_\alpha \inf_{T \subseteq G} \frac{1}{|T|} \cdot \sH(\alpha^T \given \cF),$$
where $\alpha$ ranges over all finite Borel partitions and $T$ ranges over finite subsets of $G$ \cite{DP02}. Equivalently, one can replace the infimum with a limit over a F{\o}lner sequence $(T_n)$ \cite{OW87}. For $\xi \subseteq \Borel(X)$ we also define
$$\ksh_G(\xi \given \cF) = \sup_\alpha \inf_{T \subseteq G} \frac{1}{|T|} \cdot \sH(\alpha^T \given \cF),$$
where $\alpha$ ranges over all finite partitions which are measurable with respect to the algebra generated by $\xi$, and $T$ ranges over all finite subsets of $G$. The proof of \cite[Theorem 2.7.(i)]{DP02} can be modified to show that if $G \acts (Y, \nu)$ is the factor of $(X, \mu)$ associated to $\salg_G(\xi) \vee \cF$, then $\ksh_G(\xi \given \cF) = \ksh_G(Y, \nu \given \cF)$, where we view $\cF$ as a sub-$\sigma$-algebra of $Y$ in the natural way. In particular $\ksh_G(\xi \given \cF) = \ksh_G(\salg_G(\xi) \given \cF)$.

\begin{cor} \label{cor:ksrok2}
Let $G$ be a countably infinite amenable group, let $G \acts (X, \mu)$ be a free {\pmp} action, let $\xi \subseteq \Borel(X)$, and let $\cF$ be a $G$-invariant sub-$\sigma$-algebra. Then
$$\rh_{G,\mu}(\xi \given \cF) = \ksh_G(\xi \given \cF).$$
\end{cor}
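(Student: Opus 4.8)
The plan is to reduce this relative, non-ergodic statement to the ergodic relative case (which is \cite{S14}) via the ergodic decomposition, using the formula we have already established for Rokhlin entropy in Corollary~\ref{cor:ergavg} together with a matching ergodic decomposition formula for Kolmogorov--Sinai entropy. First I would reduce $\xi$ to a countable subcollection $\xi' \subseteq \Borel(X)$ with $\salg(\xi') = \salg(\xi) \mod \Null_\mu$, and replace $\cF$ by a countably generated $\cF'$ with $\cF' = \cF \mod \Null_\mu$; neither side of the claimed identity changes, and for the $\ksh$ side this uses that $\ksh_G(\xi \given \cF) = \ksh_G(\salg_G(\xi) \given \cF)$, noted just before the statement. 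With these reductions in place, Corollary~\ref{cor:ergavg} gives
$$\rh_{G,\mu}(\xi' \given \cF') = \int_{\E_G(X)} \rh_{G,\nu}(\xi' \given \cF') \ d \tau(\nu),$$
where $\mu = \int_{\E_G(X)} \nu \ d\tau(\nu)$ is the ergodic decomposition.

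Next I would invoke the known ergodic case: for $\tau$-almost-every $\nu \in \E_G(X)$ the action $G \acts (X, \nu)$ is free (freeness is a $\mu$-almost-everywhere statement about the action, hence passes to almost every ergodic component), and so by \cite{S14} (the relative ergodic version of Rokhlin $=$ Kolmogorov--Sinai, as recalled in the paragraph preceding Corollary~\ref{cor:ksrok1}) we have $\rh_{G,\nu}(\xi' \given \cF') = \ksh_G(\xi' \given \cF')$ computed with respect to $\nu$. Substituting this into the integral above yields
$$\rh_{G,\mu}(\xi' \given \cF') = \int_{\E_G(X)} \ksh_{G,\nu}(\xi' \given \cF') \ d \tau(\nu).$$
To finish, it remains to identify the right-hand side with $\ksh_G(\xi' \given \cF')$ computed with respect to $\mu$. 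For this I would prove an ergodic decomposition formula for relative Kolmogorov--Sinai entropy: $\ksh_{G,\mu}(\xi' \given \cF') = \int \ksh_{G,\nu}(\xi' \given \cF') \ d\tau(\nu)$. Using the definition $\ksh_G(\xi' \given \cF') = \sup_\alpha \inf_T \tfrac{1}{|T|}\sH(\alpha^T \given \cF')$ over $\salg(\xi')$-measurable finite partitions $\alpha$, the ``$\leq$'' direction follows because for a fixed $\alpha$ and a F\o lner sequence $(T_n)$ one has $\tfrac{1}{|T_n|}\sH_\mu(\alpha^{T_n} \given \cF') = \int \tfrac{1}{|T_n|}\sH_\nu(\alpha^{T_n} \given \cF')\,d\tau$ by Lemma~\ref{lem:break} applied with the partition $\alpha^{T_n}$ and with $\cF'$ in place of $\cF$ (noting $\sinv_G$ contributes nothing against $\cF'$ when we restrict attention to each ergodic fiber, exactly as in Lemma~\ref{lem:break}), and then letting $n \to \infty$ with the Fatou/monotone-type estimate; the ``$\geq$'' direction is the subtler one and uses that the entropy function $h_\alpha(\nu) = \inf_T \tfrac{1}{|T|}\sH_\nu(\alpha^T)$ is, for amenable $G$, an affine upper-semicontinuous function of the invariant measure, so that $\ksh_{G,\mu} = \int \ksh_{G,\nu}\,d\tau$ by a standard Choquet-type / affinity argument (this is classical; one reference is the affinity of entropy, as in \cite{OW87, DP02}).

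The main obstacle I anticipate is precisely this last ingredient: establishing (or locating a clean reference for) the ergodic decomposition of \emph{relative} Kolmogorov--Sinai entropy. The non-relative statement, that $\ksh_{G,\mu}(X) = \int \ksh_{G,\nu}(X)\,d\tau(\nu)$, is well known for amenable $G$ (it is the affinity of the entropy functional), but the relative version requires being slightly careful about how the conditioning $\sigma$-algebra $\cF'$ interacts with the ergodic decomposition — in particular one wants $\sH_\mu(\alpha^T \given \cF') = \sH_\mu(\alpha^T \given \cF' \vee \sinv_G)$ up to an error that vanishes in the normalized limit, which is exactly the kind of statement packaged in Lemma~\ref{lem:break}. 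Once that formula is in hand, the theorem follows by combining the two integral identities above and cancelling. A cleaner alternative, which I would try first, is to note that $\ksh_G(\xi' \given \cF') = \ksh_G(Y,\nu_Y \given \cF')$ where $(Y,\nu_Y)$ is the factor associated to $\salg_G(\xi')\vee\cF'$ (stated in the excerpt), so that it suffices to prove the ergodic decomposition of $\ksh_G(Y, \cdot \given \cF')$ for a genuine action, possibly letting us cite the non-relative affinity result more directly after a further relativization reduction; but either way the amenable ergodic-decomposition-of-$\ksh$ step is the crux.
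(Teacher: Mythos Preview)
Your reduction to the ergodic case via Corollary~\ref{cor:ergavg} is exactly how the paper begins, and the paper likewise takes the ergodic decomposition of relative Kolmogorov--Sinai entropy for granted (it simply writes ``since both quantities satisfy an ergodic decomposition formula''). So your anticipated obstacle is not where the real work lies.

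The genuine gap is your claim that, in the ergodic case, $\rh_{G,\nu}(\xi' \given \cF') = \ksh_{G,\nu}(\xi' \given \cF')$ is already contained in \cite{S14}. It is not: \cite{S14} (as restated here in Corollary~\ref{cor:ksrok1}) gives only $\rh_G(X, \nu \given \cF) = \ksh_G(X, \nu \given \cF)$, i.e.\ the \emph{full} relative Rokhlin entropy, not the \emph{outer} Rokhlin entropy of a collection $\xi$. Passing to the factor $(Y,\nu_Y)$ associated to $\salg_G(\xi) \vee \cF$ does not bridge this gap: one has $\ksh_G(\xi \given \cF) = \ksh_G(Y, \nu_Y \given \cF)$, but on the Rokhlin side only the inequality $\rh_{G,\nu}(\xi \given \cF) \leq \rh_G(Y, \nu_Y \given \cF)$, and moreover the $G$-action on $Y$ need not be free, so Corollary~\ref{cor:ksrok1} cannot be applied to $Y$ directly.

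The paper's proof supplies exactly this missing ergodic step. Using \cite{ST14}, one picks a partition $\gamma$ with $\sH(\gamma) < \epsilon$ whose $G$-factor is free; then the factor associated to $\salg_G(\gamma \cup \xi) \vee \cF$ is free and Corollary~\ref{cor:ksrok1} yields $\rh_{G,\nu}(\gamma \cup \xi \given \cF) \leq \ksh_G(\gamma \cup \xi \given \cF)$. For the reverse direction one takes a near-optimal witness $\beta$ for $\rh_{G,\nu}(\gamma \cup \xi \given \cF)$ and uses $\ksh_G(\gamma \cup \xi \given \cF) \leq \ksh_G(\beta \given \cF) \leq \sH(\beta \given \cF)$. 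Since adding $\gamma$ perturbs each side by at most $\sH(\gamma) < \epsilon$ (via sub-additivity for $\rh$ and directly from the definition for $\ksh$), letting $\epsilon \to 0$ gives the ergodic identity. This free-factor trick is the heart of the argument and is absent from your proposal.
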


\begin{proof}
Since both quantities satisfy an ergodic decomposition formula, it suffices to prove this with the assumption that $\mu$ is ergodic, in which case $\sinv_G$ is trivial. Fix $\epsilon > 0$. By \cite{ST14} there is a partition $\gamma$ with $\sH(\gamma) < \epsilon$ and with the property that $G$ acts freely on the factor of $G \acts (X, \mu)$ associated to $\salg_G(\gamma)$. It is not difficult to deduce from the definitions that
\begin{equation} \label{eqn:ksrok1}
\ksh_G(\xi \given \cF) \leq \ksh_G(\gamma \cup \xi \given \cF) \leq \sH(\gamma) + \ksh_G(\xi \given \cF) \leq \epsilon + \ksh_G(\xi \given \cF).
\end{equation}
Similarly, by sub-additivity of Rokhlin entropy
\begin{equation} \label{eqn:ksrok2}
\rh_{G,\mu}(\xi \given \cF) \leq \rh_{G,\mu}(\gamma \cup \xi \given \cF) \leq \sH(\gamma) + \rh_{G,\mu}(\xi \given \cF) \leq \epsilon + \rh_{G,\mu}(\xi \given \cF).
\end{equation}
Letting $G \acts (Y, \nu)$ be the factor of $(X, \mu)$ associated to $\salg_G(\gamma \cup \xi) \vee \cF$, we have that $G$ acts freely on $Y$ by construction of $\gamma$ and thus Corollary \ref{cor:ksrok1} gives
\begin{equation} \label{eqn:ksrok3}
\rh_{G,\mu}(\gamma \cup \xi \given \cF) \leq \rh_G(Y, \nu \given \cF) = \ksh_G(Y, \nu \given \cF) = \ksh_G(\gamma \cup \xi \given \cF).
\end{equation}

If $\rh_{G,\mu}(\gamma \cup \xi \given \cF) = \infty$ then we are done by (\ref{eqn:ksrok1}), (\ref{eqn:ksrok2}), and (\ref{eqn:ksrok3}). So suppose $\rh_{G,\mu}(\gamma \cup \xi \given \cF) < \infty$. Fix a partition $\beta$ with $\sH(\beta \given \cF) < \rh_{G,\mu}(\gamma \cup \xi \given \cF) + \epsilon$ and $\gamma \cup \xi \subseteq \salg_G(\beta) \vee \cF$. By (\ref{eqn:ksrok3}) we have
$$\rh_{G,\mu}(\gamma \cup \xi \given \cF) \leq \ksh_G(\gamma \cup \xi \given \cF) \leq \ksh_G(\beta \given \cF) \leq \sH(\beta \given \cF) < \rh_{G,\mu}(\gamma \cup \xi \given \cF) + \epsilon.$$
Therefore $|\rh_{G,\mu}(\xi \given \cF) - \ksh_G(\xi \given \cF)| < 3 \epsilon$ by (\ref{eqn:ksrok1}) and (\ref{eqn:ksrok2}). Now let $\epsilon$ tend to $0$.
\end{proof}

Next we compare Rokhlin entropy with sofic entropy. As a convenience to the reader, we recall the definition of sofic groups.

\begin{defn}
A countable group $G$ is \emph{sofic} if there exists a sequence of maps $\sigma_n : G \rightarrow \Sym(d_n)$ (not necessarily homomorphisms) such that
\begin{enumerate}
\item[\rm (i)] $\frac{1}{d_n} \cdot |\{1 \leq i \leq d_n : \sigma_n(g) \circ \sigma_n(h)(i) = \sigma_n(g h)(i)\}| \rightarrow 1$ for all $g, h \in G$,
\item[\rm (ii)] $\frac{1}{d_n} \cdot |\{1 \leq i \leq d_n : \sigma_n(g)(i) \neq i\}| \rightarrow 1$ for all $1_G \neq g \in G$, and
\item[\rm (iii)] $d_n \rightarrow \infty$.
\end{enumerate}
Such a sequence of maps $\Sigma = (\sigma_n : G \rightarrow \Sym(d_n))_{n \in \N}$ is called a \emph{sofic approximation} to $G$.
\end{defn}

Let $G$ be a sofic group with sofic approximation $\Sigma$. For a {\pmp} action $G \acts (X, \mu)$ and $G$-invariant sub-$\sigma$-algebras $\cF_1, \cF_2$ we let $h_{\Sigma, \mu}(\cF_1 \given \cF_2 \mathbin{:} X, G)$ denote the sofic entropy of $\cF_1$ relative to $\cF_2$ in the presence of $X$ as defined in \cite[Section 2]{H2}. The following is a slight and technical improvement upon \cite[Prop. 2.12]{H2} that, by virtue of how Rokhlin entropy was defined in that paper, only applied to aperiodic actions.

\begin{prop} \label{prop:sofrok}
Let $G$ be a sofic group with sofic approximation $\Sigma$, let $G \acts (X, \mu)$ be a {\pmp} action, and let $\cF_1, \cF_2$ be $G$-invariant sub-$\sigma$-algebras of $X$. Then
$$h_{\Sigma, \mu}(\cF_1 \given \cF_2 \mathbin{:} X, G) \leq \rh_{G,\mu}(\cF_1 \given \cF_2).$$
In particular, the sofic entropy of $G \acts (X, \mu)$ is at most $\rh_G(X, \mu)$.
\end{prop}

\begin{proof}
The final claim follows by setting $\cF_1 = \Borel(X)$ and $\cF_2 = \{X, \varnothing\}$. When $G \acts (X, \mu)$ is aperiodic this is \cite[Prop. 2.12]{H2} (that paper defines Rokhlin entropy via the formula in our Corollary \ref{cor:simplerok}). Fix a probability space $(L, \lambda)$ with $0 < \sH(L, \lambda) < \infty$. Consider the free action $G \acts (X \times L^G, \mu \times \lambda^G)$. We view $\cF_1$ and $\cF_2$ as $G$-invariant sub-$\sigma$-algebras of both $\Borel(X)$ and $\Borel(X \times L^G)$ in the natural way. It was shown by Bowen in \cite{B10b} that $h_G^\Sigma(X \times L^G, \mu \times \lambda^G) = h_G^\Sigma(X, \mu) + \sH(L, \lambda)$, and his proof also easily implies
$$h_{\Sigma, \mu \times \lambda^G}(\cF_1 \given \cF_2 \mathbin{:} X \times L^G, G) = h_{\Sigma,\mu}(\cF_1 \given \cF_2 \mathbin{:} X, G).$$
The action of $G$ on $X \times L^G$ is aperiodic, so \cite[Prop. 2.12]{H2} implies that
$$h_{\Sigma, \mu \times \lambda^G}(\cF_1 \given \cF_2 \mathbin{:} X \times L^G, G) \leq \rh_{G,\mu \times \lambda^G}(\cF_1 \given \cF_2).$$
Finally, it follows immediately from the definitions that
\begin{equation*}
\rh_{G,\mu \times \lambda^G}(\cF_1 \given \cF_2) \leq \rh_{G,\mu}(\cF_1 \given \cF_2).\qedhere
\end{equation*}
\end{proof}

\section{Restricted orbit equivalence} \label{sec:oe}

Recall that two {\pmp} actions $G \acts (X, \mu)$ and $\Gamma \acts (Y, \nu)$ are \emph{orbit equivalent} if there is a measure space isomorphism $\phi : (X, \mu) \rightarrow (Y, \nu)$ which sends almost-every $G$-orbit to a $\Gamma$-orbit. In other words, up to an isomorphism $G$ and $\Gamma$ both act on $(X, \mu)$ and they have the same orbits $\mu$-almost-everywhere.

It is a theorem of Ornstein and Weiss that any two free actions of countably infinite amenable groups are orbit equivalent \cite{OW80}. Thus orbit equivalences do not respect entropy. However, in 2000 Rudolph and Weiss made the surprising discovery that Kolmogorov--Sinai entropy is preserved under a certain restricted class of orbit equivalences \cite{RW00}. In this section we will show that Rokhlin entropy is preserved under this same restricted class of orbit equivalences. We remark that due to the definition of Rokhlin entropy this is a rather simple fact, but working from the definition of Kolmogorov--Sinai entropy, as Rudolph--Weiss did, requires more work.

Recall that for a {\pmp} action $G \acts (X, \mu)$ the \emph{induced orbit equivalence relation} is
$$E_G^X = \{(x, y) \: \exists g \in G, \ \ g \cdot x = y\}.$$
Also, the \emph{full group} of $E_G^X$, denoted $[E_G^X]$, is the set of all Borel bijections $\theta : X \rightarrow X$ with $\theta(x) \in G \cdot x$ for all $x \in X$.

\begin{defn}
Let $G \acts (X, \mu)$ be a {\pmp} action, let $\theta \in [E_G^X]$, and let $\cF$ be a $G$-invariant sub-$\sigma$-algebra. We say that $\theta$ is \emph{$\cF$-expressible} if there is an $\cF$-measurable partition $\{Z_g^\theta : g \in G\}$ of $X$ such that $\theta(x) = g \cdot x$ for almost-every $x \in Z_g^\theta$ and all $g \in G$.
\end{defn}

Notice that the partition $\{Z_g^\theta : g \in G\}$ is not unique if $G$ does not act freely. The notion of expressibility can also be stated in terms of cocycles. Specifically, $\theta \in [E_G^X]$ is $\cF$-expressible if and only if there is an $\cF$-measurable cocycle $c : \Z \times X \rightarrow G$ satisfying $c(n, x) \cdot x = \theta^n(x)$ for all $n \in \Z$ and $x \in X$.

We recall two elementary lemmas from Part I \cite{S14}.

\begin{lem}\cite[Lem. 3.2]{S14} \label{lem:expmove}
Let $G \acts (X, \mu)$ be a {\pmp} action and let $\cF$ be a $G$-invariant sub-$\sigma$-algebra. If $\theta \in [E_G^X]$ is $\cF$-expressible and $A \subseteq X$, then $\theta(A)$ is $\salg_G(\{A\}) \vee \cF$-measurable. In particular, if $A \in \cF$ then $\theta(A) \in \cF$.
\end{lem}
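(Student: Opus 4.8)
The plan is to prove Lemma \ref{lem:expmove} directly from the definition of $\cF$-expressibility, building $\theta(A)$ explicitly as a countable union of pieces, each of which is a $G$-translate of a set measurable with respect to $\salg_G(\{A\}) \vee \cF$. Since $\theta$ is $\cF$-expressible, fix a $\cF$-measurable partition $\{Z_g^\theta : g \in G\}$ of $X$ with $\theta(x) = g \cdot x$ for almost-every $x \in Z_g^\theta$. The starting observation is that on the piece $Z_g^\theta$ the bijection $\theta$ agrees (mod null) with the action of $g$, so $\theta(A) = \bigcup_{g \in G} \theta(A \cap Z_g^\theta) = \bigcup_{g \in G} g \cdot (A \cap Z_g^\theta) \mod \Null_\mu$.

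First I would verify that each set $A \cap Z_g^\theta$ lies in $\salg_G(\{A\}) \vee \cF$: indeed $A \in \salg_G(\{A\})$ and $Z_g^\theta \in \cF$, so the intersection is in the join. Next, since $\salg_G(\{A\}) \vee \cF$ is a $G$-invariant $\sigma$-algebra (both $\salg_G(\{A\})$ and $\cF$ are $G$-invariant, and the $\sigma$-algebra generated by a family of $G$-invariant $\sigma$-algebras is again $G$-invariant), the translate $g \cdot (A \cap Z_g^\theta)$ also lies in $\salg_G(\{A\}) \vee \cF$. Finally, a countable union of sets in a $\sigma$-algebra stays in that $\sigma$-algebra, so $\theta(A) = \bigcup_{g \in G} g \cdot (A \cap Z_g^\theta)$ is $\salg_G(\{A\}) \vee \cF$-measurable, as claimed. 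For the ``in particular'' statement, if $A \in \cF$ then each $A \cap Z_g^\theta \in \cF$, hence $g \cdot (A \cap Z_g^\theta) \in \cF$ by $G$-invariance of $\cF$, and the countable union $\theta(A)$ is in $\cF$.

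I do not anticipate a genuine obstacle here; the only point requiring a little care is the handling of null sets. Since $\theta$ only agrees with $g$ on $Z_g^\theta$ up to a $\mu$-null set, the identity $\theta(A) = \bigcup_g g\cdot(A\cap Z_g^\theta)$ holds only modulo $\Null_\mu$ — but this is exactly the sense in which measurability statements are understood throughout the paper, so it suffices. One should also note that $G$ is countable, which is what makes the union over $g \in G$ a legitimate countable operation inside a $\sigma$-algebra. With these remarks the proof is complete.

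\begin{proof}
Since $\theta$ is $\cF$-expressible, fix a $\cF$-measurable partition $\{Z_g^\theta : g \in G\}$ of $X$ with $\theta(x) = g \cdot x$ for $\mu$-almost-every $x \in Z_g^\theta$ and every $g \in G$. Then, modulo $\Null_\mu$,
$$\theta(A) = \bigcup_{g \in G} \theta(A \cap Z_g^\theta) = \bigcup_{g \in G} g \cdot (A \cap Z_g^\theta).$$
For each $g \in G$ we have $A \in \salg_G(\{A\})$ and $Z_g^\theta \in \cF$, so $A \cap Z_g^\theta \in \salg_G(\{A\}) \vee \cF$. As $\salg_G(\{A\})$ and $\cF$ are both $G$-invariant, so is $\salg_G(\{A\}) \vee \cF$, and hence $g \cdot (A \cap Z_g^\theta) \in \salg_G(\{A\}) \vee \cF$. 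Since $G$ is countable, the union displayed above is a countable union of elements of $\salg_G(\{A\}) \vee \cF$, so $\theta(A) \in \salg_G(\{A\}) \vee \cF$.

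If moreover $A \in \cF$, then $A \cap Z_g^\theta \in \cF$ for every $g \in G$, and by $G$-invariance of $\cF$ we get $g \cdot (A \cap Z_g^\theta) \in \cF$. Therefore $\theta(A) = \bigcup_{g \in G} g \cdot (A \cap Z_g^\theta) \in \cF$.
\end{proof}
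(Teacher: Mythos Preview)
Your proof is correct. Note, however, that the present paper does not actually prove this lemma: it merely records it as \cite[Lem.~3.2]{S14} and invokes it without argument. Your direct computation $\theta(A) = \bigcup_{g \in G} g \cdot (A \cap Z_g^\theta)$ (mod $\Null_\mu$), together with the $G$-invariance of $\salg_G(\{A\}) \vee \cF$, is exactly the natural argument and is essentially what appears in the cited reference.
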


\begin{lem}\cite[Lem. 3.3]{S14} \label{lem:expgroup}
Let $G \acts (X, \mu)$ be a {\pmp} action and let $\cF$ be a $G$-invariant sub-$\sigma$-algebra. If $\theta, \phi \in [E_G^X]$ are $\cF$-expressible then so are $\theta^{-1}$ and $\theta \circ \phi$.
\end{lem}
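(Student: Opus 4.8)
The plan is to build the witnessing $\cF$-measurable partitions for $\theta^{-1}$ and $\theta \circ \phi$ directly from the given partitions $\{Z^\theta_g : g \in G\}$ and $\{Z^\phi_g : g \in G\}$. Throughout, equalities of sets and of functions are understood modulo $\mu$-null sets; since $G$ is countable, this causes no trouble, as one may intersect the countably many relevant conull sets.

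For $\theta^{-1}$: for $h \in G$ put $W_h = h^{-1} \cdot Z^\theta_{h^{-1}}$. Each $W_h$ lies in $\cF$ because $\cF$ is $G$-invariant and $Z^\theta_{h^{-1}} \in \cF$ (alternatively, $W_h = \theta(Z^\theta_{h^{-1}})$, which is in $\cF$ by Lemma \ref{lem:expmove}). Since $\{Z^\theta_g\}_{g}$ partitions $X$ and $\theta$ is a bijection, $\{\theta(Z^\theta_g)\}_g = \{g \cdot Z^\theta_g\}_g$ partitions $X$, and reindexing by $h = g^{-1}$ shows $\{W_h\}_h$ is a partition of $X$. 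If $y \in W_h$, write $y = \theta(x)$ with $x \in Z^\theta_{h^{-1}}$; then $\theta(x) = h^{-1} \cdot x$, so $\theta^{-1}(y) = x = h \cdot (h^{-1} \cdot x) = h \cdot y$ for a.e.\ such $y$. Thus $\{W_h\}$ witnesses that $\theta^{-1}$ is $\cF$-expressible.

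For $\theta \circ \phi$: for $k \in G$ put
\[ W_k = \bigcup_{g \in G} \big( Z^\phi_g \cap g^{-1} \cdot Z^\theta_{kg^{-1}} \big). \]
Each term is $\cF$-measurable (using $Z^\phi_g \in \cF$ and, by $G$-invariance of $\cF$, $g^{-1}\cdot Z^\theta_{kg^{-1}} \in \cF$), and the union stays in $\cF$ as $G$ is countable. For a.e.\ $x$ there is a unique $g$ with $x \in Z^\phi_g$, and then a unique $h$ with $g \cdot x \in Z^\theta_h$, i.e.\ $x \in g^{-1}\cdot Z^\theta_h$; hence $x$ lies in exactly one $W_k$, namely $k = hg$, so $\{W_k\}_k$ partitions $X$. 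Finally, for a.e.\ $x \in Z^\phi_g \cap g^{-1}\cdot Z^\theta_{kg^{-1}}$ we have $\phi(x) = g \cdot x$ and, writing $h = kg^{-1}$, $\theta(g\cdot x) = h \cdot (g \cdot x)$, so $(\theta \circ \phi)(x) = (hg) \cdot x = k \cdot x$. Thus $\{W_k\}$ witnesses that $\theta \circ \phi$ is $\cF$-expressible.

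The computations above are entirely routine; the only point requiring a bit of attention is the null-set bookkeeping noted at the outset, namely that the defining identities of expressibility hold only almost everywhere, so one should fix at the start a single conull set on which all of the needed relations (over all $g$ and $h$) hold simultaneously. There is no genuine obstacle here — this is exactly why the paper files it under ``elementary lemmas'' imported from Part I.
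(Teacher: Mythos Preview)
Your argument is correct: the direct construction of the witnessing $\cF$-measurable partitions for $\theta^{-1}$ and $\theta\circ\phi$ goes through exactly as you wrote, with the only care needed being the null-set bookkeeping you already flagged. The paper itself gives no proof of this lemma---it simply imports it as \cite[Lem.~3.3]{S14} from Part~I---so there is nothing further to compare; your write-up is precisely the elementary verification one expects and would find in the cited source.
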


The following proposition was stated for ergodic actions in Part I \cite{S14}. In the case of free actions of amenable groups it recovers the entropy preservation result of Rudolph--Weiss \cite{RW00} (by Corollary \ref{cor:ksrok2}).

Note that if $G$ and $\Gamma$ act on $(X, \mu)$ with the same orbits then $E_G^X = E_\Gamma^X$ and $[E_G^X] = [E_\Gamma^X]$. In this situation, we say that $\theta \in [E_G^X]$ is $(G, \cF)$-expressible if it is $\cF$-expressible with respect to the $G$-action $G \acts (X, \mu)$.

\begin{prop} \label{prop:oe}
Let $G \acts (X, \mu)$ and $\Gamma \acts (X, \mu)$ be aperiodic {\pmp} actions having the same orbits, and let $\cF$ be a $G$ and $\Gamma$ invariant sub-$\sigma$-algebra. If $\Gamma$ is $(G, \cF)$-expressible and $G$ is $(\Gamma, \cF)$-expressible, then for every $\xi \subseteq \Borel(X)$
$$\rh_{G,\mu}(\xi \given \cF) = \rh_{\Gamma,\mu}(\xi \given \cF).$$
In particular, $\rh_G(X, \mu \given \cF) = \rh_\Gamma(X, \mu \given \cF)$.
\end{prop}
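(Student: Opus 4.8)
The plan is to show the two inequalities $\rh_{G,\mu}(\xi \given \cF) \le \rh_{\Gamma,\mu}(\xi \given \cF)$ and $\rh_{\Gamma,\mu}(\xi \given \cF) \le \rh_{G,\mu}(\xi \given \cF)$ separately; by symmetry it suffices to prove one of them, say the first. Since both actions are aperiodic, I would first invoke Corollary \ref{cor:simplerok} to rewrite each quantity in the simplified form, namely $\rh_{\Gamma,\mu}(\xi \given \cF) = \inf\{\sH(\beta) : \beta \text{ a countable partition with } \xi \subseteq \salg_\Gamma(\beta) \vee \cF\}$, and similarly for $G$. (Actually the cleaner route may be to avoid Corollary \ref{cor:simplerok} and work directly with the original definition, tracking the $\sinv_G = \sinv_\Gamma$ term, but let me proceed with the simplified version as it makes the bookkeeping transparent.)

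The heart of the argument is the following: fix a countable partition $\beta$ witnessing (nearly) the infimum for $\Gamma$, so $\xi \subseteq \salg_\Gamma(\beta) \vee \cF$ and $\sH(\beta)$ is within $\epsilon$ of $\rh_{\Gamma,\mu}(\xi \given \cF)$. Since $G$ is $(\Gamma, \cF)$-expressible, each generator $\gamma \in \Gamma$ acts on $X$ as an element $\gamma \in [E_\Gamma^X] = [E_G^X]$ which is $(G,\cF)$-expressible. By Lemma \ref{lem:expmove} (applied within the $G$-action), for any $A$ the set $\gamma \cdot A = \gamma(A)$ is $\salg_G(\{A\}) \vee \cF$-measurable; iterating with Lemma \ref{lem:expgroup}, every element of $\Gamma$, viewed in $[E_G^X]$, is $(G,\cF)$-expressible, so for each set $B \in \beta$ and each $\gamma \in \Gamma$ we get $\gamma \cdot B \in \salg_G(\{B\}) \vee \cF \subseteq \salg_G(\beta) \vee \cF$. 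Consequently $\salg_\Gamma(\beta) = \salg_G(\{\, \gamma \cdot B : \gamma \in \Gamma,\ B \in \beta\,\}) \vee \cF \subseteq \salg_G(\beta) \vee \cF$, and hence $\xi \subseteq \salg_\Gamma(\beta) \vee \cF \subseteq \salg_G(\beta) \vee \cF$. Therefore $\beta$ is an admissible partition for the $G$-side infimum, giving $\rh_{G,\mu}(\xi \given \cF) \le \sH(\beta) < \rh_{\Gamma,\mu}(\xi \given \cF) + \epsilon$. Letting $\epsilon \to 0$ yields $\rh_{G,\mu}(\xi \given \cF) \le \rh_{\Gamma,\mu}(\xi \given \cF)$. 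The reverse inequality is identical with the roles of $G$ and $\Gamma$ swapped, using that $\Gamma$ is $(G,\cF)$-expressible. The final statement follows by taking $\xi = \Borel(X)$.

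The main point requiring care — the ``obstacle,'' though it is mild — is verifying that $\salg_\Gamma(\beta) \vee \cF \subseteq \salg_G(\beta) \vee \cF$ rigorously: one needs that $\gamma$-translates of $\beta$ stay inside $\salg_G(\beta) \vee \cF$ for \emph{all} $\gamma \in \Gamma$, not just generators, which is exactly why Lemma \ref{lem:expgroup} (closure of $(G,\cF)$-expressible maps under composition and inverse) is needed, together with the observation that every $\gamma \in \Gamma$ belongs to $[E_\Gamma^X] = [E_G^X]$ and is $(G,\cF)$-expressible by hypothesis. One should also note that $\cF$ being both $G$- and $\Gamma$-invariant is used implicitly so that the $\sigma$-algebras $\salg_G(\beta)\vee\cF$ and $\salg_\Gamma(\beta)\vee\cF$ are genuinely $G$- resp. $\Gamma$-invariant. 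If instead one prefers to work with the unsimplified definition (to handle possible non-aperiodicity, which is not assumed here since the proposition states aperiodicity), one would additionally observe $\sinv_G = \sinv_\Gamma$ since the two actions have the same orbits, so the $\sinv$-terms match up; but under the stated aperiodicity hypothesis the appeal to Corollary \ref{cor:simplerok} sidesteps this entirely.
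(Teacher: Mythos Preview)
Your approach is essentially the paper's: both arguments reduce to showing, via Lemma~\ref{lem:expmove}, that $\salg_\Gamma(\alpha)\vee\cF$ and $\salg_G(\alpha)\vee\cF$ coincide (modulo invariant sets) for every partition $\alpha$. The paper works directly from the original definition, simply noting $\sinv_G=\sinv_\Gamma$ since the orbits agree (the alternative route you yourself mention), rather than invoking Corollary~\ref{cor:simplerok}.

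Two small slips worth correcting. First, you have the two expressibility hypotheses swapped: to obtain $\salg_\Gamma(\beta)\vee\cF\subseteq\salg_G(\beta)\vee\cF$ (and hence $\rh_{G,\mu}(\xi\given\cF)\le\rh_{\Gamma,\mu}(\xi\given\cF)$) you need that $\Gamma$ is $(G,\cF)$-expressible, not that $G$ is $(\Gamma,\cF)$-expressible; the latter hypothesis is what gives the reverse inequality. Second, Lemma~\ref{lem:expgroup} is not needed here, since the hypothesis already asserts that \emph{every} $\gamma\in\Gamma$ (not merely a generating set) is $(G,\cF)$-expressible.
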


\begin{proof}
Note that $\sinv_G = \sinv_\Gamma$. Denote this common $\sigma$-algebra by $\sinv$. Since $\Gamma$ is $(G, \cF)$-expressible, for every partition $\alpha$ Lemma \ref{lem:expmove} implies $\salg_\Gamma(\alpha) \vee \sinv \vee \cF \subseteq \salg_G(\alpha) \vee \sinv \vee \cF$. Similarly, since $G$ is $(\Gamma, \cF)$-expressible we get $\salg_G(\alpha) \vee \sinv \vee \cF \subseteq \salg_\Gamma(\alpha) \vee \sinv \vee \cF$. So for every partition $\alpha$ we have $\salg_G(\alpha) \vee \sinv \vee \cF = \salg_\Gamma(\alpha) \vee \sinv \vee \cF$. The claim now follows immediately from the definition of Rokhlin entropy.
\end{proof}

Before ending this section, we briefly mention one additional observation which seems worth recording. The following lemma is a generalization of the following simple fact: if $G \acts (X, \mu)$ is a {\pmp} action, $\Gamma$ is a subgroup of $G$, and the restricted action $\Gamma \acts (X, \mu)$ is aperiodic, then $\rh_G(X, \mu) \leq \rh_\Gamma(X, \mu)$. In the lemma below, we consider not only the case where $\Gamma$ is a subgroup of $G$ but more generally the case where $\Gamma$ is an $\cF$-expressible subgroup of the full group $[E_G^X]$. This is indeed more general, as each $g \in G$, when viewed as an element of $[E_G^X]$, is $\{X, \varnothing\}$-expressible.

\begin{lem} \label{lem:sub}
Let $G \acts (X, \mu)$ be a {\pmp} action, let $\xi \subseteq \Borel(X)$, and let $\cF$ be a $G$-invariant sub-$\sigma$-algebra. If $\Gamma \leq [E_G^X]$ is an $\cF$-expressible subgroup which acts aperiodically then
$$\rh_{G,\mu}(\xi \given \cF) \leq \rh_{\Gamma, \mu}(\xi \given \cF).$$
In particular, if $\salg_G(\xi) \vee \cF \vee \sinv_G = \Borel(X)$, then $\rh_G(X, \mu \given \cF) \leq \rh_{\Gamma,\mu}(\xi \given \cF)$.
\end{lem}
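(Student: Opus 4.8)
The plan is to mimic the argument in Proposition \ref{prop:oe}, but here we only have a one-sided containment of orbit equivalence relations and hence can only hope for a one-sided inequality. The key point is that elements of $\Gamma$, being $\cF$-expressible members of $[E_G^X]$, move $\cF$-measurable sets and more generally $\salg_G(\alpha)\vee\cF$-measurable sets within the same $\sigma$-algebra. First I would observe that since $\Gamma \leq [E_G^X]$, every $\Gamma$-orbit is contained in a $G$-orbit, so any $G$-invariant set is $\Gamma$-invariant; in particular $\sinv_G \subseteq \sinv_\Gamma$. Next, fix an arbitrary countable partition $\alpha$ of $X$. I claim that $\salg_\Gamma(\alpha) \vee \sinv_G \vee \cF \subseteq \salg_G(\alpha) \vee \sinv_G \vee \cF$. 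Indeed, $\salg_\Gamma(\alpha)$ is generated by the sets $\gamma(A)$ for $\gamma \in \Gamma$ and $A \in \alpha$; since $\gamma$ is $\cF$-expressible with respect to $G \acts (X,\mu)$, Lemma \ref{lem:expmove} gives that $\gamma(A)$ is $\salg_G(\{A\}) \vee \cF$-measurable, hence $\salg_G(\alpha)\vee\cF$-measurable. This establishes the containment.

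Now suppose $\alpha$ is any countable partition witnessing a value close to $\rh_{\Gamma,\mu}(\xi \given \cF)$, i.e. $\xi \subseteq \salg_\Gamma(\alpha) \vee \cF \vee \sinv_\Gamma$ and $\sH_\mu(\alpha \given \cF \vee \sinv_\Gamma)$ is small. Here there is a subtlety: the relevant invariant $\sigma$-algebras on the $\Gamma$-side and the $G$-side differ, since in general $\sinv_G \subsetneq \sinv_\Gamma$. However, because $\Gamma$ acts aperiodically, we can invoke Corollary \ref{cor:simplerok}: the outer Rokhlin entropy $\rh_{\Gamma,\mu}(\xi \given \cF)$ equals $\inf\{\sH_\mu(\beta) : \beta \text{ a countable partition with } \xi \subseteq \salg_\Gamma(\beta)\vee\cF\}$, with no invariant $\sigma$-algebras appearing at all. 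So fix $\epsilon > 0$ and a countable partition $\beta$ with $\xi \subseteq \salg_\Gamma(\beta)\vee\cF$ and $\sH_\mu(\beta) < \rh_{\Gamma,\mu}(\xi\given\cF) + \epsilon$. By the containment from the previous paragraph, $\xi \subseteq \salg_\Gamma(\beta)\vee\cF \subseteq \salg_G(\beta)\vee\cF \subseteq \salg_G(\beta)\vee\cF\vee\sinv_G$, so $\beta$ is admissible in the definition of $\rh_{G,\mu}(\xi\given\cF)$, and therefore $\rh_{G,\mu}(\xi\given\cF) \leq \sH_\mu(\beta \given \cF\vee\sinv_G) \leq \sH_\mu(\beta) < \rh_{\Gamma,\mu}(\xi\given\cF)+\epsilon$. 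Letting $\epsilon \to 0$ yields $\rh_{G,\mu}(\xi\given\cF) \leq \rh_{\Gamma,\mu}(\xi\given\cF)$.

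For the ``in particular'' clause, assume $\salg_G(\xi)\vee\cF\vee\sinv_G = \Borel(X)$. Then any partition $\alpha$ with $\xi \subseteq \salg_G(\alpha)\vee\cF\vee\sinv_G$ automatically satisfies $\salg_G(\alpha)\vee\cF\vee\sinv_G = \Borel(X)$, so $\rh_G(X,\mu\given\cF) = \rh_{G,\mu}(\xi\given\cF)$, and the desired inequality follows from the first part. I expect the main (and really the only) obstacle to be the bookkeeping around the two different invariant $\sigma$-algebras $\sinv_G$ and $\sinv_\Gamma$; the cleanest way to sidestep this is exactly the appeal to Corollary \ref{cor:simplerok} above, which is available precisely because we have assumed $\Gamma$ acts aperiodically. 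One should double-check that Corollary \ref{cor:simplerok} is stated for outer Rokhlin entropy $\rh_{\Gamma,\mu}(\xi\given\cF)$ of an arbitrary $\xi \subseteq \Borel(X)$ — which it is — so that no countability hypothesis on $\xi$ is needed here.
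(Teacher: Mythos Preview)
Your argument is correct. The key step---using Lemma~\ref{lem:expmove} to obtain $\salg_\Gamma(\beta)\vee\cF \subseteq \salg_G(\beta)\vee\cF$---is exactly the same as in the paper, and your ``in particular'' clause is handled correctly.

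Where you diverge from the paper is in how you dispose of the discrepancy between $\sinv_G$ and $\sinv_\Gamma$. You invoke Corollary~\ref{cor:simplerok} for the aperiodic $\Gamma$-action, which lets you choose $\beta$ with $\xi\subseteq\salg_\Gamma(\beta)\vee\cF$ and $\sH_\mu(\beta)$ close to $\rh_{\Gamma,\mu}(\xi\given\cF)$, so that $\sinv_\Gamma$ never enters. The paper instead works directly from the definition of $\rh_{\Gamma,\mu}(\xi\given\cF)$ (which involves $\sinv_\Gamma$) and uses Corollary~\ref{cor:invzero} to find a two-piece partition $\chi$ with $\sH(\chi)<\epsilon$ and $\sinv_\Gamma\subseteq\salg_\Gamma(\chi)$; it then shows $\xi\subseteq\salg_G(\alpha\vee\chi)\vee\cF$ and finishes via sub-additivity. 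Your route is shorter and cleaner, at the cost of invoking the stronger Corollary~\ref{cor:simplerok} (which rests on Theorem~\ref{thm:nekrieger}); the paper's route is more self-contained, needing only the lightweight Corollary~\ref{cor:invzero} and sub-additivity. Both are valid and the difference is largely a matter of packaging.
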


\begin{proof}
Fix $\epsilon > 0$. Since the action of $\Gamma$ is aperiodic, by Corollary \ref{cor:invzero} there is a two-piece partition $\chi$ such that $\sH(\chi) < \epsilon$ and $\sinv_\Gamma \subseteq \salg_\Gamma(\chi)$. Let $\alpha$ be a countable partition satisfying $\sH(\alpha \given \cF \vee \sinv_\Gamma) \leq \rh_{\Gamma,\mu}(\xi \given \cF) + \epsilon$ and $\xi \subseteq \salg_\Gamma(\alpha) \vee \cF \vee \sinv_\Gamma$. Using Lemma \ref{lem:expmove} we obtain $\sinv_\Gamma \subseteq \salg_\Gamma(\chi) \subseteq \salg_G(\chi) \vee \cF$ and
$$\xi \subseteq \salg_\Gamma(\alpha) \vee \sinv_\Gamma \vee \cF \subseteq \salg_\Gamma(\alpha \vee \chi) \vee \cF \subseteq \salg_G(\alpha \vee \chi) \vee \cF.$$
By sub-additivity we obtain
$$\rh_{G,\mu}(\xi \given \cF) \leq \sH(\chi) + \sH(\alpha \given \salg_G(\chi) \vee \cF) \leq \epsilon + \sH(\alpha \given \sinv_\Gamma \vee \cF) \leq \rh_{\Gamma,\mu}(\xi \given \cF) + 2 \epsilon.$$
Now let $\epsilon$ tend to $0$.
\end{proof}

\section{Stabilizers} \label{sec:stab}

In this section we look at how stabilizers relate to entropy. Before the main theorem, we need a simple lemma. Below, for an equivalence relation $R$ and $B \subseteq X$ we write $[B]_R = \{x \in X : \exists b \in B \text{ with } x \ R \ b\}$ for the $R$-saturation of $B$.

\begin{lem} \label{lem:expeq}
Let $G \acts (X, \mu)$ be a {\pmp} action, let $\cF$ be a $G$-invariant sub-$\sigma$-algebra, let $\Theta \subseteq [E_G^X]$ be a countable collection of $\cF$-expressible functions, and let $R$ be the equivalence relation generated by $\Theta$ (meaning $R$ is the smallest equivalence relation satisfying $x \ R \ \theta(x)$ for all $x \in X$ and $\theta \in \Theta$). Then for $B \subseteq X$ we have $[B]_R \in \salg_G(B) \vee \cF$.
\end{lem}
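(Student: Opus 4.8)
The plan is to reduce the statement to the group case. Let $\Gamma \leq [E_G^X]$ be the subgroup generated by $\Theta$; since $\Theta$ is countable, $\Gamma$ is countable. The first step is to identify $R$ with the orbit equivalence relation of $\Gamma$. Unwinding the definition, $x \ R \ y$ holds precisely when there is a finite chain $x = x_0, x_1, \dots, x_n = y$ in which each consecutive pair satisfies $x_i = \theta(x_{i-1})$ or $x_{i-1} = \theta(x_i)$ for some $\theta \in \Theta$; equivalently, $y = \psi_n \circ \cdots \circ \psi_1(x)$ for some $\psi_1, \dots, \psi_n \in \Theta \cup \Theta^{-1}$, i.e. $y = \gamma(x)$ for some $\gamma \in \Gamma$. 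Hence $R = E_\Gamma^X$, and therefore $[B]_R = \bigcup_{\gamma \in \Gamma} \gamma(B)$.

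The second step is to control each set $\gamma(B)$. Every $\gamma \in \Gamma$ is a finite composition of elements of $\Theta \cup \Theta^{-1}$, so Lemma \ref{lem:expgroup} (applied repeatedly, together with the fact that each $\theta \in \Theta$ is $\cF$-expressible) shows that every $\gamma \in \Gamma$ is $\cF$-expressible. Now Lemma \ref{lem:expmove}, applied to $\gamma$ and the set $B$, gives $\gamma(B) \in \salg_G(\{B\}) \vee \cF$ for every $\gamma \in \Gamma$.

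Finally, since $\Gamma$ is countable and $\salg_G(\{B\}) \vee \cF$ is a $\sigma$-algebra, the countable union $[B]_R = \bigcup_{\gamma \in \Gamma} \gamma(B)$ belongs to $\salg_G(\{B\}) \vee \cF = \salg_G(B) \vee \cF$, which is exactly the claim. I do not expect any genuine obstacle here: the only point needing a moment's care is the identification $R = E_\Gamma^X$, which is a routine unwinding of the definition of the equivalence relation generated by a family of Borel bijections; everything else is an immediate consequence of Lemmas \ref{lem:expmove} and \ref{lem:expgroup} from Part I combined with the countability of $\Gamma$.
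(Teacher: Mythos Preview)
Your proof is correct and follows essentially the same approach as the paper: let $\langle \Theta \rangle$ be the countable group generated by $\Theta$, observe that $[B]_R = \bigcup_{\gamma \in \langle \Theta \rangle} \gamma(B)$, use Lemma~\ref{lem:expgroup} to see every $\gamma$ is $\cF$-expressible, and then apply Lemma~\ref{lem:expmove} to each $\gamma(B)$. The paper's proof is just a more compressed version of what you wrote.
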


\begin{proof}
By Lemma \ref{lem:expgroup}, all combinations of elements of $\Theta$ and their inverses are $\cF$-expressible. Denote by $\langle \Theta \rangle$ the countable group generated by $\Theta$. The claim follows from Lemma \ref{lem:expmove} since $[B]_R = \bigcup_{\theta \in \langle \Theta \rangle} \theta(B) \in \salg_G(B) \vee \cF$.
\end{proof}

We also need the following fairly well known fact.

\begin{lem} \label{lem:indep}
Let $G \acts (X, \mu)$ be a {\pmp} action, let $Z \subseteq X$ be a non-null Borel set, and let $T \subseteq G$ be finite. Then there is a non-null $Z' \subseteq Z$ with $T \cdot z_1 \cap T \cdot z_2 = \varnothing$ for all $z_1 \neq z_2 \in Z'$.
\end{lem}

\begin{proof}
Let $\Gamma$ be the Borel graph on $Z$ with edge set $\{(z, z') \in Z \times Z : z \neq z' \text{ and } T \cdot z \cap T \cdot z' \neq \varnothing\}$. The degree of $\Gamma$ is at most $|T|^2$, so by \cite[Prop. 4.6]{KST99} there is a Borel function $f : Z \rightarrow \{0, 1, \ldots, |T|^2\}$ with $f(z) \neq f(z')$ for all $z, z' \in Z$ joined by an edge. Now pick any $i \in \{0, 1, \ldots, |T|^2\}$ with $\mu(f^{-1}(i)) > 0$ and set $Z' = f^{-1}(i)$.
\end{proof}

\begin{thm} \label{thm:stab}
Let $G \acts (X, \mu)$ and $G \acts (Y, \nu)$ be {\pmp} actions with the action on $X$ aperiodic, and let $\cF$ be a $G$-invariant sub-$\sigma$-algebra of $Y$.Consider a factor map $f : G \acts (X, \mu) \rightarrow G \acts (Y, \nu)$. Identify $\Borel(Y)$ as a sub-$\sigma$-algebra of $X$ in the natural way via $f$.
\begin{enumerate}
\item[\rm (i)] Assume $|\Stab_G(f(x)) : \Stab_G(x)| \geq k$ for $\mu$-almost-every $x \in X$. Then $\rh_{G,\mu}(\xi \given \cF) \leq \frac{1}{k} \cdot \rh_{G,\nu}(\xi \given \cF)$ for every collection $\xi \subseteq \Borel(Y)$. In particular,
$$\rh_{G,\mu}(Y, \nu) \leq \frac{1}{k} \cdot \rh_G(Y, \nu).$$
\item[\rm (ii)] If $|\Stab_G(f(x)) : \Stab_G(x)| = \infty$ for $\mu$-almost-every $x \in X$ then
$$\rh_{G, \mu}(Y, \nu) = 0.$$
\end{enumerate}
\end{thm}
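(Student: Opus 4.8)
The plan is to deduce (ii) from (i) and to prove (i) by a \emph{dilution} argument. For (ii): if the index is infinite then it is $\geq k$ for every $k$, so (i) gives $\rh_{G,\mu}(Y,\nu)\leq\tfrac1k\rh_G(Y,\nu)$ for all $k$, and when $\rh_G(Y,\nu)<\infty$ we let $k\to\infty$. When $\rh_G(Y,\nu)=\infty$ we instead write $\Borel(Y)=\bigvee_m\Borel(Y_m)$ as an increasing union of finitely-generated factors $Y_m$ (so $\rh_G(Y_m,\nu_m)<\infty$), apply the previous case to each $X\to Y_m$ to get $\rh_{G,\mu}(Y_m,\nu_m)=0$, and conclude $\rh_{G,\mu}(Y,\nu)\leq\sum_m\rh_{G,\mu}(\Borel(Y_m)\given\Borel(Y_{m-1}))=0$ by countable sub-additivity (Corollary~\ref{cor:add2}).

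For (i), the central object is the Borel equivalence relation $R=\{(x,x'):x'\in G\cdot x,\ f(x)=f(x')\}$ on $X$, whose classes have exactly $|\Stab_G(f(x)):\Stab_G(x)|\geq k$ elements. It is generated by the family $\{\theta_g\}_{g\in G}\subseteq[E_G^X]$, where $\theta_g$ acts as $g$ on $f^{-1}(\mathrm{Fix}_Y(g))$ and as the identity elsewhere; since $f^{-1}(\mathrm{Fix}_Y(g))\in\Borel(Y)$, each $\theta_g$ is expressible over $\mathcal D_0:=\salg_G(\{f^{-1}(\mathrm{Fix}_Y(g)):g\in G\})\subseteq\Borel(Y)$, so Lemma~\ref{lem:expeq} yields $[B]_R\in\salg_G(B)\vee\mathcal D_0$ for every Borel $B$; in particular the decomposition of each orbit into $R$-classes is $\mathcal D_0$-measurable. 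I first reduce to the case $\mathcal D_0\subseteq\cF$: since $\mathcal D_0$ is countably generated and its finite generating partitions are $G$-equivariantly determined by a single point of each (infinite) orbit, a sparse-marker argument using Corollary~\ref{cor:invzero} and countable sub-additivity gives $\rh_{G,\mu}(\mathcal D_0\given\cF)=0$; then sub-additivity (Corollary~\ref{cor:add2}) gives $\rh_{G,\mu}(\xi\given\cF)\leq\rh_{G,\mu}(\xi\given\cF\vee\mathcal D_0)$, while $\rh_{G,\nu}(\xi\given\cF\vee\mathcal D_0)\leq\rh_{G,\nu}(\xi\given\cF)$, so it suffices to prove the bound after enlarging $\cF$ to contain $\mathcal D_0$.

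Now assume $\mathcal D_0\subseteq\cF$ and fix a partition $\beta$ of $Y$ with $\xi\subseteq\salg_G(\beta)\vee\cF\vee\sinv_G$ and $\sH_\nu(\beta\given\cF\vee\sinv_G)$ close to $\rh_{G,\nu}(\xi\given\cF)$; set $\bar\beta=f^{-1}(\beta)$, an $R$-invariant partition of $X$. Using aperiodicity, pick a Borel set $A$ of measure $\epsilon$ meeting each $G$-orbit exactly once, and — since $\mathcal D_0\subseteq\cF$ makes the $R$-class structure visible — let $M$ be a Borel transversal of $R$ that is $\salg_G(\{A\})\vee\cF$-measurable (given the $A$-point of an orbit one Borel-orders the orbit and $\cF$-measurably selects one point from each $R$-class). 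Because elements of the full group preserve $\mu$, one gets $\mu(M)\leq\tfrac1k$, and more precisely $\mu(M\cap\bar\beta^{(n)}_i)=\tfrac1n\mu(\bar\beta^{(n)}_i)$, where $\bar\beta^{(n)}_i$ is the union of the size-$n$ $R$-classes inside $\bar\beta_i$. Let $\alpha=\{A,X\setminus A\}\vee\alpha'$, where $\alpha'$ agrees with $\bar\beta$ on $M$ and is trivial on $X\setminus M$. Decoding: from $\salg_G(\alpha)\vee\cF$ one recovers $A$ along the orbit, hence $M$, hence (since $\mathcal D_0\subseteq\cF$) the unique $M$-point of $[x]_R$, and reading $\alpha'$ there recovers $\bar\beta(x)$; thus $\xi\subseteq\salg_G(\alpha)\vee\cF\vee\sinv_G$. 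For the entropy, $\sH_\mu(\alpha\given\cF\vee\sinv_G)\leq\sH(\epsilon,1-\epsilon)+\sH_\mu(\alpha'\given\{M,X\setminus M\}\vee\cF\vee\sinv_G)$, and the last term equals $\mu(M)\cdot\sH_{\mu_M}(\bar\beta\given\cF\vee\sinv_G)$ where $\mu_M$ is $\mu$ normalized on $M$; a short convexity computation from the measure identities above (using $n\geq k$) bounds this by $\tfrac1k\sH_\nu(\beta\given\cF\vee\sinv_G)$. Letting $\epsilon\to0$ and then $\beta$ approach the infimum gives $\rh_{G,\mu}(\xi\given\cF)\leq\tfrac1k\rh_{G,\nu}(\xi\given\cF)$.

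The main obstacle is making this last entropy count airtight: $\mu(M)\,\sH_{\mu_M}(\bar\beta\given\cdot)$ must be controlled when the $R$-class sizes vary across the pieces of $\bar\beta$ (the clean equality with $\tfrac1k\sH_\nu(\beta\given\cdot)$ holds only when every $R$-class has size exactly $k$), and one must also accommodate that $G\acts(Y,\nu)$ need not be aperiodic — so Krieger-type results (Theorem~\ref{intro:krieger}) are unavailable on $Y$ to regularize $\beta$, and $f^{-1}$ of the finite-orbit part of $Y$ consists of infinite $R$-classes that should be absorbed using the part-(ii) argument. A second, more routine, point is to verify rigorously that $\rh_{G,\mu}(\mathcal D_0\given\cF)=0$, i.e. that the stabilizer data of $Y$ can be generated inside the aperiodic action on $X$ with arbitrarily small outer Rokhlin entropy, bearing in mind that the stabilizer map is $G$-equivariant rather than $G$-invariant.
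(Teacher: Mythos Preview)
Your overall picture---the equivalence relation $R$ on $X$ with classes $\Stab_G(f(x))\cdot x$, a transversal $M$, and diluting the near-optimal partition of $Y$ by restricting it to $M$---is the same as the paper's. Your reduction of (ii) to (i) via finitely generated sub-factors $Y_m$ is also sound and is a legitimate alternative to the paper's direct argument for (ii). The problems are in your execution of (i).

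\textbf{A fatal step.} You write: ``pick a Borel set $A$ of measure $\epsilon$ meeting each $G$-orbit exactly once.'' No such set exists. A set meeting every $G$-orbit exactly once is a Borel transversal for $E_G^X$; for an aperiodic {\pmp} action any such transversal (when it exists at all) has measure zero, and for an ergodic non-atomic action the orbit relation is not smooth so no Borel transversal exists. Everything downstream---ordering each orbit from its ``$A$-point'', building $M$ as $\salg_G(\{A\})\vee\cF$-measurable---therefore collapses.

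\textbf{An unjustified reduction.} Your claim that $\rh_{G,\mu}(\mathcal D_0\given\cF)=0$, where $\mathcal D_0$ is the $\sigma$-algebra generated by the fixed-point sets of $Y$, is not established. The stabilizer map $x\mapsto\Stab_G(f(x))$ is merely $G$-equivariant (for conjugation on $\Sub(G)$), so the ``read it off one orbit point'' heuristic does not give a free encoding; $\mathcal D_0$ is just a sub-$\sigma$-algebra of $\Borel(Y)$, and bounding its outer entropy in $X$ is essentially a special case of the statement you are trying to prove. The sparse-marker sketch you give does not address this.

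\textbf{How the paper avoids both issues.} The paper first reduces to ergodic $\mu$ via Corollary~\ref{cor:ergavg}; then all $R$-classes have a constant size (exactly $k$, or all infinite), which removes your ``main obstacle'' about varying class sizes. It never tries to encode $\mathcal D_0$. Instead it chooses a single non-null set $Z\subseteq X$ and a finite $T\subseteq G$ with $|T|=k$ and $[z]_R=T\cdot z$ for all $z\in Z$, shrinking $Z$ so that $\zeta=\{Z,X\setminus Z\}$ has $\sH(\zeta)<\epsilon/2$. The swaps $\theta_t$ ($z\leftrightarrow t\cdot z$ on $Z$, identity elsewhere) are $\salg_G(\zeta)$-expressible, and their $G$-conjugates generate $R$; this replaces your $\mathcal D_0$-expressibility at the cost of an arbitrarily small $\sH(\zeta)$. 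The $R$-transversal $M$ is then built explicitly from $Z$: enumerate $G=\{g_0,g_1,\ldots\}$, set $\psi(x)=g_i\cdot x$ for the least $i$ with $g_i\cdot x\in T\cdot Z$, and let $M=\psi^{-1}(Z)$. Then $M\in\salg_G(\zeta)$, $\mu(M)=1/k$ exactly, and $\mu_M$ agrees with $\mu$ on $R$-invariant sets, so the dilution count $\mu(M)\cdot\sH_{\mu_M}(\alpha\given\cF)=\tfrac1k\sH_\mu(\alpha\given\cF)$ is an equality with no convexity fudging needed. No orbit transversal and no claim about $\mathcal D_0$ are required.
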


\begin{proof}
Our proof uses some ideas of Meyerovitch \cite{Me15}. We will prove this in the case that $\mu$ is ergodic, as then the general case is obtained by Corollary \ref{cor:ergavg}. Let $R$ be the equivalence relation where $x, x' \in X$ are $R$-equivalent if and only if they lie in the same $G$-orbit and have the same image under $f$. Note that $[x]_R = \Stab_G(f(x)) \cdot x$ has cardinality $|\Stab_G(f(x)) : \Stab_G(x)|$. Since $f$ is $G$-equivariant, we have that $g \cdot [x]_R = [g \cdot x]_R$ for all $g \in G$ and $x \in X$. Thus, a single $R$-class determines all other $R$-classes in the same $G$-orbit.

(i). Fix $\xi \subseteq \Borel(Y)$ and fix $\epsilon > 0$. By ergodicity and by picking a larger $k$ if necessary, we may assume that $|[x]_R| = k$ for almost-every $x \in X$ (the case $k = \infty$ is handled by case (ii)). Since there are only countably many subsets of $G$ of cardinality $k$, there is $T \subseteq G$ such that $|T| = k$, $1_G \in T$, and with the property that $Z = \{x \in X : [x]_R = T \cdot x\}$ has positive measure. Set $\zeta = \{Z, X \setminus Z\}$. Using Lemma \ref{lem:indep}, we can replace $Z$ with a non-null subset so that $\sH(\zeta) < \epsilon / 2$ and $T \cdot z \cap T \cdot z' = \varnothing$ for all $z \neq z' \in Z$. For $t \in T$ define $\theta_t \in [E_G^X]$ by setting $\theta_t(x) = t \cdot x$ for $x \in Z$, $\theta_t(x) = t^{-1} \cdot x$ for $x \in t \cdot Z$, and $\theta_t(x) = x$ in all other cases. Then by ergodicity and up to discarding a null set, $R$ coincides with the equivalence relation generated by the $\salg_G(\zeta)$-expressible maps $\{g^{-1} \theta_t g : g \in G, \ t \in T\} \subseteq [E_G^X]$.

Fix an enumeration $1_G = g_0, g_1, \ldots$ for $G$, and define $\psi : X \rightarrow T \cdot Z = [Z]_R$ by setting $\psi(x) = g_i \cdot x$, where $i$ is least with $g_i \cdot x \in T \cdot Z = [Z]_R$, or equivalently $g_i \cdot [x]_R \subseteq [Z]_R$. Set $M = \psi^{-1}(Z)$. Since $Z$ meets every $R$-class in $[Z]_R$ precisely once, we have that $M$ meets every $R$-class precisely once and hence $\mu(M) = 1 / k$. Let $\mu_M$ be the normalized restriction of $\mu$ to $M$ defined by $\mu_M(B) = \mu(M \cap B) / \mu(M)$. Note that $M \in \salg_G(\zeta)$ and that $\mu_M(B) = \mu(B)$ for every $R$-invariant Borel set $B$.

Let $\alpha$ be a partition of $Y$ with $\xi \subseteq \salg_G(\alpha) \vee \cF$ and $\sH(\alpha \given \cF) \leq \rh_{G,\nu}(\xi \given \cF) + \epsilon / 2$. We also view $\alpha \subseteq \Borel(Y) \subseteq \Borel(X)$ as a partition of $X$. Each $A \in \alpha$ is $R$-invariant and thus $A = [A \cap M]_R$. Moreover, since every set in $\alpha \cup \cF$ is $R$-invariant we have that $\mu$ and $\mu_M$ agree on $\alpha \cup \cF$ and thus
$$\mu(M) \cdot \sH_{\mu_M}(\alpha \given \cF) = \frac{1}{k} \cdot \sH_\mu (\alpha \given \cF) \leq \frac{1}{k} \cdot \rh_{G,\nu}(\xi \given \cF) + \epsilon / 2.$$
Let $\beta$ be the join of $\{X \setminus M\} \cup (\alpha \res M)$ with $\zeta$. For each $A \in \alpha$ Lemma \ref{lem:expeq} implies that
$$A = [A \cap M]_R \in \salg_G(A \cap M) \vee \salg_G(\zeta) \subseteq \salg_G(\beta),$$
and thus $\xi \subseteq \salg_G(\alpha) \vee \cF \subseteq \salg_G(\beta) \vee \cF$. So by sub-additivity
$$\rh_{G,\mu}(\xi \given \cF) \leq \sH(\zeta) + \sH(\beta \given \salg_G(\zeta) \vee \cF) \leq \epsilon / 2 + \mu(M) \cdot \sH_{\mu_M}(\alpha \given \cF) \leq \epsilon + \frac{1}{k} \cdot \rh_{G,\nu}(\xi \given \cF).$$
Since $\epsilon > 0$ was arbitrary, this complete the proof of (i).

(ii). Fix an increasing sequence of finite partitions $(\alpha_n)_{n \in \N}$ of $Y$ satisfying $\bigvee_{n \in \N} \salg_G(\alpha_n) = \Borel(Y)$. If $\rh_{G,\mu}(\alpha_n) = 0$ for each $n$, then by sub-additivity we have $\rh_{G,\mu}(Y, \nu) \leq \sum_{n \in \N} \rh_{G,\mu}(\alpha_n) = 0$. So it suffices to fix a finite partition $\alpha$ of $Y$ and show that $\rh_{G,\mu}(\alpha) = 0$.

Fix $\epsilon > 0$. By assumption $|[x]_R| = \infty$ for almost-every $x \in X$. For each $n \geq 1$, as in the proof of (i) we can pick a finite $T_n \subseteq G$ and a non-null Borel set $Z_n \subseteq X$ such that $|T_n| = n$, $|T_n \cdot z| = n$, and $T_n \cdot z \subseteq [z]_R$ for all $z \in Z_n$. By replacing $Z_n$ with a non-null subset if necessary and applying Lemma \ref{lem:indep}, we may assume that $T_n \cdot z \cap T_n \cdot z' = \varnothing$ for all $z \neq z' \in Z_n$ and that $\sH(\zeta_n) < \epsilon / 2^{n+1}$, where $\zeta_n = \{Z_n, X \setminus Z_n\}$. As before, for $t \in T_n$ define $\theta^n_t \in [E_G^X]$ by $\theta^n_t(x) = t \cdot x$ for $x \in Z_n$, $\theta^n_t(x) = t^{-1} \cdot x$ for $x \in t \cdot Z_n$, and $\theta^n_t(x) = x$ in all other cases. Each $\theta^n_t$ is $\salg_G(\zeta_n)$-expressible. Set $\Sigma = \bigvee_{n \in \N} \salg_G(\zeta_n)$ and note that by sub-additivity $\rh_{G,\mu}(\Sigma) \leq \sum_{n \in \N} \sH(\zeta_n) < \epsilon / 2$.

Let $S$ be the equivalence relation generated by the $\Sigma$-expressible maps $\{g^{-1} \theta^n_t g : g \in G, \ n \in \N, \ t \in T_n\}$. Then $S$ is a sub-relation of $R$ and by ergodicity almost-every $S$-class is infinite. Pick a Borel set $M \subseteq X$ which meets every $S$-class but has small enough measure that $\sH(\{M, X \setminus M\}) + \mu(M) \cdot \log |\alpha| < \epsilon / 2$. Again let $\mu_M$ denote the normalized restriction of $\mu$ to $M$. Set $\beta = \{X \setminus M\} \cup (\alpha \res M)$ and observe $\sH(\beta) = \sH(\{M, X \setminus M\}) + \mu(M) \cdot \sH_{\mu_M}(\alpha) < \epsilon / 2$. Since each $A \in \alpha$ is $S$-invariant and $M$ meets every $S$-class, we have
$$A = [A \cap M]_S \in \salg_G(A \cap M) \vee \Sigma \subseteq \salg_G(\beta) \vee \Sigma$$
by Lemma \ref{lem:expeq} and thus $\alpha \subseteq \salg_G(\beta) \vee \Sigma$. It follows from sub-additivity that
$$\rh_{G,\mu}(\alpha) \leq \rh_{G,\mu}(\Sigma) + \sH(\beta) < \epsilon.$$
Letting $\epsilon$ tend to $0$, we obtain $\rh_{G,\mu}(\alpha) = 0$.
\end{proof}

The previous theorem leads to an alternate proof of Meyerovitch's theorem which states that ergodic actions of positive sofic entropy must have finite stabilizers. For a sofic group $G$ with sofic approximation $\Sigma$ and a {\pmp} action $G \acts (X, \mu)$, we let $h_G^\Sigma(X, \mu)$ denote the corresponding sofic entropy (see for instance \cite{Ke13} for the definition).

\begin{cor}
Let $G$ be a sofic group with sofic approximation $\Sigma$ and let $G \acts (Y, \nu)$ be a {\pmp} action.
\begin{enumerate}
\item[\rm (i)] $h_G^\Sigma(Y, \nu) \leq \frac{1}{k} \cdot \rh_G(Y, \nu)$ if all stabilizers have cardinality at least $k$.
\item[\rm (ii)] $h_G^\Sigma(Y, \nu) = 0$ if all stabilizers are infinite.
\end{enumerate}
\end{cor}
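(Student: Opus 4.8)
The plan is to deduce both parts from Theorem~\ref{thm:stab} by attaching a Bernoulli shift to $(Y,\nu)$ to build an aperiodic extension with prescribed stabilizer indices, and then to transport the resulting outer Rokhlin entropy estimate to sofic entropy through Proposition~\ref{prop:sofrok}. We may assume $G$ is infinite, since for finite $G$ both claims are elementary (a subgroup of $G$ has cardinality at most $|G|$, so the hypotheses force $k\le|G|$ in (i) and are vacuous in (ii)).

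First I would fix a probability space $(L,\lambda)$ with $0<\sH(L,\lambda)<\infty$ and set $X=Y\times L^G$, $\mu=\nu\times\lambda^G$, with $G$ acting diagonally (left-shift on the second factor). Because $G$ is infinite and $\lambda$ is not a point mass, the Bernoulli shift $G\acts(L^G,\lambda^G)$ is essentially free, so for $\mu$-almost-every $x=(y,\ell)$ we have $\Stab_G(x)=\Stab_G(y)\cap\Stab_G(\ell)=\{1_G\}$; in particular $G\acts(X,\mu)$ is aperiodic. Let $f\colon X\to Y$ be the projection, which is a factor map with $\Stab_G(f(x))=\Stab_G(y)$. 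Hence $|\Stab_G(f(x)):\Stab_G(x)|=|\Stab_G(y)|$ for $\mu$-a.e.\ $x$, so the hypothesis of (i) gives index $\ge k$ a.e., and the hypothesis of (ii) gives infinite index a.e.

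Now I would apply Theorem~\ref{thm:stab} (with the trivial relative $\sigma$-algebra) to the factor $f\colon(X,\mu)\to(Y,\nu)$: part (i) yields $\rh_{G,\mu}(Y,\nu)\le\frac{1}{k}\cdot\rh_G(Y,\nu)$, and part (ii) yields $\rh_{G,\mu}(Y,\nu)=0$, where $\rh_{G,\mu}(Y,\nu)=\rh_{G,\mu}(\Borel(Y))$ is the outer Rokhlin entropy of the factor within $X$. To pass to sofic entropy, apply Proposition~\ref{prop:sofrok} with $\cF_1=\Borel(Y)\subseteq\Borel(X)$ and $\cF_2=\{\varnothing,X\}$ to get $h_{\Sigma,\mu}(\Borel(Y)\mathbin{:}X,G)\le\rh_{G,\mu}(\Borel(Y))=\rh_{G,\mu}(Y,\nu)$. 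Finally, the Bernoulli-extension identity used in the proof of Proposition~\ref{prop:sofrok} (Bowen's relative product formula, applied with the base $Y$ in the role of $X$ there) gives $h_{\Sigma,\mu}(\Borel(Y)\mathbin{:}X,G)=h_{\Sigma,\nu}(\Borel(Y)\mathbin{:}Y,G)=h_G^\Sigma(Y,\nu)$. Chaining the three relations gives $h_G^\Sigma(Y,\nu)=h_{\Sigma,\mu}(\Borel(Y)\mathbin{:}X,G)\le\rh_{G,\mu}(Y,\nu)\le\frac{1}{k}\cdot\rh_G(Y,\nu)$ in case (i), and $h_G^\Sigma(Y,\nu)\le\rh_{G,\mu}(Y,\nu)=0$ in case (ii); under the usual convention that sofic entropy lies in $\{-\infty\}\cup[0,\infty]$ the latter forces $h_G^\Sigma(Y,\nu)=0$ (and in any case it is not positive, which is what is needed for the new proof of Meyerovitch's theorem).

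The product/stabilizer computation and the final chaining are routine. The one place where genuine sofic-entropy input is needed — and the structural crux of the argument — is the identity $h_{\Sigma,\mu}(\Borel(Y)\mathbin{:}X,G)=h_G^\Sigma(Y,\nu)$: passing to the Bernoulli extension must not change the ``relative sofic entropy in the presence'' of the factor. (The inequality $\le$ here is automatic for any extension, since microstates for $X$ restrict to microstates for $Y$; the reverse inequality is special to $X=Y\times L^G$, where a $Y$-microstate can be freely completed on the $L^G$-coordinate.) This is exactly the form of Bowen's additivity theorem recalled in the proof of Proposition~\ref{prop:sofrok}, so no new sofic-entropy work is required — everything else is bookkeeping built on Theorem~\ref{thm:stab} and Proposition~\ref{prop:sofrok}.
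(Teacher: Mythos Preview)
Your proof is correct and follows the same overall strategy as the paper: form the free Bernoulli extension $X=Y\times L^G$, apply Theorem~\ref{thm:stab} to bound the outer Rokhlin entropy $\rh_{G,\mu}(Y,\nu)$, and then invoke a Bowen-type product identity together with the sofic-versus-Rokhlin comparison to descend to $h_G^\Sigma(Y,\nu)$.

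The packaging of the final step differs slightly. The paper works only with \emph{total} entropies: it uses sub-additivity to get $\rh_G(X,\mu)\le\sH(L,\lambda)+\rh_{G,\mu}(Y,\nu)$, Bowen's additivity $h_G^\Sigma(X,\mu)=h_G^\Sigma(Y,\nu)+\sH(L,\lambda)$, and the basic inequality $h_G^\Sigma(X,\mu)\le\rh_G(X,\mu)$, then cancels the common $\sH(L,\lambda)$. You instead apply Proposition~\ref{prop:sofrok} in its full relative form to the sub-$\sigma$-algebra $\Borel(Y)\subseteq\Borel(X)$ and use the ``in the presence'' identity $h_{\Sigma,\mu}(\Borel(Y)\mathbin{:}X,G)=h_G^\Sigma(Y,\nu)$ directly. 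Your route avoids the add-and-subtract trick and is a touch cleaner, at the cost of invoking Hayes' more refined quantity and the relative version of Bowen's product formula (which, as you correctly note, is exactly what the proof of Proposition~\ref{prop:sofrok} already records). Either way the substance is identical.

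Your handling of part~(ii) is also in line with the paper: both arguments literally yield $h_G^\Sigma(Y,\nu)\le 0$, and you are right to flag that equality with $0$ is a matter of convention (the paper does not explicitly address the possibility $h_G^\Sigma(Y,\nu)=-\infty$ either).
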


\begin{proof}
(i). Consider the Bernoulli shift $(2^G, u_2^G)$ and set $(X, \mu) = (2^G \times Y, u_2^G \times \nu)$. Then $G \acts (X, \mu)$ is essentially free and this action factors onto $G \acts (Y, \nu)$. Therefore by the previous theorem $\rh_{G,\mu}(Y, \nu) \leq \frac{1}{k} \cdot \rh_G(Y, \nu)$. So we have
\begin{equation} \label{eqn:soficstab1}
\rh_G(X, \mu) \leq \log(2) + \rh_{G,\mu}(Y, \nu) \leq \log(2) + \frac{1}{k} \cdot \rh_G(Y, \nu).
\end{equation}
On the other hand, Bowen proved that sofic entropy is additive under direct products with Bernoulli shifts \cite{B10b}. As sofic entropy is a lower bound to Rokhlin entropy \cite[Prop. 2.12]{H2}, we obtain
\begin{equation} \label{eqn:soficstab2}
\log(2) + h_G^\Sigma(Y, \nu) = h_G^\Sigma(X, \mu) \leq \rh_G(X, \mu).
\end{equation}
Combining (\ref{eqn:soficstab1}) and (\ref{eqn:soficstab2}) completes the proof of (i).

For (ii) the argument is mostly the same, except that in place of (\ref{eqn:soficstab1}) we have the inequality $\rh_G(X, \mu) \leq \log(2) + \rh_{G,\mu}(Y, \nu) = \log(2)$.
\end{proof}

\thebibliography{999}

\bibitem{Al}
A. Alpeev,
\textit{On Pinsker factors for Rokhlin entropy}, preprint. http://arxiv.org/abs/1502.06036.

\bibitem{B10b}
L. Bowen,
\textit{Measure conjugacy invariants for actions of countable sofic groups}, Journal of the American Mathematical Society 23 (2010), 217--245.

\bibitem{B12}
L. Bowen,
\textit{Sofic entropy and amenable groups}, Ergod. Th. \& Dynam. Sys. 32 (2012), no. 2, 427--466.

\bibitem{B15}
L. Bowen,
\textit{Zero entropy is generic}, preprint. http://arxiv.org/abs/1603.02621.

\bibitem{CsKo}
I. Csisz\'{a}r and J. K\"{o}rner,
Information Theory: Coding Theorems for Discrete Memoryless Systems. Cambridge University Press, New York, 2011.

\bibitem{DP02}
A. Danilenko and K. Park,
\textit{Generators and Bernoullian factors for amenable actions and cocycles on their orbits}, Ergod. Th. \& Dynam. Sys. 22 (2002), 1715--1745.

\bibitem{Do11}
T. Downarowicz,
Entropy in Dynamical Systems. Cambridge University Press, New York, 2011.

\bibitem{Far62}
R. H. Farrell,
\textit{Representation of invariant measures}, Illinois J. Math. 6 (1962), 447--467.

\bibitem{GS}
D. Gaboriau and B. Seward,
\textit{Cost, $\ell^2$-Betti numbers, and the sofic entropy of some algebraic actions}, to appear in Journal d'Analyse Math\'{e}matique.

\bibitem{Gl03}
E. Glasner,
\textit{Ergodic theory via joinings}. Mathematical Surveys and Monographs, 101. American Mathematical Society, Providence, RI, 2003. xii+384 pp.

\bibitem{GrMa}
S. Graf and R. Mauldin,
\textit{Measurable one-to-one selections and transition kernels}, American Journal of Mathematics 107 (1985), no. 2, 407--425.


\bibitem{H2}
B. Hayes,
\textit{Relative entropy and the Pinsker product formula for sofic groups}, preprint. http://arxiv.org/abs/1605.01747.

\bibitem{Ka51}
S. Kakutani,
\textit{Random ergodic theorems and Markov processes with a stable distribution}, Proc. 2nd Berkeley Sympos. Math. Statist, and Prob., 1951, 247--261.

\bibitem{K95}
A. Kechris,
Classical Descriptive Set Theory. Springer-Verlag, New York, 1995.

\bibitem{K10}
A. Kechris,
Global Aspects of Ergodic Group Actions. Mathematical Surveys and Monographs, 160, American Mathematical Society, 2010.

\bibitem{KST99}
A. Kechris, S. Solecki, and S. Todorcevic,
\textit{Borel chromatic numbers}, Adv. in Math. 141 (1999), 1--44.

\bibitem{Ke13}
D. Kerr,
\textit{Sofic measure entropy via finite partitions}, Groups Geom. Dyn. 7 (2013), 617--632.

\bibitem{Ke13a}
D. Kerr,
\textit{Bernoulli actions of sofic groups have completely positive entropy}, to appear in Israel Journal of Math.

\bibitem{KL11a}
D. Kerr and H. Li,
\textit{Entropy and the variational principle for actions of sofic groups}, Invent. Math. 186 (2011), 501--558.

\bibitem{KL13}
D. Kerr and H. Li,
\textit{Soficity, amenability, and dynamical entropy}, American Journal of Mathematics 135 (2013), 721--761.

\bibitem{Me15}
T. Meyerovitch,
\textit{Positive sofic entropy implies finite stabilizer}, preprint. http://arxiv.org/abs/1504.08137.

\bibitem{OW80}
D.\ Ornstein and B.\ Weiss, \emph{Ergodic theory of amenable group actions I. The Rohlin lemma}, Bulletin of the American Mathematical Society 2 (1980), 161--164.

\bibitem{OW87}
D. Ornstein and B. Weiss,
\textit{Entropy and isomorphism theorems for actions of amenable groups}, Journal d'Analyse Math\'{e}matique 48 (1987), 1--141.

\bibitem{Os65}
V. I. Oseledets,
\textit{Markov chains, skew products and ergodic theorems for ``general'' dynamical systems}, Theory of Probability \& Its Applications 10 (1965), no. 3, 499--504.

\bibitem{Rok57}
V. A. Rokhlin,
\textit{Metric classification of measurable functions}, Uspekhi Mat. Nauk 12 (1957), no. 2, 169--174.

\bibitem{Roh67}
V. A. Rokhlin,
\textit{Lectures on the entropy theory of transformations with invariant measure}, Uspekhi Mat. Nauk 22 (1967), no. 5, 3--56.

\bibitem{RW00}
D. J. Rudolph and B. Weiss,
\textit{Entropy and mixing for amenable group actions}, Annals of Mathematics 151 (2000), no. 2, 1119--1150.

\bibitem{S14}
B. Seward,
\textit{Krieger's finite generator theorem for actions of countable groups I}, preprint. http://arxiv.org/abs/1405.3604.

\bibitem{S14b}
B. Seward,
\textit{Krieger's finite generator theorem for actions of countable groups II}, preprint. http://arxiv.org/abs/1501.03367.

\bibitem{S16}
B. Seward,
\textit{Weak containment and Rokhlin entropy}, preprint. http://arxiv.org/abs/1602.06680.

\bibitem{S16a}
B. Seward,
\textit{Positive entropy actions of countable groups factor onto Bernoulli shifts}, in preparation.

\bibitem{ST14}
B. Seward and R. D. Tucker-Drob,
\textit{Borel structurability on the $2$-shift of a countable group}, preprint. http://arxiv.org/abs/1402.4184.

\bibitem{T15}
A. Tserunyan,
\textit{Finite generators for countable group actions in the Borel and Baire category settings}, Advances in Mathematics 269 (2015), 585--646.

\bibitem{Var63}
V. S. Varadarajan,
\textit{Groups of automorphisms of Borel spaces}, Trans. Amer. Math. Soc. 109 (1963), 191--220.

\end{document}